\numberwithin{counter}{subsection}
\newtheorem{theorem}{Theorem}[section]
\newtheorem{lemma}[theorem]{Lemma}
\newtheorem{proposition}[theorem]{Proposition}
\newtheorem{counter-e}{Counterexample}
\newcommand{\D}{\partial}
\newcommand{\eps}{\varepsilon}
\newcommand{\R}{\mathbb{R}}
\newcommand{\N}{\mathbb{N}}
\newcommand{\SU}{$\mathtt{SU}$}
\newcommand{\ST}{$\mathtt{ST}$}
\definecolor{Question}{HTML}{F77409}
\definecolor{Problem}{HTML}{F01A1A}
\definecolor{Comment}{HTML}{011AF8}
\definecolor{Check}{HTML}{08AB06}
\title{Modeling the propagation of riots, collective behaviors and epidemics}
\author[1,2]{Henri Berestycki}
\author[3]{Samuel Nordmann}
\author[1]{Luca Rossi}
\affil[1]{\small 
	Centre d'analyse et de mathématique sociales,
	EHESS - CNRS, 54, boulevard Raspail, Paris, France}
\affil[2]{\small Senior Visiting
Fellow, Institute for Advanced Study, Hong Kong
University of Science and Technology}
\affil[3]{\small Department of Applied Mathematics, Tel Aviv University, Tel Aviv 6997801, Israel}
\newenvironment{dedication}
        {\vspace{0ex}\begin{quotation}\begin{center}\begin{em}}
        {\par\end{em}\end{center}\end{quotation}}
\begin{document}

\numberwithin{equation}{section}

\maketitle
\begin{dedication}
%\hspace{4cm}
%\vspace*{9cm}
{\`A Italo Capuzzo Dolcetta, \\
en signe d'affection, de profonde estime et d'amiti\'e
%en signe d'amiti\'e et estime profondes
}
\end{dedication}

\bigskip

\begin{abstract}
%We introduce and analyze a class of systems of Reaction-Diffusion equations to model the dynamics of social unrest. 
%A question of particular relevance is to account for the spatial aspects of the dynamics.

This paper is concerned with a family of Reaction-Diffusion systems that we introduced in~\cite{Berestycki2019b}, and that generalizes the {\em SIR} type models from epidemiology.
 Such systems are now also used to describe collective behaviors.
In this paper, we propose a modeling approach for these apparently diverse phenomena through the example of the dynamics of social unrest.
The model involves two quantities: the level of social unrest, or more generally activity,
 $u$, and a field of social tension $v$, which play asymmetric roles. We think of $u$ as the actually observed or explicit quantity while $v$ is an ambiant, sometimes implicit, field of susceptibility that modulates the dynamics of~$u$. 

In this article, we explore this class of model and prove several theoretical results based on the framework developed in~\cite{Berestycki2019b}, of which the present work is a companion paper. We particularly emphasize here two subclasses of systems: {\em tension inhibiting} and {\em tension enhancing}. These are characterized by  respectively a negative or a positive
feedback of the unrest on social tension. We establish several properties for these classes and also study some extensions. 
%{\em initial} level of 
In particular, we describe the behavior of the system following an initial surge of activity. We show that the model can give rise to many diverse qualitative dynamics. 
%study several extensions of the model, such as spatially heterogeneous systems.
We also provide a variety of numerical simulations to illustrate our results and to reveal further properties and open questions.

\end{abstract}
\textbf{Key words: }Epidemiology models $\cdot$ SIR model %$\cdot$ COVID-19  
$\cdot$ Threshold phenomenon $\cdot$ Social systems $\cdot$ Reaction-diffusion systems
$\cdot$ Contagion $\cdot$ Traveling waves $\cdot$ Speed of propagation

\newpage
\tableofcontents

\newpage

\section{Motivation and framework}

%The $SIR$ type model in epidemiology and its host of variants have given rise to a vast literature. 

%A number of large-scale phenomena, such as epidemics or other collective behaviors exhibit complex dynamics that can only be described by elaborate modeling approaches.
%In the current context, there is no doubt on the importance of reliable models to describe these phenomena. However, these are sometimes difficult to study empirically because of the scarcity of data and the difficulty of conducting large-scale experiments. Mathematics has a lot to offer in this field of modeling since it allow rigorous analysis, identifying the mechanisms, and testing the hypotheses through numerical experiments.

%In the present circumstances, a considerable number of studies are devoted to the introduction and analysis of epidemiology models. 
 
%The most widely used model in this area is undoubtedly the $SIR$ compartmental model or %one of the many extensions and variants. 
Introduced by Kermack and McKendrick~\cite{Kermacka} as one particular instance of a family of models, the $SIR$ compartmental type model and its host of variants are basic tools of epidemiology. They have given rise to a vast literature.
The {\em SIR} system features two populations: the \emph{Susceptible}, represented by $S(t,x)$, and the \emph{Infected}, represented by $I(t,x)$. These are supplemented by the compartment of \emph{removed} $R(t,x)$.  In its simplest form (with spatial component $x$) and Brownian diffusion, the model is written as the following system of reaction-diffusion equations:
\begin{equation}\label{SIModel}
\left\{\begin{aligned}
&\D_t I -d_1 \Delta I= \beta IS -\gamma I,\\
&\D_t  S-d_2\Delta S=-\beta IS,
\end{aligned}\right.
\end{equation}
together with the equation for the \emph{recovered} $R(t,x)$
$$
\D_t R -d_3\Delta R=\gamma I.
$$
Since \eqref{SIModel} does not involve $R$ (which is derived from $I$), we can overlook it.

This system is essential in epidemiology, both from the point of view of theory~\cite{Hethcote1989,H2000,Vynnycky2010,Miller2017,Siettos2013,Hoppensteadt1982a,Ruan} and applications~\cite{Nepomuceno2018,Bailey1975,Anderson1982}.
Most of the available mathematical approaches are quite specific to this system and do not lend themselves to  be generalized to a broader class of systems. 
Given the variety of extensions and variants of this model, it is natural to seek a unified mathematical framework and to identify the core general properties of this class of system.
This is one of the aims of this paper.

%Devant la variétés des extensions et variantes de ce modèle, il est un enjeu majeur de proposer un cadre mathématiques unifié et de dégager les propriétés générales de cette classe de système.

% particularly regarding the study of spatial propagation.The pioneering works are due to Kendall~\cite{Kendall1965}, Mollison~\cite{Mollison1977}, Thieme~\cite{Thieme1977}, Aronson~\cite{Aronsonb} for the asymptotic speed of propagation, Atkinson, Reuter~\cite{Atkinson1976}, Diekmann~\cite{DiekmannBIS,Diekmann1978,Diekmann1979,Diekmann1978b}, Brown, Carr~\cite{Brown1977} for transition waves, and Radcliffe, Rass~\cite{Radcliffe1983,Radcliffe1984,Radcliffe1986} for studying the $n$-dimensional system. Traveling waves have also been studied by Hosono, Ilyas~\cite{Hosono1995}, Zhao, Wang~\cite{Zhao2004a}, and many others~\cite{Ducrot,Wu2011a,Kansakar2013,Yang2013a,Ai2014,Yang2015,Xu2016b,Xu2016c,Zhou2017,Zhao2005a}. The theory has been extended to the case of time-delayed equations, first by Thieme, Zhao~\cite{Thieme2003}, then by other authors~\cite{Li2015a,Wang2006,Wang2010}. Finally, propagation in periodic heterogeneous environments is studied, among others, by Ducrot, Giletti~\cite{Ducrot2014} and Ducasse~\cite{Ducasse2019}.

%Apart from the epidemiology context, variants of the $SI$ model have long since been proposed to account for many instances of social collective behaviors, see the survey of Dietz~\cite{Dietz1967}. 
%%%%%%
Similar models have been used since long to describe the spread of riots and, more generally, of collective behaviors in various social contexts (see e.g. the survey of Dietz~\cite{Dietz1967}). 
%variants of the $SI$ model have long since been proposed to account for many instances of social collective behaviors,  
One may trace this use of epidemiology models in the context of social phenomena to the analogy between the mechanisms of contagion and social imitation. 
%In turn, this analogy also points at the social nature of epidemics.
%%%%%%

%The use of epidemiology models to describe social phenomena is based on the analogy between, on the one hand, the mechanisms of contagion present in epidemics and, one the other hand,
The mechanism of social imitation implies that the inclination of an individual to join a social movement is largely influenced by the intensity of the movement itself. Once the movement has reached a certain size, owing to several mechanisms such as imitation or social influence, more people are prone to join it and the movement grows. The celebrated work of Granovetter \textit{Threshold Models of Collective Behavior}~\cite{Granovetter1978} described the formal analogy between epidemics and collective behavior in the following visionary terms\footnote{we left out the references}: 
%\textit{
\begin{quotation}
``There is also some similarity between the present models and models used in epidemiology, the diffusion of information and innovations, and the evolution of behavior in groups over time.''
\end{quotation}
%} 
This approach is also developed by Burbeck et al.~\cite{Burbeck1978} in their pioneering paper about the dynamics of riots: 
%\textit{
\begin{quotation}
``Patterns within three major riots suggest that the dynamics of the spread of riot behavior during a riot can be fruitfully compared to those operative in classical epidemics. We therefore conceptualize riots as behavioral epidemics, and apply the mathematical theory of epidemics [...].''
\end{quotation}
%} 
It is only relatively recently that several works have developed the actual epidemiology approach for the modeling of riots ~\cite{Berestycki2015,Berestycki2016a,Berestycki2018a,Petrovskii,Yang2020}. This approach proved very effective in fitting data from observations~\cite{Bonnasse-Gahot2018,Caroca2020}.

%has later been used in many other articles~\cite{Berestycki2015,Berestycki2016a,Berestycki2018a,Petrovskii,Yang2020} and has proven to be effective in reporting data from the field~\cite{Bonnasse-Gahot2018,Caroca2020}.
Similar ideas have also been applied to other instances of collective behaviors: the propagation of ideas (see the pioneering works of Goffman \& Newill~\cite{Goffman1964} and Daley \& Kendall~\cite{DALEY1964}, and also the more recent contributions on the propagation of scientific knowledge~\cite{Kaiser2006,Huo2016,Cao2016,Kiss2010,MoritzMarutschke2014}, rumors~\cite{Kawachi2008,Afassinou2014,Zhao2012,Zhao2013}, and extreme ideology in a society~\cite{Santonja2008}),
the diffusion of a new product in a market (as originally studied by Bass~\cite{Bass1969} and more recently included in numerous works dealing both with the theory~\cite{Fibich2017,Fibich2016,Bhattacharya2019,Rodrigues2015} and the applications~\cite{Goncalves2017,Gurley2017}), the growth of political parties~\cite{Jeffs2016} and the propagation of memes and hashtags~\cite{Wang2011a,Skaza2017,Woo2016,Gaumont2018}.

Conversely, the analogy between epidemics and collective behaviors shows us that to a large extent epidemics are  a social phenomenon.
%This is clearly exemplified by 
The impacts and challenges of the current COVID-19 epidemic remind us of this fact. 
%show this nature.
The website~\cite{MacGregor2020} of the Institute of Development Studies formulates it explicitly: 
%\textit
\begin{quotation}
``As the COVID-19 pandemic rages across the world, one thing is clear: this epidemic, like all others, is a social phenomenon. The dynamics of the virus, infection and immunity, not to mention on-going efforts to revise and improve clinical care, and endeavors to develop medical treatments and vaccines, are a critical part of the unfolding story. So, too, are peoples' social responses to the disease and interactions with each other.''
\end{quotation}

\paragraph*{}
Since epidemics and collective behaviors, although very different in nature, have structural similarities, they can be modeled and studied within a unified mathematical paradigm.
A question of particular importance is to understand whether a \emph{triggering event}, i.e. a small initial social movement, can result in a significant movement by means of social imitation and other self-reinforcement mechanisms.
%We ask the question whether a movement of small intensity,  will promptly fade out or lead to major movement. 
The answer to this question typically involves a threshold phenomenon on an ambient level of \emph{susceptibility}. In a context of \emph{low susceptibility}, the \emph{triggering event} fades out and the system promptly returns to a calm situation, whereas in a context of \emph{high susceptibility}, the \emph{triggering event} leads to a significant movement. %The dynamics are then affected by a number of inhibitory or exciter factors. 

Our main goal in~\cite{Berestycki2019b} and in the present article is to develop a unified mathematical framework to deal with this general setting. 
We aim to unify, generalize, and open new fields of application for epidemiology and collective behavior models.
This article can also be seen as a contribution to the program proposed by Granovetter: 
%\textit{
\begin{quotation}
``To develop these analogies in more detail would require that (1) my models, expressed below as difference equations in discrete time, be translated into differential equations in continuous time, and that (2) some way be found to introduce the "threshold" concept into these other models, which generally do not stress individual differences.
% (cf. especially Coleman's discussion of "heterogeneity"). 
While some work in this direction has been accomplished, it is incomplete''
% and could not be adequately presented
%in a brief way. Hence, it is deferred to future publications.''
\end{quotation}

Let us emphasize that 
%many articles consider non-spatial models, even though 
spatial diffusion plays a key role in the dynamics of many collective behaviors~\cite{Smith2012,Bonnasse-Gahot2018,Yang2020,Braha2012}. 
Therefore, in~\cite{Berestycki2019b} and here 
%is to provide general mathematical tools to deal with these models in a unified way 
we include spatial dependence and we are especially interested in spatial propagation.

%A huge number of different variants of epidemiology models are considered in the literature.
\paragraph*{}

In our approach, we consider the coupled dynamics of a level of \emph{activity}, denoted by $u$, representing the intensity of activity (e.g. rioting activity, fraction of population having adopted a belief or a technology, etc.), and an underlying level of \emph{susceptibility}, denoted by $v$, representing the ambient context. We emphasize that these two quantities play an asymmetric role: $u$ is thought of as the actual observed or explicit quantity while $v$ is a potential field that modulates the growth of $u$. From a modeling perspective, the level of activity $u$ often represents an explicit quantity that is tractable empirically, whereas the level of susceptibility $v$ is an implicit field. In a sense, we postulate the existence of such a field which is a lumped variable that results from several complex social interactions. Then, these two quantities interact: the field $v$ modulates the dynamics of the activity level, and there is also a feedback mechanism whereby the level of activity influences the field of susceptibility. This is why we call this general class of models the \emph{activity/susceptibility} systems.

Assuming that $u$ and $v$ are subject to diffusion and coupled reaction, the resulting mathematical model takes the following general form: %of a system of Reaction-Diffusion equations
\begin{subequations}\label{ActivityModulatorSystem}
\begin{equation}\label{ActivityModulatorSystem_eq}
\left\{\begin{aligned}
&\D_t  u -d_1 \Delta u=\Phi(u,v):=uF(u,v), \\%u\big[r(v)G(u)-\omega\big],\\
&\D_t  v-d_2\Delta v=\Psi(u,v):= uG(u,v)+(v_b-v)H(u,v),
\end{aligned}\right.
\end{equation}
\begin{equation}\label{ActivityModulatorSystem_init}
u(t=0,x)=u_0(x)\gneqq 0\quad;\quad v(t=0,x)\equiv v_b.
\end{equation}
\end{subequations}
%with $\omega>0$ a constant, $r(\cdot)$ nonnegative increasing, and $G(u)$ nonincreasing.
We aim at keeping the assumptions on the terms in the system as general as possible.
In fact, the form of $\Phi$ and $\Psi$ given in~\eqref{ActivityModulatorSystem} is only suggested 
in this introduction to give an insight of our approach, but later on we deal with more general nonlinear terms, see  Section~\ref{sec:TheModel} below.

Let us briefly justify the structural form of the above system.  In a normal situation, i.e., in the absence of any exogenous event, we consider the system at equilibrium at some steady state $(u,v)\equiv (0,v_b)$, where the level of activity $u$ is null and the social tension $v$ is at its base value~$v_b$. Note that the special
form of $\Phi$, $\Psi$ in~\eqref{ActivityModulatorSystem} ensures that $(0,v_b)$ is indeed a steady state.
We also assume that the steady-state $(0,v_b)$ is, in a sense, weakly stable in a situation where $u= 0$, that is, $H(0,v)\geq0$.

We consider that an exogenous event occurs at $t=0$ affecting the activity and propose to study its effect on the system. This exogenous event is encoded in the initial condition $u_0(\cdot)\gneqq0$.
%We emphasize that the base level of social tension $v_b$ also appears explicitly in the second equation of~\eqref{ActivityModulatorSystem}; in our approach, we think of $v_b$ as a bifurcation parameter.

As we will see, the class of systems~\eqref{ActivityModulatorSystem} gives rise to
many diverse qualitative behaviors. 
This variety is illustrated by two subclasses that we will investigate in more details, namely, the \emph{tension inhibiting} systems (where $\Psi\leq 0$) which give rise to ephemeral episodes of activity, and the \emph{tension enhancing} systems (where $\Psi\geq 0$) which give rise to time-persisting episodes of activity.

%on the form of the system~\eqref{ActivityModulatorSystem}

%the level of activity is assumed to be null, accounting for the fact that the event is rare and only occurs through episodic bursts. Before any exogenous event occurs, we consider the system at equilibrium at some steady-state $(u=0,v=v_0)$.

%
%studying consider this system under 
%
%Expect 
%
%À part ça, nous considérons le systèmes sous des hypothèses générales 
%
%stated in Section~\ref{sec:FormalFeatures}. Except from this, 
%
%Except from some structural assumption, we consider the above system under a r
%The precise assumptions on the parameters, that we try to keep minimal, are s.
%%From simple modeling assumptions stated in Section~\ref{sec:FormalFeatures}, we derive in Section~\ref{sec:ConstructTheModel} the following system
%We consider the above system under rather general assumptions.  
%
%
%Let us give here a few comments.

\paragraph*{}
The $SI$ epidemiology model~\eqref{SIModel} is recovered from~\eqref{ActivityModulatorSystem} by taking
$\Phi(u,v)=\beta uv-\gamma u$ and
$\Psi(u,v)=-\beta uv,$
In this context, the \emph{Susceptible} are represented by $v$, and the \emph{Infected} are represented by $u$: this exeplifies the role of potential field assumed by $v$ --~here the susceptibles. Actually, we will see that the $SI$ belongs to the subclass of \emph{tension inhibiting} systems. The terms $\pm\beta uv$ (which could be described as law of mass action type terms) account for the contagion mechanism and derive from the assumption that the contagion is proportional to the rate of encounter between susceptible and infected individuals in a evenly mixed population. Many papers consider variants of the model where this term is replaced by a more elaborate contagion term. For example, the
Michaelis-Menten interaction assumes the existence of a saturation effect on the contagion term
and amounts to substitute $\pm\beta SI$ by $\pm\beta\frac{SI}{1+b(S+I)}$. Most of these variant fit our general framework~\eqref{ActivityModulatorSystem} and are therefore included in our study.

As further discussed in our other paper~\cite{Berestycki2019b}, a number of systems used in other modeling areas (such as propagation in excitable media, population dynamics, etc.) also fall into the setting defined by~\eqref{ActivityModulatorSystem_eq}.
When dealing with solid combustion (which is a typical example of propagation in an excitable media), one can choose $u$ to represent the temperature, $v$ to represent the chemical fuel, and assume that their dynamics is governed by~\eqref{ActivityModulatorSystem_eq} with
 $\Phi(u,v)=qF(u)v$ and $\Psi(u,v)=-F(u)v$, where $q>0$ is a constant, see~\cite{Bages2010,Larrouturou1991,Berestycki1985} and references therein. The function $F$ derives from the Arrhenius law and is typically taken to be of the form $F(u)= (u-\theta)_+ g(u) $, where $\theta\in(0,1)$ is the ignition temperature and $g$ is some postive  function involving the activation energy.
 
Another famous system that fit our framework is the classical Lotka-Volterra predator-prey model, obtained by taking (overlooking various parameters) $\Phi(u,v)=u(v-\omega)$ and $\Psi(u,v)=-uv+v(1-v)$ (with $v_b=1$) in~\eqref{ActivityModulatorSystem_eq}.
%(i.e. $r\equiv Id$, $G\equiv1$, $f\equiv-Id$, $v_0=1$ and $h\equiv-Id$). 
In this context, $u$ represents the density of \emph{predators} and $v$ the density of \emph{preys}. The term $\pm uv$ represents the transfer between prey and predators (through a law of mass action type term) ;  $-\omega u$ represents the natural death rate of predators ; $v(1-v)$ represents the natural birth rate and saturation effect (due, for instance, to the limitations of ressources) in the prey population.

%The class of system~\eqref{ActivityModulatorSystem} encompasses other monotonic and non-monotonic systems appearing in various fields of application, such as Belousov-Zhabotinski equation for biochemical systems~\cite{V982}, or the Bass model in marketing~\cite{Fibich2016}. See~\cite{Berestycki2019b} for a more comprehensive discussion on the classical models that fit into the framework of~\eqref{ActivityModulatorSystem}.

%A huge literature has been devoted to this model, variants and extensions. 

%Although we consider rather general functions $f$ and $h$ in the second equation of~\eqref{ActivityModulatorSystem}, we also investigate in more details two model classes giving rise to completely different dynamics, namely, 

\paragraph*{}
In our paper~\cite{Berestycki2019b}, we propose a theoretical study of~\eqref{ActivityModulatorSystem} in a general framework. In the present article, we discuss the significance of the results for modeling purposes while assuming a slightly more specific structure to the system. We also prove some new theoretical results concerning the long-time behavior of solutions in the \emph{tension inhibiting} and the \emph{tension enhancing} cases, dealing with the behaviors of solutions far from the leading edge of the front. Those results deal with both the traveling wave problem and the Cauchy problem.
We accompany our analysis with several numerical simulations which also reveal a number of interesting open questions.

To fix ideas, we place ourselves in the context of modeling of social unrest, which is a historical example and a textbook case of a propagating collective behavior. The epidemiology approach is particularly relevant in this context, as highlighted by the pioneering work of Burbeck et al.~\cite{Burbeck1978} already mentioned.
However, our approach can be envisioned to model other sociological phenomena in social sciences and population dynamics. Let us also mention that the literature on the modeling of social unrest often considers models with very particular forms, even though the quantities at stake (especially the \emph{susceptibility}, or \emph{social tension}) are not directly accessible from data. It thus seems important to us to develop a unified approach with mild assumptions on the parameters.

%Hence, our aim here is not to assess the validity of our model with quantitative data from the field, but rather to 

In this paper, our aim is to illustrate the richness of the framework and to discuss its qualitative relevance regarding the topic, while keeping the mathematical approach quite general.

%For some special classes of models, we also prove new theoretical % More specifically, we show that in the tension

\paragraph*{Outline.} 
We start with presenting, in Section~\ref{sec:FormalFeatures}, the social phenomena that we aim at describing, 
pointing out the basic sociological assumptions that lead, in Section~\ref{sec:ConstructTheModel},
to the mathematical derivation of our model.
The model and the assumptions are then stated in Sections~\ref{sec:TheModel}-\ref{sec:TravelingWaves}. In Section~\ref{sec:PreviousModels}, we discuss the existing literature on this and related topics.
Section~\ref{sec:Analysis} is devoted to a general analysis of the model. Applying the results of~\cite{Berestycki2019b}, we enlighten a threshold phenomenon on the initial level of social tension for the ignition of a social movement. We present some estimates on the speed of propagation of the movement and comment on the interpretation of the mathematical results in terms of modeling.
Next, we focus on two important classes of models: the \emph{tension inhibiting} systems (Section~\ref{sec:inhibiting}), which generate ephemeral movements of social unrest,  and the \emph{tension enhancing} systems (Section~\ref{sec:enhancing}), which give rise to time-persisting movements of social unrest.
In both cases, we present some new results about the behavior of solutions far from the 
leading edge of the propagating front, as well as for traveling wave solutions.
These results are corroborated by numerous numerical simulations.
In Section~\ref{sec:MixedCase}, we examine several mixed cases which are neither \emph{tension inhibiting} nor \emph{tension enhancing}, and that exhibit more complex dynamics.
Section~\ref{sec:SpaceHeterogeneous} deals with extensions of the model including spatial heterogeneity.
Section~\ref{sec:Proofs} contains all the proofs of our results.
 Finally, Section~\ref{sec:conclusion} is devoted to concluding remarks and perspectives.

\paragraph*{Remark on the numerical simulations.}
%Our results are illustrated by numerous 
Numerical simulations are performed with a standard explicit Euler finite-difference scheme, with time-step $d t=0.05$, and space-step $d x= 1$. In the caption of each figure, we give a clickable URL link and the reference to a video of the simulation available online\footnote{Temporary address:
\href{https://sites.google.com/view/samuelnordmann/research/modeling-social-unrest-videos}{\color{blue}\underline{https://sites.google.com/view/samuelnordmann/research/modeling-social-unrest-videos}}\newline
Definite address to be specified in the published paper.}.

\section{The model}
\subsection{The dynamics of Social Unrest}\label{sec:FormalFeatures}
In this section, we introduce our modeling assumptions on the dynamics of social unrest in society and other collective behaviors. We do not aim at discussing the sociological origins of social unrest, which is the topic of an abundant literature and continues to be studied. Instead, we propose a model built from simple ingredients to account for recurrent patterns observed in these phenomena~\cite{Davies2013a,Bonnasse-Gahot2018}.
Our purpose here is to identify some possible features and mechanisms that land themselves to mathematical analysis. Of particular importance in this respect is the dynamical unfolding and spatial spreading of social unrest.

Our approach consists of using epidemiology models to account for the coupled dynamics of social unrest and social tension. This approach, introduced by Burbeck et al.~\cite{Burbeck1978} and further developped in~\cite{Berestycki2015,Berestycki2016a,Berestycki2018a}, turns out to be remarkably efficient to account for data from the field. In particular, a model~\cite{Bonnasse-Gahot2018} of the class we consider here reproduces rather precisely the dynamics and spreading of the French riots of 2005, which was triggered by the death of two young men trying to escape the police in Clichy-sous-Bois, a poor suburb of
Paris. This event occurred in a context of high social tension and was the spark for the riots that spread throughout the country and lasted over three weeks. 

%\textit{``A new approach to the study of large-scale urban riots has
%resulted in the discovery of remarkably coherent patterns in the
%distribution of riot events over time. Patterns within three
%major riots suggest that the dynamics of the spread of riot
%behavior during a riot can be fruitfully compared to those
%operative in classical epidemics. We therefore conceptualize
%riots as behavioral epidemics, and apply the mathematical theory
%of epidemics to data from the Los Angeles (1965), Detroit (1967),
%and Washington, D.C., (1968) riots.''}

 For literature on the modeling of social unrest, riots, and related topics, we refer the reader to~\cite{Granovetter1978,Bouchaud2013,Berestycki2015,Abudu1974} and references therein. We return in Section~\ref{sec:PreviousModels} to the existing literature and give a more detailed comparison between our model and several others.

We define the level of \emph{social unrest}, abbreviated to {\SU}, as the number of rioting activities or civil disobedience. 
We can think of {\SU} as the level of illegal actions resulting from rioting, measured in some homogeneous way (e.g. number of rioting incidents reported by the police).
Our model also features a level of Social Tension, abbreviated to {\ST}, accounting for the resentment of a population towards society, be it for political, economic, or for social reasons. This implicit quantity can be seen as the underlying (or potential) field of susceptibility for an individual to join a social movement.
The guiding principle of our approach rests on the hypothesis that {\SU} and {\ST} follow coupled dynamics.

%This assumption allows focussing more clearly on the interplay between {\SU} and {\ST}.

\paragraph*{}
Let us now review the most common characteristics of the dynamics of {\SU} and define some vocabulary. 
To begin with, even if social movements can take many different forms, it appears that they often occur as episodic \emph{bursts}. 
A first simplistic classification would be to distinguish an ephemeral movement of social unrest, that we call here a ``riot'', which lasts at most a couple of weeks and then fades~(e.g. the London riots of 2011~\cite{Davies2013a,Baudains2013} or the French riot of 2005~\cite{Bonnasse-Gahot2018}), from a long-duration or persisting movement of social unrest, that we call here a ``lasting upheaval'',
which lasts longer and can result in significant political or sociological changes (e.g. the Yellow Vest Movement~\cite{Wikipedia,Morozov2019}, the Arab Spring~\cite{Lynch2013,Lang2014}, the Russian revolution of 1905–1907~\cite{Past2020}, or the French Revolution. See also~\cite{Arendt1972}).

However, most social protests are commonly considered to
have been ignited by a single \emph{triggering event}~\cite{Edmonds2011}. %One can think of this \emph{triggering event} as the straw that broke the camel's back. 

Accordingly, we assume that in a normal situation, the level of {\SU} is null and that {\ST} is at equilibrium at its base value. To account for the \emph{triggering event}, we assume that the system is perturbed at $t=0$. 
%However, a single event normally has a limited effect on society. 

We therefore expect that whether the \emph{triggering event} ignites a \emph{burst} of {\SU} depends on the level of {\ST}.
If {\ST} is high enough, a small \emph{triggering event} triggers a \emph{burst} of {\SU}; whereas if {\ST} is low, the same event is followed by a prompt \emph{return to calm}. This threshold phenomenon is studied in the famous work of Granovetter~\cite{Granovetter1978}.

These observations suggest that, in a context of low {\ST}, an intrinsic mechanism of \emph{relaxation} occurs on {\SU}. The \emph{relaxation} rate accounts for various sociological features after a burst, such as fatigue, police repression, incarceration, etc.

On the other hand, a high {\ST} activates an \emph{endogeneous growth} of {\SU}. In other words, if {\ST} is above a threshold level, then a mechanism of {self-reinforcement} occurs on {\SU}. This is analogous to a flame propagation (an \emph{endogenous} growth is activated when the temperature is high enough) and pertains, more generally to ``excitable media''.
One can think of this \emph{endogeneous} feature as the gregarious dimension of social movements:
the larger the movement, the more prone an individual is to join it~\cite{Raafat2009,Schussman2005}.

Naturally, this self-reinforcement mechanism can be counterbalanced by a \emph{saturation} effect, accounting for the limited number of individuals, resources, goods to be damaged, etc.

Another important feature usually observed during movements of {\SU} is the \emph{geographical spread}~\cite{Braha2012,Yang2020}.
A striking example is the case of the 2005 riots in France~\cite{Bonnasse-Gahot2018,Snow2007}. This phenomenon is either caused by the rioters movement as in London~2011, or by a diffusion of the riot as in France~2005. However, the role played by the geography in the dynamics of {\ST} is less clear. For example, one could consider that {\ST} is, or is not, affected by diffusion, or even that it is affected by a non-local diffusion (see Section~\ref{sec:Ext_NonLocal}) since information nowadays is often available instantaneously through global media.

\paragraph*{}

With this vocabulary at hand, a \emph{riot} (i.e., an ephemeral movement of social unrest) 
will typically be observed in a case where the \emph{burst} of {\SU} results in a \emph{decrease} of {\ST}. %(i.e., when {\SU} has a negative feedback {\ST}). 
Once {\ST} falls below a threshold value, {\SU} fades and eventually stops. 
%Note however that the final level of {\ST} can be nonzero. 
We call this case \emph{tension inhibiting}. It is qualitatively comparable to the outburst of a disease, which propagates until the number of susceptible individuals falls below a certain threshold. This behavior is well captured by the famous $SI$ epidemiology model, with $S=$ {\ST} and $I=$ {\SU}.

On the contrary, a \emph{lasting upheaval} (i.e., a time-persisting movement of social unrest) will typically be observed in a case where the \emph{burst} of {\SU} results in an \emph{increases} of {\ST}. In this case, the dynamics escalates towards a sustainable state of high {\SU}. We call this case \emph{tension enhancing}. From a modeling point of view, it points to a cooperative system.
% (i.e., when {\SU} has a positive feedback {\ST}). 

These two model classes give a first good idea of the variety of behaviors generated by the model. They suggest different classes of systems: epidemiology models on the one hand and monotone systems on the other hand. Those two classes of model are studied quite separately in the literature. Our aim here is to take advantage of the unified framework of~\cite{Berestycki2019b} to propose a single model able to encompass both behaviors.

 Of course, one can also consider more complex scenarios where the feedback of {\SU} on {\ST} is neither positive nor negative. This situation is included in our framework and illustrated with numerous examples later on.

\subsection{Construction of the model}\label{sec:ConstructTheModel}
We propose a mathematical model inspired from~\cite{Berestycki2015,Berestycki2016a,Berestycki2018a} to account for the dynamics of {\SU} and {\ST}.
We let $u(t,x)$ denote the level of {\SU} at given time $t$ and position $x$, 
and let $v(t,x)$ be the level of {\ST}.
We consider the general form of systems of Reaction-Diffusion equations
\begin{equation}\label{ActivityModulatorSystem_construction}
\left\{\begin{aligned}
&\D_t  u -d_1 \Delta u=\Phi(u,v),\\
&\D_t  v-d_2\Delta v=\Psi(u,v).
\end{aligned}\right.
\end{equation}
%\begin{equation}\label{GeneralEquationMotivation}
%\left\{\begin{aligned}
%&\D_t  u =d_1 \Delta u+\Phi(u,v),\\
%&\D_t  v=d_2\Delta v+\Psi(u,v),\\
%&u(0,x):=u_0(x),\quad v(0,x):=v_0(x).
%\end{aligned}\right.
%\end{equation}
%with $d_1>0$, $d_2\geq0$ and $\Phi$, $\Psi$ that will be specified. 
The diffusion terms $d_1 \Delta u(t,x)$ and $d_2\Delta v(t,x)$ describe the influence that one location has on its \emph{geographical neighbors}. The reaction terms $\Phi$ and $\Psi$ model the endogenous growths and feebacks of $u$ and $v$.

The level $u=0$ represents the absence of social unrest, or activity. 
The value $v_b$ stands for the {\em base value} of the social tension in the normal (quiet) regime.
We assume that the system is at equilibrium in the quiet regime, that is,
$(u\equiv0,v\equiv v_b)$ is a steady state for~\eqref{ActivityModulatorSystem_construction} 
($\Phi(0,v_b)=\Psi(0,v_b)=0$). For example, we can choose, as in~\eqref{ActivityModulatorSystem},
\begin{equation}\label{ParticularForm}
\Phi(u,v)=uF(u,v)\quad; \quad \Psi(u,v)= u G(u,v)+(v_b-v)H(u,v).
\end{equation}
%\nota{plutot $\Psi(u,v)= u v f(u,v)+(v_0-v)h(u,v)$ sinon $v\geq0$ n'est pas garanti}
We further assume that $v_b$ is a weakly stable state for the second equation when $u\equiv 0$, i.e., 
$\Psi(0,v)\geq 0$ if $v\leq v_b$ and $\Psi(0,v)\leq0$ if $v\geq v_b$. Under the particular form~\eqref{ParticularForm}, it amounts to saying that $H(0,\cdot)\geq0$ (which is the case of the $SI$ system, where $H\equiv0$).
%Typically, we may assume that everywhere $H\geq0$ or even $H\equiv0$.

%Since we chose to \emph{neglect the intrinsic dynamics of {\ST}} in absence of {\SU}, we assume $\Psi(0,v)=0$. This can be considered as our main assumption. In the same spirit, as long as $d_2>0$, we shall assume $v_0(\cdot)$ to be constant, to ensure that $(u=0,v= v_0)$ is a state of equilibrium.

We then introduce a \emph{triggering event}.
This corresponds to a small perturbation of the steady-state $(u=0,v=v_b)$ and is encoded in the initial condition. For clarity, we suppose that the initial perturbation only occurs on the $u$ component, that is, we take
$v_0(\cdot)\equiv v_b$. The case of more general initial conditions $v_0$ can be adapted from the results of~\cite{Berestycki2019b}). %This assumption does not change the spirit of our results and is mostly made for clarity (observe indeed that the perturbation of $u$ immediately affects $v$).
%The general case is dealt with in~\cite{Berestycki2019b}.
%We shall, therefore, assume that $u(t=0)=u_0(x)\gneqq0$ is of small intensity and with compact support. For simplicity, we omit the perturbation on $v$ and $v(t=0)=v_0$; nevertheless, our results can easily be adapted to the case where $v(t=0)$ is a small perturbation of $v_0$.

We choose the term $\Phi$ in the first equation of~\eqref{ActivityModulatorSystem_construction} to be of the form
\begin{equation*}
\Phi(u,v):= u\big[r(v)f(u)-\omega\big].
\end{equation*}
The term $f(u)$ represents the \emph{endogenous} factor (or self-reinforcement/saturation mechanism).
We take $f$ nonincreasing (to account for a saturation effect) and positive at $u=0$; for example, $f(u)=1-u$ or $f(u)=1$. 
%This means that $uf(u)$ is of the Fisher-KPP type.
%Note that, with this choice, the equation on $u$ is homogeneous, that is, $\Phi(0,v)=0$. 

The parameter $\omega>0$ is the natural rate of \emph{relaxation} of {\SU} in absence of self-reinforcement.

The endogenous factor is regulated by $r(v)$, which models the role of \emph{activator} played by {\ST}. We choose $r(\cdot)$ to be nonnegative and increasing.
%, because increasing social tension should increase the level of rioting activity. 
We can think of this term as an on-off switch of the endogenous growth. For example, $r(\cdot)$ can be linear $r(v)=v$, or take the form of a sigmoïd
\begin{equation*}
 r(v)=\frac{1}{1+e^{(\alpha-v)\beta}},
\end{equation*}
%See~\autoref{fig:r}.
where, $\alpha\geq 0$ is a threshold value while $\beta>0$ measures the stiffness of the transition between the relaxed state and the excited state (if we formally take $\beta=+\infty$, then $r$ is a step function which equals $0$ if $v<\alpha$ and equals $1$ if $v>\alpha$).

%Let us emphasize that most of our theoretical results can be adapted to a more general function~$\Phi$, see~\cite{Berestycki2019b}. We choose this particular form for convenience and to make clearer the interpretation in terms of modeling.
%\begin{figure}
%\center
%\begin{subfigure}[b]{0.4\linewidth}
%\includegraphics[width=1\linewidth]{Figures/}
%\caption{$u\mapsto G(u)$}\label{fig:KPP}
%  \end{subfigure}
%  \hfill
%\begin{subfigure}[b]{0.4\linewidth}
%\includegraphics[width=1\linewidth]{Figures/}
%\caption{$v\mapsto r(v)$}
%  \label{fig:r}
%  \end{subfigure}
%  \caption{Graph of the parameters}
%\end{figure}

\paragraph*{}

%Taking the above into account we choose $\Psi$ of the form
%\begin{equation}
%\Psi(u,v):= uv f(u,v)
%\end{equation}
%for some general function $f$, with $f(\cdot,v)\leq 0$ for $v\geq 1$. 

Let us now describe the reaction term $\Psi$ in the second equation of~\eqref{ActivityModulatorSystem_construction}.
For the sake of clarity, we want to normalize $v$ such that it ranges in $(0,1)$; thus we will assume that $v_0\in(0,1)$,
$\Psi(u,0)\geq 0$ and $\Psi(u,1)\leq 0$.
We devote a particular attention to the  following two classes of models. 
Each case illustrates a typical qualitative behavior.
\begin{enumerate}
\item \emph{The tension inhibiting case:} $\Psi(u,v)<0$ for $u> 0$, $v\in(0,1)$. In this case, a \emph{burst} of $u$ causes a \emph{decrease} of $v$. We expect this case to give rise to an ephemeral riot and to behave comparably to the $SI$ epidemiology model~\eqref{SIModel}, in which
$$\Psi(u,v)=-\beta uv,$$
with $\beta>0$.
%obtained for $G(u,v)=-v$ and $H\equiv0$.
%This particular choice could even be called \emph{strongly inhibiting} since $u\mapsto \Psi(u,v)$ is decreasing. 

\item \emph{The tension enhancing case:} $\Psi(u,v)>0$ for $u> 0$, $v\in(0,1)$. In this case, a \emph{burst} of $u$ causes an \emph{increase} of $v$. We expect this case to give rise to a lasting upheaval and to behave comparably to a cooperative system (although we do not assume that $\Psi$ is monotonic). 
As an example, we can take
$$ \Psi(u,v)=uv(1-v).$$
%obtained for $G(u,v)=v(1-v)$ and $H\equiv0$.
%This particular choice could even be called \emph{strongly enhancing} since $u\mapsto \Psi(u,v)$ is increasing. 
\end{enumerate}

%After studying the general case (with minimal assumptions on $\Psi$), we will conduct a detailed analysis on the two above particular classes of model, and also present a model with mixed behaviors featuring a double threshold phenomenon.
%We recall that the diffusion on the rioting activity $u$ is observed on the data~\cite{Bonnasse-Gahot2018}, whereas the relevance of the diffusion on the social tension is not clear, mostly because $v$ is an implicit field that is difficult to characterize with data. 

\subsection{The model: assumptions and notations}\label{sec:TheModel}
We consider $u(t,x)$, which stands for the level of social unrest at time $t\geq0$ and location $x\in\R^n$, and $v(t,x)$, which stands for the level of social tension, solution of
\begin{equation}\label{GeneralEquationMotivationFinal}
\left\{\begin{aligned}
&\D_t  u -d_1 \Delta u=\Phi(u,v):=u\big[r(v)f(u)-\omega\big],\\
&\D_t  v-d_2\Delta v=\Psi(u,v),\\%:= uG(u,v) +(v_b-v)H(u,v),\\
&u(0,x):=u_0(x),\quad v(0,x):=v_0(x).
\end{aligned}\right.
\end{equation}
Here are our standing assumptions, that will be understood throughout the paper:
\begin{enumerate}[label=\textbf{\alph*}) \,]
	%[$a)$]
\item $d_1>0$, $d_2\geq0$, $\omega>0$.
\item $f(u)$ is smooth and nonincreasing on $[0,+\infty)$, with $f(0)>0$;\\
for example, $f(u)=1$ or $f(u)=1-u$.
\item $r(v)$ is smooth, nonnegative and increasing on $(0,1)$;\\
for example, $r(v)=v$.
\item $r(0)<\frac\omega{f(0)}<r(1)$, and we define
\begin{equation}\label{Def_v_star}%{Assumption_Range01}
v_\star:=r^{-1}\left(\frac{\omega}{f(0)}\right)\in(0,1).
\end{equation}
\item $\Psi(0,\cdot)$ has a (weakly) stable zero $v_b\in(0,1)$, i.e.,
\begin{equation}\label{v-stability}
\Psi(0,v)\geq 0,\quad \forall v\in(0,v_b)\quad;\quad \Psi(0,v)\leq 0,\quad \forall v\in(v_b,1);
\end{equation}
for example, $\Psi(0,\cdot)\equiv0$.
\item%\nota{Sam: finalement, j'ai écris cette hypothèse directement sur  $\Psi$ plutôt que $G$ et $H$. C'est plus général et pas beaucoup moins lisible je crois. On peut en discuter} %\nota{it is strange to give a condition on $\Psi$ rather than on $f,h$} 
$\Psi(u,v)$ is smooth and satisfies the saturation conditions at $v=0,1$ 
%\begin{equation}\label{AssumptionSaturationV}
% \Psi(u,v)<0 \quad\text{for all }u\in[0,1]\text{ and } v\not\in(0,1).
% \end{equation}
 \begin{equation}\label{AssumptionSaturationV}
\Psi(u,0)\geq 0\quad \text{ and }
\quad\Psi(u,1)\leq 0\qquad\forall u\geq0.
\end{equation}
%or $r(v)=\frac{1}{1+e^{(\alpha-v)\beta}}.$
%\item The range $v_0\in(0,1)$ covers both  possibilities of a \emph{burst} or a \emph{return to calm}:
%\begin{equation}\label{AssumptionPossibleOutburst}
%\frac{\omega}{G'(0)}\in\left(r(0),r(1)\right).
%\end{equation}
%u\left(r
\item $u_0(x)\gneqq0$ is bounded and $v_0\equiv v_b$, where $v_b$ is the constant in~\eqref{v-stability}.
\end{enumerate}
The structure assumed on the system~\eqref{GeneralEquationMotivationFinal} is slightly more specific here than in the general framework developped in~\cite{Berestycki2019b} where no monotony is assumed and $\Phi$ can have a more generic form;
yet, our set of assumptions encompasses many diverse systems, which may be highly non-monotone and exhibit quite different qualitative behaviors, as illustrated in the sequel.

Note that if $\Psi(0,\cdot)\equiv0$  then~\eqref{v-stability} is automatically satisfied and so any $v_b\in(0,1)$ is suitable for our set of assumptions.%\nota{J'ai ajouté ça}

%We emphasize that the hypothesis~\eqref{AssumptionPrincipaleInitialConditions} implies that $v_0$ is a stable root of $\Psi(0,\cdot)$. An important case that we will study in more details is when $H\equiv0$; in this case the assumptions~\eqref{AssumptionPrincipaleInitialConditions}  is automatically satisfied.

The following property is an immediate consequence of our assumptions.
\begin{lemma}\label{Lemma_uv}
Any solution of~\eqref{GeneralEquationMotivationFinal} satisfies
%\nota{voir p.5}
\begin{equation*}
u(t,x)>0\quad \text{and}\quad 0<v(t,x)<1,\qquad\text{ $\forall t>0$, $x\in\R^n$.}
\end{equation*}
\end{lemma}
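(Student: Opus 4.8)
The plan is to derive each of the three inequalities from the parabolic comparison principle together with the strong maximum principle, treating the two equations of~\eqref{GeneralEquationMotivationFinal} separately and regarding, in each one, the other unknown as a known coefficient. Since any solution is bounded on finite time intervals, the quantity $r(v)f(u)-\omega$ and the $v$-derivatives of $\Psi$ evaluated along the solution are bounded coefficients, which is all that the linear theory requires.

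First I would establish $u>0$. Writing the $u$-equation as the linear problem
\[
\partial_t u - d_1\Delta u - b(t,x)\,u = 0,\qquad b(t,x):=r\big(v(t,x)\big)f\big(u(t,x)\big)-\omega,
\]
with $b$ bounded, I note that $u\equiv0$ solves the same equation and is therefore a subsolution; as $u_0\gneqq0$, comparison gives $u\ge0$. Since $d_1>0$ makes the operator uniformly parabolic and $u_0\not\equiv0$, the strong maximum principle upgrades this to $u(t,x)>0$ for all $t>0$, $x\in\R^n$.

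Next, using $u\ge0$, I would bound $v$ by viewing the second equation as the scalar problem $\partial_t v-d_2\Delta v=\Psi\big(u(t,x),v\big)$. The saturation conditions~\eqref{AssumptionSaturationV} say precisely that the constants $0$ and $1$ are, respectively, a sub- and a supersolution, since $\partial_t 0-d_2\Delta 0=0\le\Psi(u,0)$ and $\partial_t 1-d_2\Delta 1=0\ge\Psi(u,1)$. As the initial datum $v_0\equiv v_b$ lies strictly between them, comparison yields $0\le v\le1$.

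It then remains to make these bounds strict. For the upper bound I would set $w:=1-v\ge0$ and use the smoothness of $\Psi$ to expand $\Psi(u,1-w)=\Psi(u,1)-\partial_v\Psi(u,\xi)\,w$ for some intermediate $\xi\in[0,1]$, so that
\[
\partial_t w - d_2\Delta w - c(t,x)\,w = -\Psi(u,1)\ge0,\qquad c(t,x):=\partial_v\Psi\big(u,\xi\big),
\]
with $c$ bounded; since $w_0\equiv1-v_b>0$, the strong maximum principle gives $w>0$, hence $v<1$, for $t>0$, and the lower bound $v>0$ follows symmetrically from $\Psi(u,0)\ge0$. The only genuine subtlety — and the point I would treat most carefully — is the degenerate case $d_2=0$, where the $v$-equation is a pointwise ODE rather than a parabolic PDE: there the strong maximum principle does not apply, and one instead integrates $\partial_t w\ge c(t,x)\,w$ in time at each fixed $x$ to obtain $w(t,x)\ge w_0\,e^{\int_0^t c}>0$, arguing analogously for $v$.
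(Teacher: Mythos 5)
Your proof is correct and takes essentially the same route as the paper's: both arguments reduce each equation to a linear parabolic problem with bounded zeroth-order coefficient (the paper writes $\Psi(u,v)-\Psi(u,0)$ and $\Psi(u,1)-\Psi(u,v)$ as such terms, which is exactly your mean-value expansion), use the saturation conditions~\eqref{AssumptionSaturationV} and the fact that $0$ solves the $u$-equation, and then conclude via the strong comparison/maximum principle when the diffusion is positive and an elementary ODE integration when $d_2=0$. Your version merely makes explicit two points the paper leaves implicit, namely the boundedness of the linearized coefficients and the fact that $u\ge 0$ is needed before invoking~\eqref{AssumptionSaturationV}.
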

\begin{proof}
Recall that $u_0\gneqq0$ and $v_0\in(0,1)$. 
For the range of $v$, we notice that in view of~\eqref{AssumptionSaturationV}, $v$ satisfies the inequalities
\begin{equation*}
\D_t v-d_2\Delta v-\big(\Psi(u,v)-\Psi(u,0)\big)=\Psi(u,0)\geq0,
\end{equation*}
and 
\begin{equation*}
\D_t (1-v)-d_2\Delta (1-v)-\big(\Psi(u,1)-\Psi(u,v)\big)=-\Psi(u,1)\geq0.
\end{equation*}
This implies that $v(t,x)$ has range in $(0,1)$, thanks 
%\nota{{AssumptionPrincipaleInitialConditions} and} 
to the parabolic strong comparison principle if $d_2>0$, or simple ODE considerations if $d_2=0$.
Analogously, since the constant $0$ is a solution of the first equation in~\eqref{GeneralEquationMotivationFinal}, we have that $u(t,x)>0$ for $t>0$, $x\in\R^n$.
\end{proof}
%Of course, since we know a priori that the solutions $u$ and $v$ range in some intervals
%\nota{pas clair pour $u$, surtout que $u<1$}, 
%the assumptions on the parameters only need to hold for $u$ and $v$ in those intervals.

\paragraph*{}

The quantity $v_\star$ defined by~\eqref{Def_v_star} coincides with 
the value of $v$ where $\D_u\Phi(0,v)$ changes sign, i.e., 
\begin{equation*}
v_\star:= \sup\left\{v\in(0,1): r(v)\leq \frac{\omega}{f(0)}\right\}=\sup\left\{v\in(0,1):\D_u\Phi(0,v)\leq0\right\}.
\end{equation*}
%From the assumption~\eqref{Assumption_Range01} and that $r(\cdot)$ is increasing, we know that $v_\star$ is well defined, lies in $(0,1)$ and reads
%\begin{equation}\label{Def_v_star}
%v_\star=r^{-1}\left(\frac{\omega}{f(0)}\right).
%\end{equation}
We will see in the sequel that $v_\star$ is the threshold value on $v_0\equiv v_b$ which determines the regime of dynamics:
\begin{itemize}
\item if $v_b<v_\star$, a small \emph{triggering event} is followed by a \emph{return to calm},
\item if $v_b>v_\star$, a small \emph{triggering event} ignites a \emph{burst} of social unrest. This is akin to the \emph{Hair-trigger} effect in KPP equations.
\end{itemize}
The assumption \eqref{Def_v_star} allows us to 
cover both possibilities of a \emph{burst} or a \emph{return to calm} depending on the choice of $v_b\in(0,1)$.
%we require 
%$v_\star\in(0,1)$, i.e., $
%\frac{\omega}{f(0)}\in\left(r(0),r(1)\right).
%\end{equation}

The above dichotomy is readily revealed by the analysis of the constant steady states of~\eqref{GeneralEquationMotivationFinal}.
Consider the scalar equation (with unknown $u$)
\begin{equation}\label{SteadyStateUApplication}
\Phi(u,v_b):=u\big[r(v_b)f(u)-\omega \big]=0,\qquad u\geq0,
\end{equation}
under the assumption that
$f$ is strictly decreasing and $f(1)=0$.
Call
\begin{equation}\label{defK0}
K_b:=\D_u\Phi(0,v_b)=r(v_b)f(0)-\omega.
\end{equation}
Note that, since $r(\cdot)$ is increasing, the sign of $K_b$ coincides with that of $v_b-v_\star$. We have the following dichotomy:
\begin{itemize}
\item If $v_b< v_\star$, then $K_b<0$, and \eqref{SteadyStateUApplication} has exactly one solution $u=0$ (stable). See~\autoref{fig:SteadyState_1}.
\item If $v_b> v_\star$, then $K_b>0$, and \eqref{SteadyStateUApplication} has exactly two solutions, $u=0$ (unstable) and $u=u_\star(v_b)$ (stable), defined by
\begin{equation}\label{def:u_star}
u_\star(v):=f^{-1}\left(\frac{\omega}{r(v)}\right).
\end{equation}
See~\autoref{fig:SteadyState_2}.
Note that $v\mapsto u_\star(v)$ is continuous increasing and that 
$u_\star(v)\searrow 0$ as $v\searrow v_\star$. For example, if $f(u)=1-u$ and $r(v)=v$, then 
$v_\star=\omega$ and $u_\star(v):=1-\frac{\omega}{v}$.
\end{itemize}

\begin{figure}
\center
\begin{subfigure}[b]{0.4\linewidth}
\includegraphics[width=1\linewidth]{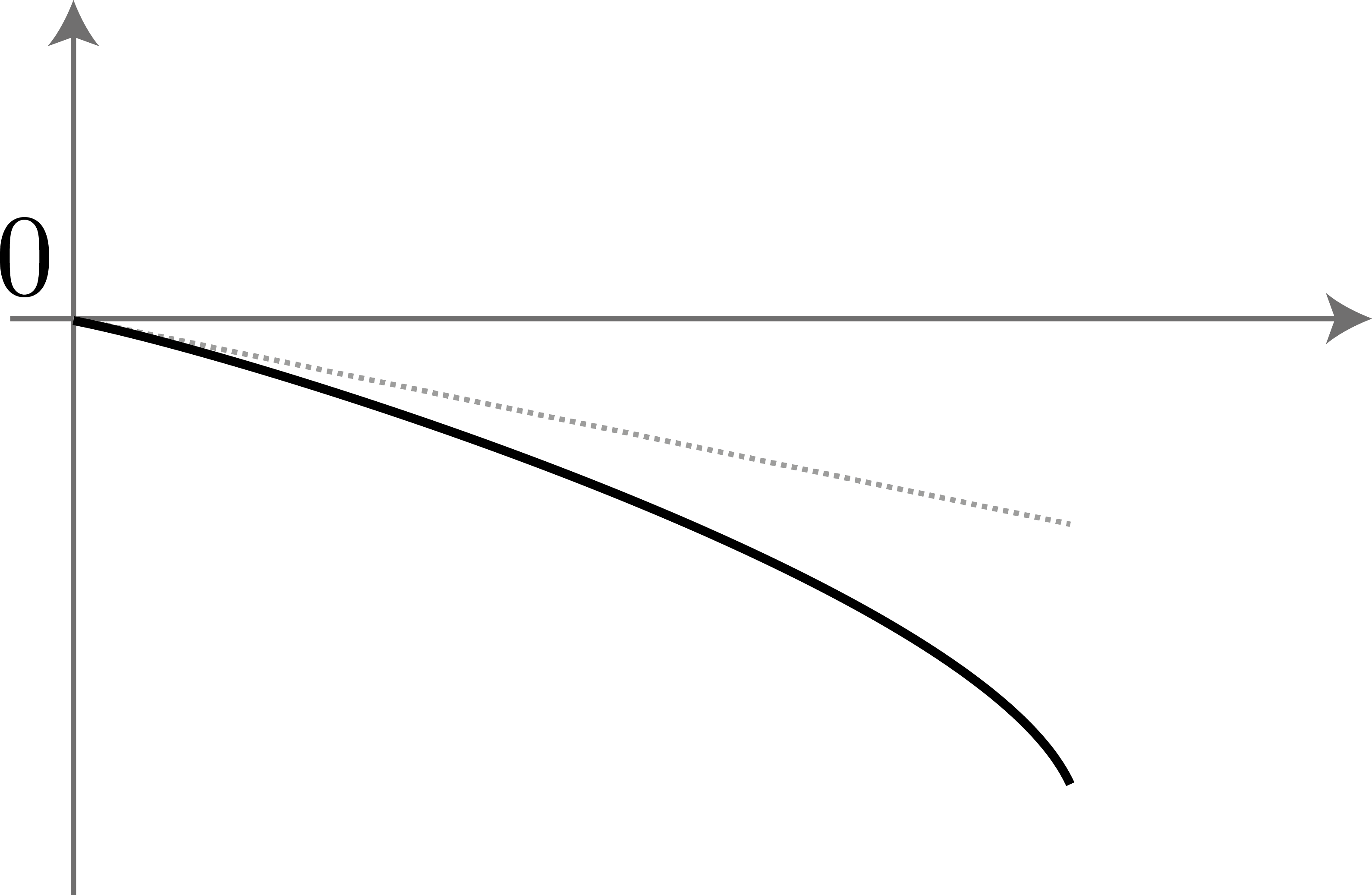}
\caption{Case $v_b<v_\star$}\label{fig:SteadyState_1}
  \end{subfigure}
  \hfill
\begin{subfigure}[b]{0.4\linewidth}
\includegraphics[width=1\linewidth]{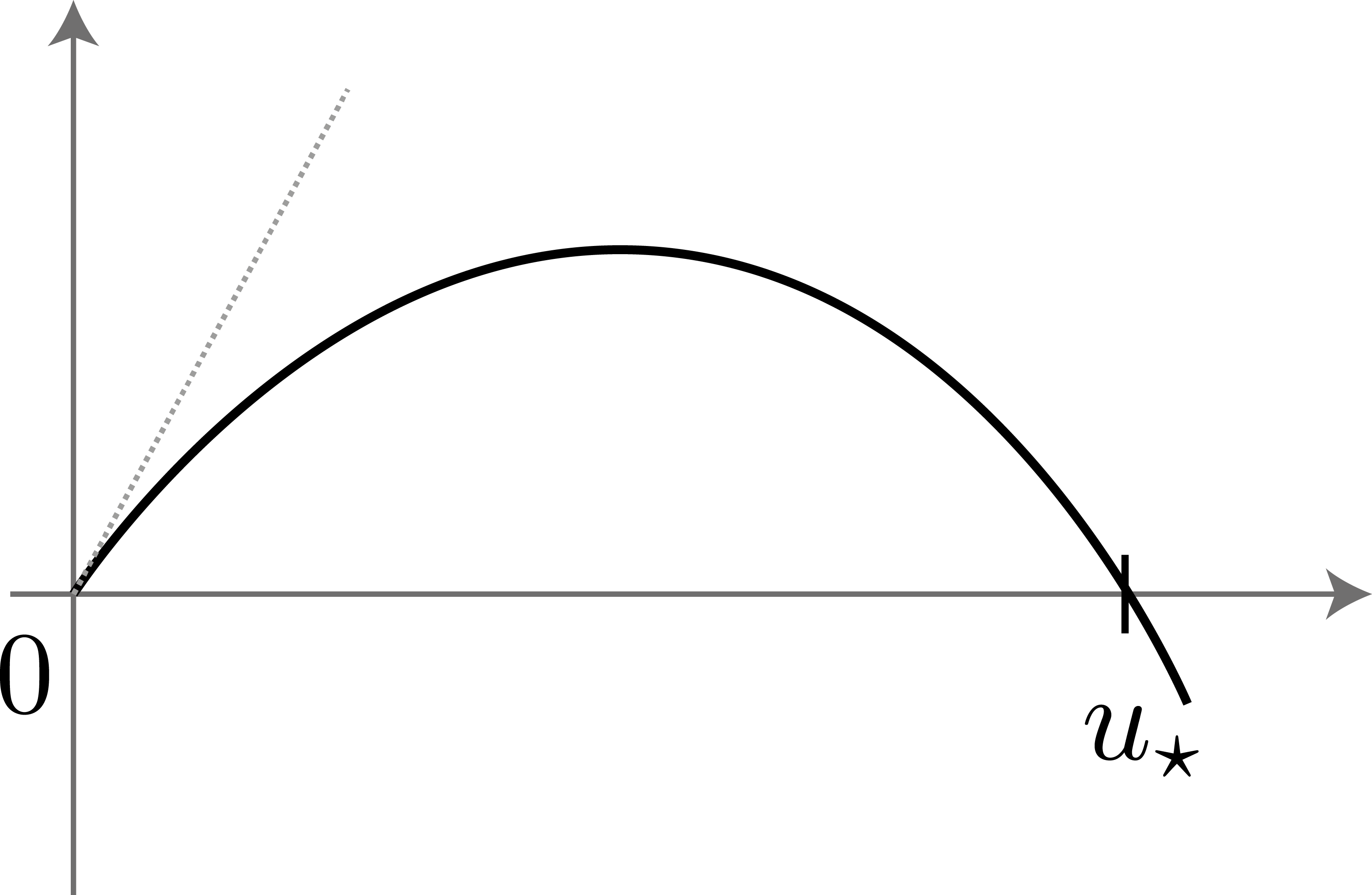}
\caption{Case $v_b>v_\star$}
  \label{fig:SteadyState_2}
  \end{subfigure}
  \caption{Graph of $u\mapsto \Phi(u,v_b)$ depending on the base level of tension $v_b$. 
  	The dashed line represents the slope at the origin (i.e.~$K_b$).}
\end{figure}

\subsection{Traveling waves}\label{sec:TravelingWaves}

It is reasonable to expect that, when a burst of social unrest occurs, the solution of~\eqref{GeneralEquationMotivationFinal} converges to a {\em traveling wave}, that is, an identical
profile moving at a constant speed. Although we do not 
prove such a result, we corroborate it with numerical evidence in the sequel. It is thus interesting to study the existence, non-existence, and the shape of traveling waves.

A traveling wave is defined as a solution of~\eqref{GeneralEquationMotivationFinal} of the form $ u(t,x)= U(x\cdot e+ct)$, $v(t,x)= V(x\cdot e+ct)$, with $c>0$, $e\in\mathbb{S}^{n-1}$ and prescribed values at $-\infty$. The profiles
 $U(\xi)$ and $V(\xi)$ thus satisfy the elliptic problem
\begin{equation}\label{SystemRiot}%\tag*{(\theequation)$_c$}
\left\{\begin{aligned}
&-d_1 U''+cU'=\Phi(U,V):=U\big[r(V)f(U)-\omega\big],\\
&-d_2 V''+c V'=\Psi(U,V),\\
&c>0\quad ;\quad U>0\text{ is bounded}\quad ;\quad 0<V<1.
\end{aligned}\right.
\end{equation}
We complete it with the \emph{semi-boundary conditions} at $-\infty$
\begin{equation}\label{SystemBorderGeneralRiot}
\left\{\begin{aligned}
&U(-\infty)=0,\\
&V(-\infty)=v_b.
\end{aligned}\right.
\end{equation}
The term \emph{semi-boundary conditions} comes from the fact that we do not impose a prescribed value at $+\infty$. The traveling waves under consideration might not be unique and may take many diverse forms, such as monotone waves or bumps. This will be illustrated in the following sections. %Note also that our definition prescribes a priori bounds on $U,V$

\subsection{Comparison with previous models and remarks}\label{sec:PreviousModels}

Our model is directly inspired by a series of papers~\cite{Berestycki2015,Berestycki2016a,Berestycki2018a} which introduce a system of Reaction-Diffusion equation, comparable to~\eqref{GeneralEquationMotivationFinal}, to model the dynamics of riots. 
As before, the quantity $u(t,x)$, depending on time $t\geq0$ and location $x\in\R^n$, represents the level of social unrest, and $v(t,x)$ represents the level of social tension.
In most cases, the model reduces to the following system
\begin{equation}\label{sys:homo}
\begin{cases}
\D_t u= d\Delta u+ ur(v)f(u)-\omega (u-u_b(x)),\\
\D_t v=  d\Delta v+S(t,x)-\theta \left(\frac{1}{(1+u)^p}v- v_b(x)\right).
\end{cases}
\end{equation}
This model has been first introduced~\cite{Berestycki2015}.

Let us describe the model~\eqref{sys:homo} and discuss the main differences with our model~\eqref{GeneralEquationMotivationFinal}.
The parameters $r(\cdot)$, $f(\cdot)$ and $\omega$ are the same as described in the previous section.
The quantity $\theta>0$ stands for the natural relaxation rate on the level of social tension to the base rate $v_b$ in absence of any rioting activity. 

The function $u_b(x)$ stands for the low recurrent rioting activity in the absence of any unusual factors. Accordingly, $v_b(x)$ denotes the base level of social tension in absence of any rioting activity. Our model~\eqref{GeneralEquationMotivationFinal} corresponds to the case where $u_b$ and $v_b$ are constant (using the change of variable $\tilde u:= u-u_b$). %We point out that, with general $u_b(x)$ and $v_b(x)$, one cannot guarantee $u(t,x)\geq u_b(x)$ and $v(t,x)\geq v_b(x)$ for all $t\geq0$, thus the dynamics can give rise to an overshoot phenomenon, meaning that the level of social unrest after an episodic burst may be lower that the usual base rate. %We point out that data do not seem to reveal any overshoot.

With non-constant $u_b(x)$ and $v_b(x)$, the model~\eqref{sys:homo} is spatially heterogenous. On the contrary, our model~\eqref{GeneralEquationMotivationFinal} is spatially homogeneous. The non-homogeneous setting is however an interesting perspective and is discussed in Section~\ref{sec:SpaceHeterogeneous}.

The parameter $p\in\R$ models the feedback of $u$ on $v$. If $p>0$, then a \emph{burst} of $u$ will \emph{slow down} the relaxation of $v$; if $p<0$, then a \emph{burst} of $u$ will \emph{speed up} the relaxation of $v$. If $p=0$, the system is decoupled. In~\cite{Berestycki2016a,Berestycki2018a}, the cases $p<0$ and $p>0$ are called respectively \emph{tension enhancing} and \emph{tension inhibiting}, however it does not correspond to what we call \emph{tension enhancing} and \emph{tension inhibiting} in the present work. 
Let us be more precise. Assume for simplicity that $v_b\equiv0$ and $S\equiv0$. In~\eqref{sys:homo}, the cases $p>0$ and $p<0$ model respectively a negative and a positive feedback of $u$ on $v$. However, since $-\theta \frac{1}{(1+u)^p}v$ is negative we deduce that $t\mapsto v(t,x)$ is decreasing and decays to $0$, regardless of the sign of $p$. This implies that any burst of social unrest eventually vanishes. Therefore, both the case $p>0$ and the case $p<0$ are contained in what we call \emph{tension inhibiting} in the present work (Section~\ref{sec:inhibiting}).
System~\eqref{sys:homo} does not model a situation where a burst of social unrest results in an increase of the social tension. Yet, this case is reasonable from the modeling perspective and allows to account for time-persisting movement of social unrest (see Section~\ref{sec:enhancing}).

%Another but related difference between~\eqref{sys:homo} and our model is that, in~\eqref{sys:homo}, the initial conditions are not a small perturbation of a steady-state $(u=0,v=v_b)$. In other words, it is not assumed that the system starts at equilibrium and is perturbed by a small triggering event; instead, the triggering event is assumed to be of large magnitude and to result in increasing the social tension from its base value $v_b$ to the initial condition $v_b$. This can be seen on the fact that the relaxation term $-\frac{\theta}{(1+u)^p}v$ does not vanish when $u=0$ and $v>0$. In turns, the level of social tension $v$ is forced to decrease to its base value, regardless of the presence of social unrest. Since $v$ follows intrinsic dynamics, the feedback of $u$ on $v$ is undermined, and so the model does not capture accurately the effects of the interplay between $u$ and $v$. 
% It is also difficult to distinguish the effect of a \emph{burst} of $u$ on $v$ from the intrinsic dynamics of $v$.

In~\cite{Berestycki2015,Berestycki2016a,Berestycki2018a}, the source term $S(t,x)$ accounts for \emph{exogenous events}. In the present paper, we consider that a \emph{single exogenous event} occurs at time $t=0$, and so we encode it in the initial conditions.
%It can be chosen of the form
%$$S(t,x)=\sum_{i=1}^nA_i\delta_{t=t_i,x=x_i}$$
%in the case of isolated events, occurring at time $t_i$ and location $x_i$, of intensity $A_i$
%The presence of the term $S(t,x)$ makes the model non-autonomous, which can sometimes be regarded as less satisfactory in terms of modeling.
%. Therefore, the dynamics of $u$ and $v$ are not intrinsic,  which sometimes undermines the insight gained from the interpretation of the results.

 In~\cite{Berestycki2018a}, the authors focus on the effect of a restriction of information, which is modeled by substituting the KPP term $uf(u)$ with the combustion term $(u-\alpha)_+f(u)$, $\alpha\in(0,1)$, where the subscript $+$ denotes the positive part.
The paper~\cite{Berestycki2016a} considers~\eqref{sys:homo} without space (i.e. $d=0$) and studies the dynamics of the system for a periodic source term
\begin{equation*}
S(t):=A\sum\limits_{i\geq0}\delta_{t=iT}.
\end{equation*}
We also mention~\cite{Yang2020} in which a numerical analysis is conducted to investigate the influence of the parameters on the shape and speed of traveling waves.
The article~\cite{Berestycki2013} also proposes a model comparable to~\eqref{sys:homo} for criminal activity.

\paragraph*{}
A recent work~\cite{Petrovskii} proposes an other reaction-diffusion model to account for the dynamics of social unrest quite different in spirit from the model we discuss here. In~\cite{Petrovskii}, $u(t,x)$ represents the number of individuals that take part in the social movement. It is assumed to satisfy the equation
\begin{equation}\label{EquationPetrovskiiU}
\D_t u-d_1\D^2_{xx} u=\eps_0+\eps u+\frac{au^2}{h^2+u^2}-m(t,x)u,
\end{equation}
where $\eps_0$, $\eps$, $a$ and $h$ are positive constants.
The term $\eps_0+\eps u$ stands for the rate at which people are willing to join the social movement.
The nonlinear term $\frac{au^2}{h^2+u^2}$ accounts for a saturation effect.
The quantity $m(t,x)$ represents the rate at which individuals exit the movement. It can be thought as a field of \emph{non-susceptibility}, and is somehow opposite to $v$ in our model. It is assumed in~\cite{Petrovskii} that $m$ is either constant, or given by the explicit formula
\begin{equation*}
 m(t)= m_1+(m_0-m_1)e^{-bt},
 \end{equation*} 
or by the equation
\begin{equation*}
\D_t m-d_2\D^2_{xx} m=\beta u\qquad;\qquad m(t=0,x)=m_0.
\end{equation*}
Let us emphasize that, for some range of parameters, the equation on $u$ is of the bistable type. For example, if $\eps_0=0$ and $m$ is a constant lying in the interval $(\eps, \eps+\frac{a}{2h})$, then equation~\eqref{EquationPetrovskiiU} admits three constant steady states (two of them are stable and the third one is unstable). This differs from our model in which $u$ satisfies a monostable equation.

%It should be noticed that, in~\eqref{EquationPetrovskiiU}, $t\mapsto m(t,\cdot)$ is increasing. From a modeling point of view, it means that rioters exit the movement faster as time goes by. 
%This is comparable to the fact that the level of social tension $v$ decreases in the model~\eqref{sys:homo}. 
%This assumption forces the level of social unrest to eventually decay to zero. 

%One of the main motivation of~\cite{Petrovskii} is to explain how a single model can generate both short-duration and long-duration movements of social unrest. However, since the solution $u$ is forced to decay to $0$, the model~\eqref{EquationPetrovskiiU} can only generate what we refer to as
%a ``riot'' (i.e. short-duration movement). The authors provide numerical evidence that, depending on the choice of parameters, the model can generate shorter or longer riots; however,
%the distinction between a \emph{shorter} and a \emph{longer} riot relies on a quantitative difference between the decay rates of $u$. In the present work, we propose an alternative model which encompasses situations where $u$ remains high even for large time;
%our framework thus allows to distinguish a short-duration movement and a long-duration movement in a qualitative and well defined way.\nota{Sam: j'ai une crainte que ce soit un peu trop "critique" envers leur article. Ça va?}

\paragraph*{}
%We see that the literature considers different and particular systems of Reaction-Diffusion equations to model the dynamics of social unrest. In the present work, we aim at keeping a general point of view by proposing a framework with minimal assumptions on the parameters.

Finally, we mention that many other mathematical approaches have been taken to model the dynamics of riots, protests and social unrest, such as individual-based models~\cite{Epstein2002,Societies2015}, cost/benefits analysis~\cite{Davies2013a}, or diffusion on networks~\cite{Yurevich2018}.% The pioneering paper using epidemiology models for riots is due to Burbeck et al.~\cite{Burbeck1978}.

\section{General properties}\label{sec:Analysis}
In this section, we state general properties on the system~\eqref{GeneralEquationMotivationFinal} under the assumptions presented in Section~\ref{sec:TheModel}. These results are established in~\cite{Berestycki2019b}. Here, we apply the results of~\cite{Berestycki2019b} in our context and discuss the implications in terms of modeling. In particular, we highlight a threshold phenomenon on the initial level of social tension for a small \emph{triggering event} to ignite a movement of social unrest.

\subsection{Return to calm}\label{sec:ResumptionCalm}
First, let us observe that in a context of low social tension, a \emph{small triggering event} is followed by a \emph{return to calm}.
Mathematically speaking, if $v_b<v_\star$,
with $v_\star$ defined in~\eqref{Def_v_star}, the steady 
state $(u=0,v=v_b)$ is stable with respect to a small perturbation on $u$.
Indeed, from \cite[Theorem~1]{Berestycki2019b}, 
if $d_2>0$ and $v_0\equiv v_b<v_\star$, then any solution with $u_0$ sufficiently small and compactly supported,
%when $v_b<v_\star$ and , there exists $\eps_0>0$
%such that if $u_0(\cdot) < \eps_0$, 
%\nota{compactly supported?}
satisfies
\begin{equation}\label{ResumptionOfCalm}
\lim_{t\to+\infty}\big(u(t,x),v(t,x)\big)=(0,v_b),\quad\text{uniformly in }
x\in\R^n.
\end{equation}
This has two implications from the modeling point of view. First, it means that a \emph{triggering event} with small intensity has no effect in the long run. 
Furthermore, since the convergence is uniform in space, it means that a \emph{localized triggering event} has a localized effect.

Under the same conditions, but with $d_2=0$, and $u_0<\eps$ 
small but not necessarily compactly supported, there holds that 
% $u_0<\eps_0$ sufficiently small entails
\begin{equation*}
\left\{\begin{aligned}
&\lim_{t\to+\infty}u(t,x)=0,\quad\text{uniformly in }
x\in\R^n,\\
&\sup\limits_{x\in\R^n}\vert v(t,x)-v_b\vert\leq C\eps,\quad\forall t\geq0,
\end{aligned}\right.
\end{equation*}
for some constant $C$ independent of $u_0$ and $\eps$. This expresses the fact that a \emph{triggering event with small intensity}, even if spread out, will have a small effect on the system.

We point out that, in the \emph{tension inhibiting case} 
(c.f. assumption~\eqref{hyp:inhibiting} below), the above results hold true for 
$u_0$ not necessarily small, see \cite[Proposition~6]{Berestycki2019b}.

\subsection{Burst of Social Unrest}\label{sec:Burst}
In contrast with the above \emph{return to calm}, when the initial level of social tension is 
sufficiently large,
an arbitrarily small \emph{triggering event} ignites a movement of social unrest. This feature is usually called a \emph{Hair-trigger effect}.

In other words, if $v_0\equiv v_b>v_\star$ %\nota{$v_b>v_\star$. Check everywhere}
defined by~\eqref{Def_v_star}, the steady state $(0,v_b)$ is unstable.
% with respect to small perturbations on $u$.
Namely, from \cite[Theorem~1]{Berestycki2019b}, we know that
for any $x_0\in\R^n$ and $r>0$, there holds that
%\nota{check hyp sur $u_0$, $v_0$}
\begin{equation}\label{Burst}
\limsup_{t\to+\infty}\left(
 u(t,x_0) +\sup\limits_{x\in B_r(x_0)}\vert v(t,x)-v_b\vert \right)>0.
\end{equation}
This means that even a small event is sufficient to trigger a \emph{burst} of social unrest, which will 
drive the system away from the initial condition. This can be put in contrast with the 
scenario when~$v_0\equiv v_b<v_\star$ for which \eqref{ResumptionOfCalm} occurs.
%\paragraph*{}

Aside from  property~\eqref{Burst}, the asymptotic behavior of the solution can be diverse, as revealed by numerical simulations presented later on in this paper.
Nevertheless, we are able to detail the picture for two important and general classes of systems: the \emph{tension inhibiting} systems or \emph{tension enhancing} systems, see Sections~\ref{sec:inhibiting}
and~\ref{sec:enhancing} respectively.

\subsection{Spatial propagation}\label{sec:SpatialPropagation}
Next, we investigate the long-range effect and the geographical spreading of a \emph{burst}.
When $v_0\equiv v_b>v_\star$ given by~\eqref{Def_v_star}, we define
\begin{equation}\label{Def_c}
c_b:=2\sqrt{d_1\left(r(v_b)f(0)-\omega\right)},\quad c_1:=2\sqrt{d_1\left(r(1)f(0)-\omega\right)}.
\end{equation}
Note that $c_b<c_1$ because $r(\cdot)$ is increasing and $v_b<1$.
Then \cite[Theorem~2]{Berestycki2019b} states that, for any $x_0\in\R^n$ and any direction $e\in\mathbb{S}^{n-1}$,
the following hold:%\nota{reprendre \cite{Berestycki2019b}}
\begin{equation}\label{SpatialPropagation_BIS}
\forall c\in(0,c_b),\quad \exists r>0,\qquad \limsup\limits_{t\to+\infty}\left(
u(t,x_0+cte) +\sup\limits_{x\in B_r(x_0)}\vert v(t,x+cte)-v_b\vert \right)>0,\\
\end{equation}
\begin{equation}\label{SpatialPropagation}
\forall c>c_1,\qquad \lim\limits_{t\to+\infty}\left(\sup_{\vert x\vert\geq ct}\Big\vert
\big(u(t,x),v(t,x)\big)-\big(0,v_b\big)\Big\vert \right)=0.
\end{equation}
From the modeling point of view, it means that a localized triggering event 
leads to a movement of social unrest that propagates through space. 
More precisely, an observer moving at a speed 
$c$ eventually outruns the propagation if $c>c_1$, whereas, if $c<c_b$,
he will face situations away from the quiet state $(0,v_b)$.

%...\nota{reprendre \cite{Berestycki2019b}}

%Property~\eqref{SpatialPropagation} concerns the dynamics on the edge of the propagation front. The behavior of the solution after the passage of the front can be diverse depending on the 
%structure of the system, c.f.~Sections~\ref{sec:inhibiting} and~\ref{sec:enhancing}. \nota{@Luca: la remarque dont on avait parlé. J'ai mis une remarque similaire à la section précédente, voir note au dessus}
%We give further results if the system is either \emph{tension inhibiting} (Section~\ref{sec:inhibiting}) or \emph{tension enhancing} (Section~\ref{sec:enhancing}).	
%The next sections provide some results in this sense in the case where the system is either 

%\begin{Luca}
%	The case $c\in(c_b,c_1)$ is not covered by this result, nor is it the description of what exactly happens
%	for $c<c_b$.
%	Numerical simulations reveal a variety of possible scenarios depending on the structure of
%	the nonlinear terms $\Phi$, $\Psi$,
%	see ... [vrai?].
%	The next sections provide some results in this sense in the case where the system is either 
%	\emph{tension inhibiting} nor \emph{tension enhancing}.	
%\end{Luca}

Properties \eqref{SpatialPropagation_BIS}-\eqref{SpatialPropagation}
do not allow us to assert the existence of an  {\em asymptotic speed of propagation},
as usually intended,
because of the gap between $c_b$ and $c_1$ and also of the fact that we only have a 
``$\limsup$'' in~\eqref{SpatialPropagation_BIS} and not a ``$\liminf$''.
The gap between $c_b$ and $c_1$ is filled in the inhibiting case, because we show in~\cite[Theorem~8]{Berestycki2019b} that in this case
property~\eqref{SpatialPropagation} holds with $c_1$ replaced by $c_b$. That is, in this case, $c_b$ is the asymptotic speed of propagation. 
The numerical simulation in Section~\ref{sec:Enhancing_Numerics_Speed} shows that in the general case the 
{\em asymptotic speed of propagation} may be different from both $c_b$ and $c_1$. 
We also show in~\cite[Theorem~11]{Berestycki2019b} that, in the enhancing case, we can replace the ``$\limsup$'' with a ``$\liminf$'' in~\eqref{SpatialPropagation_BIS}. This means that an observer moving at speed $c<c_b$ 
faces an \emph{excited scenario} for all sufficiently large times.
%\begin{equation}
%\forall c<c_b,\quad \limsup\limits_{t\to+\infty}\left(\sup\limits_{x\cdot e\geq ct}\left \vert(u(t,x),v(t,x))-(0,v_b)\right \vert\right)>0.
%\end{equation}
%It 

%, that is,
%\begin{equation}
%\forall c>c_b,\quad \lim\limits_{t\to+\infty}\left(\sup\limits_{x\cdot e\geq ct}\left \vert(u(t,x),v(t,x))-(0,v_b)\right \vert\right)=0.
%\end{equation}
%Therefore, the movement of social unrest propagates with asymptotic speed $c_b$. Since $c_b$ is an increasing function of $v_b$, it means from the modeling point of view that the higher the initial level of social tension, the faster the riot propagates through space.
%
%In the enhancing case (Section~\ref{sec:enhancing}), we can replace the $\limsup$ with a $\liminf$ in the first line, i.e., 
%\begin{equation}
%\forall c<c_b,\quad \limsup\limits_{t\to+\infty}\left(\sup\limits_{x\cdot e\geq ct}\left \vert(u(t,x),v(t,x))-(0,v_b)\right \vert\right)>0.
%\end{equation}
%It means that a runner moving at speed $c_b<c_1$ is behind the propagation front of the movement of social unrest for all sufficiently large times.
%
%\begin{Luca}
%	Je metterais les phrases ci-dessus au debut des sections correspondantes, en donnant la structure
%	
%	- resultats de \cite{Berestycki2019b}
%	
%	- enonces des nouveaux Thm sur le Asymptotic behavior of the solution
%	
%	- enonces des nouveaux Thm sur les Waves
%	
%	- Subsection proofs of Asymptotic behavior
%	
%	- Subsection proofs of Waves
%	
%	Cela permet de bien comprendre quels sont les nouveaux resultats p/r a' \cite{Berestycki2019b}
%	
%	\end{Luca}

\section{Tension Inhibiting - dynamics of a riot}\label{sec:inhibiting}
We consider our main equation~\eqref{GeneralEquationMotivationFinal}.
%,\nota{system [PARTOUT] (je remet ma note). Sam: est-ce vraiment bien de réécrire plusieurs fois exactement le même système?}
%that we recall here,
%%\nota{system PARTOUT}
%\begin{equation}\label{GeneralEquationMotivationFinal}
%\left\{\begin{aligned}
%&\D_t  u -d_1 \Delta u=uF(u,v):=u\big[r(v)f(u)-\omega\big],\\
%&\D_t  v-d_2\Delta v=\Psi(u,v):=uG(u,v)+(v_b-v)H(u,v).
%\end{aligned}\right.
%\end{equation}
The \emph{tension inhibiting} structure relies on
 following negativity assumptions on $\Psi$:
% \nota{finalement en \cite{Berestycki2019b} 
% 	pour avoir la ``boundedness'' de $u$ 
% 	je relaxe la premiere condition avec $\Psi\leq0$, mais on laisse comme ca car
% 	on en a besoin pour d'autres resultats (?):\\ Sam: on a besoin d'une inégalité stricte (par ex. voir~\eqref{LimInhibitingReaction}), mais seulement sur $\Psi$ et si $u>0$ et $v\in(0,v_b)$. Du coup je n'ecris que le sup. et je l'écris sur $\Psi$ directement ok?} 

\begin{equation}\label{hyp:inhibiting}
\begin{cases}
\Psi(u,v)<0 & \text{for all $u>0,\ v\in(0,1)$},\\
\displaystyle\limsup_{u\to+\infty}\frac{\Psi(u,v)}{u}<0 & \text{locally uniformly in $v\in(0,1]$}.
\end{cases} 	
 \end{equation}
In particular, in view of the saturation assumption~\eqref{AssumptionSaturationV},
there holds that $\Psi(u,0)=0$ for all $u\geq0$.

Assumptions~\eqref{hyp:inhibiting} essentially mean
that $u$ has a negative feedback on $v$ (however, we do not assume any monotonicity on $u\mapsto \Psi(u,v)$ as in the $SI$ model~\eqref{SIModel}).
As a direct consequence of this assumption and the parabolic comparison principle
(or ODE considerations if $d_2=0$), we have that
\begin{equation}\label{lemma:v<v0}
v(t,x)< v_0=v_b,\qquad \forall\ t>0,\ x\in\R^n.
\end{equation}
%For this reason, the inhibiting 
%\begin{equation}
%\begin{cases}
%\sup_{u\geq0}G(u,v)<0 & for all \, $v\in(0,v_b)$,\\
%\displaystyle\limsup_{u\to+\infty}g(u,v)<0 & locally uniformly in \, $v\in(0,1]$.
%\end{cases}
%\end{equation}
%\begin{equation}\label{AssumptionTensionInhibiting}
%\forall u,v\in(0,1), \quad f(u,v)<0  \ ;\qquad h\equiv0,
%\end{equation}
%\nota{plutot $v\in[0,1]$ et $u\geq0$ PARTOUT}
%
%
%\begin{equation}\label{AssumptionTensionInhibiting}
%
%which we call the . 
A typical example of a \emph{tension inhibiting} system is the $SI$ epidemiology model~\eqref{SIModel}.
%\begin{equation}\label{SIModel}
%\left\{\begin{aligned}
%&\D_t  u -d_1 \Delta u=\beta uv-\gamma u,\\
%&\D_t  v-d_2\Delta v=-\beta uv.
%\end{aligned}\right.
%\end{equation}
This system enters the general class~\eqref{GeneralEquationMotivationFinal} by taking $r(v)=v$, $f(u)=u$, and $\Psi(u,v)=-uv$ in~\eqref{GeneralEquationMotivationFinal}. Note that the $SI$ model is inhibiting for any $v_b\in(0,1)$.

%We recall the corresponding equation of the traveling waves: for $c\geq0$
%\begin{equation}\label{SystemRiot_Inhibiting}%\tag*{(\theequation)$_c$}
%\left\{\begin{aligned}
%&cU'-d_1 U''=\Phi(U,V):=U\big[r(V)f(U)-\omega\big],\\
%&c V'-d_2 V''=\Psi(U,V):= UG(U,V) +(v_b-V)H(U,V),\\
%&U,V\text{ smooth, bounded, nonnegative and not constant},
%\end{aligned}\right.
%\end{equation}
%and
%\begin{equation}\label{SystemBorderGeneralRiot_Inhibiting}
%\left\{\begin{aligned}
%&U(-\infty)=0,\\
%&V(-\infty)=v_b.
%\end{aligned}\right.
%\end{equation}

 %Let us point out that, 
%mathematically speaking, we do not need to assume under assumption~\eqref{AssumptionTensionInhibiting} a saturation effect on $u$, i.e., we can drop the assumption $f(1)=0$ and include the case where $G>0$ on $(0,+\infty)$. In particular, 
%setting $G\equiv Id$ and $f\equiv-1$, our main equation~\eqref{GeneralEquationMotivationFinal} indeed reduces to the $SI$ model.

\paragraph*{}
As we will see, the \emph{tension inhibiting} case typically grasps the dynamics of a limited-duration movement of social unrest, that we call a \emph{riot}.
A good heuristics of the behavior of the model is given by a formal analysis of the underlying $ODE$ system.
Consider a constant initial datum $u_0$, hence the solutions $u(t,x)$ and $v(t,x)$ of~\eqref{GeneralEquationMotivationFinal} do not depend on $x$. 
In this case, we easily see that if the initial level of social tension $v_b$
is above the threshold $v_\star$ then the system features a \emph{Hair-trigger effect}, that is, any \emph{triggering event} (i.e., $u_0>0$) ignites a movement of social unrest. Then, from assumption~\eqref{hyp:inhibiting}, the level of social tension decreases, until it goes below than the threshold value $v_\star$. At this point, 
the level of social unrest begins to fade and eventually goes to $0$, while the level of social tension converges to some final state
smaller than the initial one.

The same qualitative behavior is observed in the context of epidemiology, where $v$ represents the number of susceptible and the number of infected. When an epidemic spreads out, the susceptible population decreases until it goes below a threshold value, after what the infection dies out.

The goal of this section is to recover, at least partially, the above properties for the PDE general system~\eqref{GeneralEquationMotivationFinal}. We begin with the 
study of traveling waves, next we present partial results for solutions of the Cauchy problem. 
We conclude this section with detailed numerical simulations.
% in Section~\ref{sec:Inhibiting_numerics}.

%\subsection{Properties of the tension inhibiting systems}\label{sec:Inhib_Results}

%We present in this section several theoretical results for the inhibiting case that complement the statements from Section~\ref{sec:Analysis}. For readability, we postpone the proofs of the results in Section~\ref{sec:ProofsInhibiting}

\subsection{Traveling waves}\label{sec:Inhibiting_TravelingWave}

Let us first reclaim from~\cite{Berestycki2019b} 
the existence and non-existence results for traveling waves, i.e., solutions 
to~\eqref{SystemRiot}-\eqref{SystemBorderGeneralRiot}.
Recalling that the sign of $v_b-v_\star$ (from definition~\eqref{Def_v_star}) coincides with the one of $K_b$ (from~\eqref{defK0}), \cite[Theorem~4]{Berestycki2019b} implies that the following hold
under the inhibiting assumption~\eqref{hyp:inhibiting}:
\begin{itemize}
\item if $v_b<v_\star$, 
%and if 
%\begin{equation}\label{hyp:inhibiting_strong}
%\Psi(u,v)\leq 0\qquad\forall u>0,\ v\in(0,1),
%\end{equation}
there exists no traveling wave;
\item if $v_b>v_\star$, there exists no traveling wave with speed $c<c_b$ and there exists a traveling wave for any speed $c>c_b$, where $c_b$ is defined in~\eqref{Def_c}.
\end{itemize}
%Note that assumption~\eqref{hyp:inhibiting_strong} is stronger than the first assumption in~\eqref{hyp:inhibiting} since the negativity holds for $v$ ranging in $(0,1)$.

Then, we quote from~\cite[Theorem~5]{Berestycki2019b} further qualitative properties of traveling waves, concerning
 monotonicity and identification of their limit at $+\infty$. Namely, under the inhibiting assumption~\eqref{hyp:inhibiting}, any traveling wave $(U,V)$ 
satisfies 
\begin{equation}\label{PropositionQualitativeInhibiting}
U(+\infty)=0, \qquad V(+\infty)=V_\infty,
\end{equation}
where $V_\infty\in[0, v_b)$ is a root of $\Psi(0,\cdot)$ and moreover $V_\infty\leq v_\star$. In addition,
\begin{equation}\label{PropositionQualitativeInhibitingTW}
V'<0\text{ in }\R\qquad ;\qquad \exists\xi_0\in\R \text{ such that }U'>0\text{ in }(-\infty,\xi_0)\text{ and }U'<0\text{ in }(\xi_0,+\infty).
\end{equation}
 Roughly speaking, $U$ has the shape of a bump and $V$ has the shape of a monotone wave.

%\label{PropositionQualitativeInhibiting}

%\nota{avant il y avait une Prop qui disait $V_\infty< v_\star$, mais dans la preuve 
%on montrait juste $V_\infty\leq v_\star$. Je l'ai comment\'ee 
%(l'inegalit\'e stricte me parait pas evidente)}
Recall that assumptions~\eqref{AssumptionSaturationV} and~\eqref{hyp:inhibiting}
yield $\Psi(0,0)=0$, hence $\Psi(0,\cdot)$ has at least one root.
If $\Psi(0,\cdot)$ has many roots (e.g., if $\Psi(0,\cdot)\equiv0$ as in the $SI$ model~\eqref{SIModel}),
an important issue is the identification of the limiting state $V_\infty$ among them. 
From a modeling perspective, the quantity $v_b-V_\infty$ measures the amount of social tension 
dissipated by the movement of social unrest. In the context of epidemiology, $v_b-V_\infty$ represents the total number of individuals that has been infected during the epidemic.
%We are able to
%improve the upper bound for the waves provided by the above mentioned 
%existence result from~\cite[Theorem~8]{Berestycki2019b}.
%
%\begin{proposition}\label{Propo_inhib_InfVStar}
%Under the inhibiting assumption~\eqref{hyp:inhibiting}, 
%any traveling wave with $v_b>v_\star$ and $c>c_b$ satisfies
%$V(+\infty)=:V_\infty< v_\star$ from~\eqref{Def_v_star}.
%\end{proposition}
%Theorem~\ref{PropositionQualitativeInhibiting}
As stated previously, we have the a priori bound~$V_\infty\leq v_\star$ from~\cite[Theorem~5]{Berestycki2019b}.
To derive a more precise estimate on $V_\infty$, one can integrate the equation on $V$ (and use $V'(\pm\infty)=0$) to find 
\begin{equation*}
c(V_\infty-v_b)=\int_{-\infty}^{+\infty}\Psi(U,V).
\end{equation*}
Analogously, from $U(\pm\infty)=U'(\pm\infty)=0$, integrating the equation on $U$ gives $$\int_{-\infty}^{+\infty}U\big[r(V)f(U)-\omega \big]=0.$$
However, this formula is not explicit. 
In some particular cases (including the $SI$ model~\eqref{SIModel} with $d_2=0$), one can obtain
a rather explicit expression for $V_\infty$.
\begin{proposition}\label{PropVInftyLeqVStar}
Let $(U,V)$ be a solution~\eqref{SystemRiot}-\eqref{SystemBorderGeneralRiot} with $d_2=0$, $f\equiv 1$ and $\Psi(U,V)=UG(V)$, that is
\begin{equation*}
\left\{\begin{aligned}
&-d_1 U''(\xi)+cU'(\xi)=U\big(r(V)-\omega\big),\\
&c V'(\xi)=UG(V),\\
\end{aligned}\right.
\end{equation*}
and assume that $G<0$ on $(0,1)$.
%with $U(-\infty)=0$ and $V(-\infty)=v_b\in(0,1).$
%Let us define $Q$ a primitive of $v\mapsto\frac{\omega-r(v)}{G(v)}.$ 
Then, letting $Q$ be a primitive of $v\mapsto\frac{\omega-r(v)}{G(v)}$, there holds that
\begin{equation*}
Q(V_\infty)=Q(v_b).
\end{equation*}
%	\begin{equation}
%	\int_\R \left(\frac{r(v)-\omega}{G(v)}\right]=0.
%	\end{equation}
In particular, $V_\infty$ does not depend on $c$, nor on $d_1$.
\end{proposition}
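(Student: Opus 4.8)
The plan is to eliminate $U$ from the first equation of~\eqref{SystemRiot} using the second one, thereby rewriting the reaction term as an exact $\xi$-derivative, and then to integrate over all of $\R$. Since $0<V<1$ and $G<0$ on $(0,1)$, the quantity $G(V)$ never vanishes along the trajectory, so the second equation $cV'=UG(V)$ can be inverted to give
\begin{equation*}
U=\frac{cV'}{G(V)}.
\end{equation*}
The observation driving the whole argument is that substituting this into the right-hand side of the first equation produces precisely a derivative of $Q(V)$.

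Concretely, with $f\equiv1$ the first equation reads $-d_1U''+cU'=U\big(r(V)-\omega\big)$, and using $Q'(v)=\frac{\omega-r(v)}{G(v)}$ I would compute
\begin{equation*}
U\big(r(V)-\omega\big)=\frac{cV'}{G(V)}\big(r(V)-\omega\big)=-c\,Q'(V)\,V'=-c\,\frac{\de}{\de\xi}\,Q\big(V(\xi)\big).
\end{equation*}
Hence the first equation becomes $-d_1U''+cU'=-c\,\frac{\de}{\de\xi}Q(V)$, an identity whose two sides are both total derivatives (the left-hand side equals $\frac{\de}{\de\xi}\big(-d_1U'+cU\big)$), which is exactly the structure that makes a single integration decisive.

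Integrating over $\R$ and using the boundary behavior recalled above~\eqref{SystemBorderGeneralRiot} and~\eqref{PropositionQualitativeInhibiting}, namely $U(\pm\infty)=0$, $U'(\pm\infty)=0$, $V(-\infty)=v_b$ and $V(+\infty)=V_\infty$, the left-hand side contributes $\big[-d_1U'+cU\big]_{-\infty}^{+\infty}=0$, while the right-hand side contributes $-c\big[Q(V)\big]_{-\infty}^{+\infty}=-c\big(Q(V_\infty)-Q(v_b)\big)$. Equating the two and dividing by $c>0$ yields $Q(V_\infty)=Q(v_b)$; since neither $c$ nor $d_1$ survives in this relation, the stated independence is immediate.

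The computation itself is short, so the only genuine care lies in the justifications at infinity. I would need to confirm that $U'(\pm\infty)=0$ (which follows from the standard regularity/boundedness theory for such traveling-wave ODEs and is already invoked in the text preceding the statement) so that the boundary terms truly drop, and that $Q$ is well defined and continuous on the closed range of $V$. Since $V$ is strictly decreasing by~\eqref{PropositionQualitativeInhibitingTW}, its range is $(V_\infty,v_b)$ with $v_b<1$; the integrand of $Q$ is smooth wherever $G<0$, so the argument is clean whenever $V_\infty>0$. The one delicate point to check, and the main potential obstacle, is the endpoint $V_\infty$ when $V_\infty=0$: there the saturation condition~\eqref{AssumptionSaturationV} forces $G(0)=0$, so $Q'$ may be singular, and one must verify that $Q(V)\to Q(V_\infty)$ as $\xi\to+\infty$ (i.e. that the improper integral converges) before concluding. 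In the generic situation $V_\infty>0$ (as in the $SI$ model) this issue does not arise and the identity holds verbatim.
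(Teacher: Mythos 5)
Your proposal is correct and follows essentially the same route as the paper's proof: invert the second equation to write $U=\frac{cV'}{G(V)}$, substitute into the first equation so that the reaction term becomes $-c\,\frac{\de}{\de\xi}Q(V)$, and integrate over $\R$ using $U(\pm\infty)=0$ and $U'(\pm\infty)=0$ (the latter obtained from elliptic estimates, as the paper notes) to kill the left-hand side. Your additional remark about the possible singularity of $Q'$ at $V_\infty=0$ (where $G$ must vanish) is a point of care that the paper's proof passes over silently, but it does not change the argument.
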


In the particular case of the $SI$ model~\eqref{SIModel} with $d_2=0$, we have 
\begin{equation*}
Q(v)= \frac{1}{\beta}(v-\omega\ln(v)).
\end{equation*}
See \autoref{fig:PlotQ}.
In this case, $V_\infty$ is uniquely determined by the conditions
\begin{equation}\label{V_infty_exact}
\left\{\begin{aligned}
&Q(V_\infty)=Q(v_b),\\
&V_\infty\leq v_\star.
\end{aligned}\right.
\end{equation}
The numerical values of $V_\infty$ as a function of $v_b$ are plotted in \autoref{fig:PlotV_Infty_exact}
for $\beta=\frac{1}{2}$.
We see that $V_\infty$ is a decreasing convex function of $v_b\in(v_\star,+\infty)$. From a modeling perspective, it means that the higher the initial level of social tension, the lower the final level of social tension after the burst of a riot.
\begin{figure}
\centering
\begin{subfigure}[b]{0.5\linewidth}
\includegraphics[width=\textwidth]{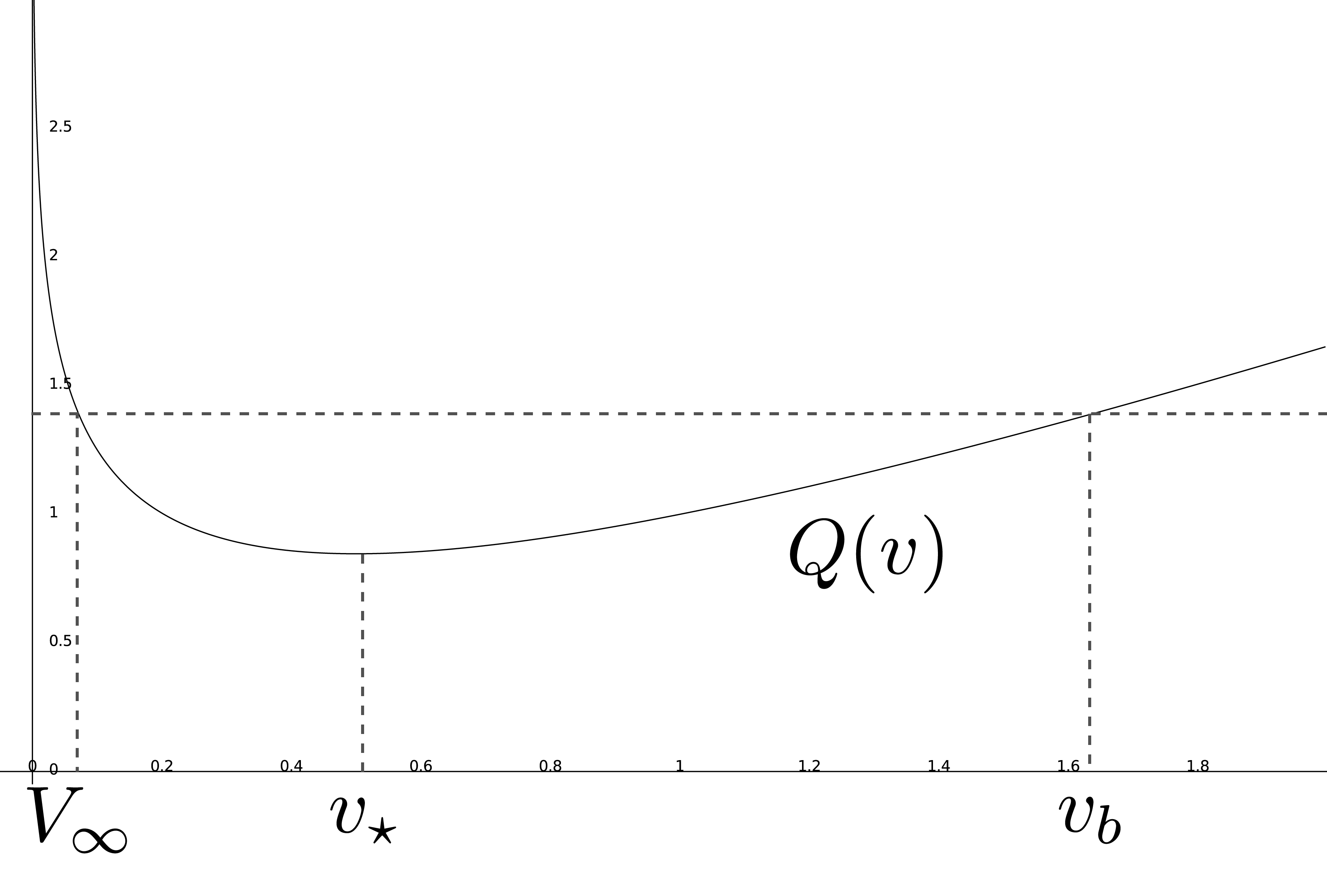}
\caption{Graph of $Q(v)=v-\frac{1}{2}\ln(v)$ (solid line) and graphic construction of the solution of~\eqref{V_infty_exact} (dashed lines) \label{fig:PlotQ}}
\end{subfigure}
\hfill
\begin{subfigure}[b]{0.45\linewidth}
\includegraphics[width=\textwidth]{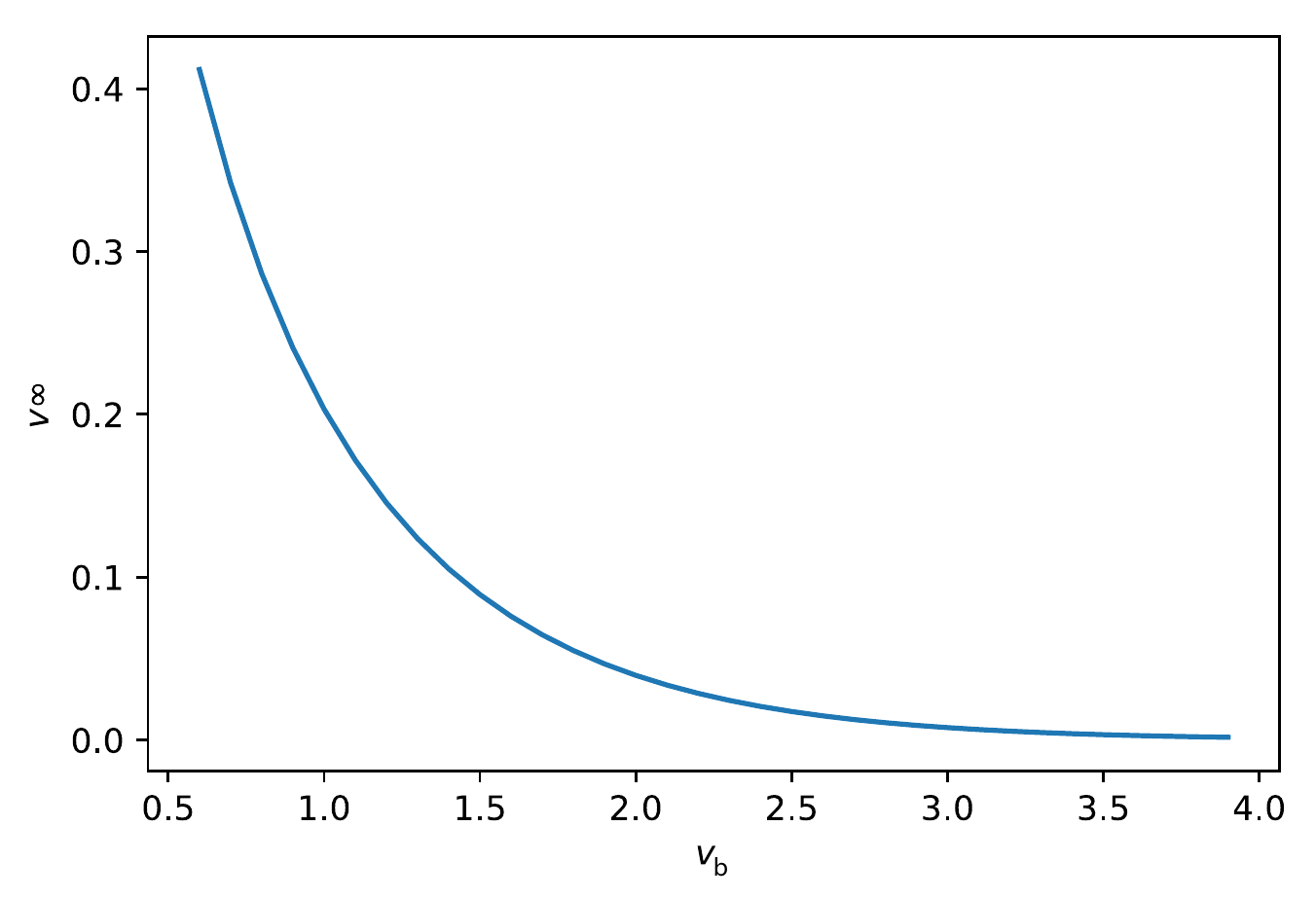}
\caption{$V_\infty$ solution of~\eqref{V_infty_exact} as a function of $v_b$, for $Q(v)=v-\frac{1}{2}\ln(v)$.}\label{fig:PlotV_Infty_exact}
\end{subfigure}
\caption{}
\end{figure}

\subsection{Large time behavior for the Cauchy problem}

We now turn to the more intricate question of studying the 
large time behavior of solutions to~\eqref{GeneralEquationMotivationFinal}. Compared with the previous section on traveling waves, we are only able to establish partial results. We 
have seen in Section~\ref{sec:ResumptionCalm} that when $v_0\equiv v_b<v_\star$,
perturbations of the steady state 
$(0,v_b)$ (not necessarily small) eventually disappear as $t\to+\infty$.
On the contrary, if $v_0\equiv v_b>v_\star$, perturbations do not tend to $0$ as $t\to+\infty$,
at least in one of the $(u,v)$ components, see Section~\ref{sec:Burst}.
However, solutions can exhibit many  qualitatively diverse behaviors in general.
We are able to obtain some informations in the inhibiting case.

\begin{theorem}\label{PropositionAsymptoticsInhibiting}
Assume that the inhibiting hypothesis~\eqref{hyp:inhibiting} holds, that $d_2>0$, and that the dimension is
$n=1$ or $2$.
%\nota{il faut en plus juste $\sup v_0<1$ je crois}.
Then any solution of~\eqref{GeneralEquationMotivationFinal} satisifes
\begin{equation}\label{limifuto0}
\liminf_{t\to+\infty}u(t,x)=0\quad\text{locally uniformly in $x\in\R^n$}.
\end{equation}

If in addition $v(t,x)$ converges pointwise to some $v_\infty(x)$ as $t\to+\infty$,
then $v_\infty$ is a constant in $[0,v_\star]$ and
	$u(t,x)\to0$ as $t\to+\infty$ locally uniformly in $x\in\R^n$.
\end{theorem}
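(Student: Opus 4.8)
The plan is to exploit the recurrence of the heat semigroup in dimension $n\le 2$, which is precisely where the hypothesis enters. The natural object is $w:=v_b-v$, which by Lemma~\ref{Lemma_uv} and~\eqref{lemma:v<v0} satisfies $0\le w<v_b$, has $w(0,\cdot)\equiv 0$, and solves $\D_t w-d_2\Delta w=-\Psi(u,v)=:g\ge 0$, with $g>0$ wherever $u>0$. Since $d_2>0$, Duhamel's formula yields the exact representation
\[
w(t,x)=\int_0^t\!\!\int_{\R^n}p_{d_2(t-s)}(x-y)\,g(s,y)\,dy\,ds,
\]
where $p_\tau$ is the Gaussian heat kernel, and hence the a priori bound $\int_0^t\!\int p_{d_2(t-s)}(x-y)\,g\,dy\,ds<v_b$ for all $t,x$. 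The mechanism I would use is that $\int_{B_\rho}p_{d_2\tau}(x-y)\,dy\gtrsim(d_2\tau)^{-n/2}$ for $\tau\gtrsim\rho^2$, so that $\int^{\infty}(d_2\tau)^{-n/2}\,d\tau=+\infty$ exactly when $n\le 2$: a persistent positive source $g$ on a fixed ball would make $w$ unbounded, which is impossible.

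For~\eqref{limifuto0} I would argue by contradiction, assuming $\liminf_{t\to\infty}u(t,x_0)=2\delta>0$ at some point. Then $u\ge\delta$ for $t\ge T_0$, and the parabolic weak Harnack inequality for the linear equation $\D_t u-d_1\Delta u=(r(v)f(u)-\omega)u$ upgrades this to $u\ge\delta'>0$ on a cylinder $B_\rho(x_0)\times[T_1,\infty)$, for any prescribed radius $\rho$ (with $\delta'$ depending on $\rho$); here $u$ is bounded, which follows from the comparison $\D_t u-d_1\Delta u\le(r(v_b)f(0)-\omega)u=K_b u$ together with the inhibiting decay of $v$, or may be quoted from~\cite{Berestycki2019b}. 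On such a cylinder, wherever in addition $v\ge\eta$ one has $g=-\Psi(u,v)\ge\kappa(\eta,\delta')>0$, using the strict negativity in~\eqref{hyp:inhibiting} together with the growth bound $\limsup_{u\to\infty}\Psi/u<0$ to control large $u$. Feeding this into the representation and invoking recurrence ($n\le 2$) forces $w(t,x_0)\to+\infty$, contradicting $w<v_b$, \emph{provided} $v$ stays bounded below on the cylinder along a set of times of infinite weight. The complementary, degenerate possibility is that $v\to 0$ there; in that case I would invoke the threshold structure: when $v$ is small, $r(v)f(0)-\omega<0$ since $v<v_\star$ (recall~\eqref{Def_v_star}), so the reaction coefficient of $u$ is uniformly negative on $B_\rho$, and choosing $\rho$ large enough that this negative rate beats $d_1\lambda_1(B_\rho)$, a comparison/barrier argument absorbing the diffusive inflow across $\partial B_\rho$ yields $u\to 0$ on $B_{\rho/2}(x_0)$, again contradicting $u\ge\delta'$. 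I expect the reconciliation of these two alternatives — showing that the failure of the recurrence contradiction genuinely forces $v$ to be small on a large enough region, in a strong enough (not merely sparse-in-time) sense to run the threshold comparison — to be the main obstacle, since it is exactly here that the source $g$ of the representation degenerates at $v=0$.

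For the second assertion, suppose $v(t,x)\to v_\infty(x)$ pointwise. Interior parabolic estimates (legitimate since $u,v$ are bounded) give local uniform convergence together with convergence of $\Delta v$, and smoothness of $v_\infty$. Integrating the $v$-equation over $[T,T+1]$ and letting $T\to\infty$, the increment $v(T+1,\cdot)-v(T,\cdot)$ tends to $0$, so $\int_T^{T+1}\Psi(u,v)\,dt\to-d_2\Delta v_\infty$. Since $\Psi\le 0$ by~\eqref{hyp:inhibiting}, this forces $\Delta v_\infty\ge 0$; as $v_\infty$ is bounded and subharmonic on $\R^n$ with $n\le 2$, a Liouville theorem forces $v_\infty\equiv c_\infty$ constant, whence $\Delta v_\infty\equiv 0$ and $\int_T^{T+1}\Psi(u,v)\,dt\to 0$. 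If $c_\infty\in(0,v_b]$ then $\Psi(u,c_\infty)<0$ for $u>0$, and the vanishing of this time-average, combined with~\eqref{limifuto0} and equicontinuity, yields $u(t,x)\to 0$ locally uniformly; if $c_\infty=0$ the same conclusion follows from the threshold comparison used in the degenerate case above. Finally $c_\infty\le v_\star$, for otherwise $r(c_\infty)f(0)-\omega>0$ would render $(0,c_\infty)$ linearly unstable in the $u$-direction, contradicting $u\to 0$ while $v\to c_\infty$. This gives $v_\infty\equiv c_\infty\in[0,v_\star]$ together with $u\to 0$ locally uniformly, as claimed.
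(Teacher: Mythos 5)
Your proposal splits into two halves of very different status, so let me address them separately.

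\paragraph*{The first assertion, \eqref{limifuto0}.} Here there is a genuine gap, and you have correctly located it yourself. Note first that the paper does not prove this statement at all: it is quoted from \cite[Proposition~7]{Berestycki2019b}, so your heat-kernel recurrence argument is an attempt at something the paper outsources (the hypotheses $d_2>0$ and $n\le 2$ indeed point to exactly this mechanism). The problem is that your two alternatives are not exhaustive. The recurrence contradiction needs the source $g=-\Psi(u,v)$ to be bounded below on $B_\rho(x_0)$ along a set of times of infinite heat-kernel weight; since $\Psi(u,0)=0$ (forced by \eqref{AssumptionSaturationV} together with \eqref{hyp:inhibiting}), this amounts to $\inf_{B_\rho}v$ staying bounded below along such times. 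The negation of that is only a weighted-in-time, sequential smallness of $\inf_{B_\rho}v$, which is far weaker than what your threshold/barrier alternative requires, namely $\sup_{B_\rho}v<v_\star-\eta$ on a ball large enough to beat $d_1\lambda_1(B_\rho)$, for \emph{all} large times. Nothing rules out the intermediate regime: $v$ is not monotone in $t$ (only $v<v_b$ holds, cf.~\eqref{lemma:v<v0}; after a riot wave passes, diffusion can push $v$ back up), so $\inf_{B_\rho}v$ may decay fast enough to kill the recurrence integral while $v$ recurrently returns above $v_\star$ on parts of the ball. Moreover, even the natural repair — propagating a single instance of smallness of $v$ forward in time via the subsolution property $\D_t v-d_2\Delta v\le 0$, and then running the damped $u$-equation on a window of length $\sim\rho^2$ — runs into a quantitative obstruction: the Harnack lower bound $\delta'=\delta'(\rho)$ you need to contradict degrades like $e^{-c\rho^2}$ (the zeroth-order coefficient destroys scale invariance), which is of the same order as the decay the barrier produces, so the constants do not obviously close. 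This is a missing idea, not a technicality; a complete proof of the theorem must either resolve it or, as the paper does, invoke the companion paper's result.

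\paragraph*{The second assertion.} This half is correct and follows essentially the paper's own route, with cosmetic differences. The paper passes to the $C^2_{loc}$ limit directly, obtains $-\Delta v_\infty\le 0$, and applies its own Liouville lemma (Lemma~\ref{LemmaLiouvilleRiots}, for bounded $w$ with $w\Delta w\ge0$, proved by a cut-off/Caccioppoli estimate); you integrate the $v$-equation over $[T,T+1]$ and invoke the classical Liouville theorem for bounded subharmonic functions in $n\le2$ — equivalent. For $u\to0$ when $v_\infty\in(0,1)$, the paper uses the pointwise limit of $\D_t v-d_2\Delta v=u\tilde\Psi$ with $\tilde\Psi=\Psi/u$ bounded away from zero, while you use vanishing time-averages of $\Psi$ plus equicontinuity; both exploit the same strict negativity in \eqref{hyp:inhibiting}, and your case $v_\infty=0$ is the paper's comparison with a linearly decaying equation (your remark about absorbing inflow across $\partial B_\rho$ is, if anything, more careful than the paper's phrasing). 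Finally, you obtain $v_\infty\le v_\star$ by the KPP hair-trigger contradiction \emph{after} proving $u\to0$, whereas the paper does it before, contradicting \eqref{limifuto0}; both are fine. One small point in your favor: your appeal to \eqref{limifuto0} in the case $c_\infty\in(0,v_b]$ is actually unnecessary — the time-average argument is self-contained — which matters, since otherwise this half would inherit the gap from the first.
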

The first part of \autoref{PropositionAsymptoticsInhibiting} is a consequence of~\cite[Proposition~7]{Berestycki2019b} and states that the movement of social unrest vanishes along some sequences of time. We establish in the second part of \autoref{PropositionAsymptoticsInhibiting} that, if we assume that $v$ converges as $t\to+\infty$, $u$ converges to $0$. This means that 
only the $v$ component matters in the estimate~\eqref{Burst} for the case $v_b>v_\star$.

As discussed in the previous section about the traveling waves, an interesting question is to estimate the final state $v_\infty$.
The following result concerns the homogeneous case, i.e., when $u_0(\cdot)$
is constant.
\begin{proposition}\label{Propo_Moyenne_Inhib}
Assume that \eqref{hyp:inhibiting} holds.	
%\nota{j'ai remplace' $v_b,\ \tilde v_b$ par $v_0,\ \tilde v_0$}
%Let $v_b>\tilde v_b>v_\star$ and let $0<u_0(\cdot)<1$ be constant and sufficiently small.
Let $(u,v)$ and $(\tilde u,\tilde v)$ be the solutions of~\eqref{GeneralEquationMotivationFinal}
with initial conditions $(u_0,v_0)$ and $(u_0,\tilde v_0)$ respectively, where 
$u_0$ is constant and $v_0\equiv v_b$, $\tilde v_0\equiv \tilde v_b$. If $v_b>\tilde v_b>v_\star$,
the corresponding
final states $v_\infty$ and $\tilde v_\infty$ satisfy
\begin{equation*}
v_\infty\leq \tilde v_\infty<v_\star.
\end{equation*}
Moreover,
\begin{equation*}
\max_{t>0} u(t)>\max_{t>0}\tilde u(t).
\end{equation*}
\end{proposition}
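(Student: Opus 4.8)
The plan is to reduce to the underlying ODE system and then eliminate time by using the strictly monotone tension variable $v$ as the independent variable, so that the comparison between the two solutions becomes a non-crossing statement for a scalar first-order ODE. Since $u_0$ is constant and \eqref{GeneralEquationMotivationFinal} is spatially homogeneous, both solutions are independent of $x$ and solve
\begin{equation*}
u'=u\big[r(v)f(u)-\omega\big],\qquad v'=\Psi(u,v),
\end{equation*}
with data $(u_0,v_b)$ and $(u_0,\tilde v_b)$. By \autoref{Lemma_uv} we have $u>0$ and $v\in(0,1)$, so \eqref{hyp:inhibiting} gives $v'=\Psi(u,v)<0$ for $t>0$: the tension strictly decreases along each trajectory. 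Hence $t\mapsto v(t)$ is invertible and I can write the activity as a function $U(v)$ of the tension, with $v$ running backwards from $v_b$ (resp. $\tilde v_b$) down to $v_\infty$ (resp. $\tilde v_\infty$). Dividing the two equations yields the scalar Cauchy problem
\begin{equation*}
\frac{dU}{dv}=\Theta(U,v):=\frac{U\big[r(v)f(U)-\omega\big]}{\Psi(U,v)},\qquad U(v_b)=u_0,
\end{equation*}
and likewise $\tilde U(\tilde v_b)=u_0$; crucially \emph{both} profiles solve the \emph{same} scalar ODE, only the starting tension differs.

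The heart of the argument is the comparison of the two profiles at the common tension level $\tilde v_b$, namely the strict gain $U(\tilde v_b)>u_0=\tilde U(\tilde v_b)$. On $v\in(\tilde v_b,v_b]$ one has $v>v_\star$, so $r(v)f(0)>\omega$; while $U$ lies below the nullcline $U=u_\star(v)$ from \eqref{def:u_star} the numerator of $\Theta$ is positive and $\Psi<0$, whence $\Theta<0$ and $U$ strictly \emph{increases} as $v$ decreases across this super-threshold band. In the separable case of \autoref{PropVInftyLeqVStar} (i.e. $f\equiv 1$, $\Psi=UG(v)$, which includes the $SI$ model) this is transparent because $\Theta=\frac{r(v)-\omega}{G(v)}$ is independent of $U$, so the gain equals the manifestly positive, $u_0$-independent quantity $-\int_{\tilde v_b}^{v_b}\Theta\,dv$. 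Next, $\Theta(\cdot,v)$ is locally Lipschitz in $U$ for $U>0$ (since $\Psi<0$ there) and extends continuously down to $U=0$ because $\Psi(U,v)\sim\partial_U\Psi(0,v)\,U$ when $\Psi(0,v)=0$; uniqueness for the scalar ODE then forbids the two profiles from crossing while $U>0$. Combined with $U(\tilde v_b)>\tilde U(\tilde v_b)$, this gives $U(v)>\tilde U(v)$ for every $v$ in the domain $(\tilde v_\infty,\tilde v_b]$ of $\tilde U$.

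From this ordering I read off both conclusions. Letting $v\downarrow\tilde v_\infty$, the profile $\tilde U$ tends to $0$ while $U$ stays strictly positive, so the first trajectory is still active at tension $\tilde v_\infty$ and must decrease further, giving $v_\infty<\tilde v_\infty$. Evaluating at the tension $v^\ast$ where $\tilde U$ attains its maximum gives $\max_v U\ge U(v^\ast)>\tilde U(v^\ast)=\max_v\tilde U$, that is $\max_t u>\max_t\tilde u$. The remaining bound $\tilde v_\infty<v_\star$ follows from the a priori estimate $V_\infty\le v_\star$ (the Cauchy analogue of \cite[Theorem~5]{Berestycki2019b}, cf. \autoref{PropVInftyLeqVStar}); strictness holds because $\tilde v_\infty=v_\star$ would force $\tilde v(t)>v_\star$ for all $t$, hence $r(\tilde v)f(0)>\omega$ and $\tilde u$ repelled from $0$, contradicting $\tilde u(t)\to0$ (the homogeneous counterpart of \autoref{PropositionAsymptoticsInhibiting}).

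I expect the main obstacle to be the strict gain $U(\tilde v_b)>u_0$ in full generality: it is immediate whenever $\Theta$ is independent of $U$ (the separable/SI case), but when $\Theta$ genuinely depends on $U$ the height picked up over the band $v\in(\tilde v_b,v_b]$ depends on the trajectory, and one must control the rising phase — this is where a smallness (small triggering event) assumption on $u_0$, or a monotonicity-in-$u_0$ argument for the gain via the non-crossing of the $u_0$-family, would enter. A secondary technical point is the careful justification of uniqueness and non-crossing at the degenerate endpoint $U=0$.
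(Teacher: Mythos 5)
Your proof follows essentially the same route as the paper's: reduce to the spatially homogeneous ODE system, view the two orbits in the $(u,v)$ phase plane (your graphs $U(v)$, $\tilde U(v)$ are just these orbits parametrized by the strictly decreasing variable $v$), and conclude by non-crossing. The paper phrases the non-crossing as uniqueness for the planar autonomous system plus the fact that the orbit $\gamma$ of $(u,v)$ cannot cross the segment $\{u_0\}\times(\tilde v_b,v_b)$; you phrase it as uniqueness for $dU/dv=\Phi/\Psi$ plus the strict gain $U(\tilde v_b)>u_0$. These are the same statements in different coordinates.

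The genuine gap is exactly the one you flagged: the strict gain. Your nullcline argument controls $U$ only while $U<u_\star(v)$; once $U$ crosses the nullcline it decreases as $v$ decreases, and nothing prevents it from falling back below $u_0$ before $v$ reaches $\tilde v_b$ --- unless the line $u=u_0$ lies below the nullcline on the whole band, i.e.\ unless $r(v)f(u_0)>\omega$ for all $v\in[\tilde v_b,v_b]$. This smallness condition on $u_0$ is precisely what the paper imposes in the very first line of its proof (condition~\eqref{segment}); with it, the segment $\{u_0\}\times(\tilde v_b,v_b)$ can be crossed only in the direction of increasing $u$, so the orbit starting at its top endpoint stays on the side $u>u_0$ until $v=\tilde v_b$, and the rest of your argument goes through. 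The hypothesis is not removable: with $f(u)=1-u$, $\Psi=-uv$ and any constant $u_0\geq 1$, one has $u'\leq-\omega u<0$ wherever $u\geq1$, and $u$ can only increase while $u<1$, so $\max_t u=\max_t\tilde u=u_0$ and the strict inequality between the maxima fails. So your ``main obstacle'' is where the (implicit) hypothesis of the proposition enters, and your proposed fix --- smallness of the triggering event --- is exactly the paper's. Two minor further points: the Lipschitz/uniqueness worry at $U=0$ never arises, since both components are strictly positive at all finite times (Lemma~\ref{Lemma_uv}) and planar uniqueness suffices; and your strict inequality $v_\infty<\tilde v_\infty$ overshoots the claim --- the assertion that $U$ stays strictly positive at $v=\tilde v_\infty$ is unjustified, because two distinct orbits may a priori converge to the same equilibrium on the line $u=0$ in infinite time; only $v_\infty\leq\tilde v_\infty$ is asserted, and that follows from the ordering alone.
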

The above proposition states that the final level
$v_\infty$ is decreasing with respect to $v_b$. From the modeling point of view, 
it implies that a higher initial level of social tension will lead to a lower final level of social tension. 
On the contrary, the higher the initial level of social tension, the higher the maximal level of social unrest.
This enlightens very well the non monotone structure of the inhibiting case.

%\bibliographystyle{abbrv}
%\bibliography{library}
%
%\end{document}

\subsection{Numerical simulations}\label{sec:Inhibiting_numerics}
Let us illustrate the dynamics of~\eqref{GeneralEquationMotivationFinal} in the inhibiting case~\eqref{hyp:inhibiting} with numerical simulations.

\subsubsection{Threshold between calm and riot}
Consider the following particular instance of~\eqref{GeneralEquationMotivationFinal}:
\begin{equation}\label{ExampleInhibiting}
\left\{\begin{aligned}
&\D_t  u -\D_{xx} u=u\left[5v(1-u)-\frac{1}{2}\right],\\
&\D_t  v-\D_{xx} v=-uv.
\end{aligned}\right.
\end{equation}
As for the initial datum, we take $u_0(x)=0.2(1-\frac{x^2}{100})_+$, where $(\cdot)_+$ denotes the positive
 part, and $v_0\equiv v_b$. Observe that any $v_b\in(0,1)$ is allowed by the stability 
 hypothesis~\eqref{v-stability}.
 System~\eqref{ExampleInhibiting} 
 satisfies the inhibiting assumption~\eqref{hyp:inhibiting}.  
The quantities defined in~\eqref{Def_v_star} and~\eqref{defK0} 
are given by $v_\star= \frac{1}{10}$ and~$K_b= 5v_b-\frac{1}{2}$.

First, if $v_b<v_\star$ we observe in \autoref{fig:Inhib_Calm}
that a triggering event is promptly followed by a return to calm, i.e., $u$ rapidly vanishes. Next, 
$v$ converges in long time to its initial value $v_b$. This is in agreement with the
property~\eqref{ResumptionOfCalm}.

\begin{figure}[p]
\center
\begin{subfigure}[p]{0.4\linewidth}
\includegraphics[scale=0.5]{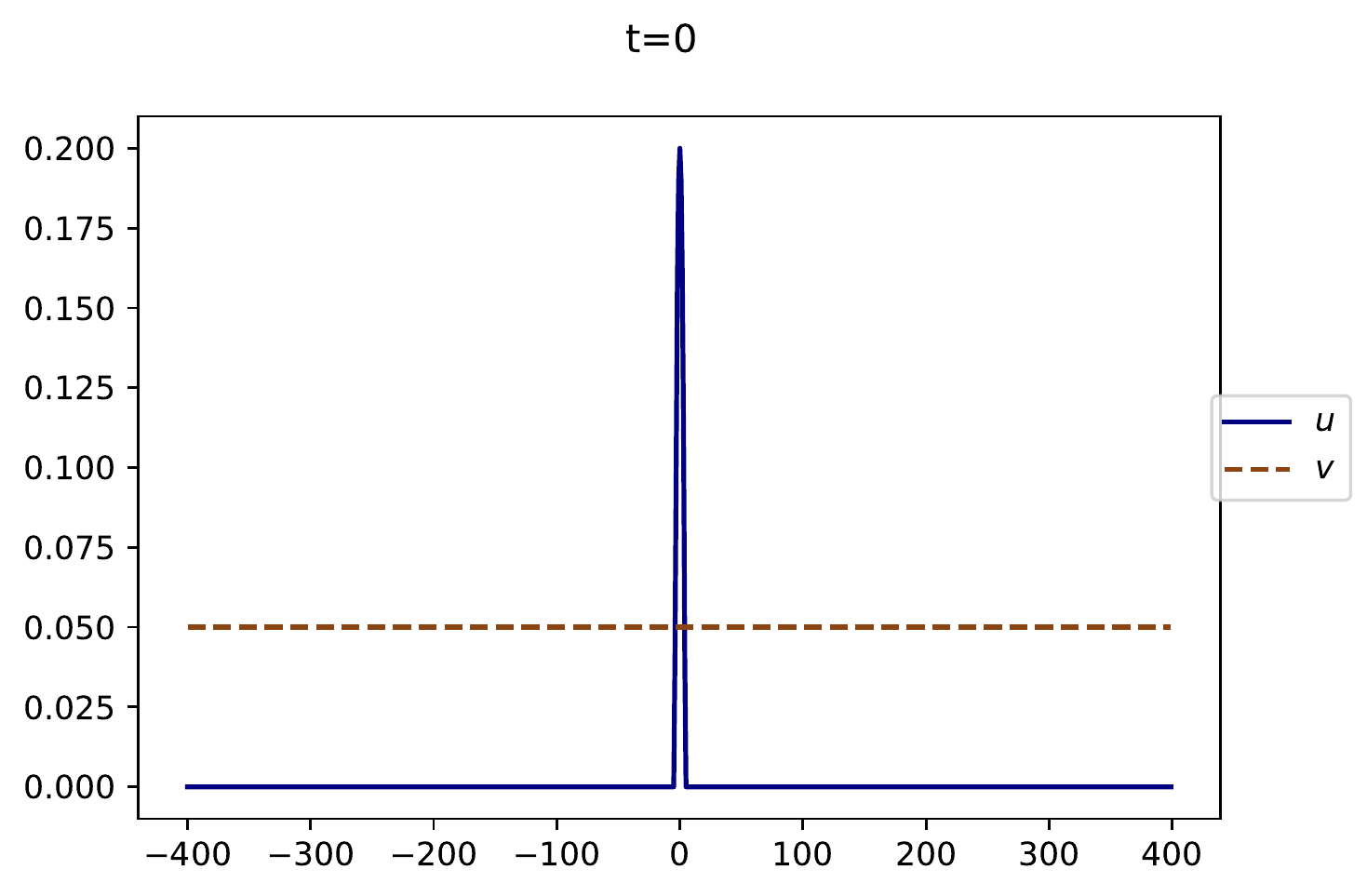}
%\caption{}
%\label{fig:Inhib_Calm_1}
  \end{subfigure}
  \hfill
\begin{subfigure}[p]{0.4\linewidth}
\includegraphics[scale=0.5]{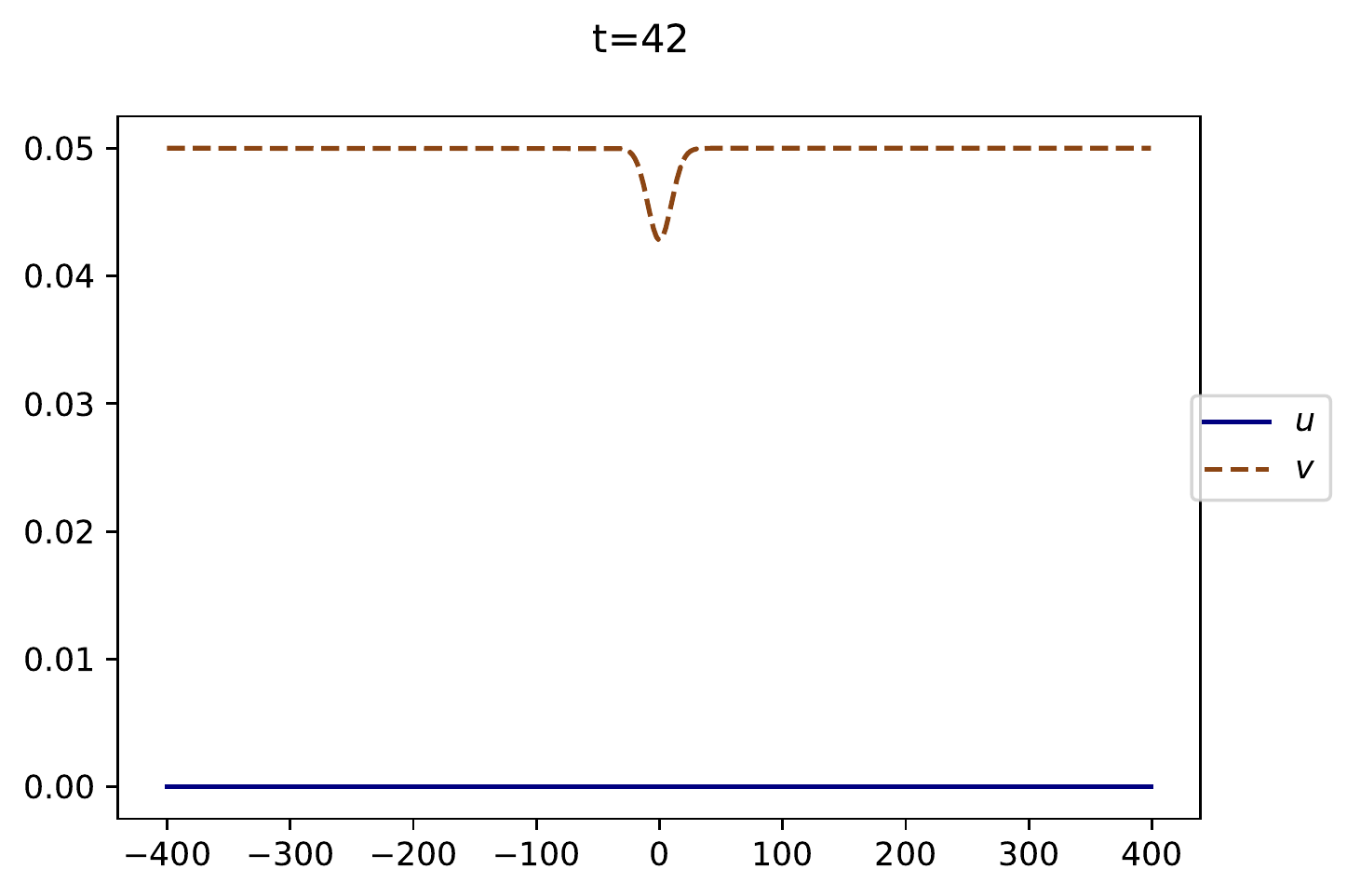}
%\caption{}
%\label{fig:Inhib_Calm_2}
  \end{subfigure}
  \\
  \begin{subfigure}[p]{0.4\linewidth}
\includegraphics[scale=0.5]{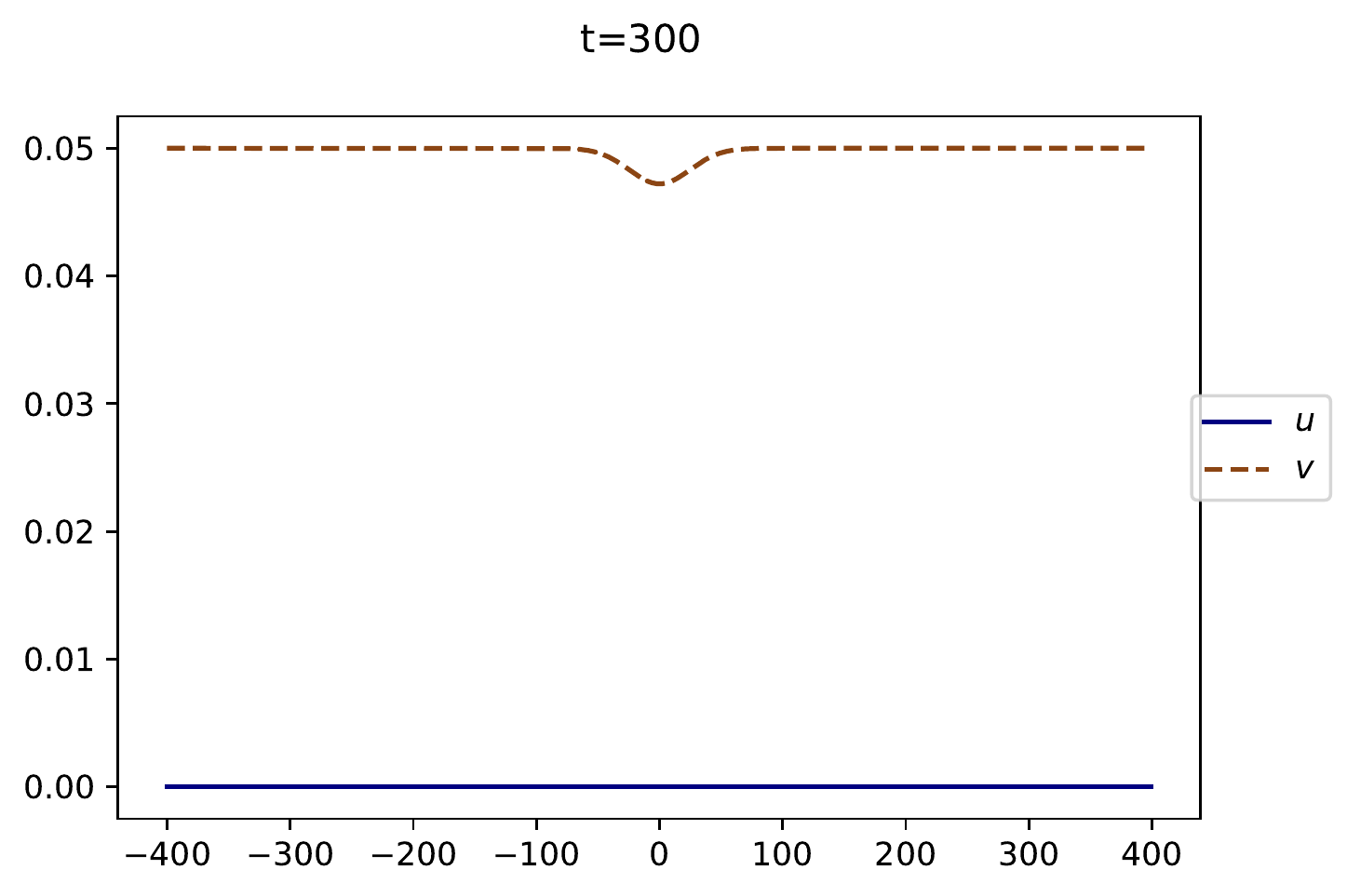}
%\caption{}
%\label{fig:Inhib_Calm_3}
  \end{subfigure}
  \caption{Inhibiting case -- return to calm. Snapshots at different times of the solution of~\eqref{ExampleInhibiting} with $v_b=0.05<v_\star$. Horizontal axis: space. Blue solid line: $u(t,\cdot)$. Brown dashed line: $v(t,\cdot)$.
\href{https://drive.google.com/file/d/1VHDMQr8X2sSdSES1CO3KRrI3CU7sGBD4/view?usp=sharing}{\color{blue}\underline{Video: Inhibiting\_v0=005.mp4}}  
  }
  \label{fig:Inhib_Calm}
\end{figure}

On the contrary, when $v_b=0.15>v_\star$, \autoref{fig:Inhib_Riot} shows that the solution does not vanish uniformly in space, but rather develops two traveling waves --one leftward and the other rightward--
i.e., two fixed profiles propagating at constant speed. This is in agreement with the discussion in Sections~\ref{sec:Burst}, \ref{sec:SpatialPropagation}. 
The profile of $u$ has the shape of a bump, while the profile of $v$ is a decreasing wave, as stated in~\eqref{PropositionQualitativeInhibiting}-\eqref{PropositionQualitativeInhibitingTW}. 
The fact that the level of {\SU} decays as $t\to+\infty$, which was expected from~\autoref{PropositionAsymptoticsInhibiting},
means that social movements get extinct after some times. Thus, the dynamics describe a limited-duration movement of social unrest, that we call a \emph{riot}.

\begin{figure}[p]
\center
\begin{subfigure}[p]{0.4\linewidth}
\includegraphics[scale=0.5]{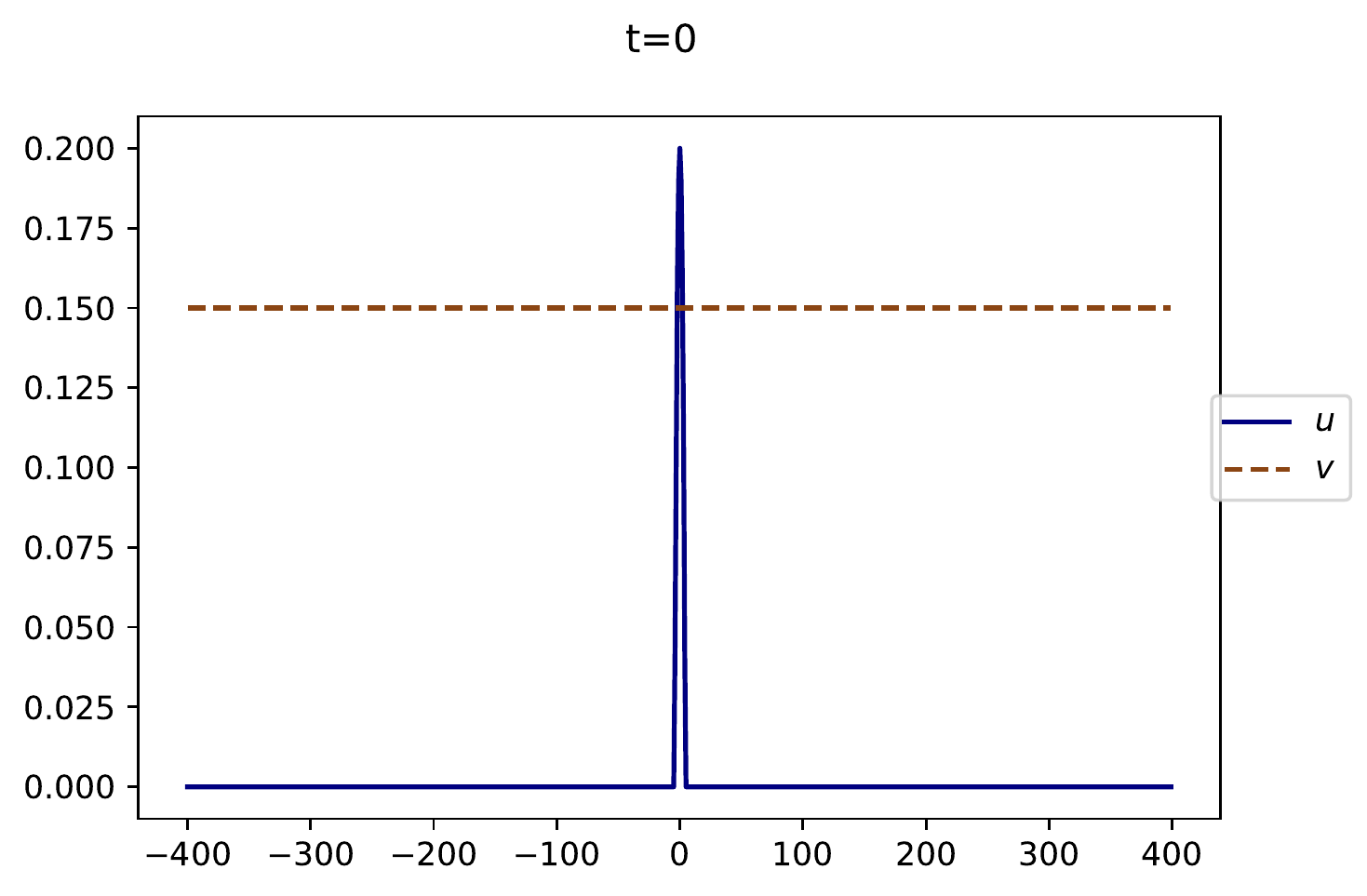}
\label{fig:Inhib_Riot_1}
  \end{subfigure}
  \hfill
\begin{subfigure}[p]{0.4\linewidth}
\includegraphics[scale=0.5]{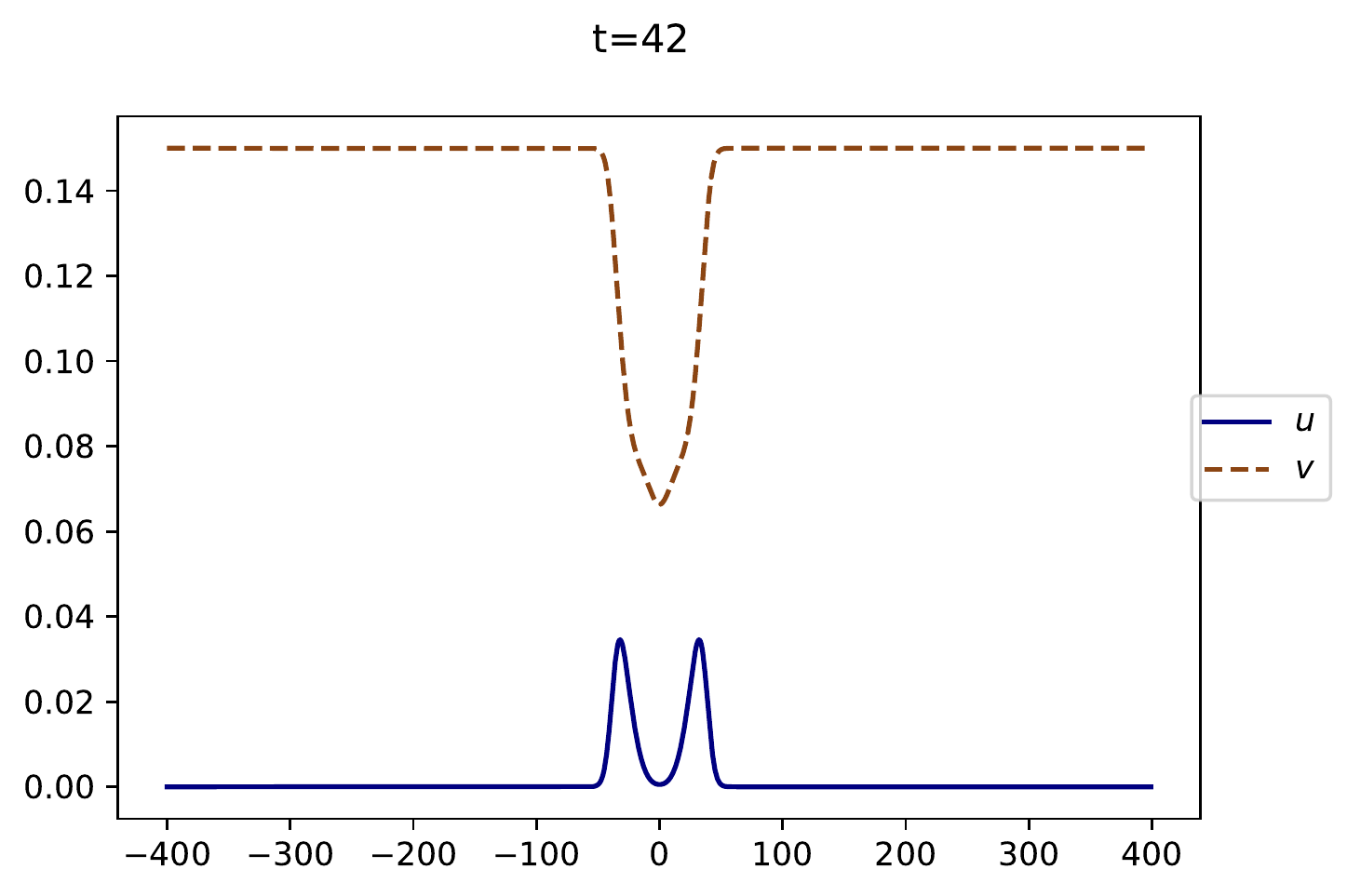}
\label{fig:Inhib_Riot_2}
  \end{subfigure}
  \\
  \begin{subfigure}[p]{0.4\linewidth}
\includegraphics[scale=0.5]{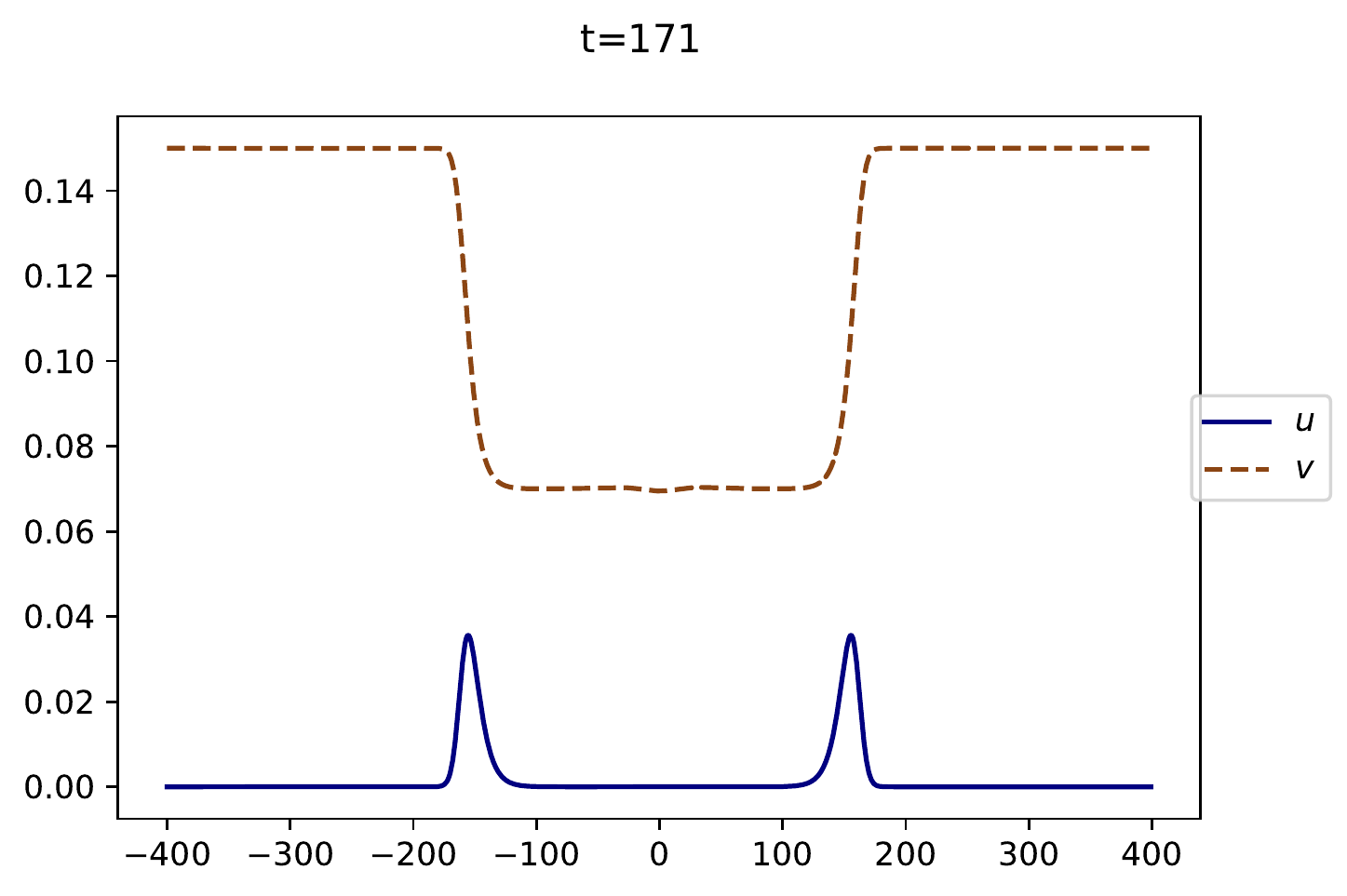}
\label{fig:Inhib_Riot_3}
  \end{subfigure}
    \hfill
\begin{subfigure}[p]{0.4\linewidth}
\includegraphics[scale=0.5]{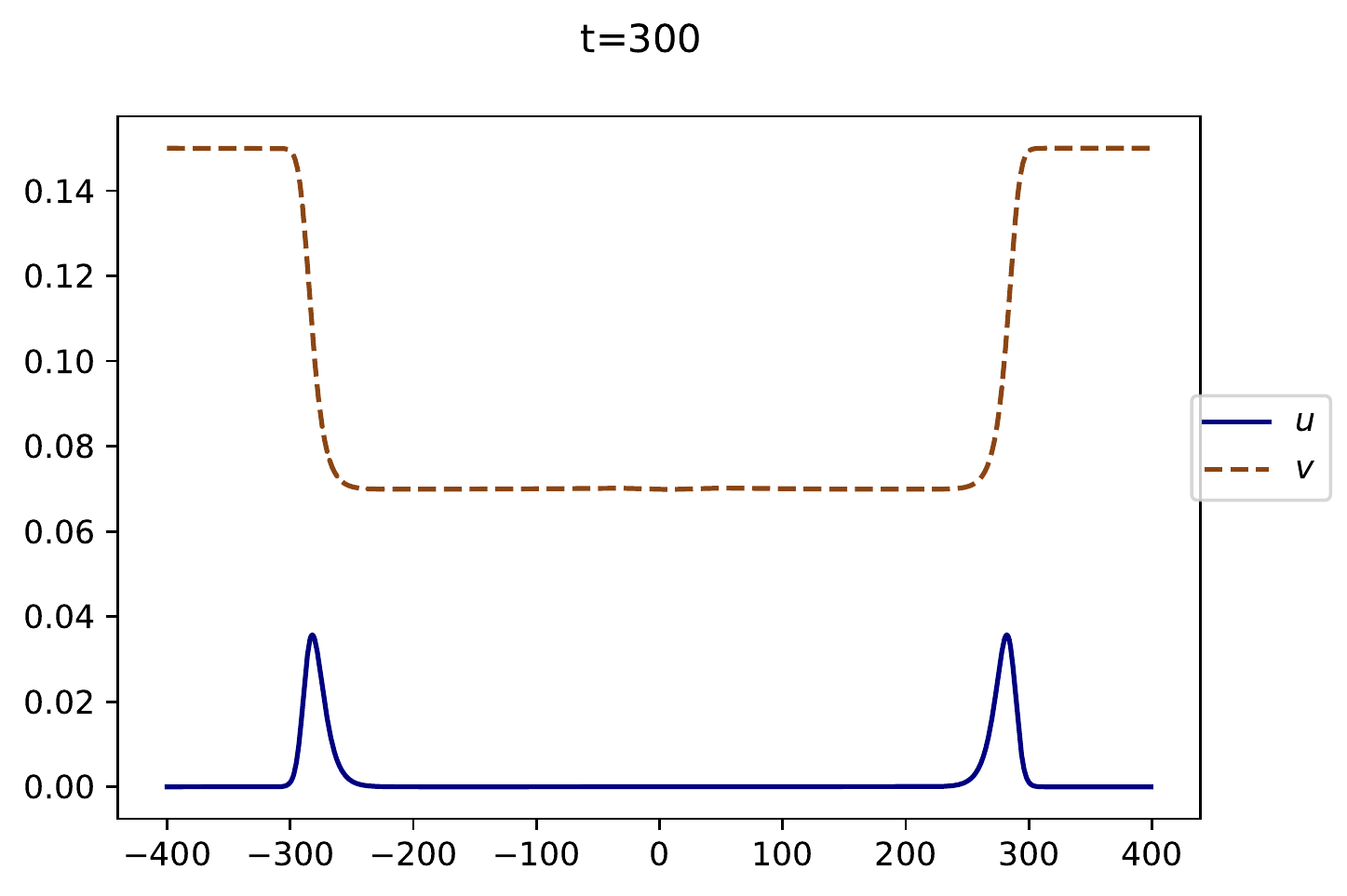}
\label{fig:Inhib_Riot_4}
  \end{subfigure}
  \caption{Inhibiting case -- riot. Snapshots at different times of the solution of~\eqref{ExampleInhibiting} with $v_b=0.15>v_\star$. Horizontal axis: space. Blue solid line: $u(t,\cdot)$. Brown dashed line: $v(t,\cdot)$. 
\href{https://drive.google.com/file/d/1XgImBPjwFPVgXAY87n9aR1M5XFt938e7/view?usp=sharing}{\color{blue}\underline{Video: Inhibiting\_v0=015.mp4}}  
  }  
  \label{fig:Inhib_Riot}
\end{figure}

\subsubsection{Speed of propagation}\label{sec:InhibSpeedNumerics}
Let us investigate in more details the dependence of the speed of propagation with respect to $v_0\equiv v_b$ on the system%\nota{est-on obliges de considerer 2 systems: \eqref{ExampleInhibiting},\eqref{ExampleInhibiting2}?}
\begin{equation}\label{ExampleInhibiting2}
\left\{\begin{aligned}
&\D_t  u -\D_{xx} u=u\left[v(1-u)-\frac{1}{3}\right],\\
&\D_t  v-\D_{xx} v=-uv.
\end{aligned}\right.
\end{equation}
We consider $u_0(x)=0.2(1-x^2)_+$ as an initial datum for $u$, then we will study the 
asymptotic speed of propagation as the initial datum for $v$, 
$v_0\equiv v_b$, varies in~$(0,1)$.

Let us briefly explain how we numerically compute the speed of propagation.
For each simulation and two given times $t_1=99$ and $t_2=399$, we track the leftmost locations $x_i$ where the solution $u(t_i,\cdot)$ reaches half its supremum, i.e., the value $\frac{1}{2}\sup_{x\in\R}u(t_i,x)$. See \autoref{fig:Inhib_Speed_Explanation}.
\begin{figure}[p]
\center
\includegraphics[width=0.6\linewidth]{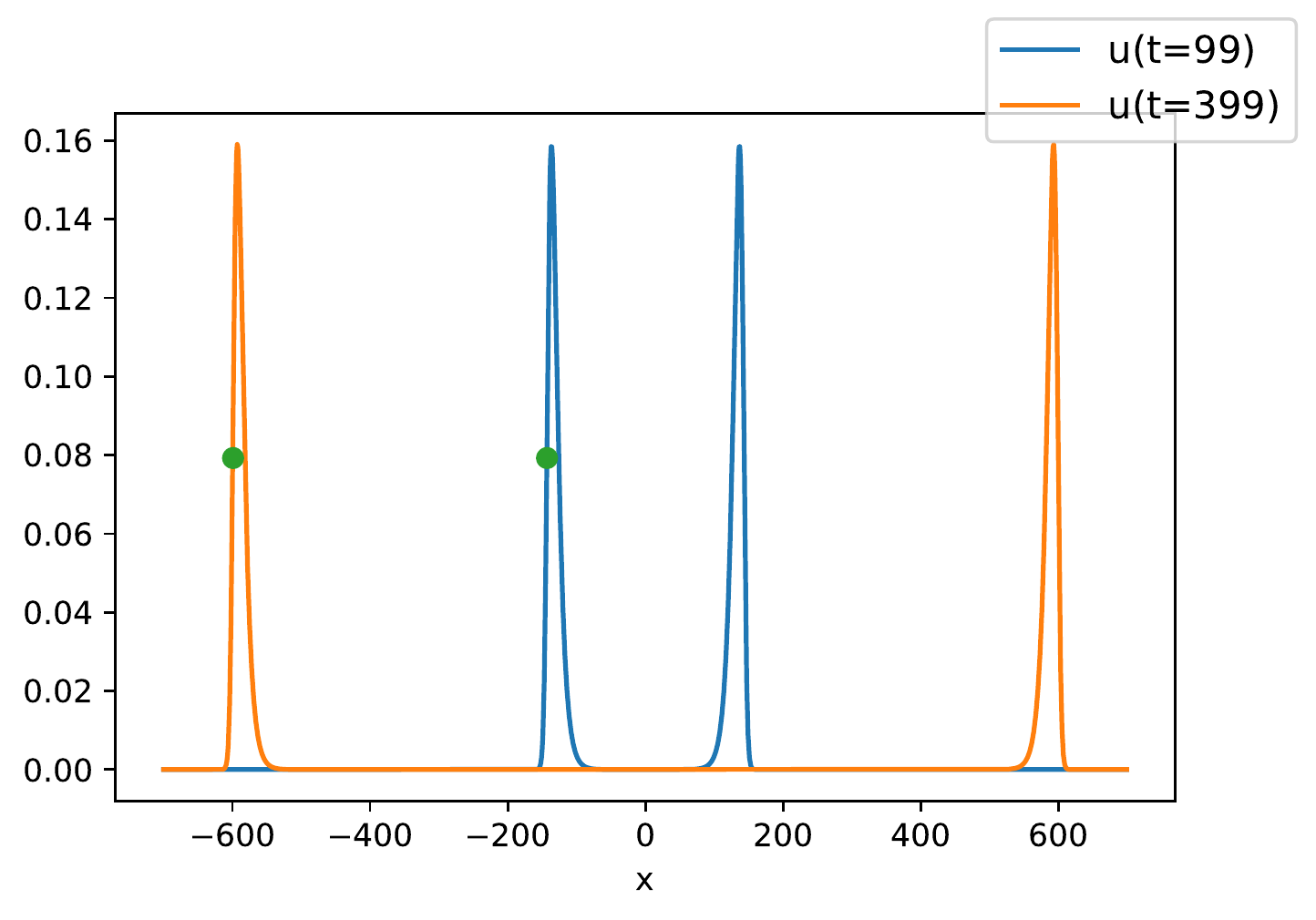}
\caption{Illustration of the method to compute the speed numerically. The horizontal coordinates of the green circles represent respectively the leftmost locations $x_1$ and $x_2$ where $u(t_1,\cdot)$ and $u(t_2,\cdot)$ reach half their supremum. The simulation is performed on~\eqref{ExampleInhibiting2} with $v_b=0.9$.\\ 
\href{https://drive.google.com/file/d/1oCFOPZqdSGBF0MQox1R4ElEGj1Vdajkm/view?usp=sharing}{\color{blue}\underline{Video: Speed\_v0=09.mp4}}  
}
\label{fig:Inhib_Speed_Explanation}
 \end{figure}
Then, we compute the speed as
\begin{equation*}
c=\frac{x_2-x_1}{t_2-t_1}.
\end{equation*}
We recall that, according to Section~\ref{sec:SpatialPropagation}, the speed of propagation of the solution is equal to 
$$c_b:=2\sqrt{v_b-\frac{1}{3}}.$$ 
We plot in~\autoref{fig:Inhib_Speed_Graph} the computed speed $c$ and the theoretical speed $c_b$, and we see that the two speeds are indeed equal.
\begin{figure}[p]
\center
\includegraphics[width=0.6\linewidth]{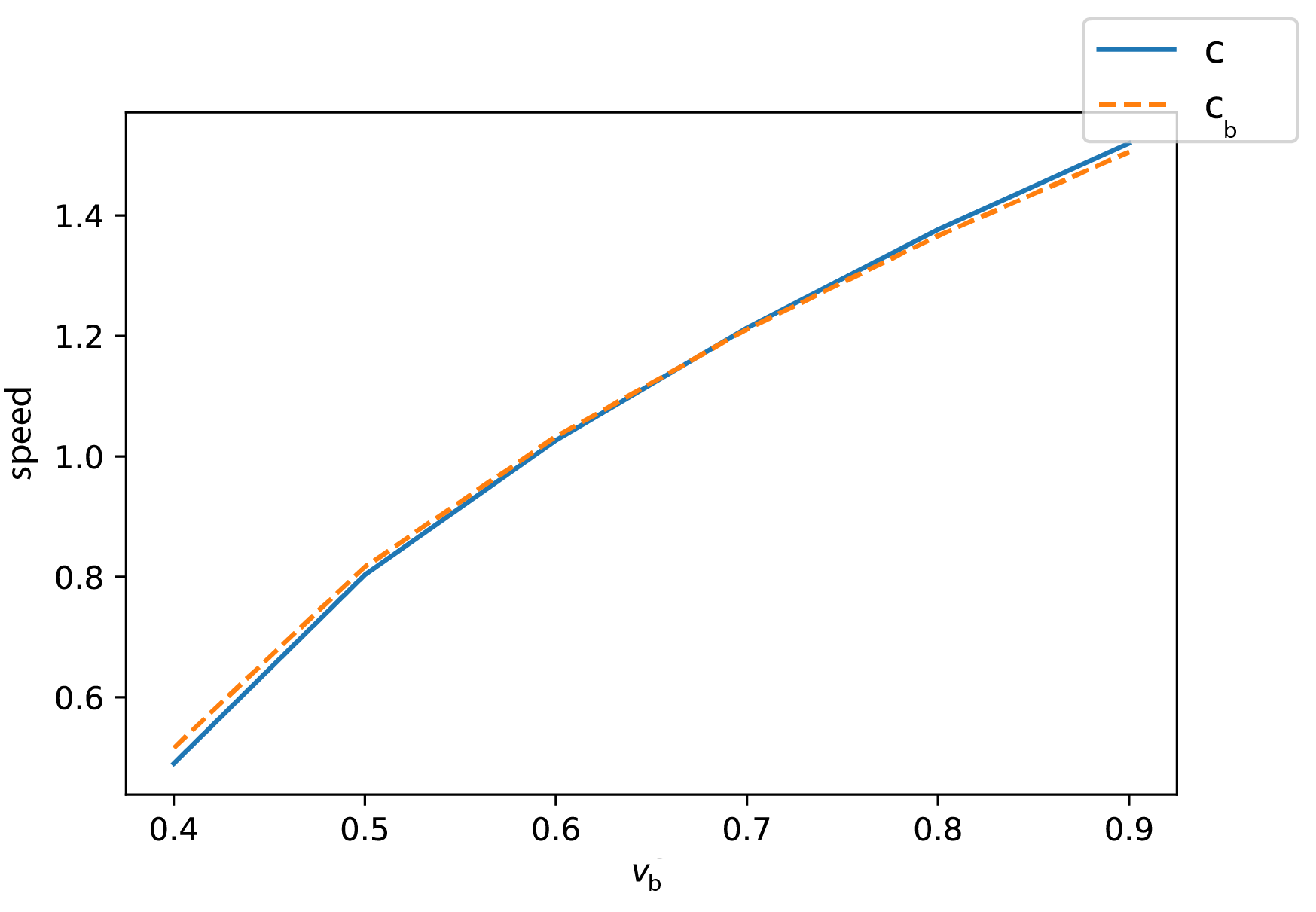}
\caption{Inhibiting case -- speed of propagation as a function of $v_b$. Blue solid line: empirical speed $c$ computed numerically via the solution of~\eqref{ExampleInhibiting2}. Dashed orange line: theoretical speed $c_b=2\sqrt{v_b-\frac{1}{3}}$.}
\label{fig:Inhib_Speed_Graph}
 \end{figure}

\subsubsection{Eventual level of social tension}

As we can see in~\autoref{fig:Inhib_Riot} and \autoref{fig:Inhib_RiotBIS}, the solution $(u,v)$ converges to some $(0,v_\infty)$ as $t\to+\infty$. We can verify numerically that $v_\infty\leq v_\star$, 
which is in agreement with \autoref{PropositionAsymptoticsInhibiting}.

It is an interesting question to estimate more precisely the final level of social tension $v_\infty$. 
Theoretical results in this direction are given by Propositions~\ref{PropVInftyLeqVStar} and~\ref{Propo_Moyenne_Inhib}.
Let us now study this question with numerical simulations.

We plot in figure~\autoref{fig:Inhib_RiotBIS} the simulation of 
equation~\eqref{ExampleInhibiting} with a high initial level of social tension $v_b=0.9$.
Compared with the simulation for $v_b=0.15$, in~\autoref{fig:Inhib_Riot}, we observe that $u$ reaches higher values, that its shape is sharper, and that the speed of propagation is larger.%\nota{on peut aussi dire que la vitesse est plus grande (dans lasubsection precedante)} 

\begin{figure}[p]
\center
\begin{subfigure}[p]{0.4\linewidth}
\includegraphics[scale=0.5]{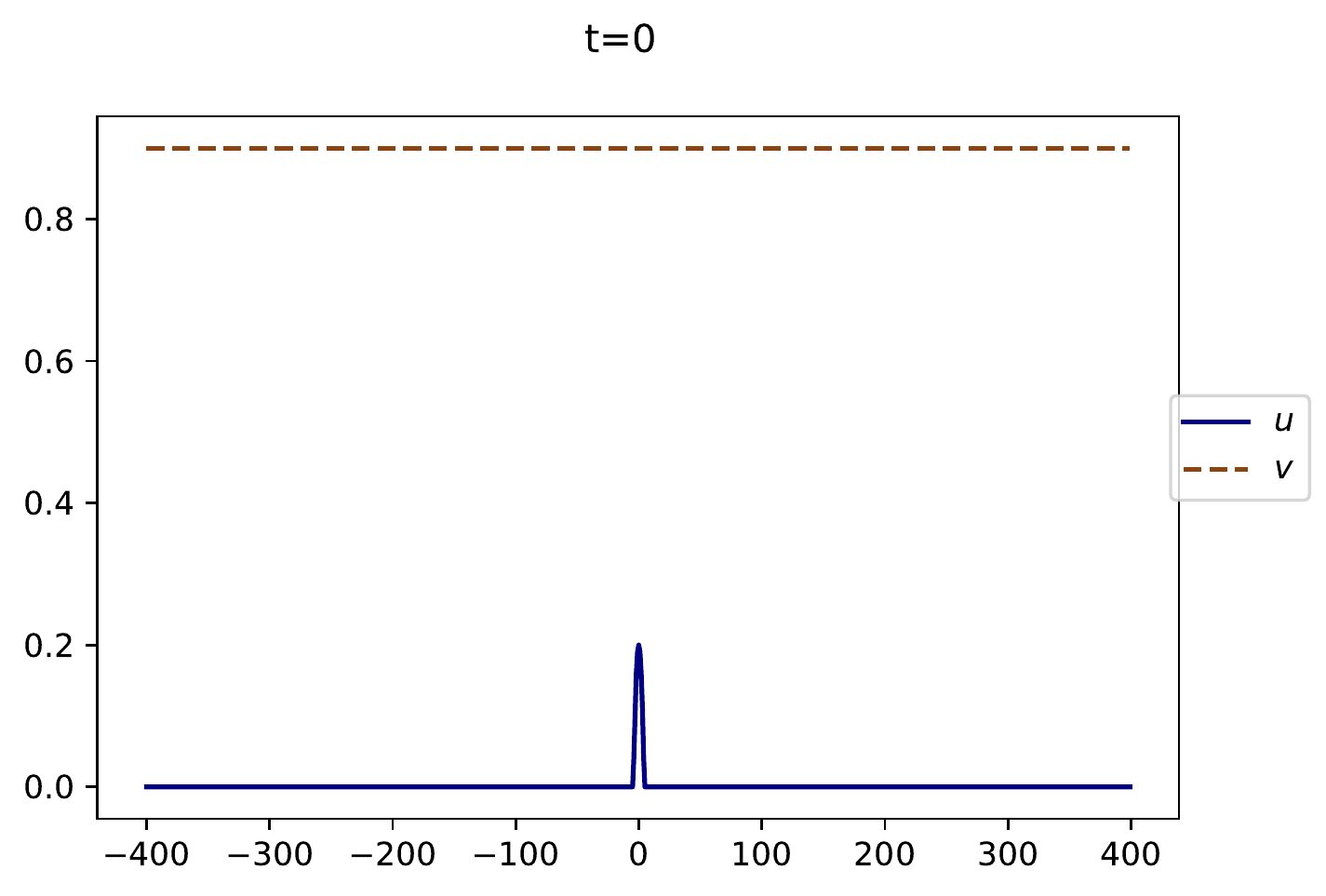}
\label{fig:Inhib_RiotBIS_1}
  \end{subfigure}
  \hfill
\begin{subfigure}[p]{0.4\linewidth}
\includegraphics[scale=0.5]{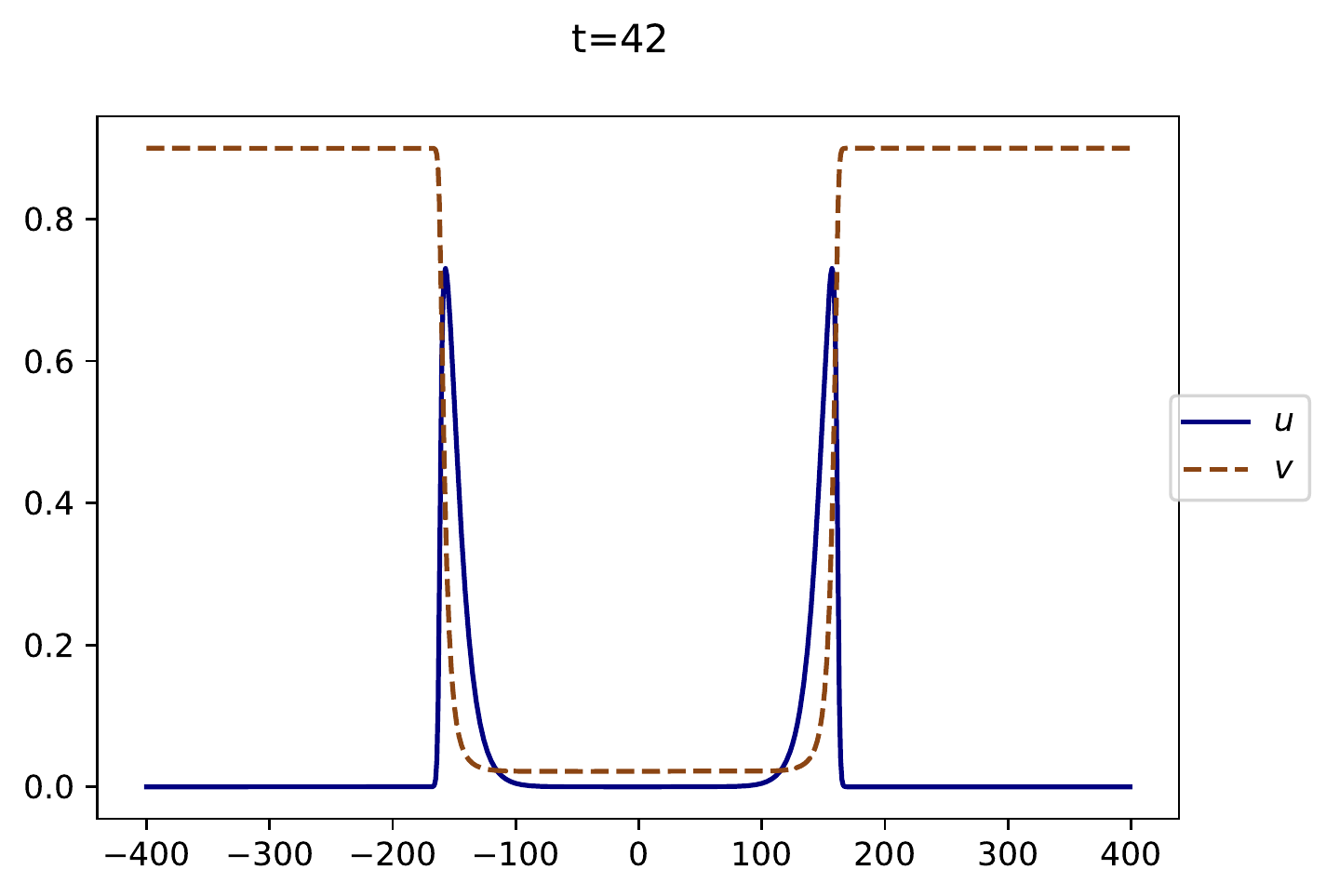}
\label{fig:Inhib_RiotBIS_2}
  \end{subfigure}
  \\
  \begin{subfigure}[p]{0.4\linewidth}
\includegraphics[scale=0.5]{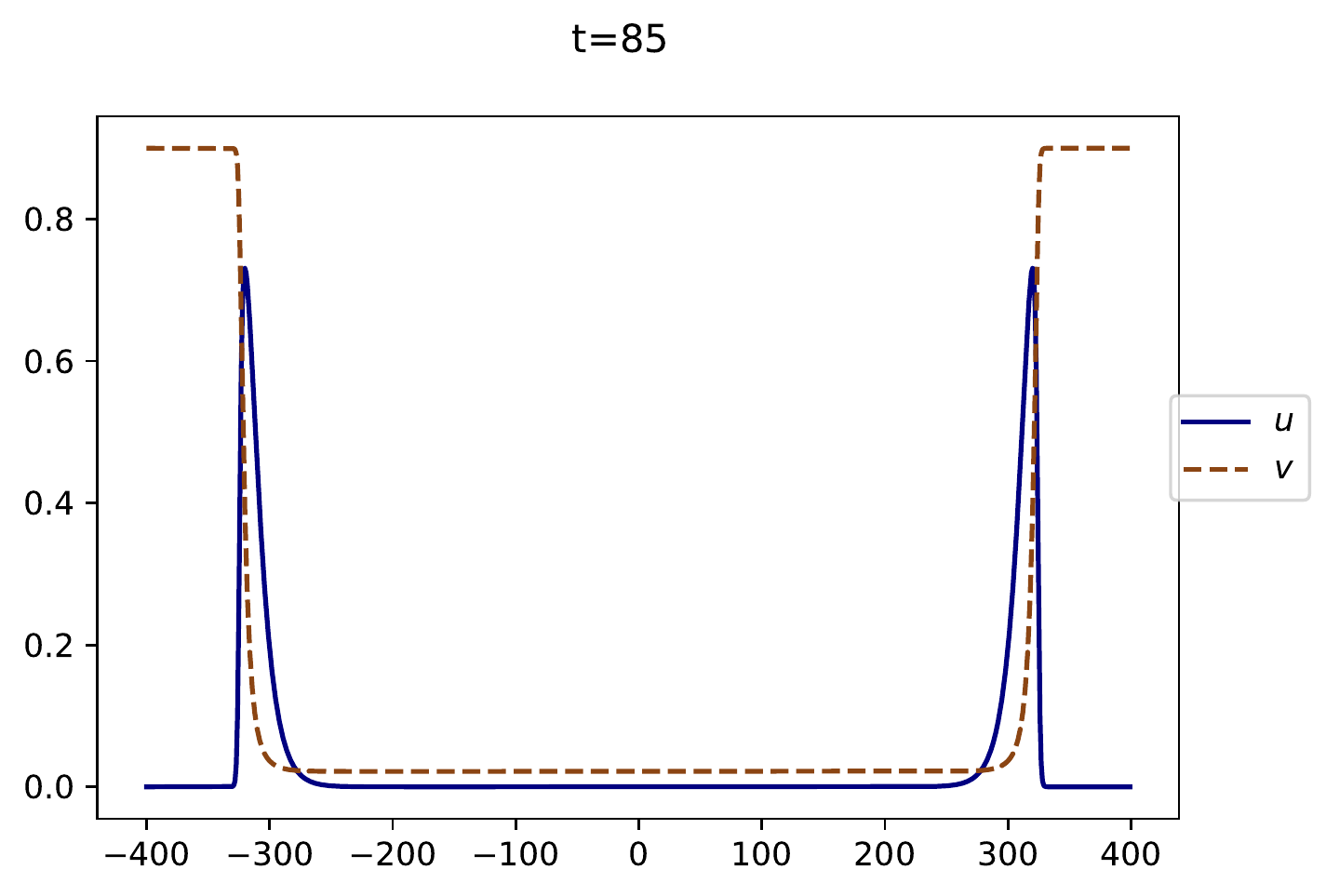}
\label{fig:Inhib_RiotBIS_3}
  \end{subfigure}
  \caption{Inhibiting case -- strong riot. 
  Snapshot at different times of the solution of~\eqref{ExampleInhibiting} with $v_b=0.9>v_\star$
  (compare with Figure~\ref{fig:Inhib_Riot} where $v_b=0.15$). 
  Horizontal axis: space. Blue solid line: $u(t,\cdot)$. Brown dashed line: $v(t,\cdot)$. 
\href{https://drive.google.com/file/d/1B-QNJXrc-0HWj6HcxC__QvvBoHTVNn1j/view?usp=sharing}{\color{blue}\underline{Video: Inhibiting\_v0=09.mp4}}  
  }
  \label{fig:Inhib_RiotBIS}
\end{figure}

We also observe in~\autoref{fig:Inhib_RiotBIS} that the eventual level of social tension $v(t=+\infty)$ is very low. To see this more precisely, we plot in~\autoref{fig:Inhib_Vinfty_V0} the value of $v_\infty$ as a function of~$v_b$ on the system~\eqref{ExampleInhibiting}. We observe that $v_\infty$ is indeed a decreasing function of $v_b$. From a modeling perspective, it means that the higher the initial level of social tension, the lower the final level of social tension. This phenomenon is in agreement with what was observed in~\autoref{fig:PlotV_Infty_exact} for the $SI$ model.

Now, let us focus on the dependence of $v_\infty$ with respect to the diffusion on social tension $d_2$. We consider \eqref{ExampleInhibiting2} with a varying diffusion on the second equation, that is,
\begin{equation}\label{ExampleInhibiting3}
\left\{\begin{aligned}
&\D_t  u -\D_{xx} u=u\left[v(1-u)-\frac{1}{3}\right],\\
&\D_t  v-d_2\D_{xx} v=-uv.
\end{aligned}\right.
\end{equation}
We fix $v_0\equiv v_b=0.5$ and $u_0(x)=0.2(1-\frac{x^2}{10})_+$. We plot in~\autoref{fig:Inhib_Vinfty_D2} the value of $v_\infty$ as a function of $d_2$. We observe that $d_2\mapsto v_\infty$ is increasing. From a modeling point of view, it means that the higher the diffusion on the social tension, the higher the final level of social tension. Heuristically, this is explained by the fact that $\lim_{\vert x\vert\to+\infty}v(t,x)=v_b>v_\infty$ and that, the larger $d_2$, the more $v(t,x)$ is influenced by its value at far distances.

\begin{figure}[p]
\center
\begin{subfigure}[p]{0.4\linewidth}
\includegraphics[scale=0.5]{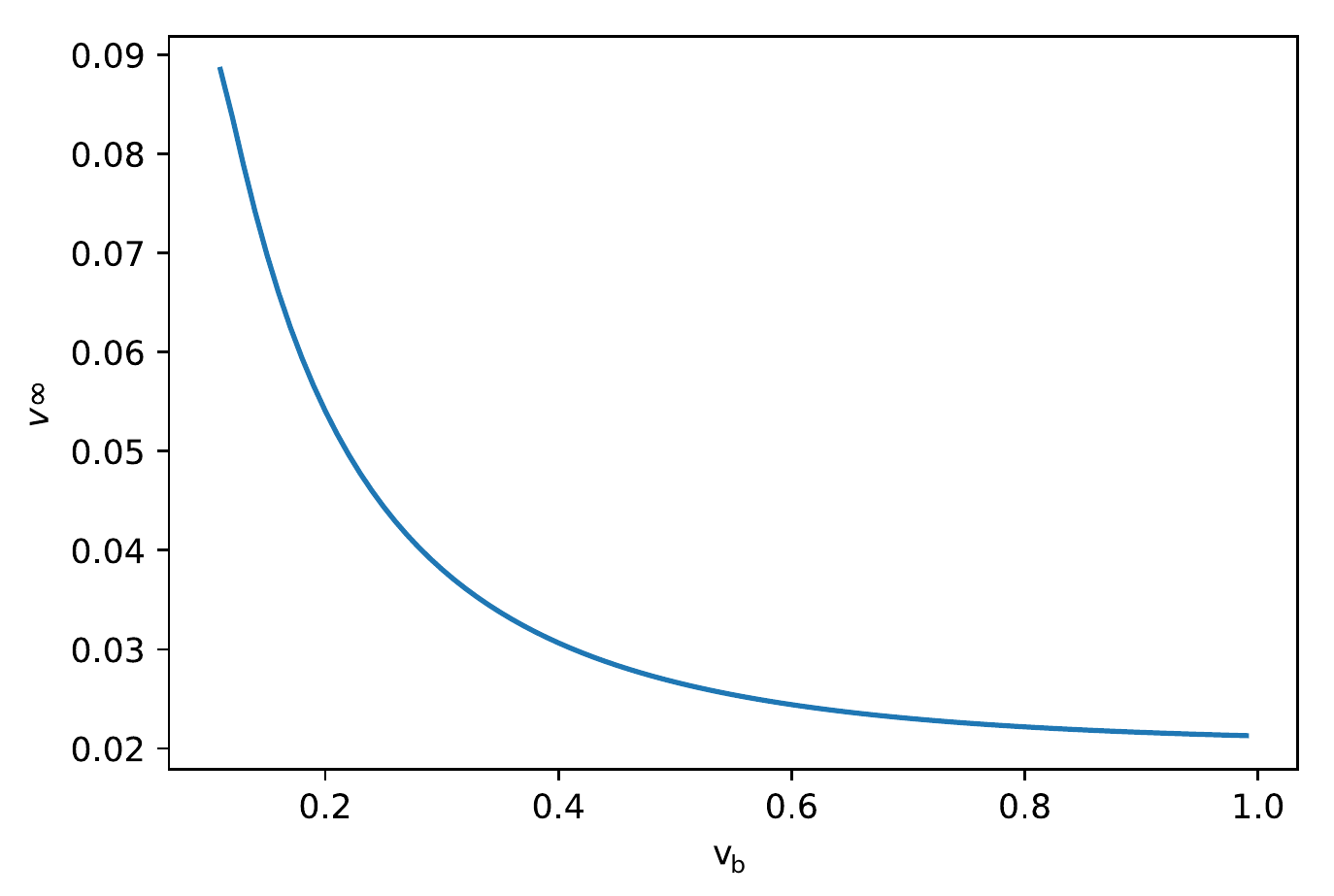}
\caption{$v_\infty$ as a function of $v_b$ in~\eqref{ExampleInhibiting}.}
\label{fig:Inhib_Vinfty_V0}
  \end{subfigure}
  \hfill
\begin{subfigure}[p]{0.4\linewidth}
\includegraphics[scale=0.5]{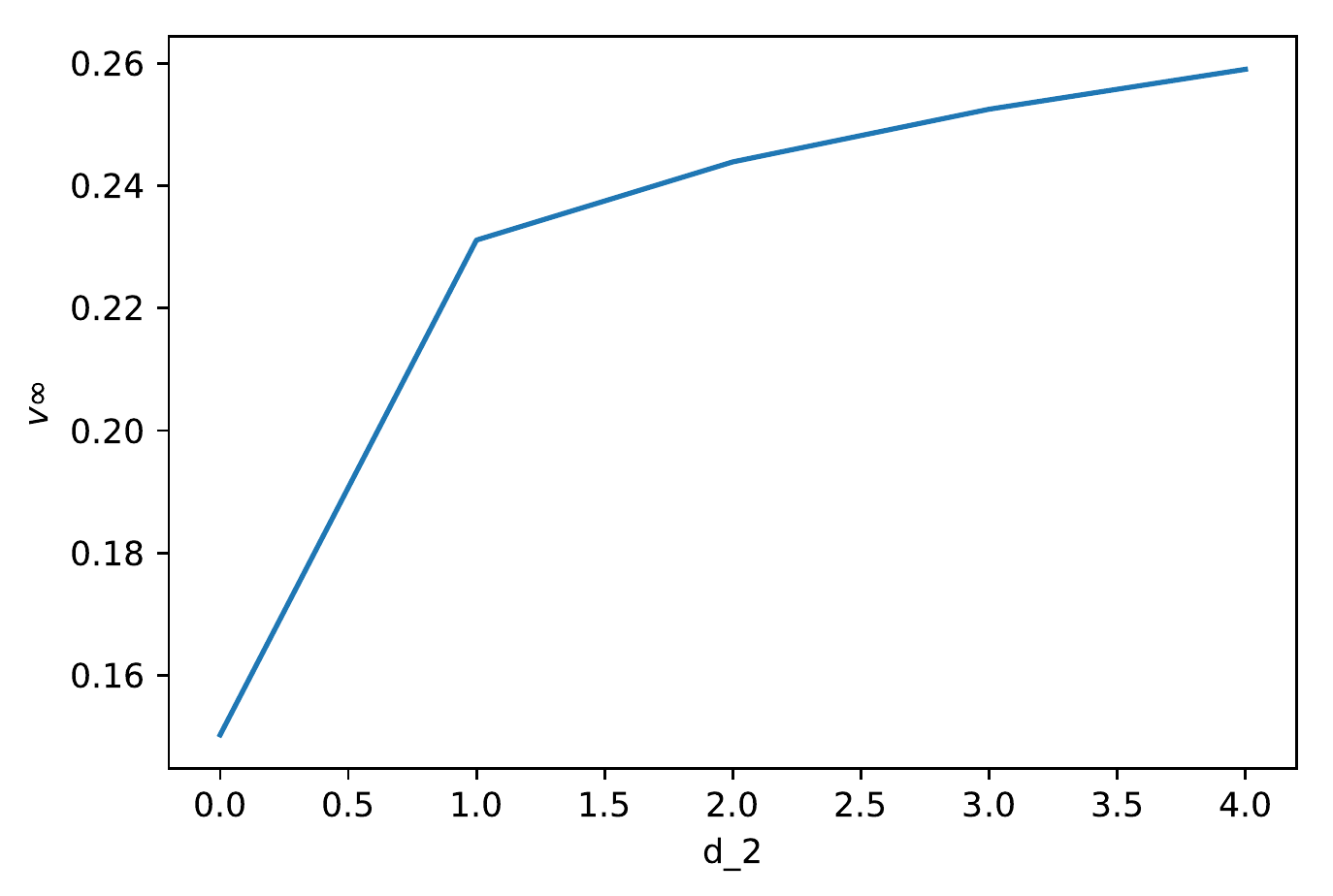}
\caption{$v_\infty$ as a function of $d_2$ in~\eqref{ExampleInhibiting3}.}
\label{fig:Inhib_Vinfty_D2}
  \end{subfigure}
  \caption{Inhibiting case --  eventual level of social tension $v_\infty=\lim_{t\to+\infty}v(t,\cdot)$ as a function of the parameters for the solution of~\eqref{ExampleInhibiting3}.}
\end{figure}

\section{Tension Enhancing - dynamics of a lasting upheaval}\label{sec:enhancing}
The \emph{tension enhancing} structure relies on the
 following saturation assumptions on~$f$ and positivity on~$\Psi$:
\begin{equation}\label{AssumptionTensionEnhancing}
\begin{cases}
f\text{ is strictly decreasing and }f(M)\leq 0 & \text{for some $M>0$},\\
\Psi(u,v)>0 & \text{$\forall u\in(0,M),\ v\in(0,1)$}.
\end{cases} 	
 \end{equation}

%, that we recall here,
%with $H\equiv0$ and $G(u,v)=v(1-v)g(u,v)$, i.e.,
%\nota{system PARTOUT}
%\begin{equation}\label{GeneralEquationMotivationFinal}
%\left\{\begin{aligned}
%&\D_t  u -d_1 \Delta u=uF(u,v):=u\big[r(v)f(u)-\omega\big],\\
%&\D_t  v-d_2\Delta v=\Psi(u,v):=uG(u,v)+(v_b-v)H(u,v).
%\end{aligned}\right.
%\end{equation}

%As mentionned before, since $\tilde Psi_2\equiv0$, assumption~\eqref{AssumptionPrincipaleInitialConditions} is automatically satisfied for any $v_b\in(0,1)$. 
The assumption on $f$ accounts for the saturation effect on the level of social unrest. A typical example is to take $f(u)=M-u$. As a direct consequence, the parabolic comparison principle yields 
\begin{equation}\label{lemma:u<M}
\limsup_{t\to+\infty} u(t,x)\leq M,\quad\forall x\in\R^n.
\end{equation}
%but we will give a more precise result on the asymptotic behavior of $u$ in~\autoref{PropositionAsymptoticsEnhancing}.

The assumption on $\Psi$ essentially means that $u$ has a positive feedback on $v$ (we do not assume, however, that $u\mapsto \Psi(u,v)$ is increasing). 
Again, as a direct consequence of the positivity of $\Psi$ we have that
\begin{equation}\label{lemma:v>v0}
v(t,x)> v_0\equiv v_b\qquad  \forall\ t>0,\ x\in\R^n.
\end{equation}

A typical example of the \emph{tension enhancing} case is the cooperative system
\begin{equation*}
\left\{\begin{aligned}
&\D_t  u -d_1 \Delta u=u\big[v(1-u)-\omega \big],\\
&\D_t  v-d_2\Delta v=uv(1-v),
\end{aligned}\right.
\end{equation*}
obtained by taking $r(u,v)=v$, $f(u,v)\equiv1-u$, $\Psi(u,v)=uv(1-v)$ 
in~\eqref{GeneralEquationMotivationFinal}.

The \emph{tension enhancing} assumption typically grasps the dynamics of a persisting movement of social unrest, 
that we refer to here as a \emph{lasting upheaval}.
A good heuristic of the behavior of the model is given by a formal analysis of the underlying $ODE$ system. 
Taking $u_0$ constant implies that the solutions $u$ and $v$ of~\eqref{GeneralEquationMotivationFinal} do not depend on $x$. If the initial level of social tension is above the threshold $v_\star=\omega$
from~\eqref{Def_v_star},
then any \emph{triggering event} ignites a \emph{burst} of social unrest. The level of social tension then \emph{increases}, which enhances the growth of social unrest. Eventually, both the level of social tension and social unrest converge to an excited state, $v\to1$ and $u\to u_\star(1)=1-\omega$ 
from definition~\eqref{def:u_star}.

We begin by stating qualitative properties for the traveling waves, then we investigate the 
large time behavior of solutions to~\eqref{GeneralEquationMotivationFinal}. 
On this latter question, we obtain more complete results than in the inhibiting case (Section~\ref{sec:inhibiting}), since we prove the  convergence of the solution to an excited state as $t\to+\infty$. We then present some numerical simulations to illustrate the results and to investigate further properties. % in Section~\ref{sec:NumericsEnhancing}.

%\subsection{Properties of the tension enhancing systems}
%
%We give theoretical results that complement the statements from Section~\ref{sec:Analysis}. For readability, we postpone the proofs of the results in Section~\ref{sec:ProofsEnhancing}

\subsection{Traveling waves}\label{sec:enhancing_TW}

Let us first discuss the existence and non-existence of traveling solutions
of \eqref{SystemRiot}-\eqref{SystemBorderGeneralRiot} using the results of~\cite{Berestycki2019b}.
Consider the function $\gamma:[c_1,+\infty)\to\R$ defined~by
\begin{equation*}
\gamma(c):=\frac{\sqrt{c^2-c_b^2}-\sqrt{c^2- c_1^2}}{2c},
\end{equation*}
where $c_b$ and $c_1$ are defined in~\eqref{Def_c}. Because $c_b<c_1$, we have that $\gamma$ is positive and decreasing, with $\gamma(c_1)= \frac{1}{2}\sqrt{1-\frac{c_b^2}{ c_1^2}}$ and $\gamma(+\infty)=0$. Let us define
\begin{equation}\label{def_c_bar}
 c_\gamma:=\inf\left\{c\geq c_1: d_2\gamma(c)\leq d_1\right\}\in[c_1,+\infty).
\end{equation}
We point out that%\nota{pas sur}
\begin{equation*}
c_\gamma=\left\{\begin{aligned}
&c_1, &&\text{if }d_2\gamma(c_1)\leq d_1,\\
&\gamma^{-1}\left(\frac{d_1}{d_2}\right), &&\text{otherwise}.
\end{aligned}\right.
\end{equation*}
Recall that the sign of $v_b-v_\star$ (from definition~\eqref{Def_v_star}) coincides with the one of $K_b$ (from~\eqref{defK0}).

Under assumptions~\eqref{AssumptionTensionEnhancing}, we know from~\cite[Theorem 4]{Berestycki2019b} that if $v_b>v_\star$ then the following hold:
\begin{itemize}
\item there exists no traveling wave with speed $c<c_b$;
\item there exists a traveling wave with any speed $c>c_\gamma$.
\end{itemize}
In particular, if $d_2$ is sufficiently small, then $c_\gamma=c_1$, and so there exists a traveling wave for any $c>c_1$.

We then reclaim~\cite[Theorem 9]{Berestycki2019b} which eastablish properties on the shape of the traveling waves. Namely, under the enhancing assumption~\eqref{AssumptionTensionEnhancing}, any traveling wave
satisfies
\begin{equation}\label{PropositionQualitativeEnhancingTW}
U',V'>0,\qquad U(+\infty)= u_\star(1):=f^{-1}\left(\frac{\omega}{r(1)}\right), \qquad V(+\infty)=1.
\end{equation}
We also recall that $u_\star(1)$ is defined in~\eqref{def:u_star} as the only positive root of $r(1)f(\cdot)-\omega$.
Note that, contrarily to the analogous result in the inhibiting case in~\eqref{PropositionQualitativeInhibiting}-\eqref{PropositionQualitativeInhibitingTW}, we are here able to determine explicitely the limit of $U$ in $+\infty$.

\subsection{Large time behavior for the Cauchy problem}
In the previous section, we have identified the limits of traveling waves as $\xi\to+\infty$. 
We now show that the same limits hold 
for the solution of~\eqref{GeneralEquationMotivationFinal} as $t\to+\infty$.
\begin{theorem}\label{PropositionAsymptoticsEnhancing}
Assume that~\eqref{AssumptionTensionEnhancing} holds and that $v_b>v_\star$.
Then any solution 
of~\eqref{GeneralEquationMotivationFinal} with $v_0\equiv v_b$ and $u_0$ compatcly supported satisfies
\begin{equation}\label{LimitingStatesEnhancing}
\lim\limits_{t\to+\infty}u(t,x)= u_\star(1):=f^{-1}\left(\frac{\omega}{r(1)}\right),\qquad
 \lim\limits_{t\to+\infty}v(t,x)=1,
\end{equation}
locally uniformly in $x\in\R^n$.
\end{theorem}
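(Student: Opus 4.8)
The plan is to establish \eqref{LimitingStatesEnhancing} by a squeezing argument that decouples the system into a pair of \emph{scalar} comparisons, thereby circumventing the absence of a cooperative/monotone structure (recall that $u\mapsto\Psi(u,v)$ is not assumed monotone). Throughout I would rely on the a priori bounds already at hand: $v_b<v(t,x)<1$ for $t>0$ (from Lemma~\ref{Lemma_uv} and \eqref{lemma:v>v0}) and $\limsup_{t\to\infty}u(t,x)\le M$ (from \eqref{lemma:u<M}). Write $u_\star(v):=f^{-1}(\omega/r(v))$ as in \eqref{def:u_star}, and let $u_f$ denote the first zero of $f$, so that $f\ge0$ on $[0,u_f]$ and, since $f(u_\star(v))=\omega/r(v)>0$, one has $u_\star(v)<u_f\le M$ for every $v\in(v_\star,1]$. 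The three steps are: (i) a uniform positive lower bound for $u$ using only $v\ge v_b$; (ii) convergence $v\to1$ using that lower bound together with $\Psi>0$; (iii) convergence $u\to u_\star(1)$ once $v$ is close to $1$.

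\textbf{Step 1 (positive lower bound for $u$).} Because $v>v_b>v_\star$, definition \eqref{Def_v_star} gives $r(v_b)f(0)-\omega>0$, so the scalar equation $\partial_t w-d_1\Delta w=w\big[r(v_b)f(w)-\omega\big]$ is monostable (KPP type) with unstable state $0$ and stable state $u_\star(v_b)>0$. Choosing a small compactly supported $\underline u_0\le u_0$ with $\underline u_0\le u_\star(v_b)$, its solution $w$ stays in $[0,u_\star(v_b)]$ and, by the hair-trigger (Aronson--Weinberger) result, $w\to u_\star(v_b)$ locally uniformly. On $[0,u_\star(v_b)]\subset[0,u_f]$ one has $f\ge0$, hence $r(v)f(w)\ge r(v_b)f(w)$ for $v\ge v_b$; thus $u$ is a supersolution of this scalar equation, and comparison yields $u\ge w$. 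Therefore $\liminf_{t\to\infty}u(t,x)\ge u_\star(v_b)>0$ locally uniformly in $x\in\R^n$.

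\textbf{Step 2 ($v\to1$), then Step 3 ($u\to u_\star(1)$).} Fix a compact $K$ and $\eta>0$. By Step 1 there are $\delta:=u_\star(v_b)/2$, a ball $B_R\supset K$ and $T>0$ with $u\ge\delta$ on $B_R\times[T,\infty)$. Setting $h(v):=\inf_{u\in[\delta,M+1]}\Psi(u,v)$, which is positive on $[v_b,1)$ by \eqref{AssumptionTensionEnhancing} with $h(1)\le0$, the field $v$ is a supersolution of $\partial_t\underline v-d_2\Delta\underline v=h(\underline v)$ on $B_R\times[T,\infty)$. Solving this scalar reaction--diffusion equation on $B_R$ with datum $v_b$ and Dirichlet value $v_b$ on $\partial B_R$, the stable state $1$ invades the interior, so for $R,t$ large $\underline v\ge1-\eta$ on $K$; comparison gives $v\ge\underline v$, and since $v<1$ we get $v\to1$ locally uniformly. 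For Step 3, $v\to1$ forces $r(1)-\epsilon\le r(v)\le r(1)$ on $K$ for large $t$. The bound $r(v)\le r(1)$ makes $u$ a subsolution (where $f\ge0$) of the monostable equation with reaction $w[r(1)f(w)-\omega]$, whose solutions tend to $u_\star(1)$, so $\limsup u\le u_\star(1)$; the lower bound comes from dominating the reaction by $w[r(1-\epsilon')f(w)-\omega]$, whose stable state $u_\star(1-\epsilon')\to u_\star(1)$ as $\epsilon'\to0$, via a Dirichlet subsolution on a large ball. Hence $u\to u_\star(1)$ locally uniformly, which is \eqref{LimitingStatesEnhancing}.

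\textbf{Main obstacle.} The principal difficulty is precisely the lack of cooperativity in the $v$--equation, which forbids a direct monotone comparison for the full coupled system; the remedy is the one--directional decoupling above, in which Step 1 uses only $v\ge v_b$ and Step 2 uses only the lower bound on $u$ produced by Step 1, with no circularity. Two recurring technical points require care: the reversal of the KPP comparison when $u$ exceeds the zero $u_f$ of $f$ (the sign of $r(v)f(u)-r(1)f(u)$ flips), which I would neutralize by truncating $f$ to vanish beyond $u_f$ --- legitimate because $\limsup u\le M$ and all limiting values $u_\star(\cdot)$ lie strictly below $u_f$ --- and the passage from the spreading statements of \cite{Berestycki2019b} (or of Aronson--Weinberger) to genuine locally uniform $\liminf$ bounds, which I would obtain through the Dirichlet subsolutions on large balls that invade the relevant stable state.
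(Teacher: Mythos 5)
Your strategy is sound and the result it yields is the right one, but your route is genuinely different from the paper's. The paper's proof (for $d_2>0$) runs in the order: first a \emph{uniform upper bound} $\limsup_{t\to+\infty}\sup_x u\leq u_\star(1)<M$ obtained by comparing with the ODE $U'=U[r(1)f_+(U)-\omega]$, $f_+:=\max\{f,0\}$; then a \emph{global} lower bound $v\geq \underline v'>v_\star$ obtained not from a lower bound on $u$ but from the fact that $(u,v)\to(0,v_b)$ as $|x|\to+\infty$ (this is where compact support of $u_0$ is used) combined with $v$ being a supersolution of the heat equation once $u<M$; then a lower bound $\liminf u\geq u_\star(\underline v')$ via a KPP subsolution and \cite{Aronson1978a}; and finally a compactness step in which limits along arbitrary time sequences are entire solutions, classified via the enhancing structure to be $(u_\star(1),1)$. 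You instead prove the lower bound on $u$ first (using only $v>v_b$), then push $v$ all the way to $1$ by an invasion argument for the scalar nonlinearity $h(s)=\inf_z\Psi(z,s)$ on large balls, and finally squeeze $u$ between $u_\star(1-\eps')$ and $u_\star(1)$. Your route buys three things: it never uses compact support of $u_0$, it dispenses with the entire-solution/Liouville-type classification at the end, and it adapts to $d_2=0$ with only a trivial change (your Step 2 becomes a pointwise ODE comparison, $\partial_t v\geq h(v)$, on the set where $u\in[\delta,M']$), whereas the paper gives a separate, structurally different proof for $d_2=0$. What the paper's ordering buys is explained next.

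There are two concrete flaws, both repairable, and both repaired most cleanly by importing the paper's first step \emph{before} your Step 1. First, your fix for the sign reversal of $f$ is wrong as stated: truncating $f$ to $f_+$ makes $u$ a \emph{subsolution} of the truncated equation (since $u[r(v)f(u)-\omega]\leq u[r(1)f_+(u)-\omega]$ always, which is exactly why the truncation works for upper bounds), but in your Steps 1 and 3 you need $u$ to be a \emph{supersolution} of an autonomous KPP equation, and where $u>u_f$ one has $u[r(v)f(u)-\omega]<-\omega u=u[r(v)f_+(u)-\omega]$, so the supersolution inequality fails for the truncated reaction as well; moreover ``$\limsup u\leq M$'' does not confine $u$ below $u_f$, since $u_f\leq M$. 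The comparison $u\geq w$ is nevertheless true, either (a) because any contact point of $u$ with the comparison function satisfies $u=w\leq u_\star(v_b)<u_f$, so the differential inequality for $w-u$ holds on the set $\{w>u\}$ and the standard maximum-principle proof of comparison goes through, or (b) more simply, by first establishing $\limsup_{t\to\infty}\sup_{x}u\leq u_\star(1)$ (the paper's $f_+$/ODE argument), so that $u(t,\cdot)\leq M'<u_f$ for all $t\geq T$, after which every one of your comparisons is valid verbatim for $t\geq T$. Second, your function $h(s):=\inf_{z\in[\delta,M+1]}\Psi(z,s)$ need not be positive on $[v_b,1)$: assumption \eqref{AssumptionTensionEnhancing} controls the sign of $\Psi$ only for $u\in(0,M)$, and \eqref{lemma:u<M} alone does not exclude values of $u$ at or above $M$. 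You must take the infimum over $[\delta,M']$ with $M'<M$, and justify $u\leq M'$ for large times --- again exactly the refined upper bound above. Once these two repairs are made (and the standard large-ball invasion lemma for $h$, with $h>0$ on $[v_b,1)$ and $h(1)=0$, is quoted or proved by the usual translation-plus-strong-maximum-principle argument), your proof is complete and somewhat more self-contained than the paper's.
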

This theorem states that the level of social unrest converges to a sustainable excited 
state $u_\star(1)$. From a modeling point of view, 
this corresponds to a persisting social movement, that we refer to as a \emph{lasting upheaval}.

%\begin{remark}
%Note that this result still holds in the degenerate case $v_b=v_\star$, but we omit this case in the proof for simplicity.
%\end{remark}

\subsection{Numerical simulations}\label{sec:NumericsEnhancing}

In this section we provide some numerical illustrations of the dynamics of system~\eqref{GeneralEquationMotivationFinal} in the enhancing case~\eqref{AssumptionTensionEnhancing}. 
%As before, the numerical simulations are performed with a standard explicit finite-difference scheme with $d t=0.05$ and $d x= 1$. Below each figure, we give the name of the corresponding video that can be found online.

\subsubsection{Threshold between calm and lasting upheaval}
Let us consider the following particular instance of~\eqref{GeneralEquationMotivationFinal} satisfying the enhancing assumption~\eqref{AssumptionTensionEnhancing}:
\begin{equation}\label{ExampleEnhancing}
\left\{\begin{aligned}
&\D_t  u -\D_{xx} u=u\left[v(1-u)-\frac{1}{3}\right],\\
&\D_t  v-\D_{xx} v=uv(1-v),
\end{aligned}\right.
\end{equation}
with initial condition $u_0(x)=0.2(1-x^2)_+$ and $v_0\equiv v_b$ ranging in $(0,1)$. In this case we have
$v_\star= \nicefrac{1}{3}$ and $K_b:= v_b-\nicefrac{1}{3}$.

\autoref{fig:Enhanc_Calm} refers to the case $v_b<v_\star$.
We observe there that a triggering event is promptly 
followed by a return to calm, namely, $u$ rapidly vanishes. Next, 
$v$ converges in long time to its initial value $v_b$. This is in agreement with the discussion in Section~\ref{sec:ResumptionCalm}. Let us emphasize, however, that this is only true when considering a sufficiently small initial condition $u_0(x)$, see Section~\ref{sec:enhancing_MagnitudeTriggering} for more details.

\begin{figure}[p]
\center
\begin{subfigure}[p]{0.4\linewidth}
\includegraphics[scale=0.5]{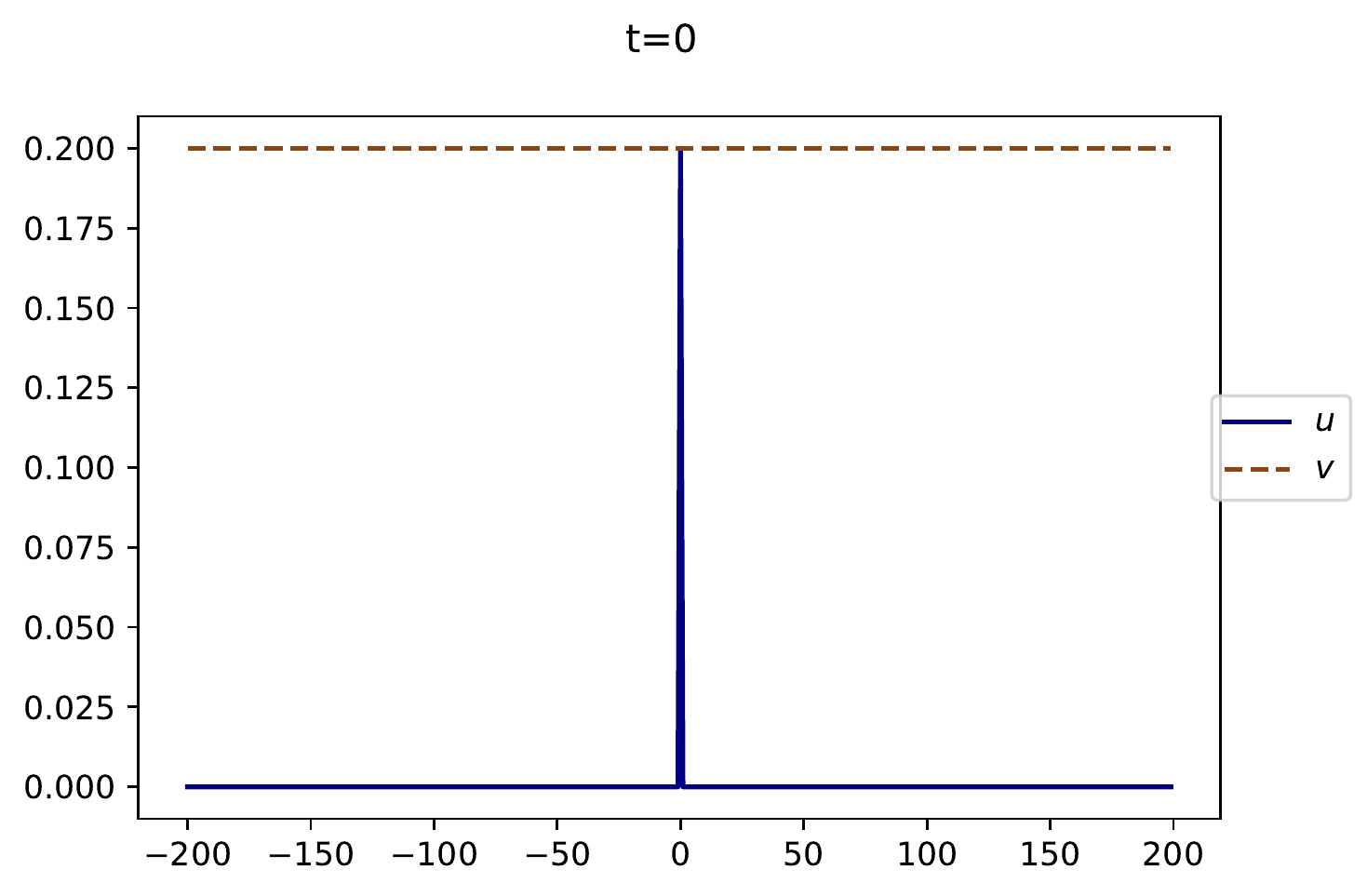}
  \end{subfigure}
  \hfill
\begin{subfigure}[p]{0.4\linewidth}
\includegraphics[scale=0.5]{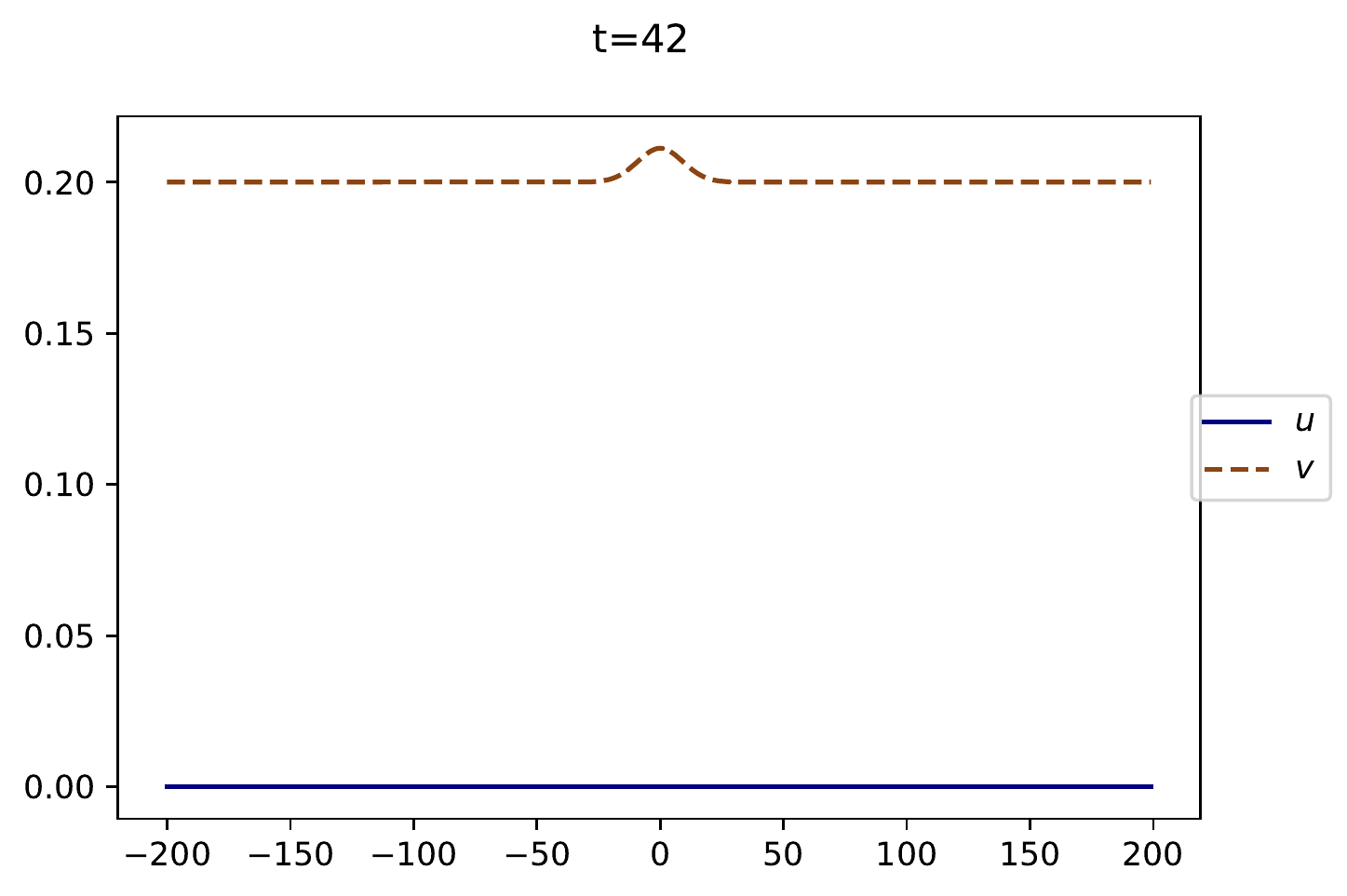}
  \end{subfigure}
  \\
  \begin{subfigure}[p]{0.4\linewidth}
\includegraphics[scale=0.5]{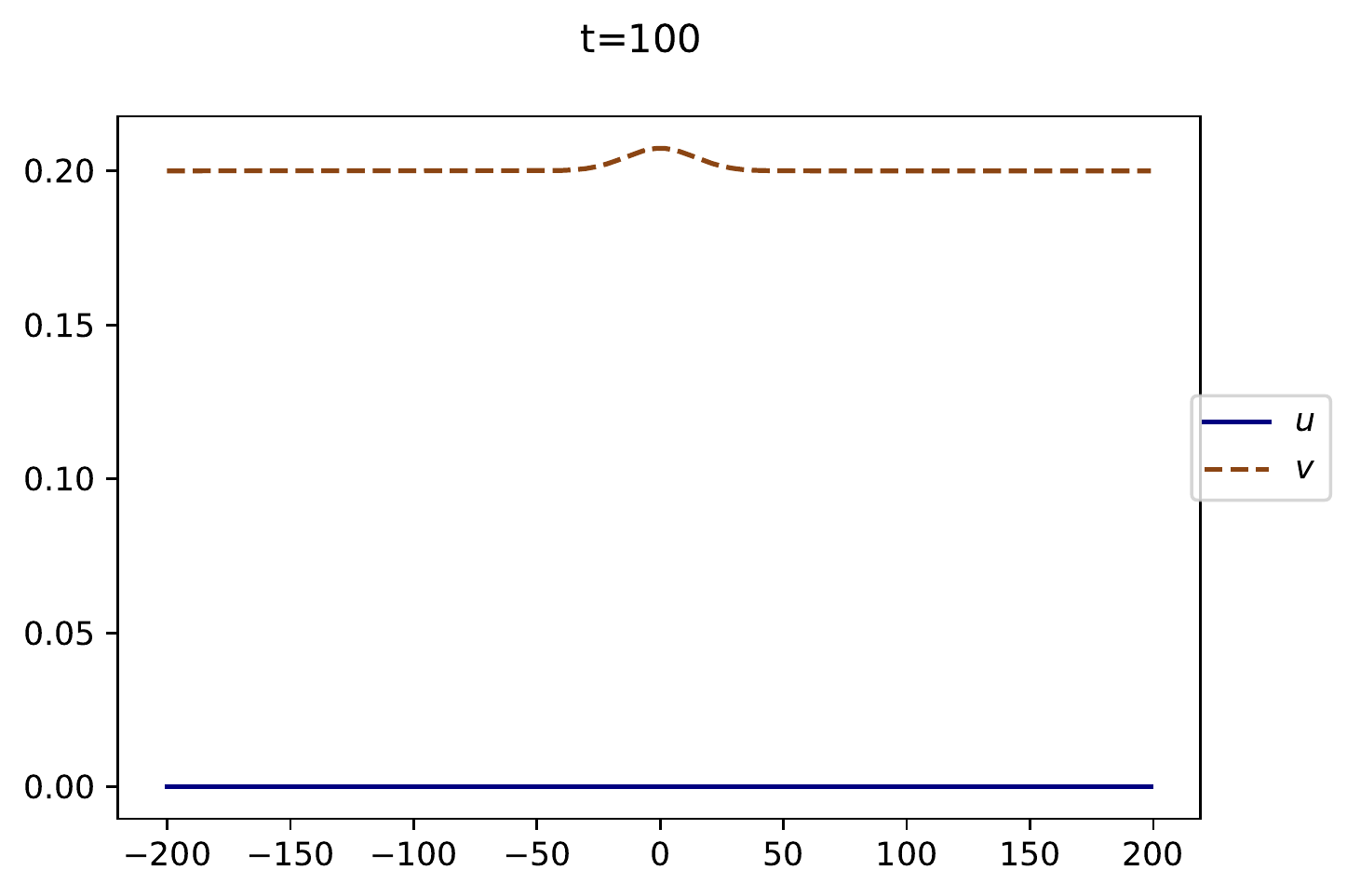}
  \end{subfigure}
  \caption{Enhancing case -- return to calm. Snapshots at different times of the space distribution of the solution of~\eqref{ExampleEnhancing} with $v_b=0.2<v_\star$. Horizontal axis: space. Blue solid line: $u(t,\cdot)$. Brown dashed line: $v(t,\cdot)$.
\href{https://drive.google.com/file/d/1h-5hvAqnKxpn5opKZbg4E6Ic1E5xVIXJ/view?usp=sharing}{\color{blue}\underline{Video: Enhancing\_v0=02.mp4}}    
  }
  \label{fig:Enhanc_Calm}
\end{figure}

On the contrary, if $v_b>v_\star$, a small triggering event ignites a burst of social unrest that spreads through space, as can be seen on \autoref{fig:Enhanc_Revolution}. This is in agreement with Sections~\ref{sec:Burst} and~\ref{sec:SpatialPropagation}. More precisely,
the simulation shows that the solution converges to two traveling waves, one leftwards and the other one rightwards, i.e., two fixed profiles moving at a constant speed.
The profiles for both $u$ and $v$ have the shape of 
monotone waves, as stated in \autoref{PropositionQualitativeInhibitingTW}. In addition, the solution 
converges pointwise to $(u_\star(1),1)$ as $t\to+\infty$, 
in agreement with~\autoref{PropositionAsymptoticsEnhancing}.
%\autoref{PropositionQualitativeInhibitingTW} and 

Since the asymptotic state as $t\to+\infty$ features a positive level of activity, the dynamics describe a persisting movement of social unrest, that we call a \emph{lasting upheaval}.

\begin{figure}[p]
\center
\begin{subfigure}[p]{0.4\linewidth}
\includegraphics[scale=0.5]{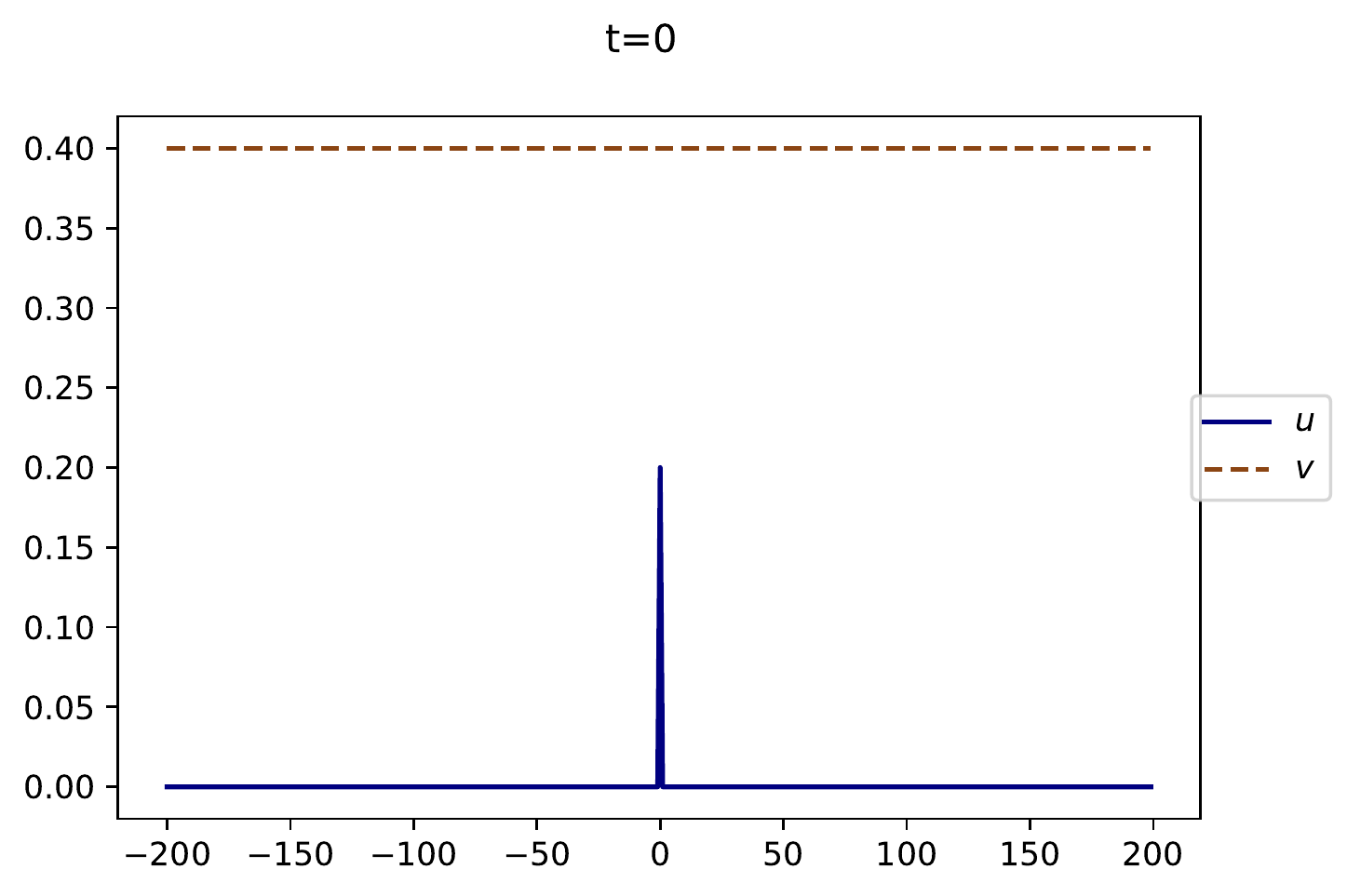}
  \end{subfigure}
  \hfill
\begin{subfigure}[p]{0.4\linewidth}
\includegraphics[scale=0.5]{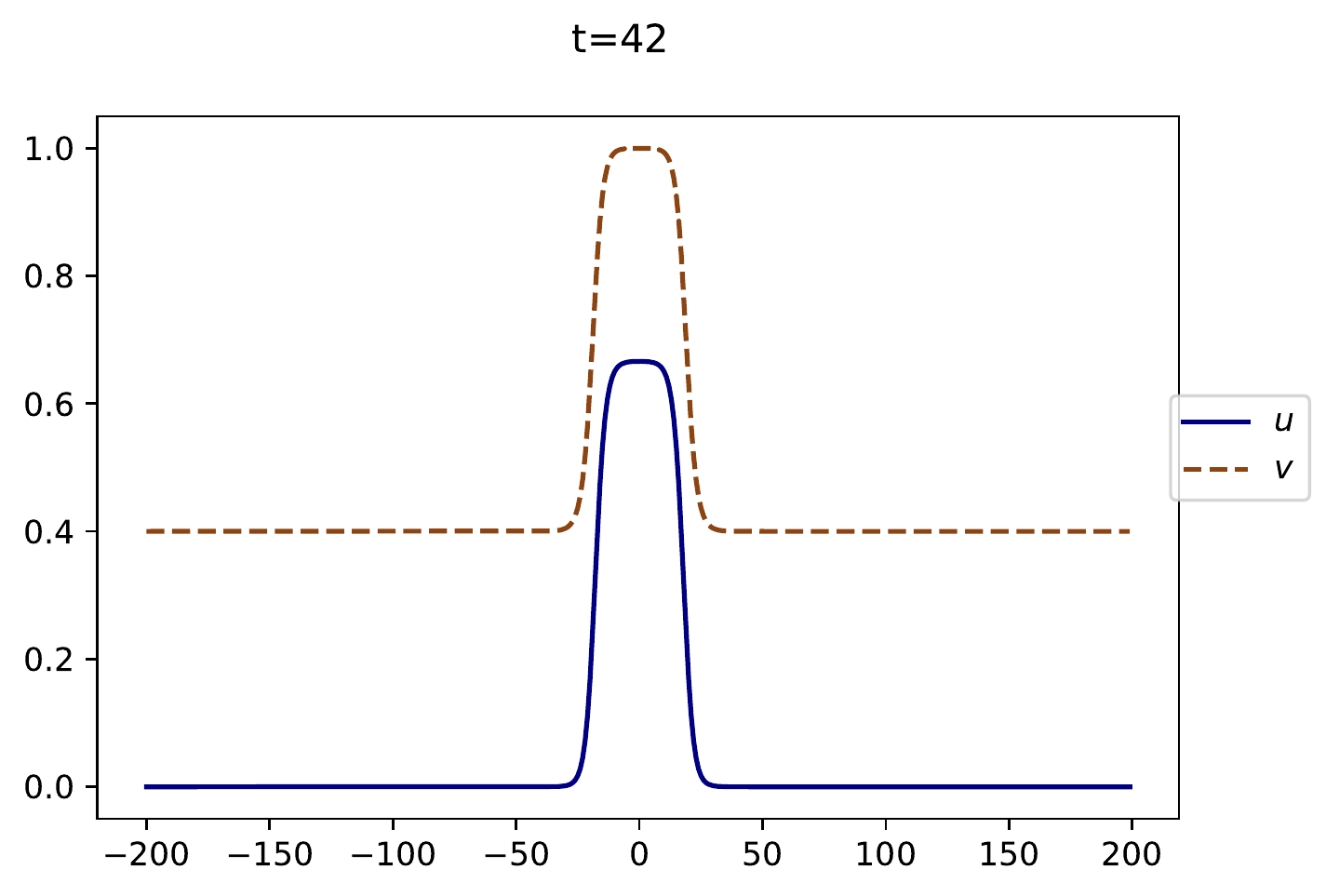}
  \end{subfigure}
  \\
  \begin{subfigure}[p]{0.4\linewidth}
\includegraphics[scale=0.5]{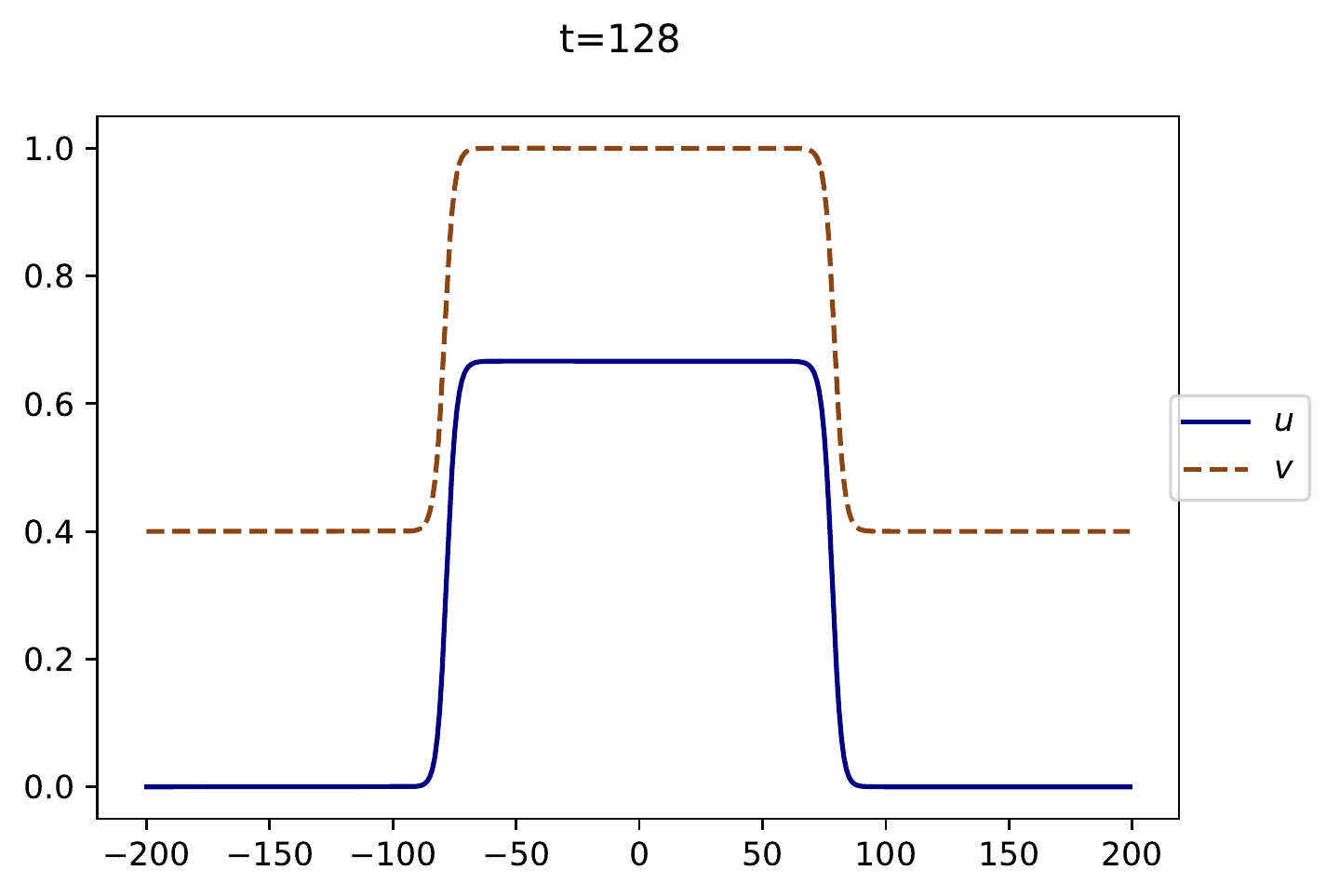}
  \end{subfigure}
    \hfill
\begin{subfigure}[p]{0.4\linewidth}
\includegraphics[scale=0.5]{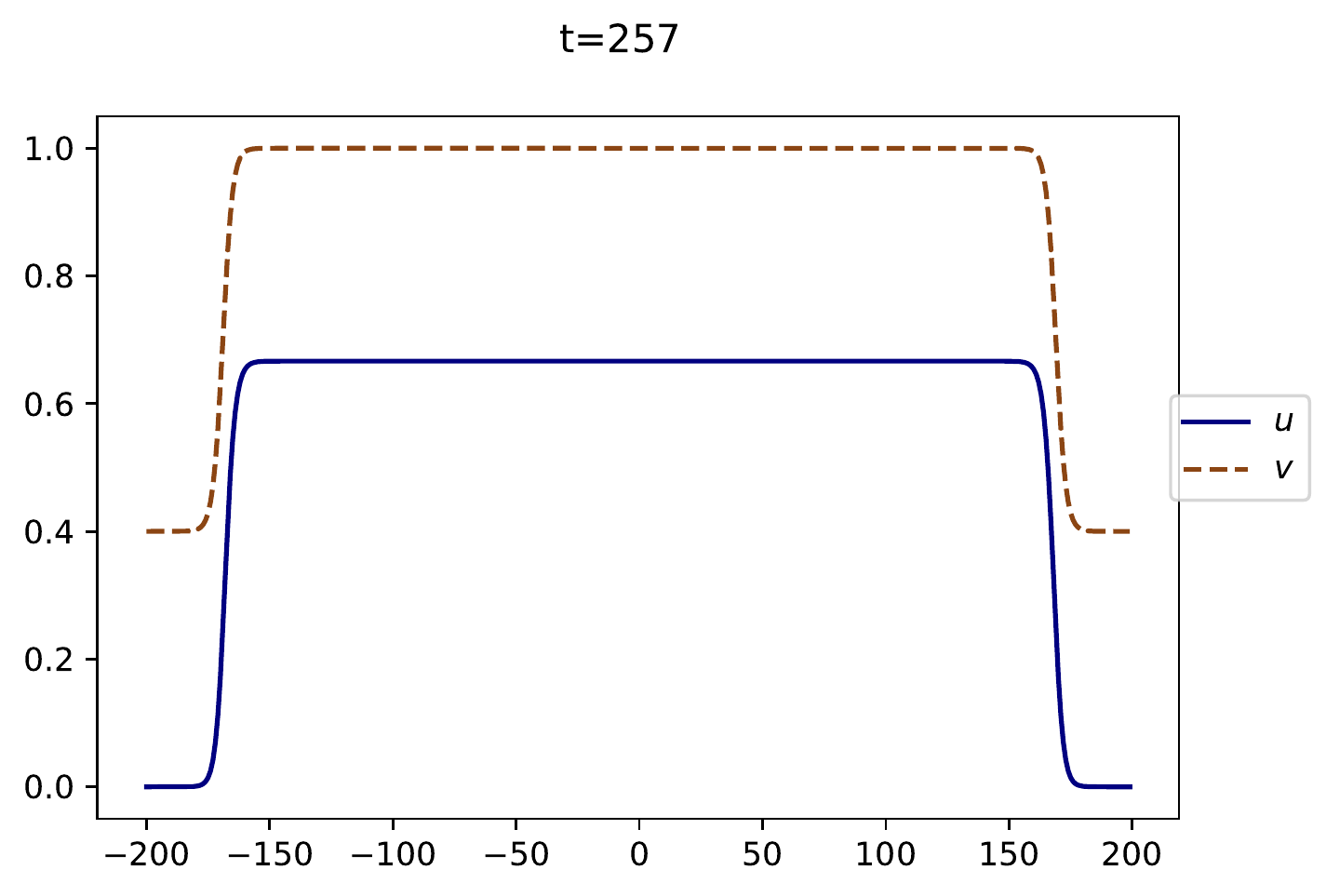}
  \end{subfigure}
  \caption{Enhancing case -- lasting upheaval. Snapshots at different times of the solution of~\eqref{ExampleEnhancing} with $v_b=0.4>v_\star$. Horizontal axis: space. Blue solid line: $u(t,\cdot)$. Brown dashed line: $v(t,\cdot)$. 
  \href{https://drive.google.com/file/d/1xuGtKmmlHj22paZp6hZZaha3ALdXvUHt/view?usp=sharing}{\color{blue}\underline{Video: Enhancing\_v0=04.mp4}}
}
  \label{fig:Enhanc_Revolution}
\end{figure}

%\nota{Sam: je suis tenté d'enlever ce paragraphe}\nota{Luca: comme tu le sens}We point out that~\eqref{ExampleEnhancing} is a cooperative system. However, our previous observations hold true when the system is not monotone. For example, the system
%\begin{equation}\label{ExampleEnhancing2}
%\left\{\begin{aligned}
%&\D_t  u -\D_{xx} u=u\left[v(1-u)-\frac{1}{3}\right],\\
%&\D_t  v-\D_{xx} v=\frac{u}{1+u}
%v(1-v),
%\end{aligned}\right.
%\end{equation}
%satisfies the enhancing assumption~\eqref{AssumptionTensionEnhancing}, and so our results and numerical observations apply. A simulation of the above system is available online through the video \textit{EnhancingNonCoop\_v0=04.mp4}

\subsubsection{Magnitude of the triggering event}\label{sec:enhancing_MagnitudeTriggering}
In the \emph{tension enhancing} case, the magnitude of the triggering event (i.e., the size of $u_0(x)$) may be of crucial importance to determine the regime of the dynamics when $v_0\equiv v_b<v_\star$. This has to be put in contrast with \emph{tension inhibiting} case (described in Section~\ref{sec:inhibiting}) for which the regime of the dynamics do not depends on the size of $u_0(x)$.

Figure~\ref{fig:Triggering_eps} depicts two distinct dynamics for system~\eqref{ExampleEnhancing} 
corresponding to $v_b=0.3<v_\star=1/3$:
the initial condition $u_0(x)=0.1(1-x^2)_+$ exhibits a 
\emph{return to calm}, see~\autoref{fig:Triggering_eps=0.1},
whereas $u_0(x)=0.5(1-x^2)_+$ ignites a lasting upheaval, see~\autoref{fig:Triggering_eps=0.5}.

\begin{figure}[h]
  \begin{subfigure}[p]{\linewidth}
\includegraphics[width=0.45\textwidth]{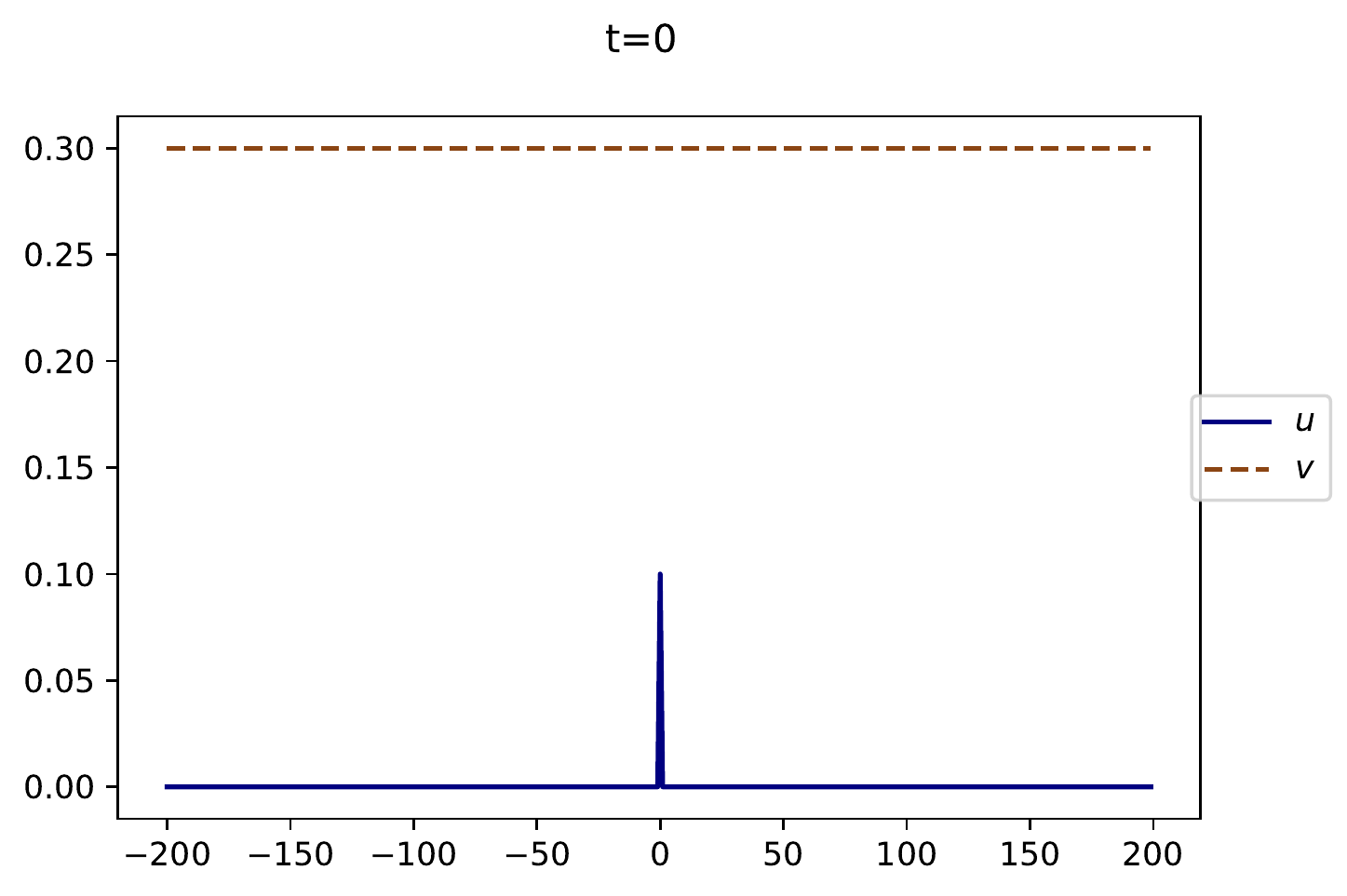}
\hfill
\includegraphics[width=0.45\textwidth]{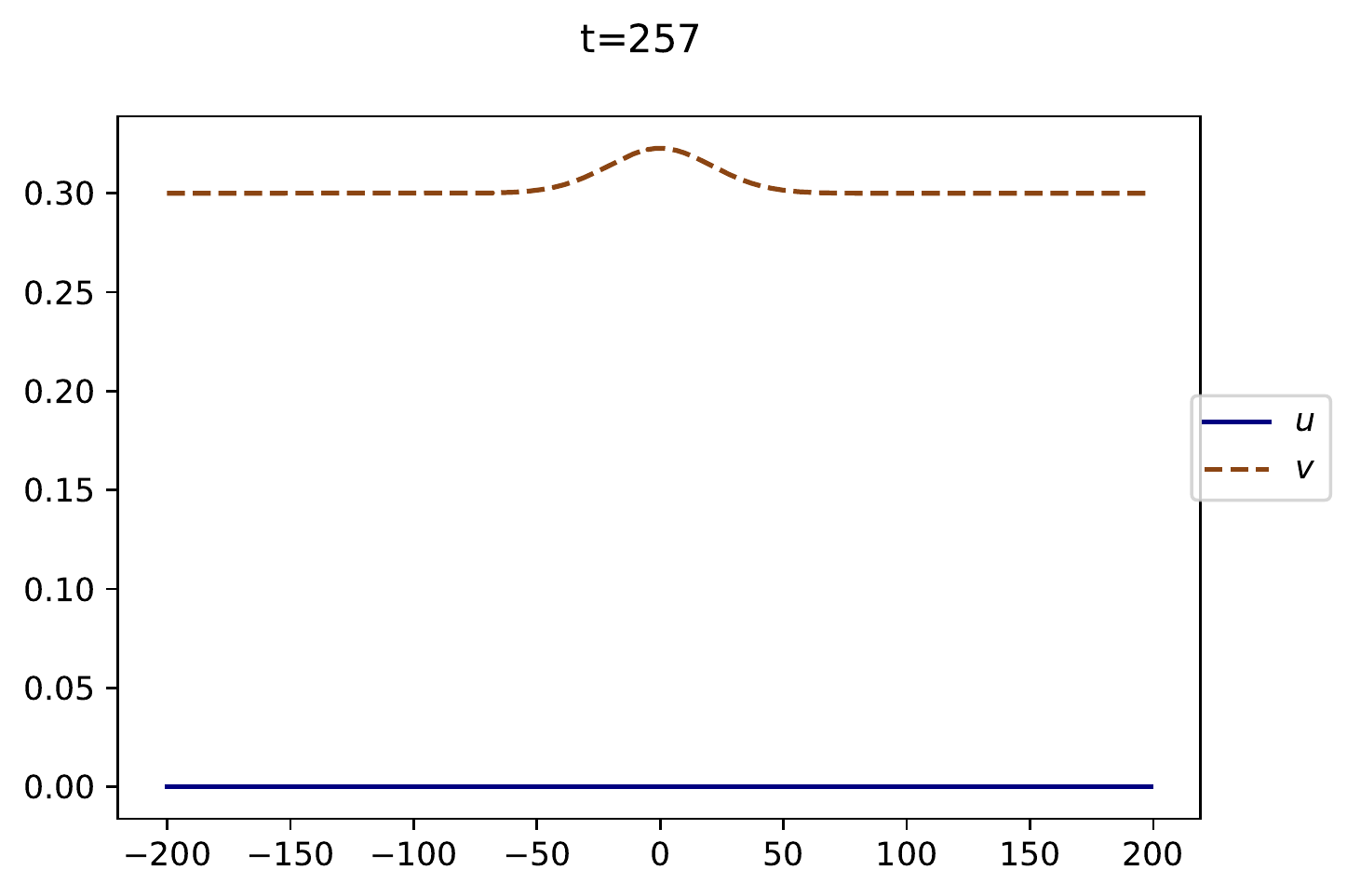}

\caption{Calm for $u_0(x)=0.1(1-x^2)_+$.
  \href{https://drive.google.com/file/d/1gJbgraLTfZbShXiY_74oxvQsvhByQOQr/view?usp=sharing}{\color{blue}\underline{Video: Triggering\_u0=01.mp4}}
} 
\label{fig:Triggering_eps=0.1}
  \end{subfigure}
 
 \begin{subfigure}[p]{\linewidth}
    \centering\includegraphics[width=0.45\textwidth]{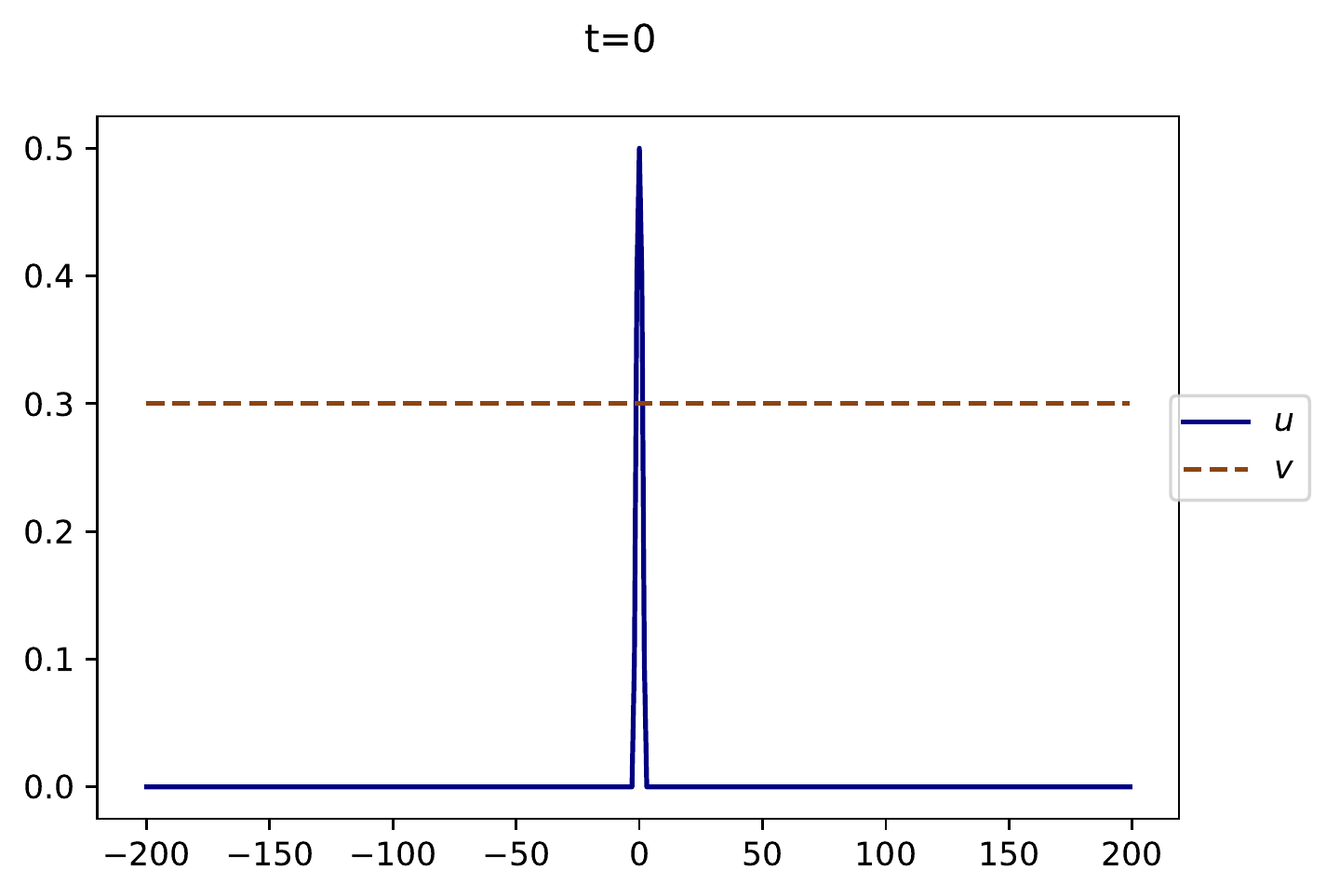}
  \hfill
    \centering\includegraphics[width=0.45\textwidth]{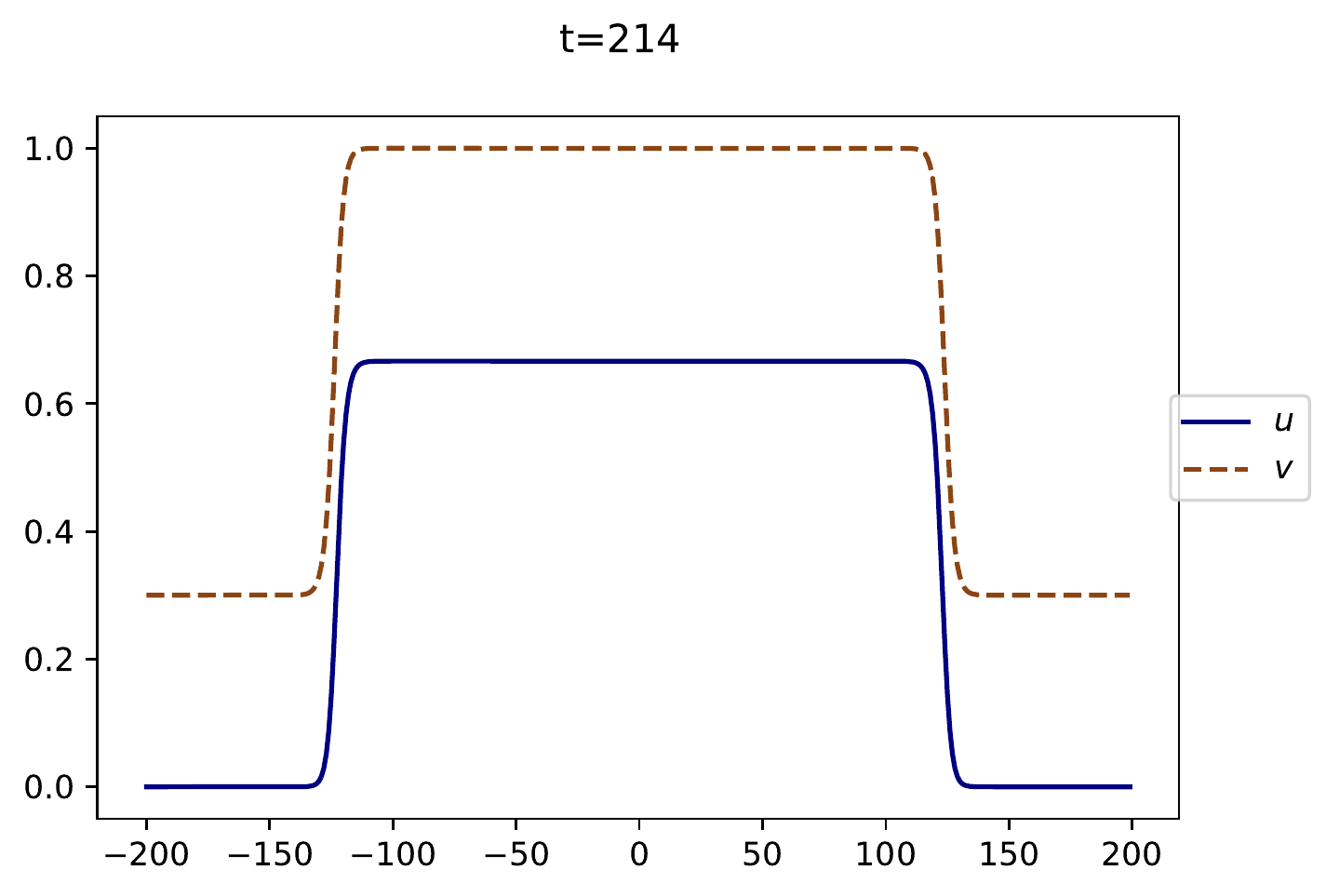}
\caption{lasting upheaval for $u_0(x)=0.5(1-x^2)_+$.
 \href{https://drive.google.com/file/d/1Ad1PmfaI22bDwc_sgcEVOg4If0-qL5ua/view}{\color{blue}\underline{Video: Triggering\_u0=05.mp4}}
} \label{fig:Triggering_eps=0.5}
    \end{subfigure}
  \caption{Enhancing case -- influence of the magnitude of the triggering event. Snapshots at different times of the solution of~\eqref{ExampleEnhancing} with $v_b=0.3<v_\star$. Horizontal axis: space. Blue solid line: $u(t,\cdot)$. Brown dashed line: $v(t,\cdot)$.}
 \label{fig:Triggering_eps}
\end{figure}

\begin{figure}[H]
\center
\begin{subfigure}[p]{0.4\linewidth}
\includegraphics[scale=0.5]{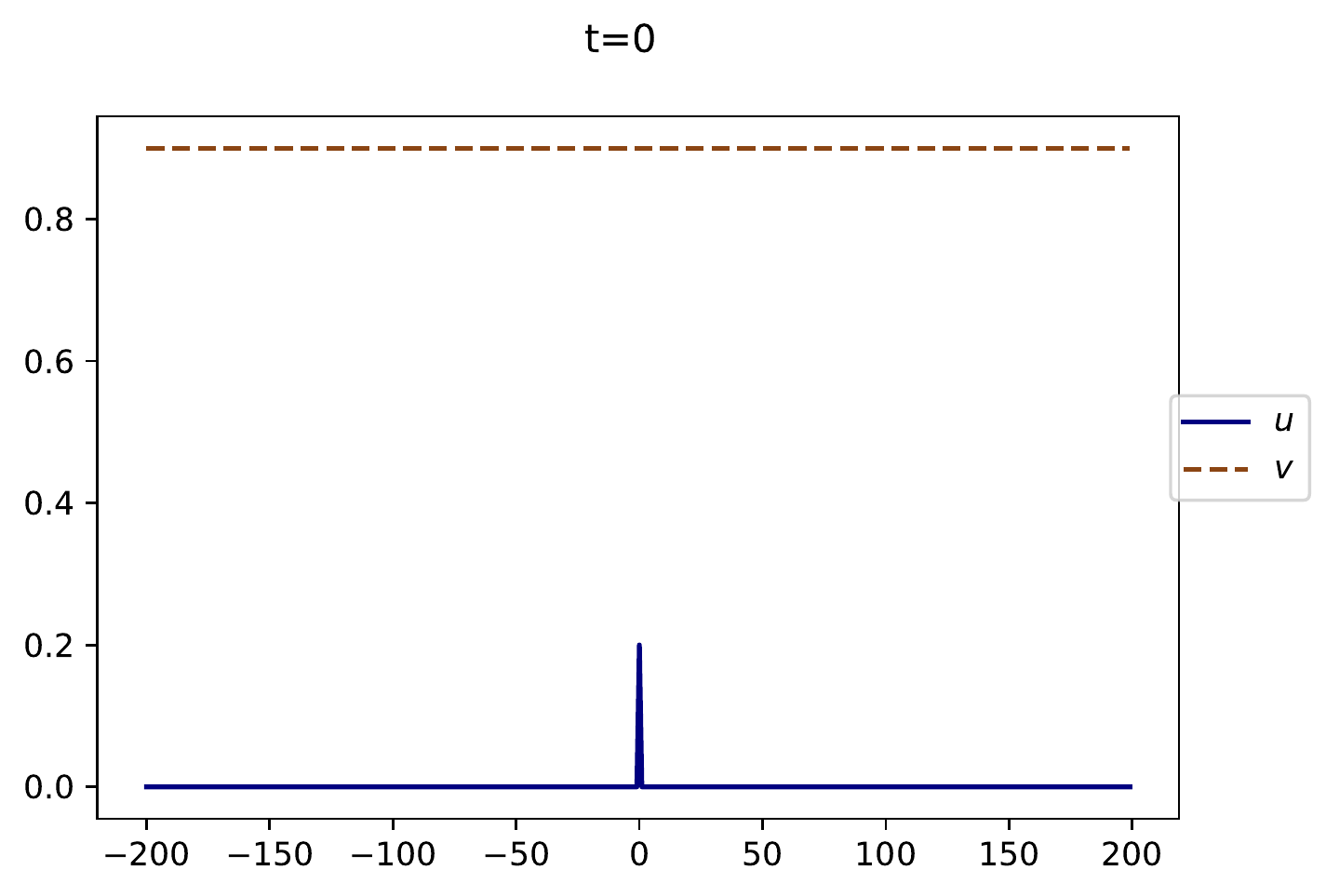}
  \end{subfigure}
  \hfill
\begin{subfigure}[p]{0.4\linewidth}
\includegraphics[scale=0.5]{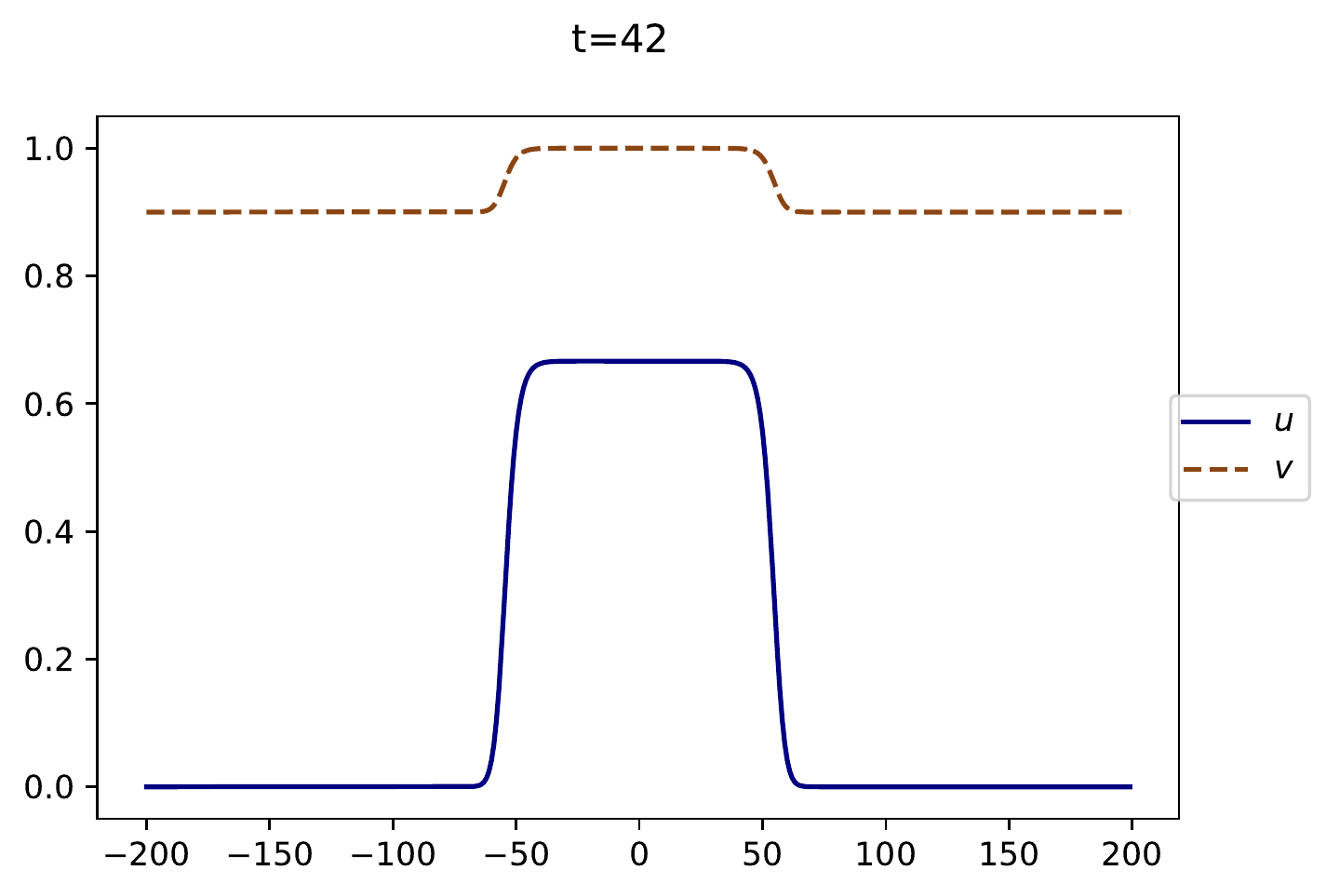}
  \end{subfigure}
  \\
  \begin{subfigure}[p]{0.4\linewidth}
\includegraphics[scale=0.5]{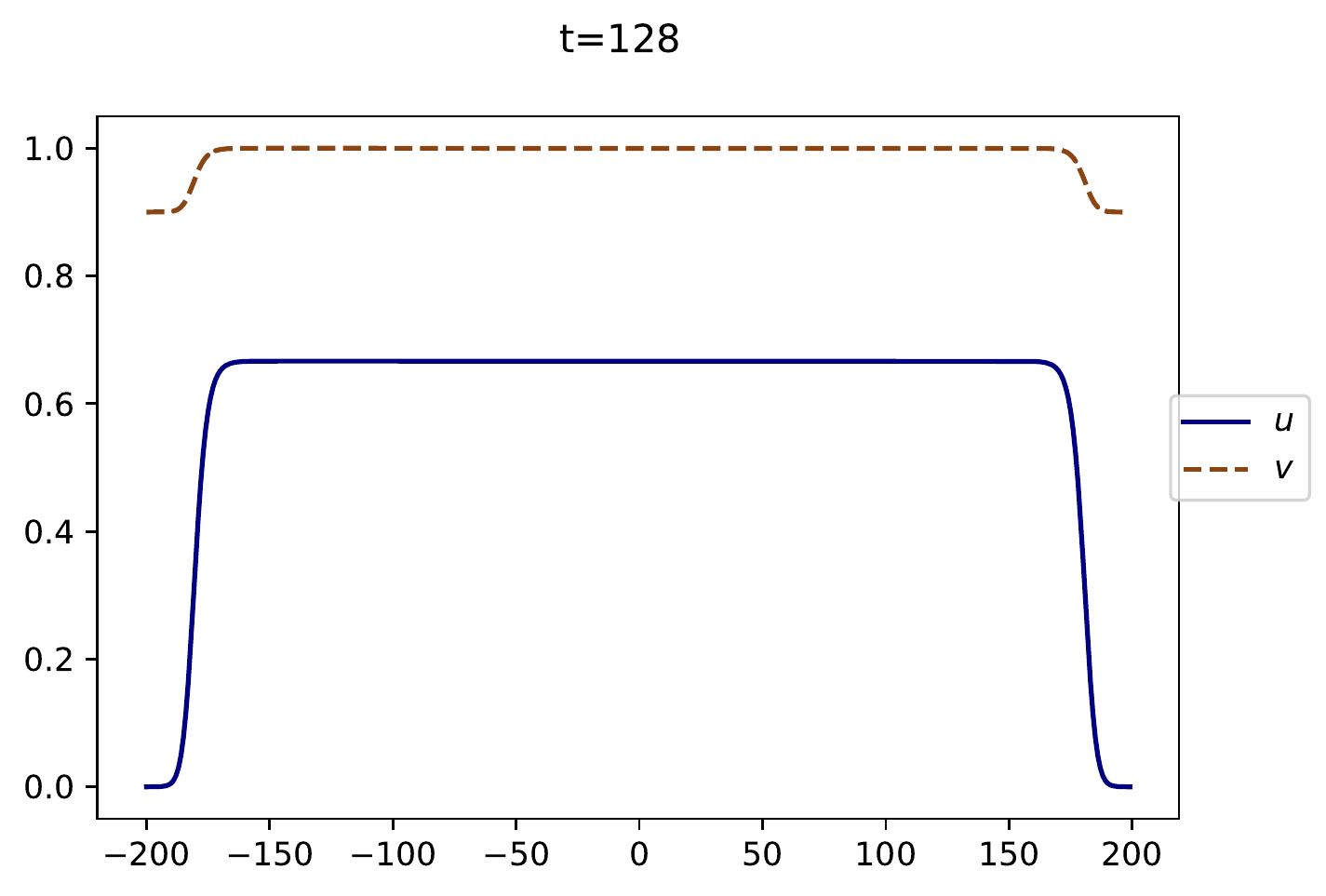}
  \end{subfigure}
  \caption{Enhancing case -- lasting upheaval. Simulation of~\eqref{ExampleEnhancing} with $v_b=0.9>v_\star$. Horizontal axis: space. Blue solid line: $u(t,\cdot)$. Brown dashed line: $v(t,\cdot)$. 
 \href{https://drive.google.com/file/d/1DXV5pa2zVHBNPI1MhXPM24wjmnNKoF1o/view?usp=drive_web}{\color{blue}\underline{Video: Enhancing\_v0=09.mp4}}  
  }
  \label{fig:Enhanc_RevolutionBIS}
\end{figure}

The phenomenon depicted by Figure~\ref{fig:Triggering_eps}
can be explained by the following heuristic: if $u_0$ is large enough, then $u(t,\cdot)$ remains at high values for a sufficiently long time so that $v$ increase and reach values above the threshold $v_\star$.

\subsubsection{Speed of propagation}\label{sec:Enhancing_Numerics_Speed}

We see in \autoref{fig:Enhanc_Revolution} that, if the initial level of social tension $v_b$ 
is above the threshold $v_\star$, the solution of~\eqref{ExampleEnhancing} propagates through space at a constant speed.
If we increase the initial social tension, we observe that the solution propagates faster through space. See \autoref{fig:Enhanc_RevolutionBIS} for a numerical simulation of~\eqref{ExampleEnhancing} with $v_b=0.9$.

To see this phenomenon more clearly, we plot the speed of propagation $c$ of the solution of~\eqref{ExampleEnhancing} as a  function of $v_b$ in \autoref{fig:Enhanc_Speed_Graph} (the speed is computed with the method presented in Section~\ref{sec:InhibSpeedNumerics}).
We see that $c$ is indeed an increasing function of $v_b$.

The theoretical result presented in Section~\ref{sec:SpatialPropagation} states that $c$ ranges in $(c_b,c_1)$ defined in~\eqref{Def_c}.
This is confirmed numerically in \autoref{fig:Enhanc_Speed_Graph}.
\begin{figure}[H]
\center
\begin{subfigure}[p]{0.45\linewidth}
\includegraphics[width=\linewidth]{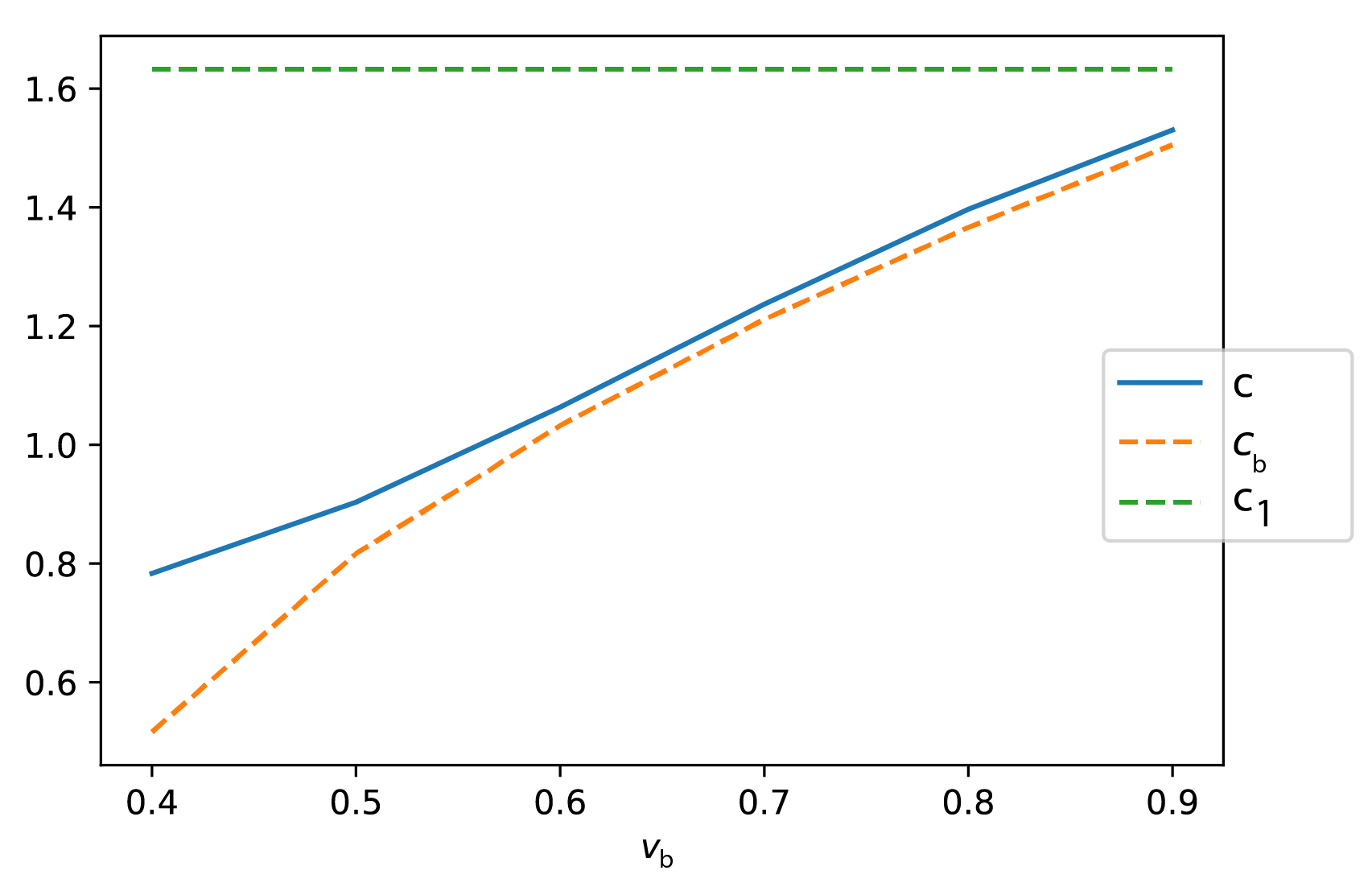}
\caption{On system~\eqref{ExampleEnhancing}.}
\label{fig:Enhanc_Speed_Graph}
  \end{subfigure}
  \hfill
\begin{subfigure}[p]{0.45\linewidth}
\includegraphics[width=\linewidth]{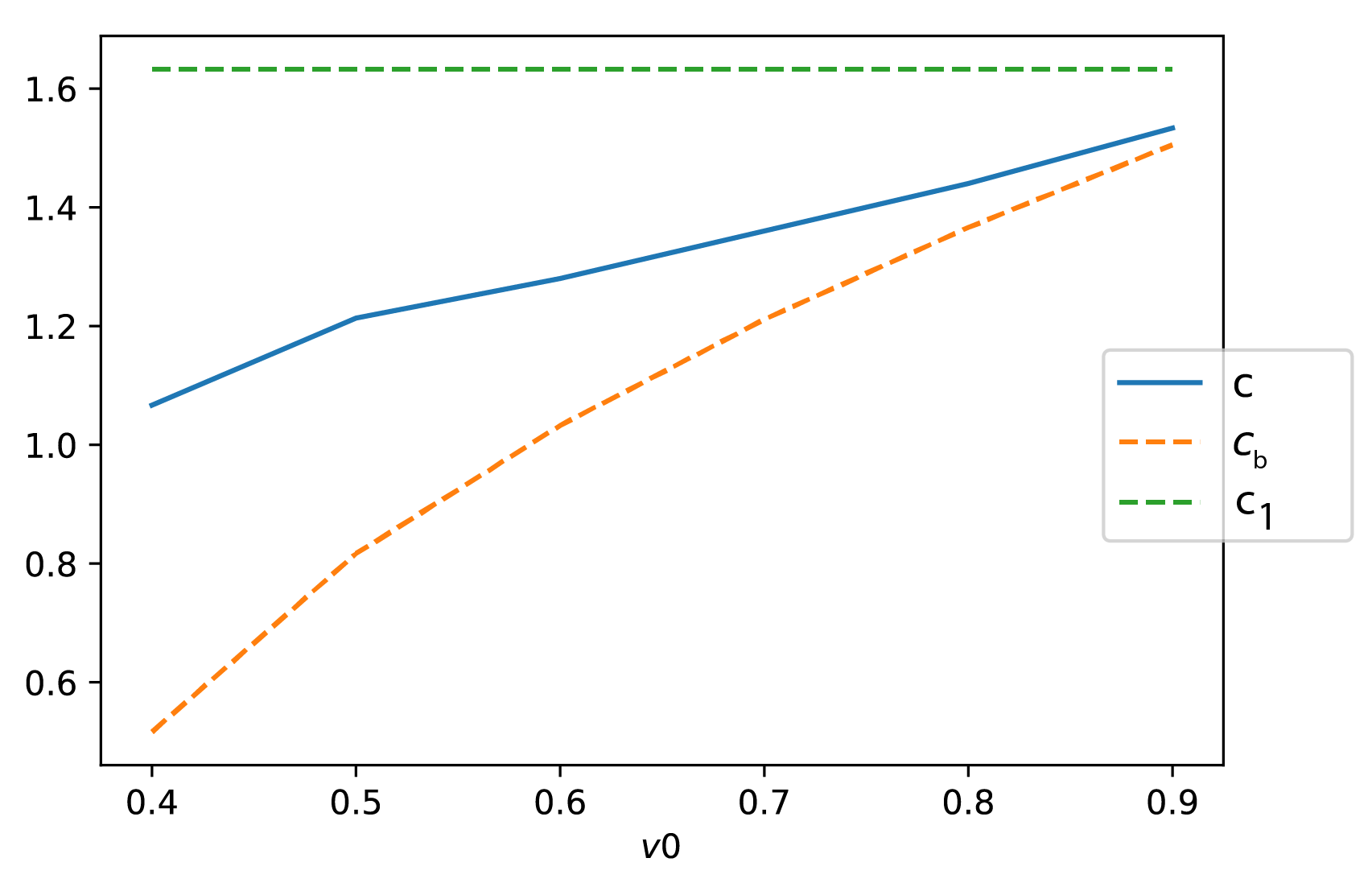}
\caption{On system~\eqref{ExampleEnhancing3} with $d_2=20$.}\label{fig:Speed_SubOpt}
  \end{subfigure}
  \caption{Enhancing case -- speed of propagation.  Blue solid line: empirical speed $c$. Dashed lines: theoretical bounds on the speed, $c_b=2\sqrt{v_b-\frac{1}{3}}$ and $c_1=2\sqrt{1-\frac{1}{3}}$.}
 \end{figure}

\autoref{fig:Enhanc_Speed_Graph} could suggest that $c\approx c_b$, or that $c$ does not depend on the equation on $v$ (i.e. $d_2$ and $\Psi$) like in the inhibiting case (Section~\ref{sec:InhibSpeedNumerics}). However, numerical experiment shows that it is not the case in general. This is a substential difference with the inhibiting case, for which $c=c_b$.
To see this, let us consider the same system~\eqref{ExampleEnhancing} with a diffusion coefficient $d_2$
on the second equation:
\begin{equation}\label{ExampleEnhancing3}
\left\{\begin{aligned}
&\D_t  u -\D_{xx} u=u\left[v(1-u)-\frac{1}{3}\right],\\
&\D_t  v-d_2\D_{xx} v=uv(1-v).
\end{aligned}\right.
\end{equation}
We plot in \autoref{fig:Speed_SubOpt} the speed of propagation $c$ of the solution of~\eqref{ExampleEnhancing3} with $d_2=20$ as a function of $v_b$. We see that $c$ and $c_b$ largely differ, especially for small values of $v_b$.
This is a numerical evidence that $v_b$ depends on $d_2$. 
Fixing $v_b=0.9$, we see on \autoref{fig:Speed_SubOpt_d2} that $c$ is indeed an increasing function of $d_2$. Again, this has to be put in contrast with the inhibiting case, for which $c=c_b$ does not depend on $d_2$.

%The above observation means that contrarily to the inhibiting case, the speed of propagation may indeed depend on the parameters in the equation on $v$.
Let us now investigate how the speed depends on the magnitude of $\Psi$. Consider the analogous of system~\eqref{ExampleEnhancing} with a variable magnitude for the reaction term in the second equation, namely,
\begin{equation}\label{ExampleEnhancing4}
\left\{\begin{aligned}
&\D_t  u -\D_{xx} u=u\left[v(1-u)-\frac{1}{3}\right],\\
&\D_t  v-\D_{xx} v=k uv(1-v),
\end{aligned}\right.
\end{equation}
where $k\geq0$ is a parameter, and $v_b=0.5$. We plot the speed $c$ as a function of $k$ in \autoref{fig:Enhanc_Speed_Graph_k}. We see that $c$ is an increasing function of $k$, and so that it indeed depends on the magnitude of $\Psi$.

\begin{figure}[h]
\center
\begin{subfigure}[p]{0.45\linewidth}
\includegraphics[scale=0.5]{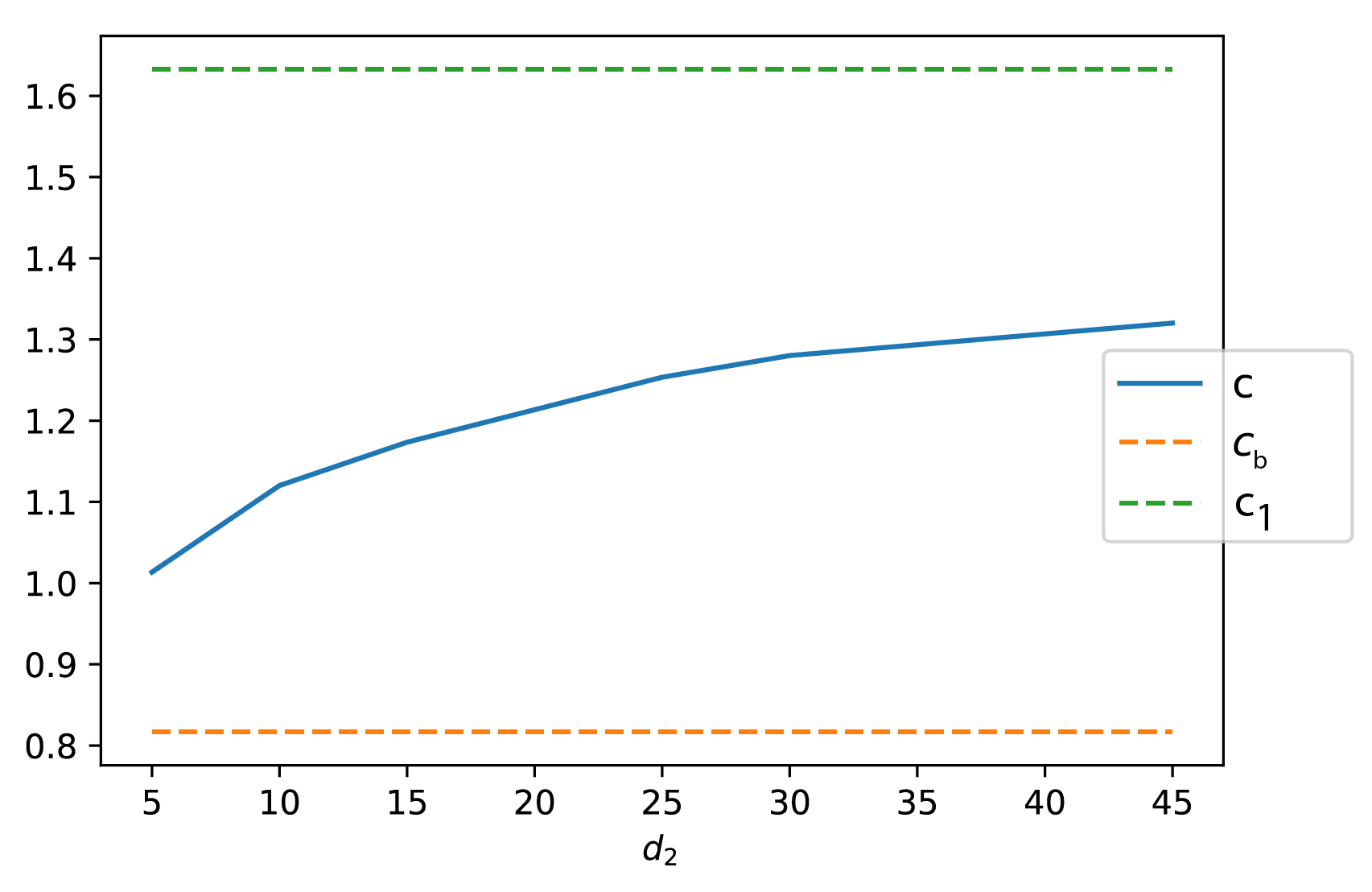}
\caption{Speed $c$ as a function of $d_2$ on the system~\eqref{ExampleEnhancing3} with $v_b=0.9$.}\label{fig:Speed_SubOpt_d2}
  \end{subfigure}
  \hfill
\begin{subfigure}[p]{0.45\linewidth}
\includegraphics[width=\linewidth]{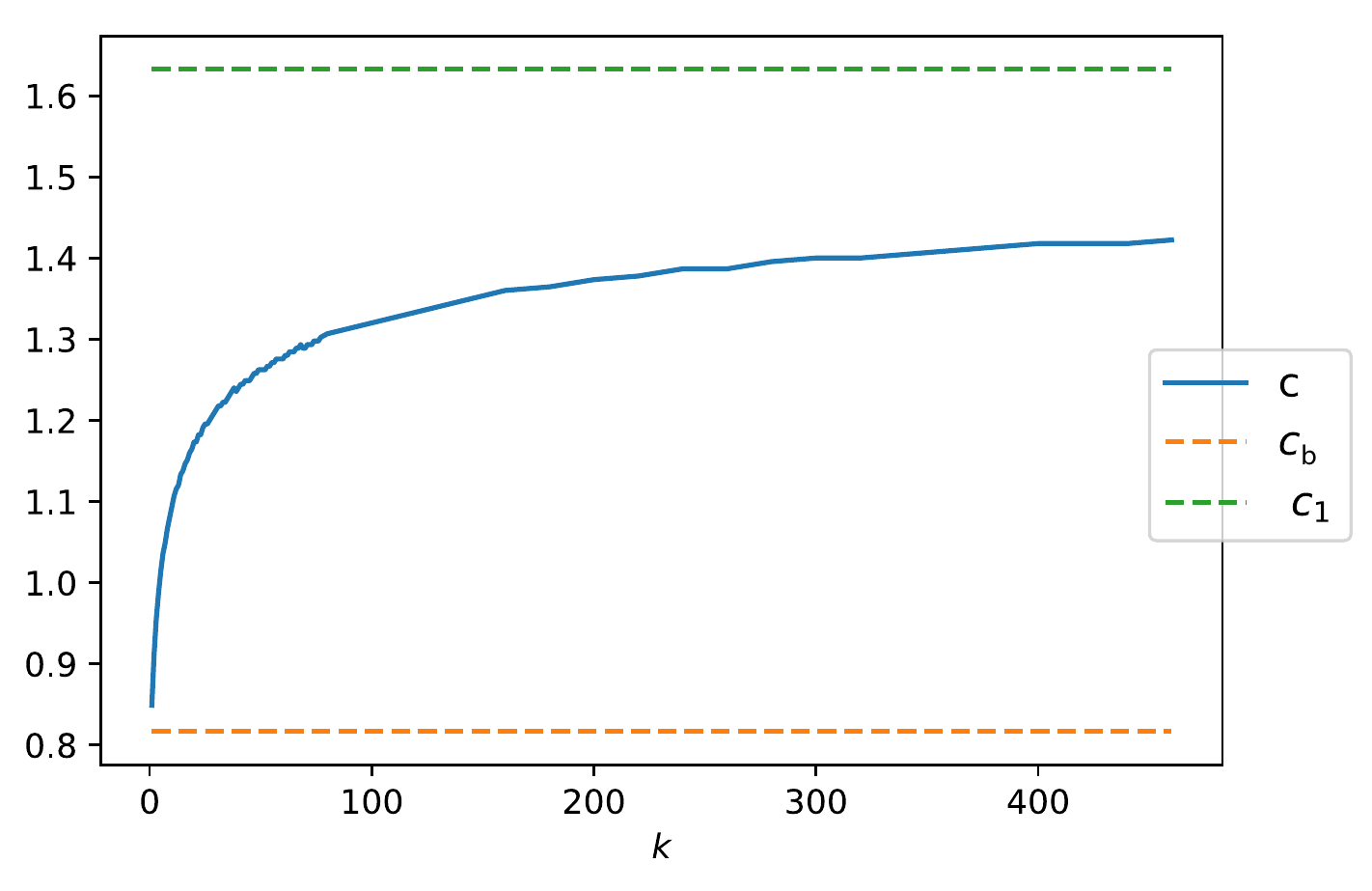}
\caption{Speed $c$ as a function of $k$ on the system~\eqref{ExampleEnhancing4} with $v_b=0.5$.}
\label{fig:Enhanc_Speed_Graph_k}
  \end{subfigure}
  \caption{Enhancing case -- dependence of the speed $c$ on the parameters $d_2$ and $\Psi$}
  \label{fig:Enhanc_Speed_d2k}
\end{figure}

These observations motivate the following questions.\\
\textbf{Open problem:} Do we have $\lim_{d_2\to+\infty} c= c_1$ in~\eqref{ExampleEnhancing3}? Do we have $\lim_{k\to0} c= c_b$ or $\lim_{k\to+\infty} c= c_1$ in~\eqref{ExampleEnhancing4}?

\section{Mixed cases}\label{sec:MixedCase}

We propose in this section to focus on some particular instances of~\eqref{GeneralEquationMotivationFinal} which exhibit more complex behaviors. We begin with a model featuring a double threshold phenomenon
which can give rise to both ephemeral riots and lasting upheaval. 
Next, we present other models where oscillating traveling waves appear.

\subsection{Double threshold: calm-riot-lasting upheaval}
Consider the system
\begin{equation}\label{ExampleMixed}
\left\{\begin{aligned}
&\D_t  u -\D_{xx} u=u\left[v(1-u)-\frac{1}{3}\right],\\
&\D_t  v-d_2\D_{xx} v=uv(1-v)(v-\frac{1}{2}),
\end{aligned}\right.
\end{equation}
and some initial conditions $u_0(x)\gneqq0$ and $v_b\in(0,1)$. We find $v_\star= 1/3$ from definition~\eqref{Def_v_star}.
Depending on the value of $v_b$, we have the following dichotomy:
\begin{itemize}
\item if $v_0\equiv v_b<1/2$: the system is tension inhibiting~\eqref{hyp:inhibiting},
\item if $v_0\equiv v_b>1/2$: the system is tension enhancing~\eqref{AssumptionTensionEnhancing}.
\end{itemize}
Indeed, note that $v_0<1/2$ implies that $v(t,x)\in (0,1/2)$ and $v_0>1/2$ that $v(t,x)$; one can thus restrict $\Psi$ to a suitable range where $\Psi(\cdot,v(t,x))$ is either positive or negative.

We can then apply the results of the previous sections to infer that a small \emph{triggering event} can lead to three different situations:
\begin{itemize}
\item For a small initial level of social tension $v_b\in(0,v_\star)$:  \emph{return to calm}  (Section~\ref{sec:Analysis}).
\item For an intermediate initial level of social tension $v\in(v_\star,1/2)$: \emph{burst of an ephemeral riot} (Section~\ref{sec:inhibiting}).
\item For a high initial level of social tension $v\in(1/2,1)$: \emph{burst of a lasting upheaval} (Section~\ref{sec:Analysis}).
\end{itemize}
From the modeling point of view, this double threshold phenomenon means that, in the model~\eqref{ExampleMixed}, the initial level of social tension determines whether a small triggering event ignites a social movement, and also whether the movement will be ephemeral or persisting.

We propose to illustrate this double threshold phenomenon with numerical simulations of~\eqref{ExampleMixed}. We take $v_0\equiv v_b\in(0,1)$ and $u_0(x)=0.1(1-x^2)_+$.
Figure~\ref{fig:v0_3} illustrates the \emph{return to calm} for $v_b=0.3$ ; Figure~\ref{fig:v0_4} the propagation of a \emph{riot} for $v_b=0.4$ ; Figure~\ref{fig:v0_6} the propagation of a \emph{lasting upheaval} for $v_b=0.6$. %We observe that each figure is in agreement with the double threshold phenomenon described in~\autoref{PropositionAsymptoticsMixed}.

\begin{figure}[p]
  \centering
  \begin{subfigure}[p]{0.45\linewidth}
    \centering\includegraphics[width=\textwidth]{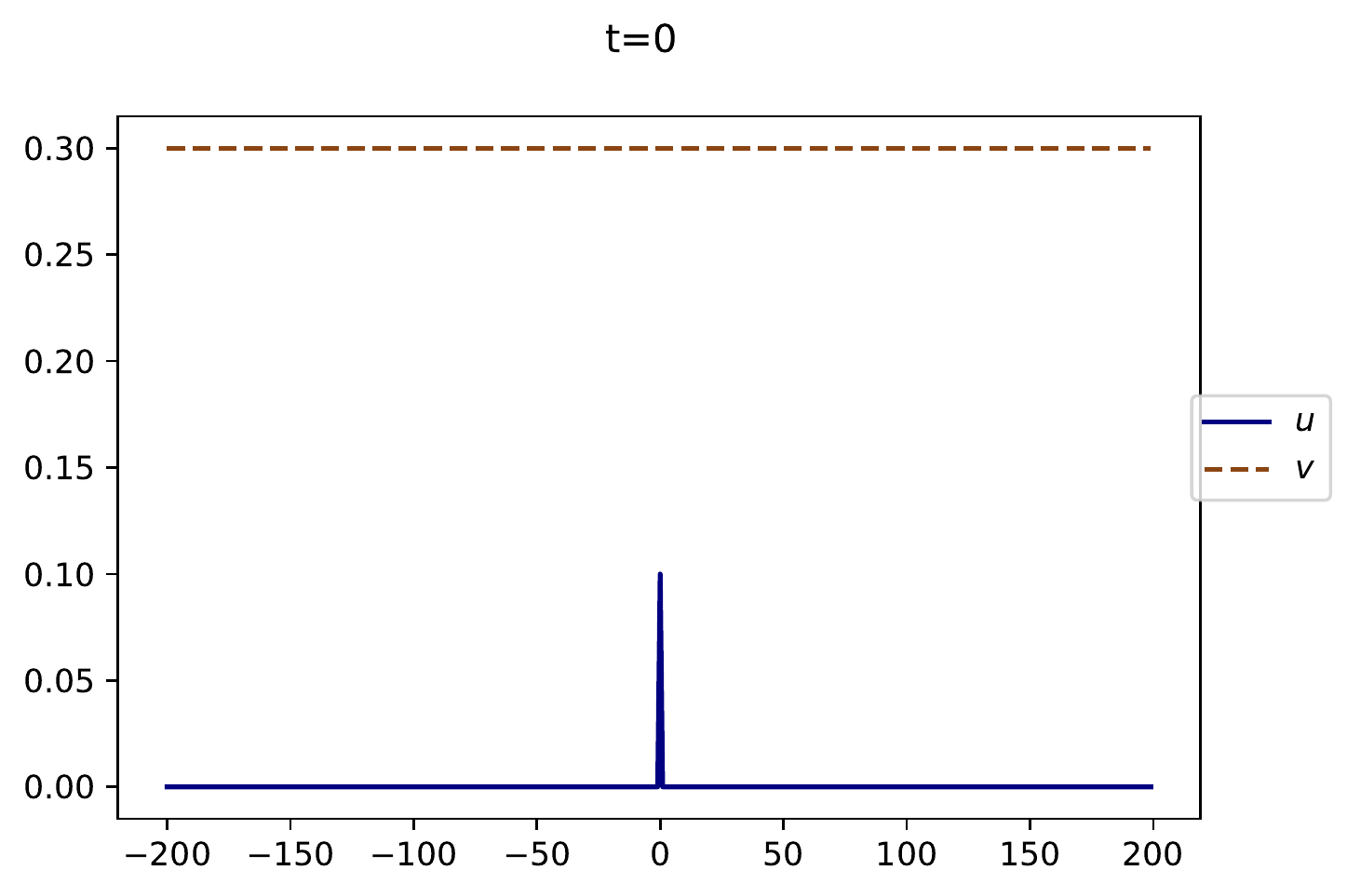}
  \end{subfigure}
  \hfill
  \begin{subfigure}[p]{0.45\linewidth}
    \centering\includegraphics[width=\textwidth]{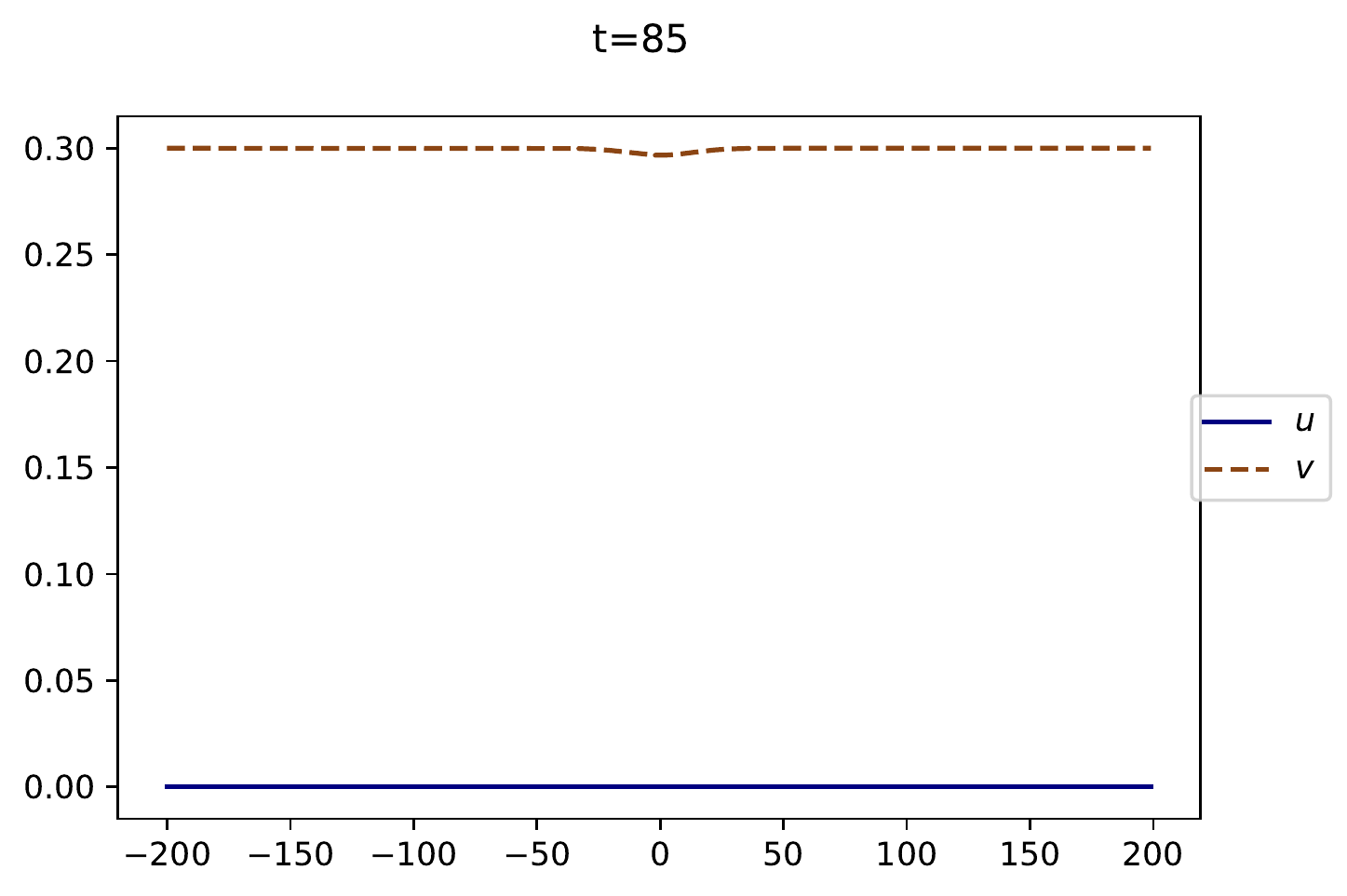}
  \end{subfigure}
\caption{Mixed Case -- Calm. Snapshots at different times of the solution of \eqref{ExampleMixed} with  $v_b=0.3<v_\star$. Horizontal axis: space. Blue solid line: $u(t,\cdot)$. Brown dashed line: $v(t,\cdot)$.
 \href{https://drive.google.com/file/d/1KVBRa7s9EXdSlj0jdgYwlpi5UMj9yj4-/view}{\color{blue}\underline{Video: Mixed\_V0=03.mp4}}  
} 
\label{fig:v0_3}
\end{figure}

\begin{figure}[p]
  \centering
  \begin{subfigure}[p]{0.45\linewidth}
    \centering\includegraphics[width=\textwidth]{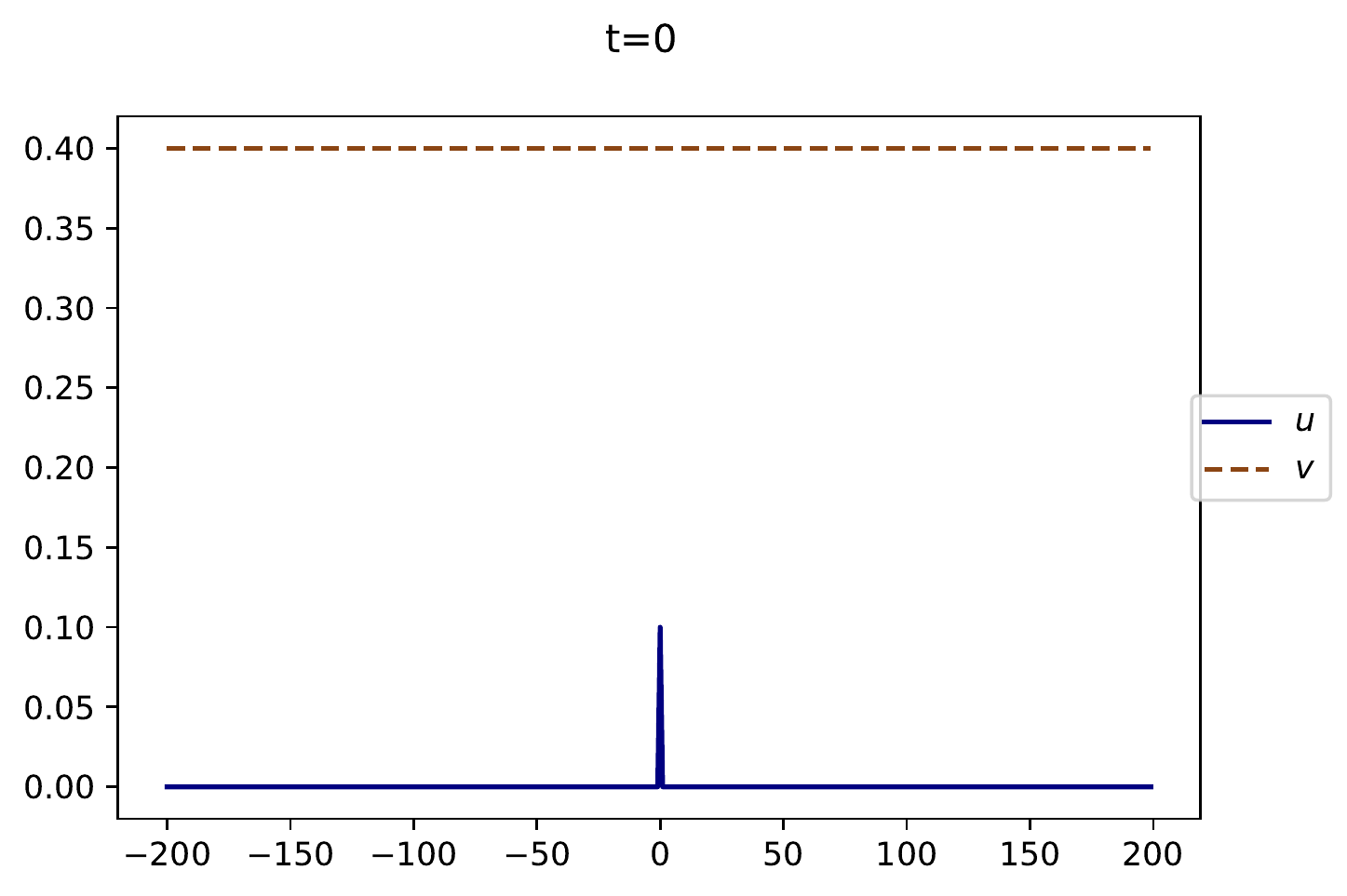}
  \end{subfigure}
  \hfill
  \begin{subfigure}[p]{0.45\linewidth}
    \centering\includegraphics[width=\textwidth]{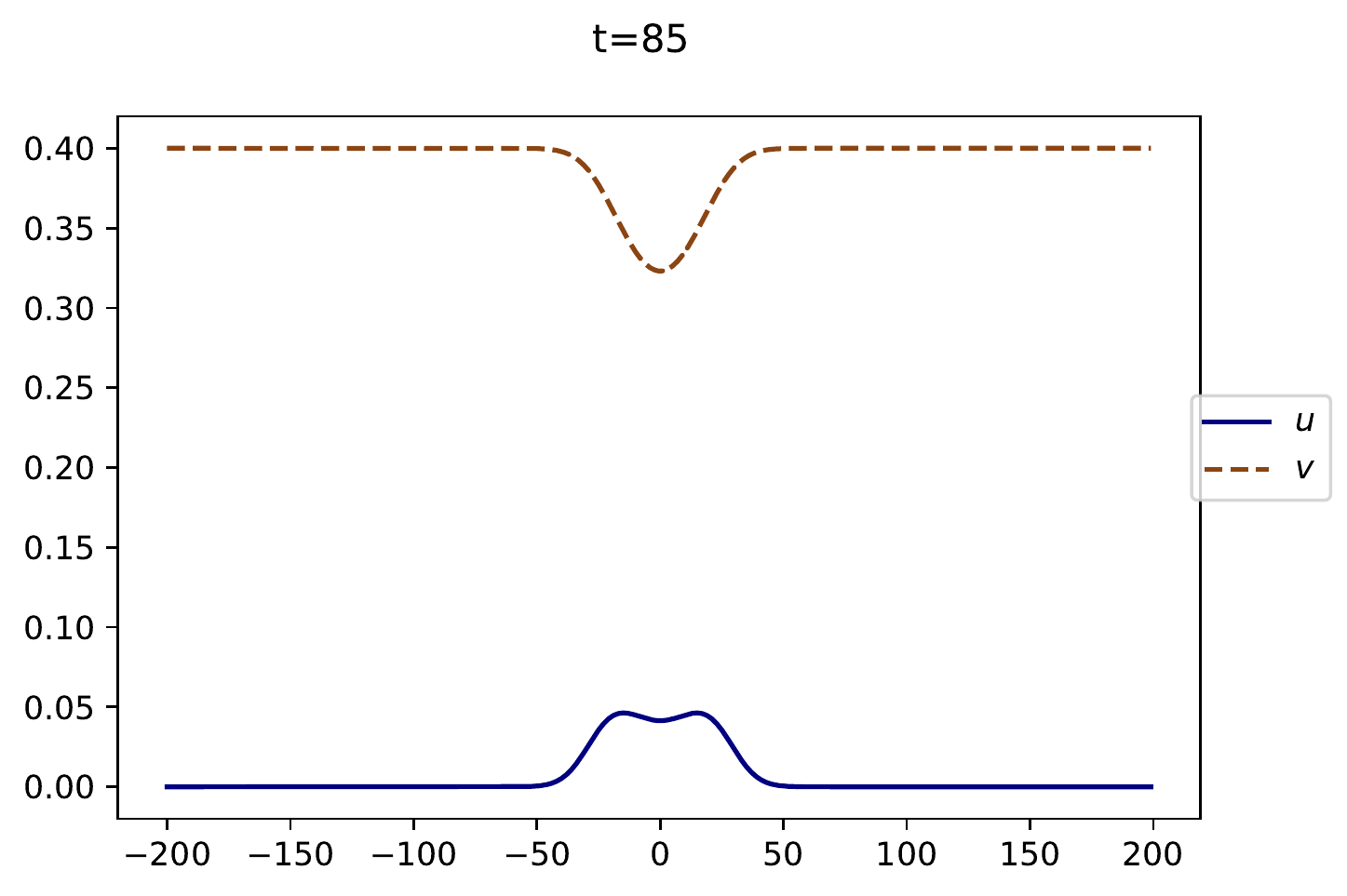}
  \end{subfigure}\\
    \begin{subfigure}[p]{0.45\linewidth}
    \centering\includegraphics[width=\textwidth]{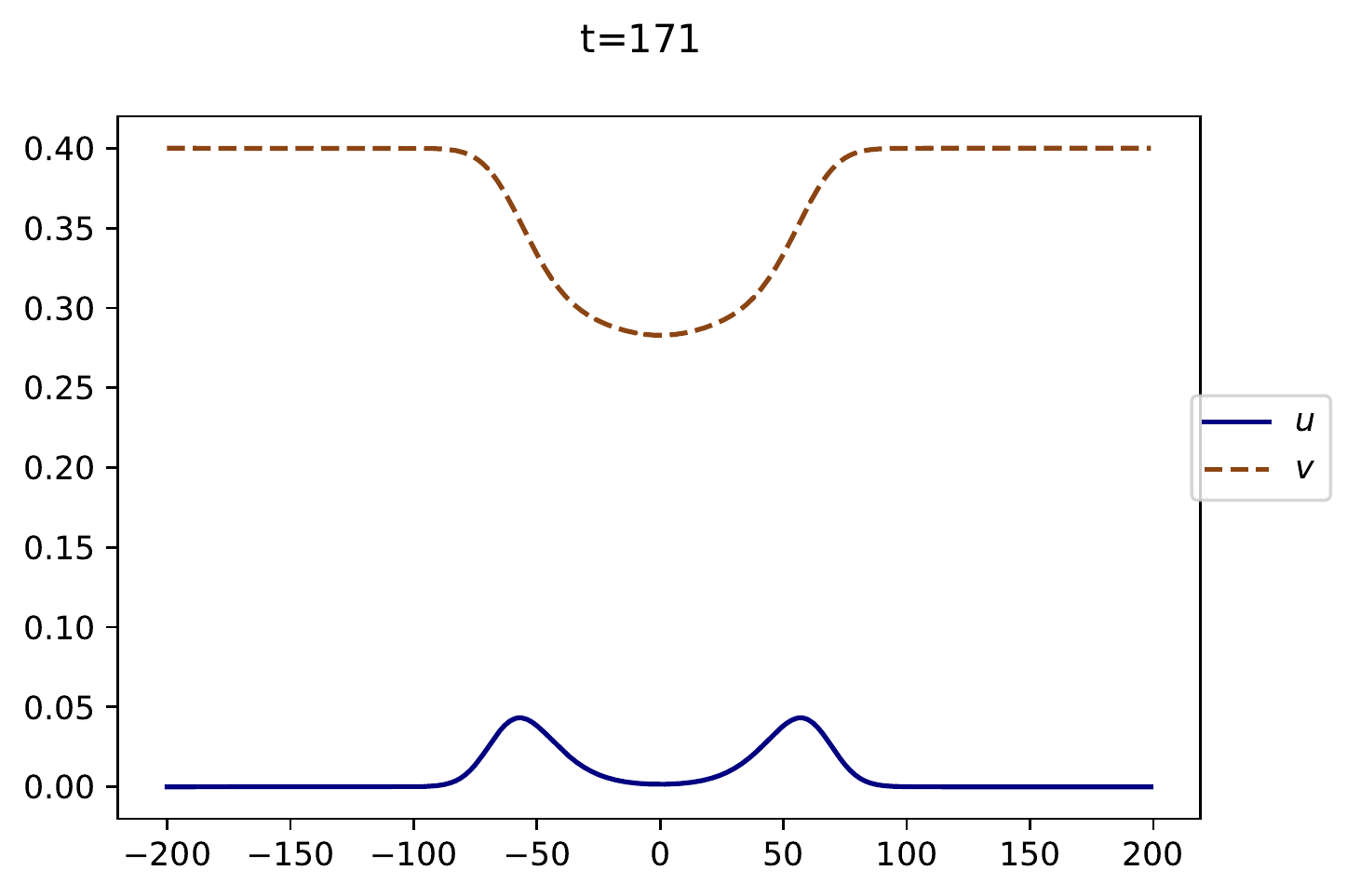}
  \end{subfigure}
  \hfill
  \begin{subfigure}[p]{0.45\linewidth}
    \centering\includegraphics[width=\textwidth]{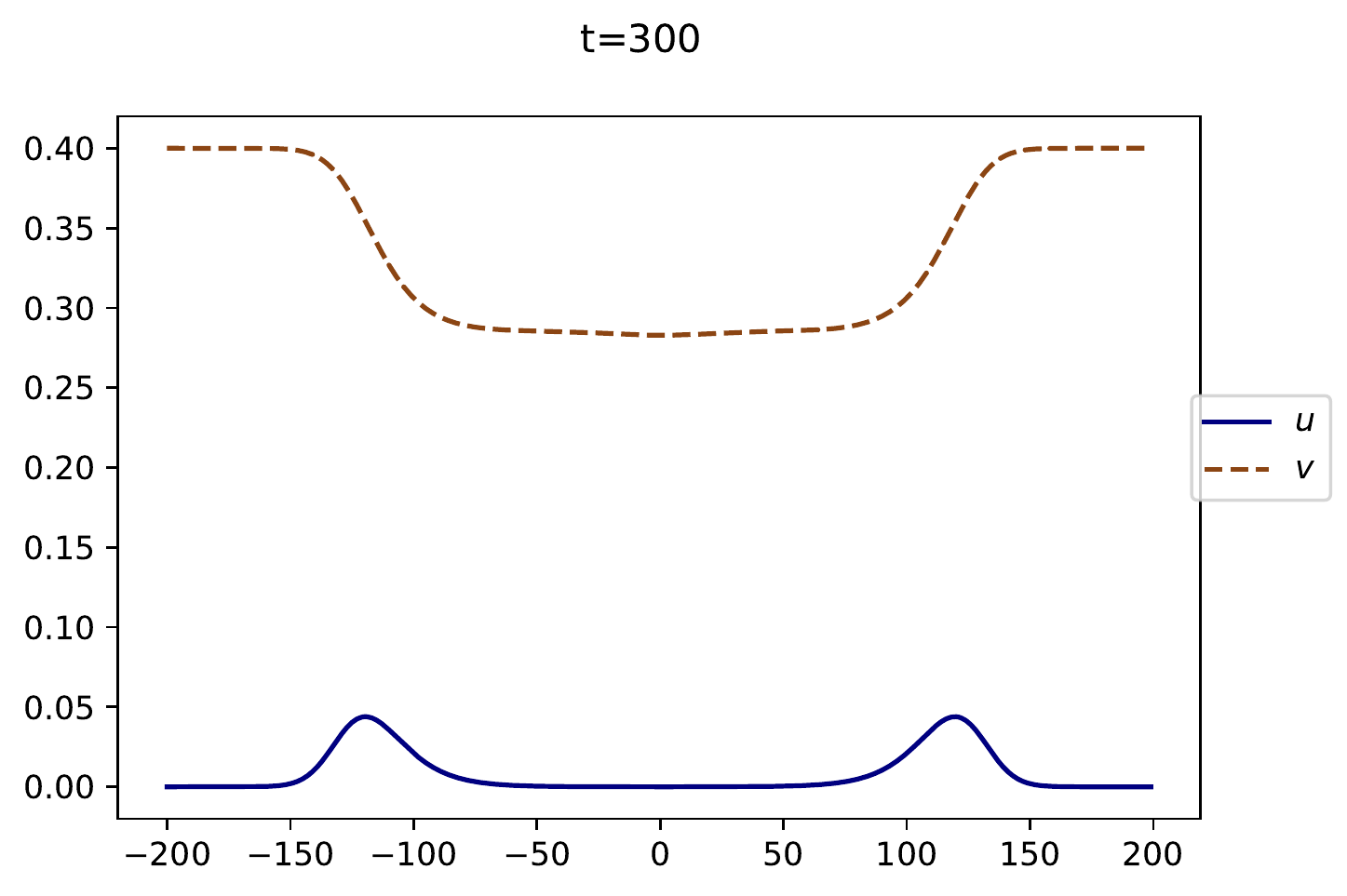}
  \end{subfigure}
\caption{Mixed Case -- riot. Snapshots at different times of the solution of \eqref{ExampleMixed} with $v_b=0.4\in(v_\star,1/2)$. Horizontal axis: space. Blue solid line: $u(t,\cdot)$. Brown dashed line: $v(t,\cdot)$.
 \href{https://drive.google.com/file/d/1yTrCbvqhhz4ww2QVMomwKawjVt4ZHVHb/view}{\color{blue}\underline{Video: Mixed\_V0=04.mp4}}  
} \label{fig:v0_4}
\end{figure}

\begin{figure}[p]
  \centering
  \begin{subfigure}[p]{0.45\linewidth}
    \centering\includegraphics[width=\textwidth]{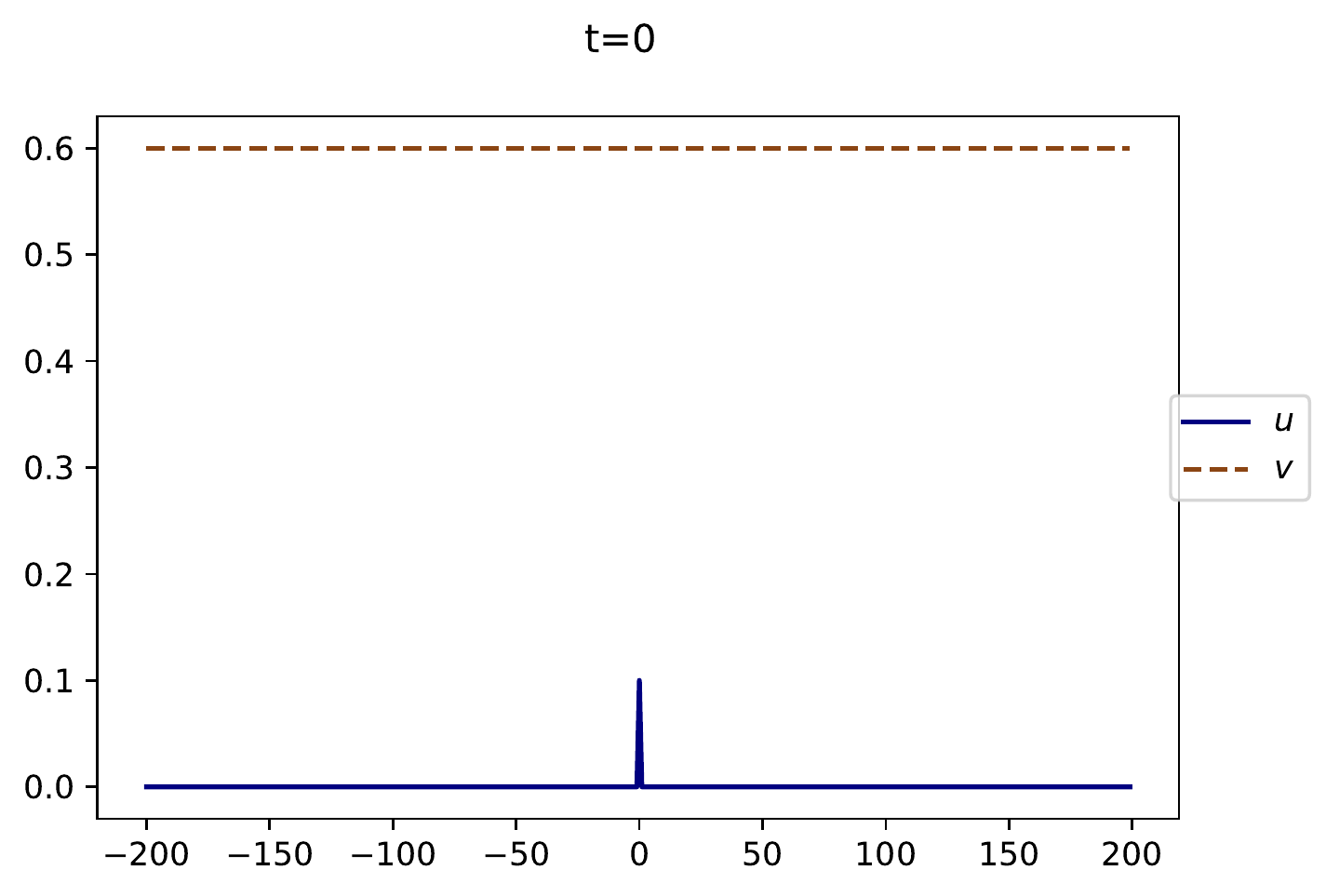}
  \end{subfigure}
  \hfill
  \begin{subfigure}[p]{0.45\linewidth}
    \centering\includegraphics[width=\textwidth]{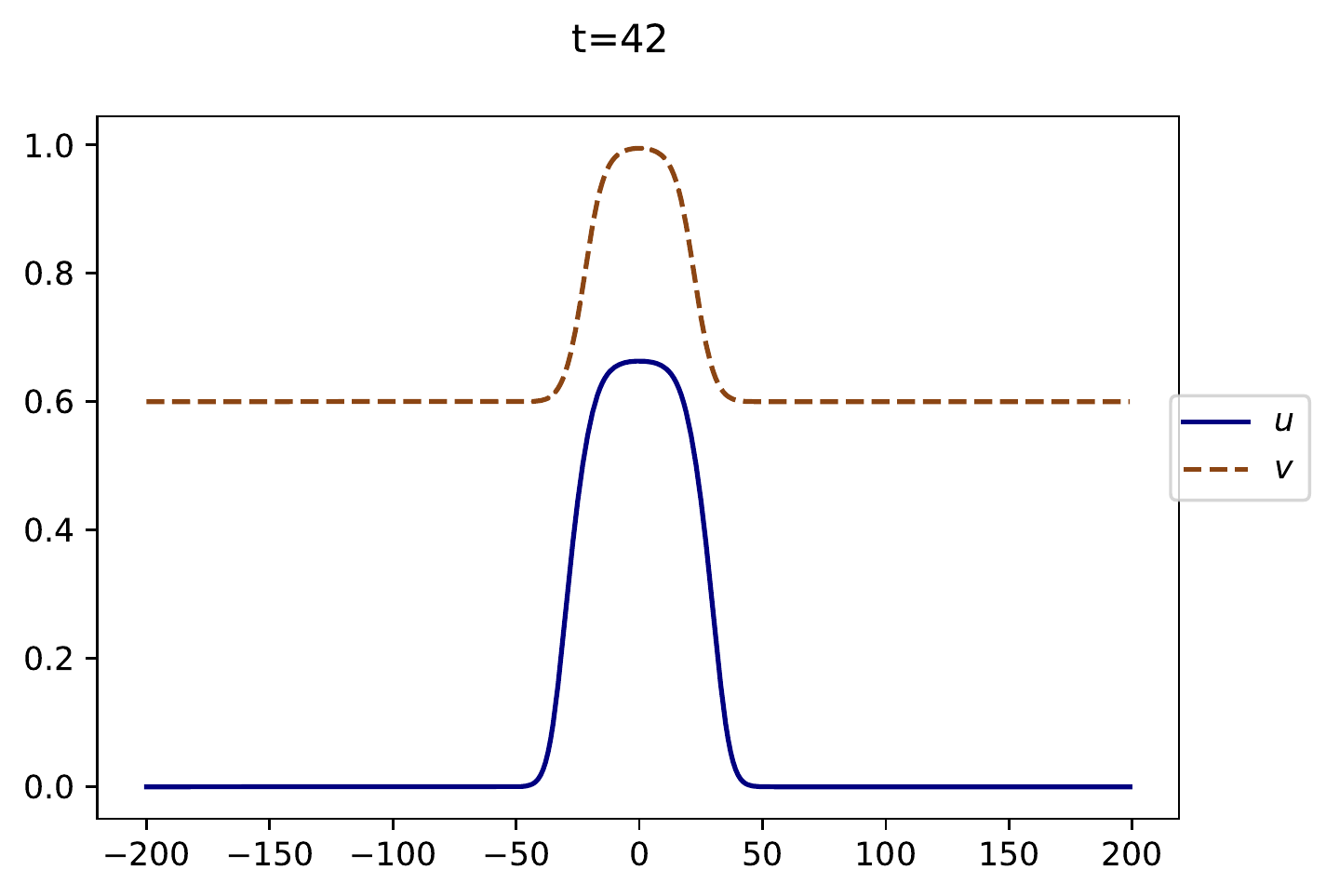}
  \end{subfigure}\\
    \begin{subfigure}[p]{0.45\linewidth}
    \centering\includegraphics[width=\textwidth]{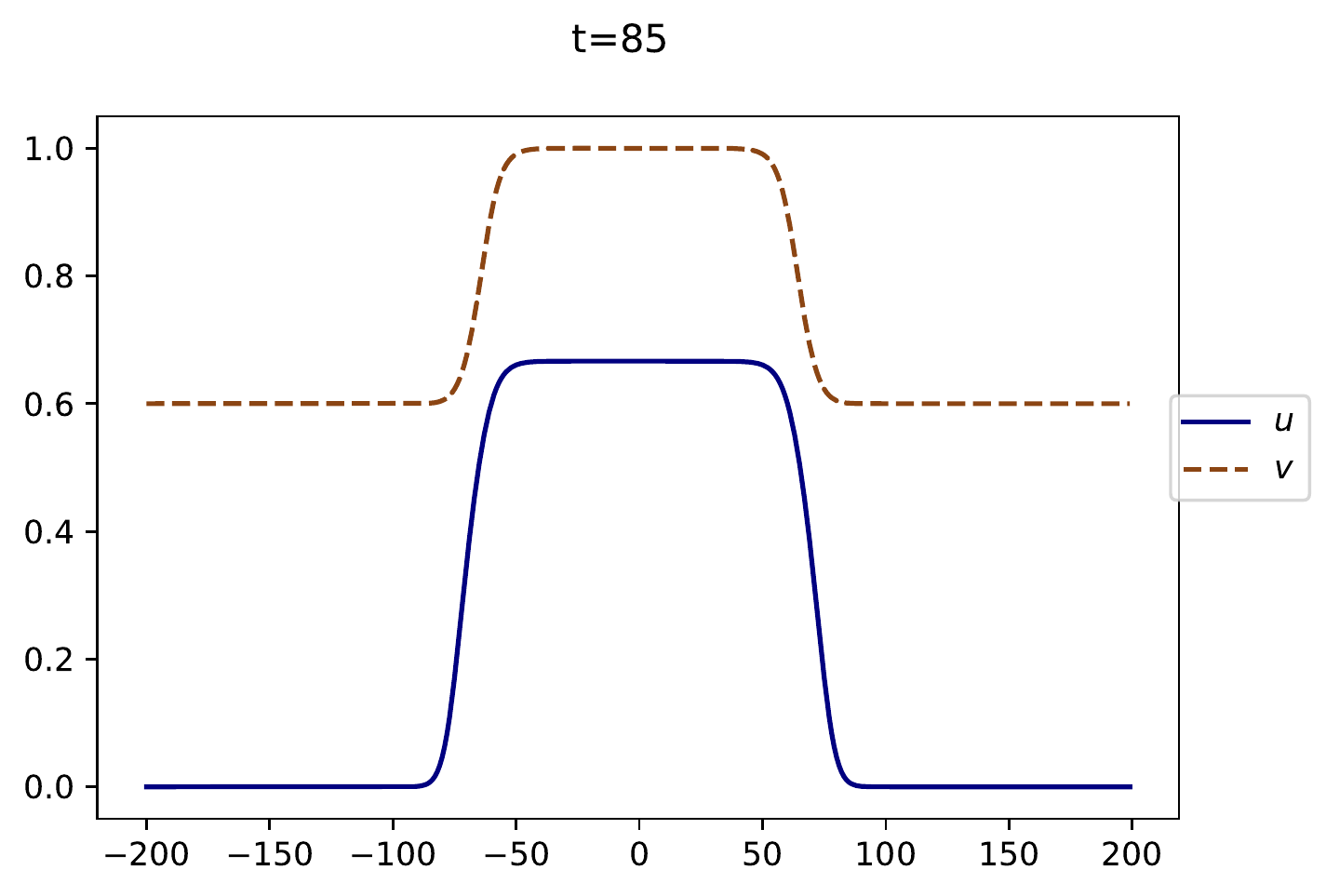}
  \end{subfigure}
  \hfill
  \begin{subfigure}[p]{0.45\linewidth}
    \centering\includegraphics[width=\textwidth]{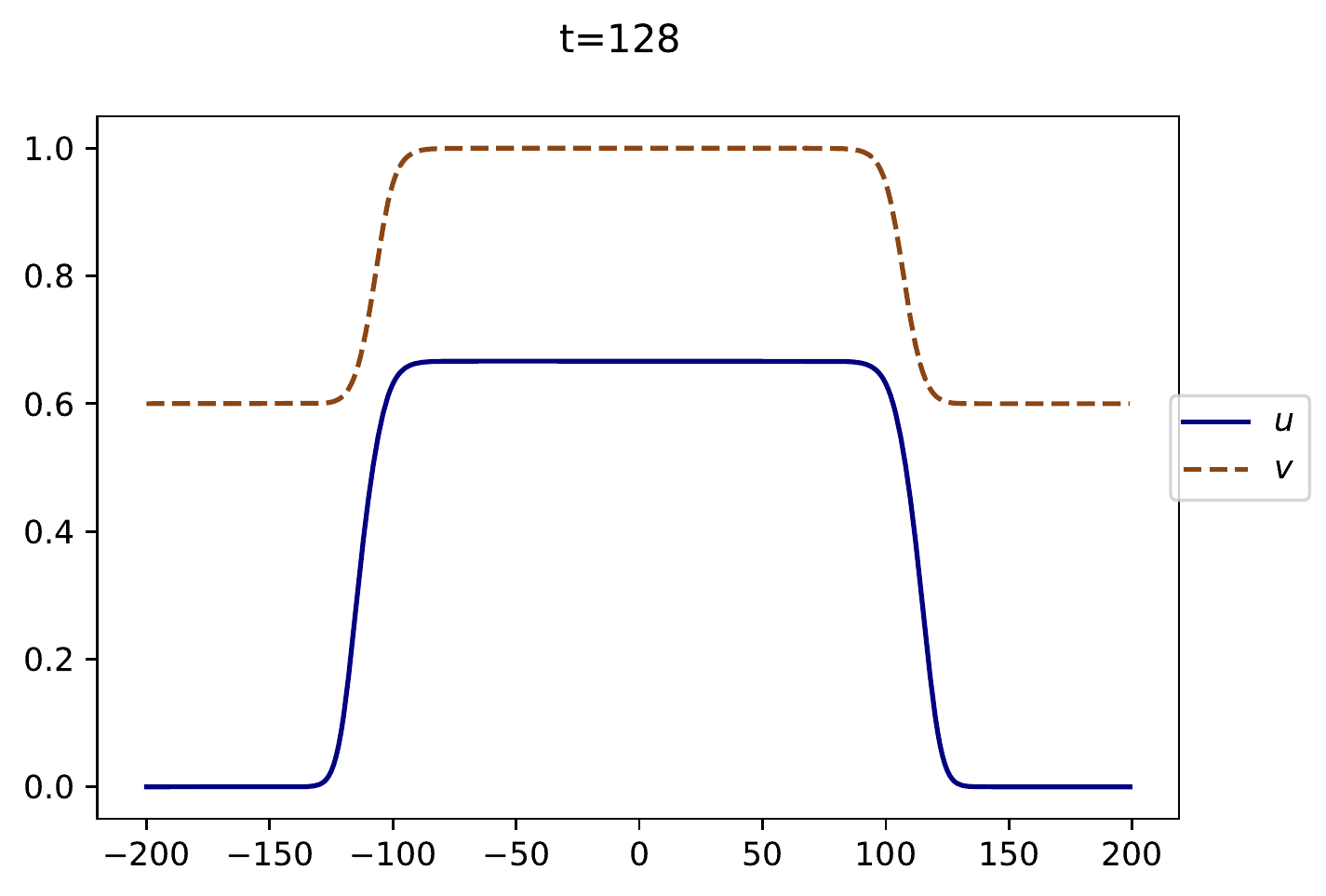}
  \end{subfigure}
\caption{Mixed Case -- lasting upheaval. Snapshots at different times of the solution of \eqref{ExampleMixed} for $v_b=0.6\in(1/2,1)$.
 \href{https://drive.google.com/file/d/1xfBlZgMDpt92-kHoMWyCT-n100rTVVSf/view}{\color{blue}\underline{Video: Mixed\_V0=06.mp4}}  
} \label{fig:v0_6}
\end{figure}

\subsection{Oscillating traveling waves}

We saw in Sections~\ref{sec:Inhibiting_TravelingWave},~\ref{sec:enhancing_TW} that the traveling wave can have the shape of a bump or a monotonic wave. Yet, some traveling waves may have a more complex shape. For example, consider the following particular instance of~\eqref{GeneralEquationMotivationFinal}
\begin{equation}\label{ExampleMixed3}
\left\{\begin{aligned}
&\D_t  u -\D_{xx} u=u\left[v(1-u)-\frac{1}{3}\right],\\
&\D_t  v-\D_{xx} v=uv(1-v)(v-10u).
\end{aligned}\right.
\end{equation}
We see in~\autoref{fig:Oscillation} that the solution converges to a traveling wave which features damped oscillations on its tail.

We suspect that some well-chosen parameters could generate travelings wave with undamped oscillation; yet, we are not able to produce such an example.

\begin{figure}[p]
  \centering
  \begin{subfigure}[p]{0.45\linewidth}
    \centering\includegraphics[width=\textwidth]{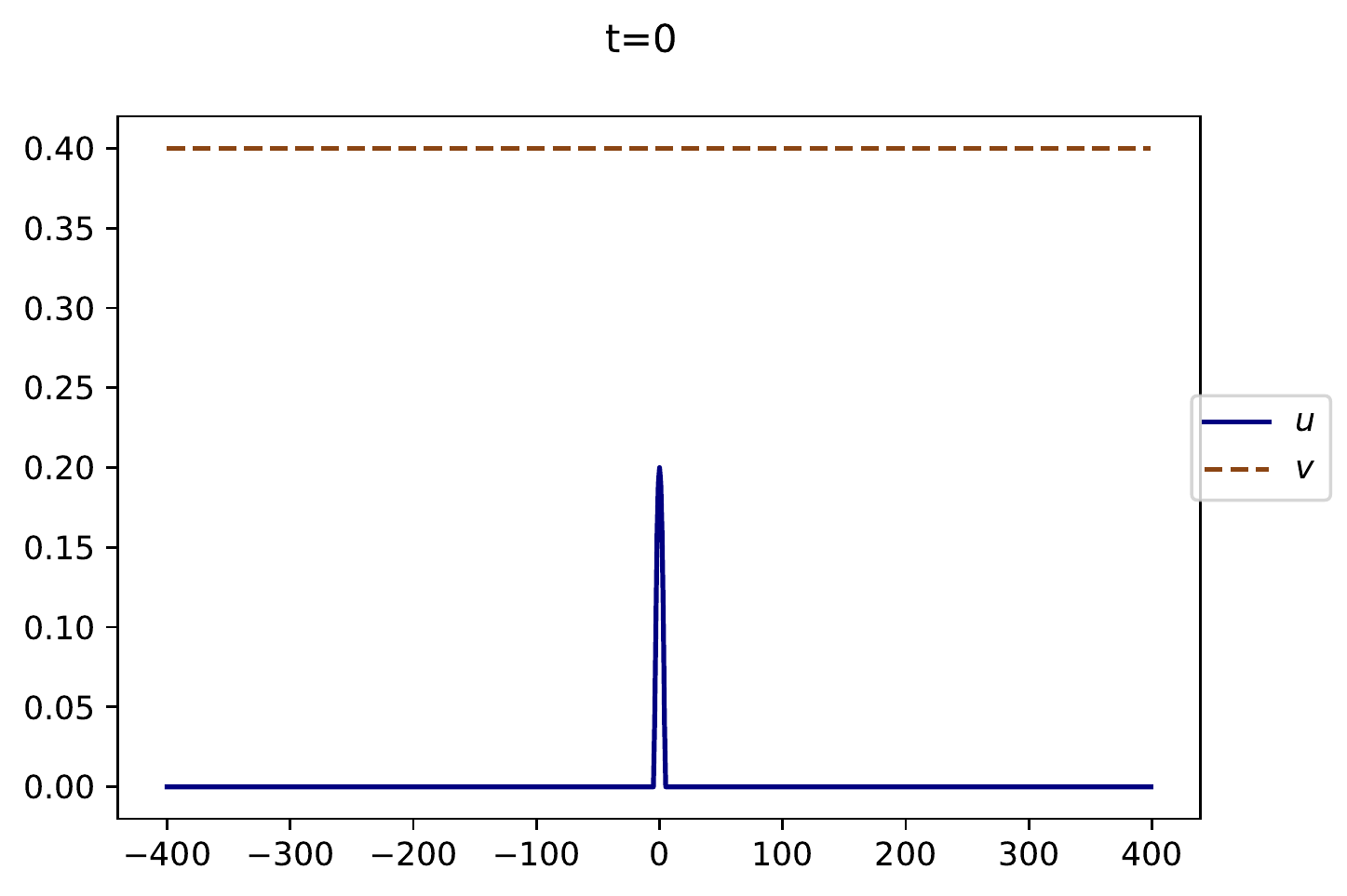}
  \end{subfigure}
  \hfill
  \begin{subfigure}[p]{0.45\linewidth}
    \centering\includegraphics[width=\textwidth]{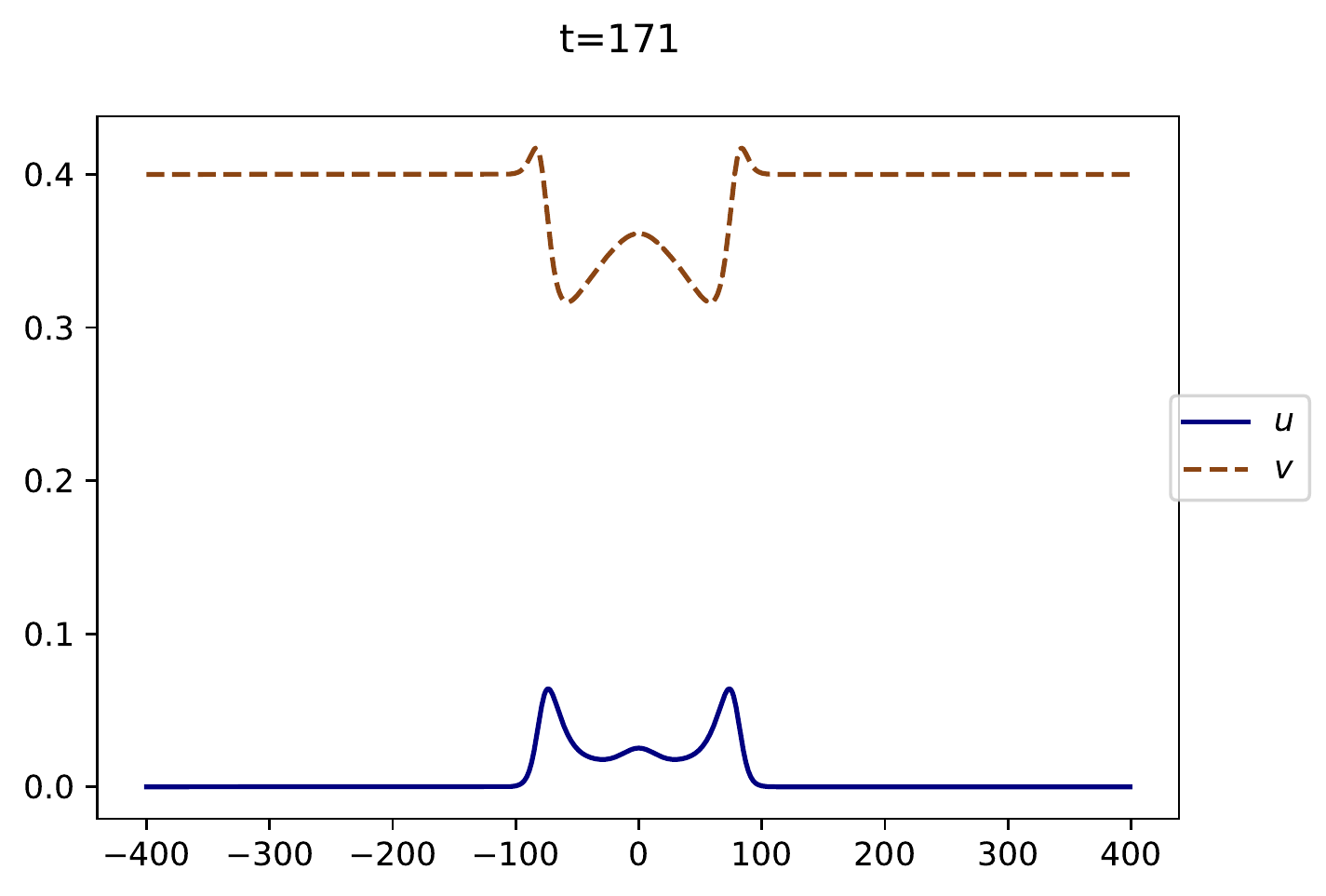}
  \end{subfigure}\\
    \begin{subfigure}[p]{0.45\linewidth}
    \centering\includegraphics[width=\textwidth]{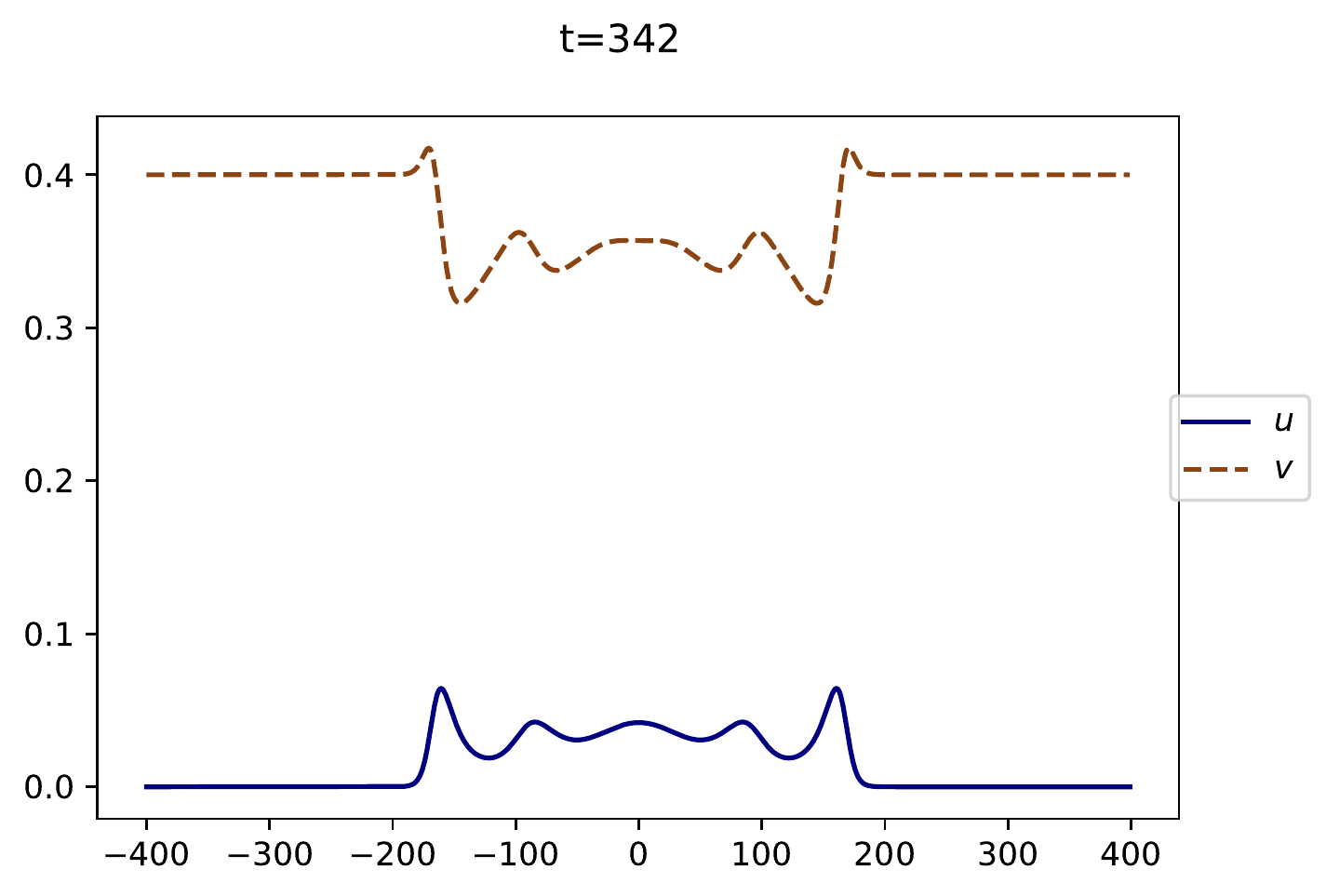}
  \end{subfigure}
  \hfill
  \begin{subfigure}[p]{0.45\linewidth}
    \centering\includegraphics[width=\textwidth]{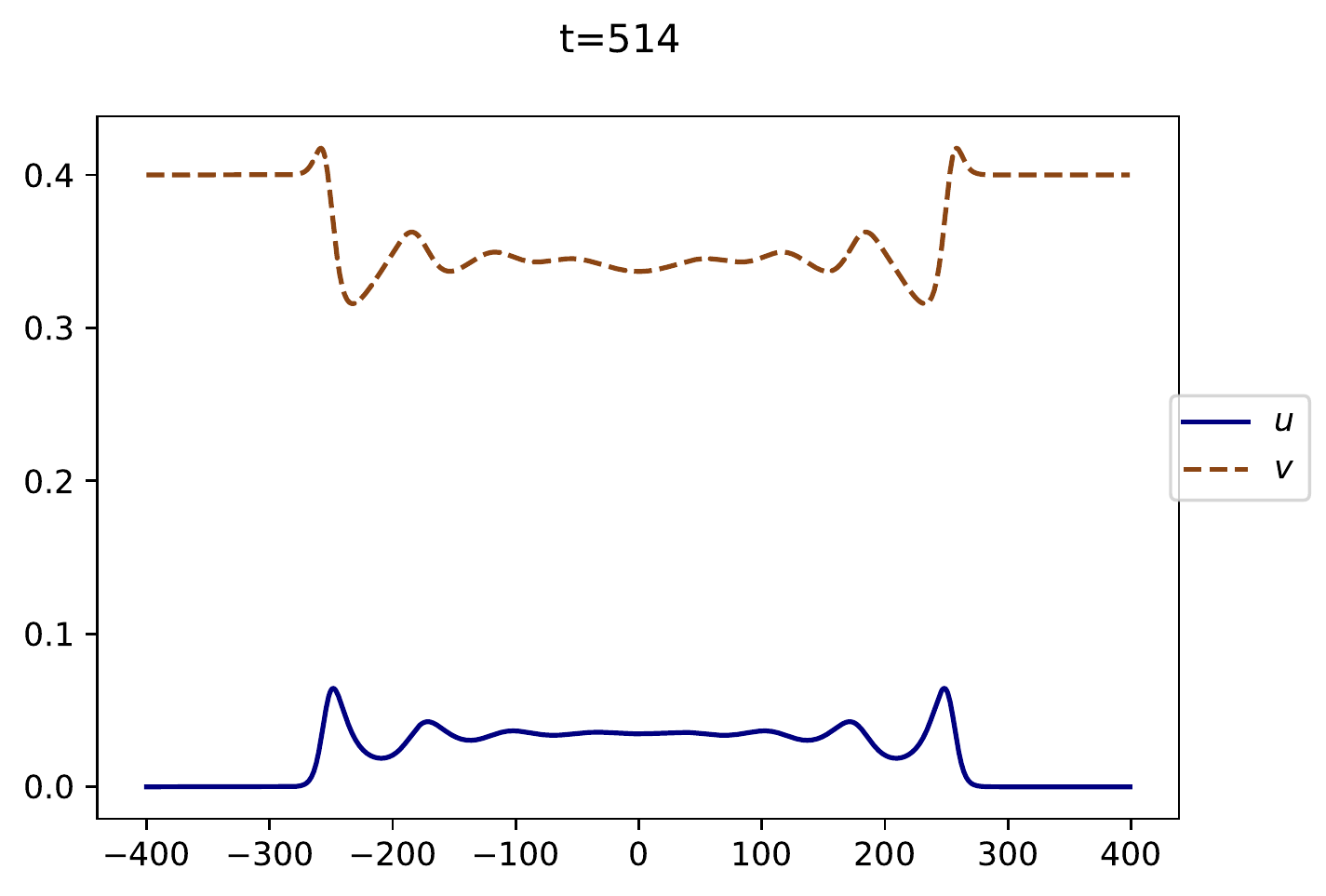}
  \end{subfigure}
\caption{Mixed Case -- Oscillating traveling wave. Snapshots at different times of the solution of \eqref{ExampleMixed3} for $v_b=0.4$ and $u_0(x)=0.2(1-\frac{x^2}{100})_+$.
\href{https://drive.google.com/file/d/1X07uf1xZsUDaQrcp2mPlTHzGGILawBp5/view}{\color{blue}\underline{Video: Oscillation\_v0=04.mp4}}  
} \label{fig:Oscillation}
\end{figure}

\subsection{Magnitude of the triggering event and terraces}\label{sec:Mixed_MagnitudeTriggering}

As described in Section~\ref{sec:enhancing_MagnitudeTriggering}, when the system is not \emph{tension inhibiting}, the magnitude of the triggering event (i.e. the size of $u_0$) may be of crucial importance to determine the regime of the dynamics when $v_0\equiv v_b<v_\star$. 

The same phenomenon may occur even in the case $v_0\equiv v_b>v_\star$.
Indeed, consider the system
\begin{equation}\label{ExampleMixed2}
\left\{\begin{aligned}
&\D_t  u -\D_{xx} u=u\left[v(1-u)-\frac{1}{3}\right],\\
&\D_t  v-\D_{xx} v=uv(1-v)(v+u-\frac{1}{2}).
\end{aligned}\right.
\end{equation}
We find $v_\star=1/3$ from definition~\eqref{Def_v_star}.
Fixing $v_0\equiv v_b>v_\star$ close to the threshold $1/2$, we expect that the magnitude of the triggering event determines whether the dynamics give rise to an ephemeral riot or a lasting upheaval. We see in \autoref{fig:Double_Threshold_eps=0.1} that, taking $v_b=0.4$ and $u_0(x)=0.1(1-\frac{x^2}{10})_+$, system \eqref{ExampleMixed2} gives rise to an ephemeral \emph{riot}. On the contrary, with the same initial level of social tension $v_b=0.4$, but with a larger triggering event $u_0(x)=0.5(1-\frac{x^2}{10})_+$, we observe a \emph{lasting upheaval} in \autoref{fig:Double_Threshold_eps=0.5}.

\begin{figure}[p]
  \begin{subfigure}[p]{\linewidth}
\includegraphics[width=0.45\textwidth]{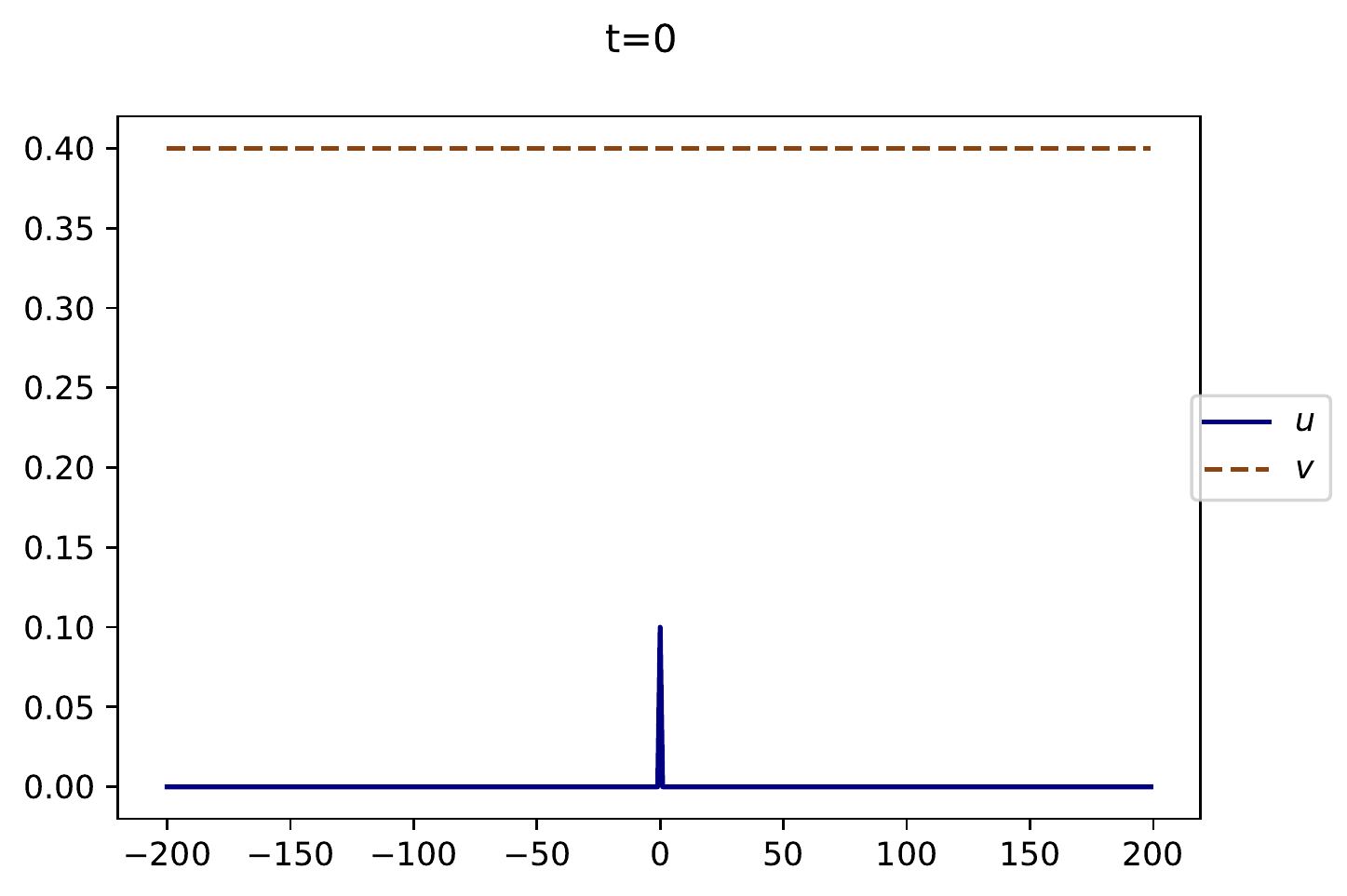}
\hfill
\includegraphics[width=0.45\textwidth]{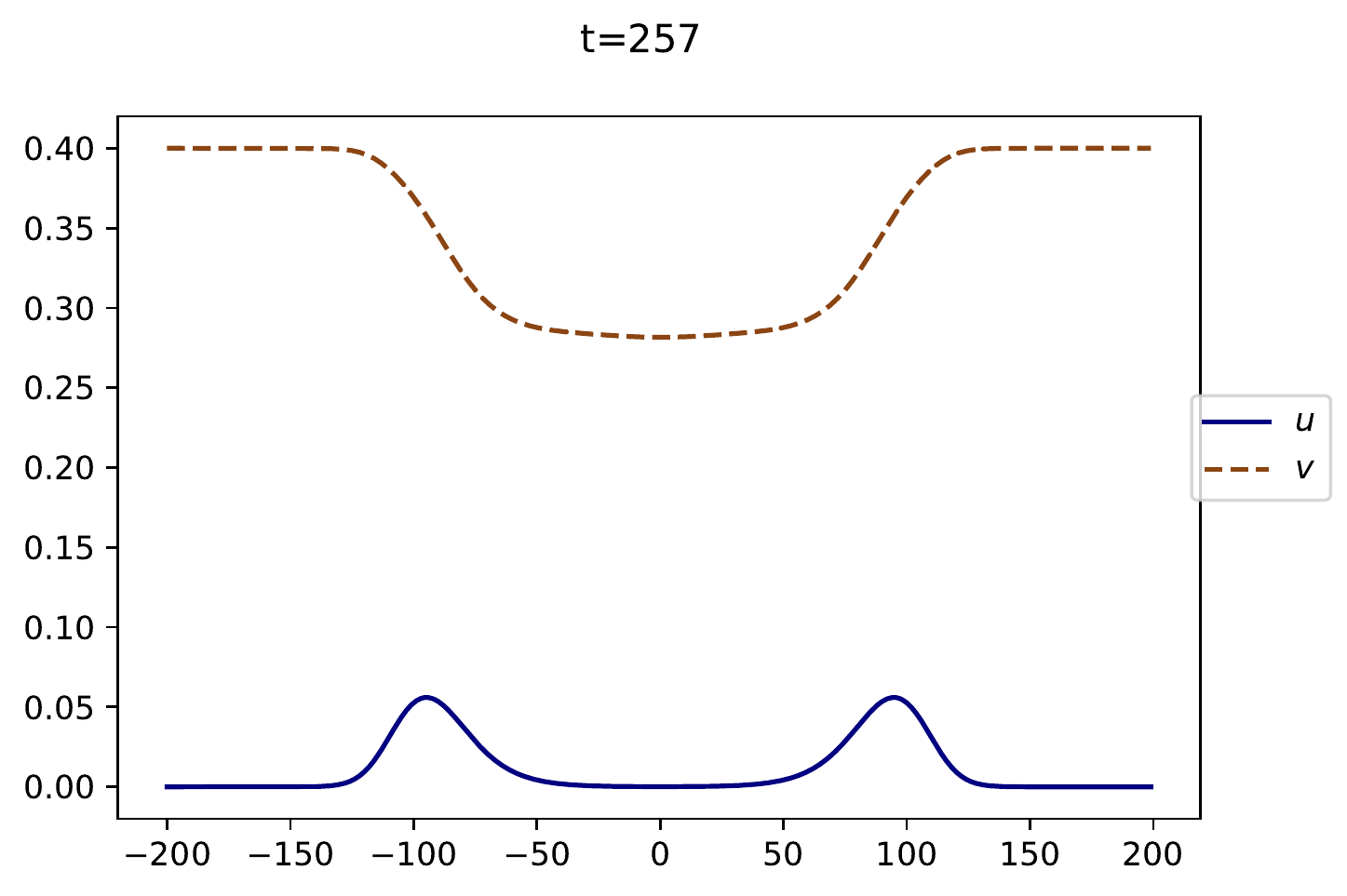}

\caption{Riot for $u_0(x)=0.1(1-\frac{x^2}{10})_+$.
\href{https://drive.google.com/file/d/1JakO5wnqsPXBQin5g_XNwPbGy_0C5JcG/view}{\color{blue}\underline{Video: Threshold\_Riot-Revolution\_u0=01.mp4}}  
} \label{fig:Double_Threshold_eps=0.1}
  \end{subfigure}
 
 \begin{subfigure}[p]{\linewidth}
    \centering\includegraphics[width=0.45\textwidth]{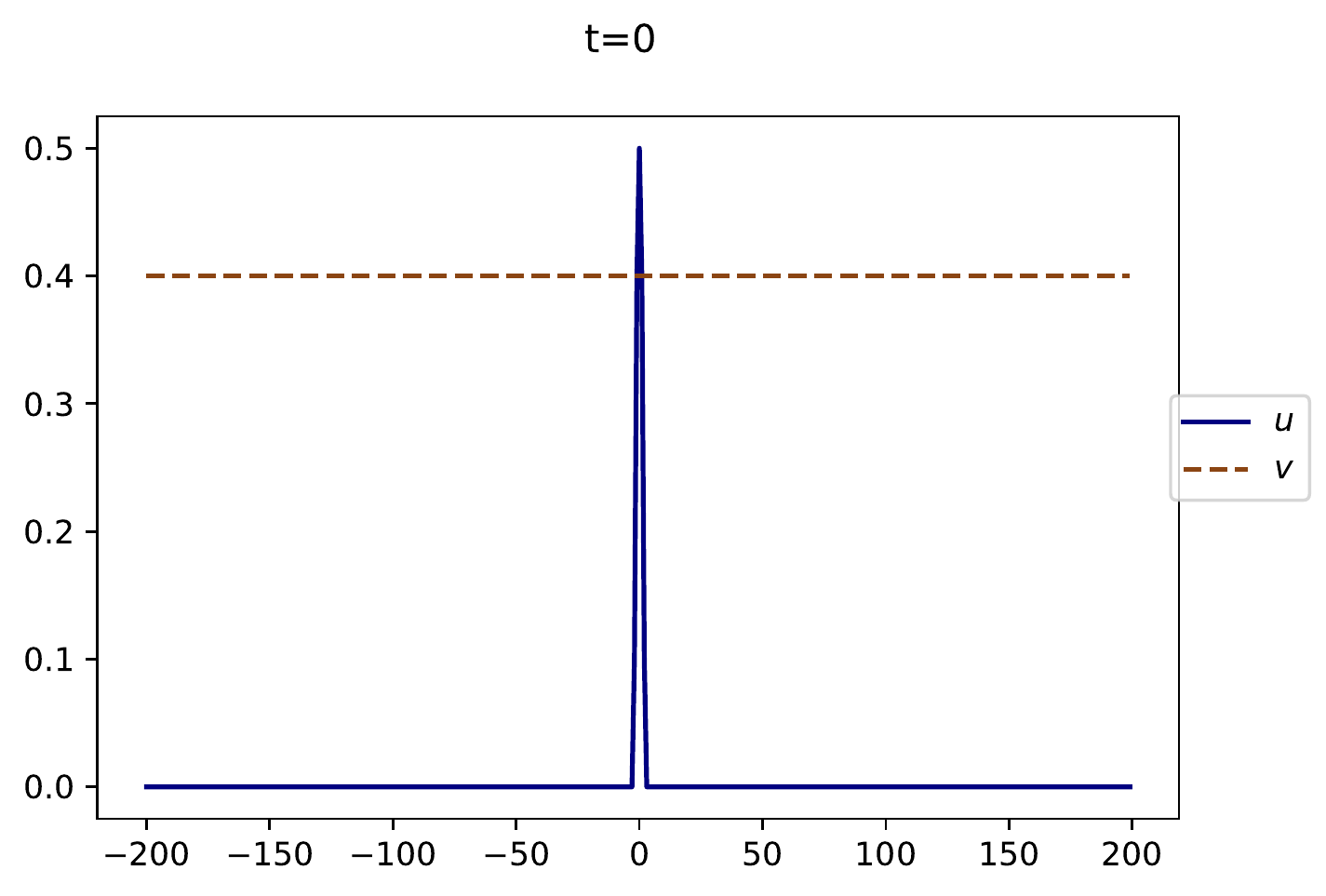}
  \hfill
    \centering\includegraphics[width=0.45\textwidth]{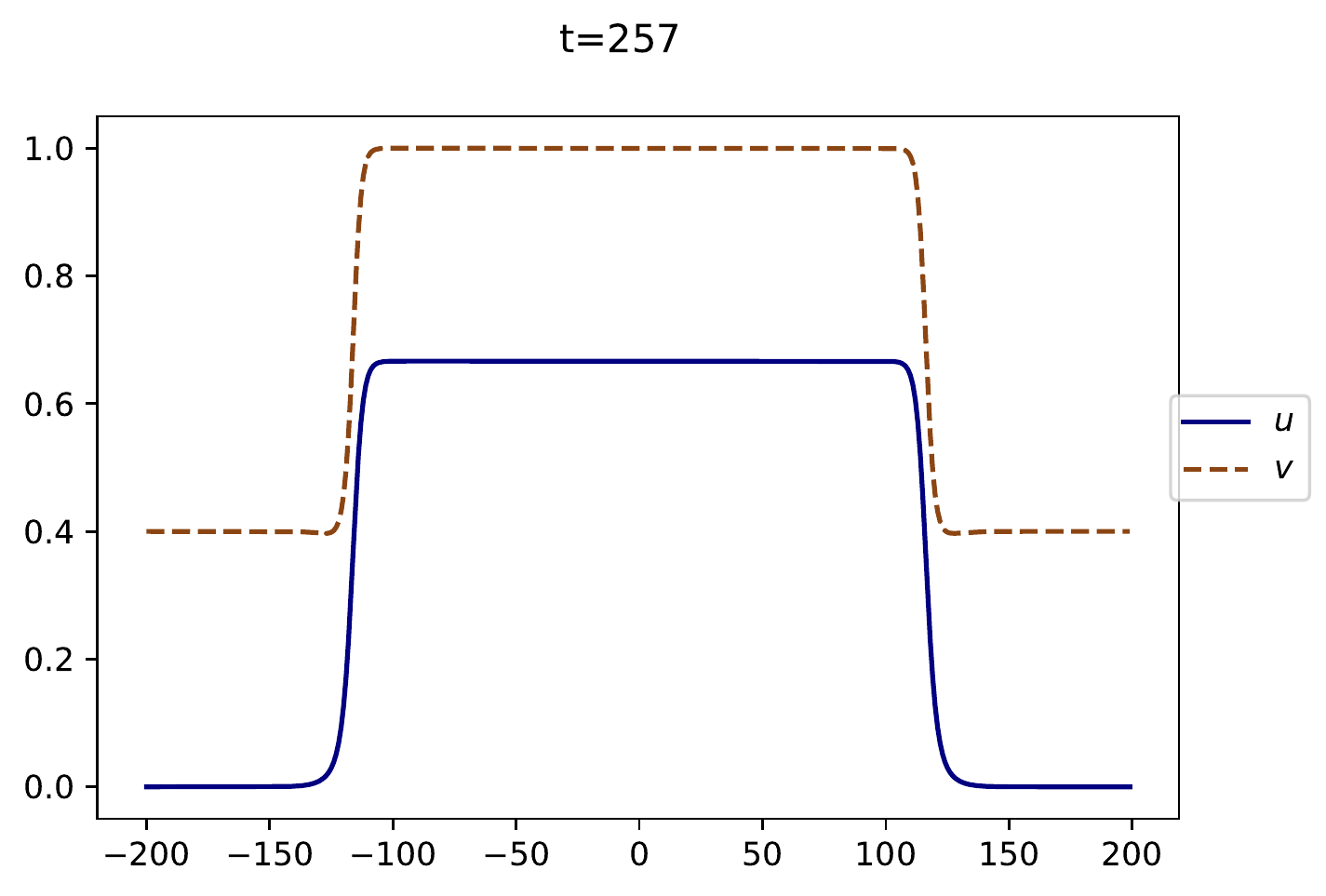}
\caption{lasting upheaval for $u_0(x)=0.5(1-\frac{x^2}{10})_+$.
\href{https://drive.google.com/file/d/1X18LuAg4awBXqzqem8Tu7wDK8VKQyDOy/view}{\color{blue}\underline{Video: Threshold\_Riot-Revolution\_u0=05.mp4}}  
} \label{fig:Double_Threshold_eps=0.5}
    \end{subfigure}
  \caption{Mixed case -- influence of the magnitude of the triggering event. Snapshots at different times of the solution of \eqref{ExampleMixed2} with $v_b=0.4$}
\end{figure}

%
%
%
%\begin{figure}[p]
%  \centering
%  \begin{subfigure}[p]{0.45\linewidth}
%    \centering\includegraphics[width=\textwidth]{Threshold_Riot-Revolution_uv_u0_eps=01_PLOT_T=0_plot_15_23_00.pdf}
%  \end{subfigure}
%  \hfill
%  \begin{subfigure}[p]{0.45\linewidth}
%    \centering\includegraphics[width=\textwidth]{Threshold_Riot-Revolution_uv_u0_eps=01_PLOT_T=257_plot_15_23_01.pdf}
%  \end{subfigure}
%\caption{Mixed Case - Riot. Snapshots at different times of the solution of \eqref{ExampleMixed} for $v_b=0.4$ and $u_0(x)=0.1(1-\frac{x^2}{10})_+$.\\
%\textit{Video: Threshold\_Riot-Revolution\_u0=01.mp4}} \label{fig:Double_Threshold_eps=0.1}
%\end{figure}
%
%
%
%
%\begin{figure}[p]
%  \centering
%  \begin{subfigure}[p]{0.45\linewidth}
%    \centering\includegraphics[width=\textwidth]{Threshold_Riot-Revolution_uv_u0_eps=05_PLOT_T=0_plot_15_23_20.pdf}
%  \end{subfigure}
%  \hfill
%  \begin{subfigure}[p]{0.45\linewidth}
%    \centering\includegraphics[width=\textwidth]{Threshold_Riot-Revolution_uv_u0_eps=05_PLOT_T=257_plot_15_23_21.pdf}
%  \end{subfigure}
%\caption{Mixed Case - Revolution. Snapshots at different times of the solution of \eqref{ExampleMixed} for $v_b=0.4$ and $u_0(x)=0.5(1-\frac{x^2}{10})_+$.\\
%\textit{Video: Threshold\_Riot-Revolution\_u0=05.mp4}} \label{fig:Double_Threshold_eps=0.5}
%\end{figure}

There are also some examples where a sufficiently large triggering event generates a fast riot followed by a slower persisting upheaval. Mathematically speaking, this consists of a terrace, that is, the superposition of two traveling waves traveling at different speeds. Consider
\begin{equation}\label{ExampleMixed4}
\left\{\begin{aligned}
&\D_t  u -\D_{xx} u=u\left[v(1-u)-\frac{1}{3}\right],\\
&\D_t  v-\D_{xx} v=\frac{1}{2} uv(1-v)(v+u-\frac{2}{3}).
\end{aligned}\right.
\end{equation}
with $v_b=0.44$ and $u_0=\eps(1-x^2)_+$.
Taking $\eps=1.5$, we see in \autoref{fig:Bistable_eps=1.5} that the dynamics correspond to that of a riot. However, if we consider a triggering event with a larger magnitude by taking $\eps=1.6$, we observe a terrace in~\autoref{fig:Bistable_eps=1.6}, featuring an ephemeral riot followed by a lasting upheaval (the traveling speed of the upheaval equals approximately half the one of the riot).

\begin{figure}[p]
  \centering
  \begin{subfigure}[p]{0.45\linewidth}
    \centering\includegraphics[width=\textwidth]{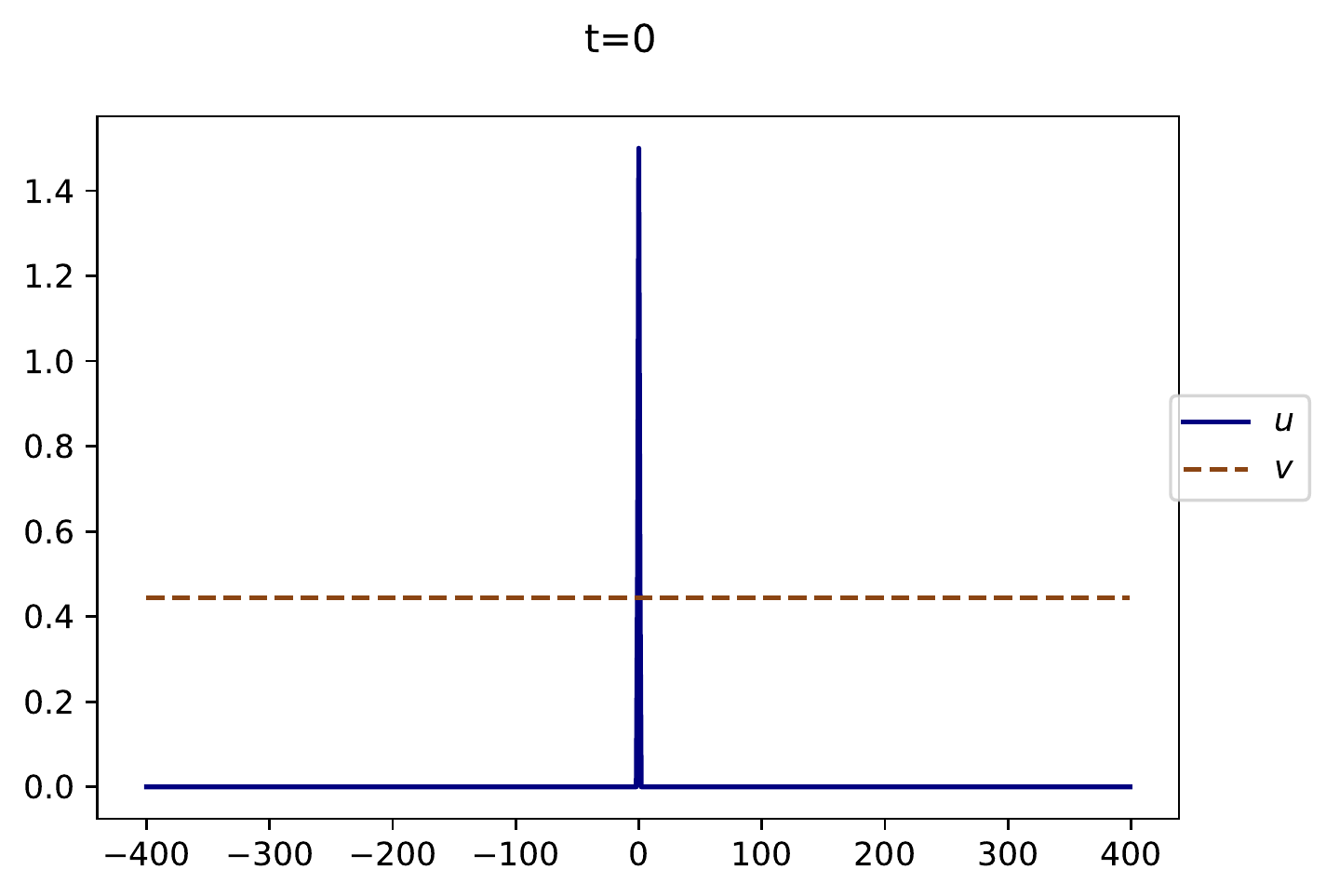}
  \end{subfigure}
  \hfill
  \begin{subfigure}[p]{0.45\linewidth}
    \centering\includegraphics[width=\textwidth]{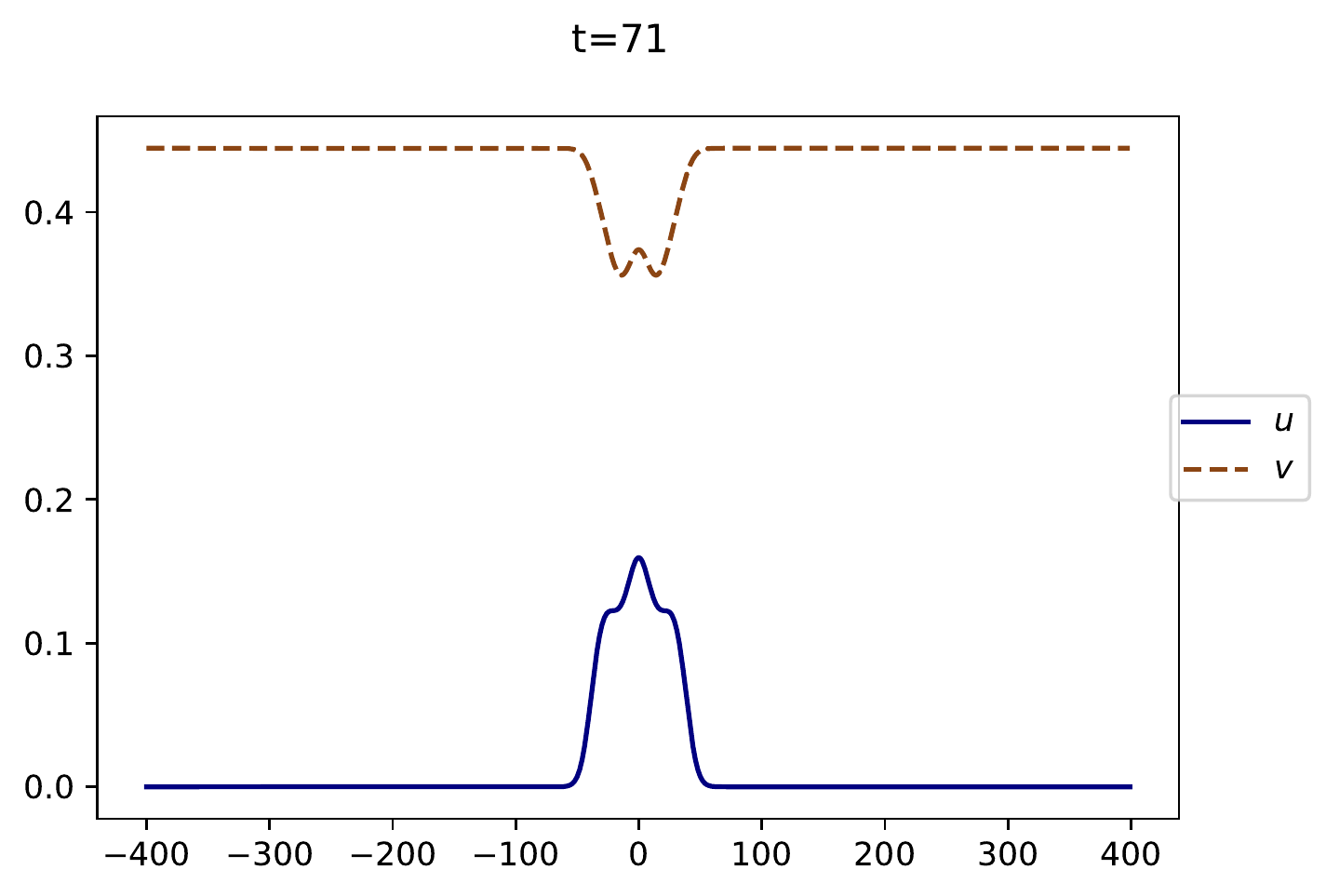}
  \end{subfigure}\\
    \begin{subfigure}[p]{0.45\linewidth}
    \centering\includegraphics[width=\textwidth]{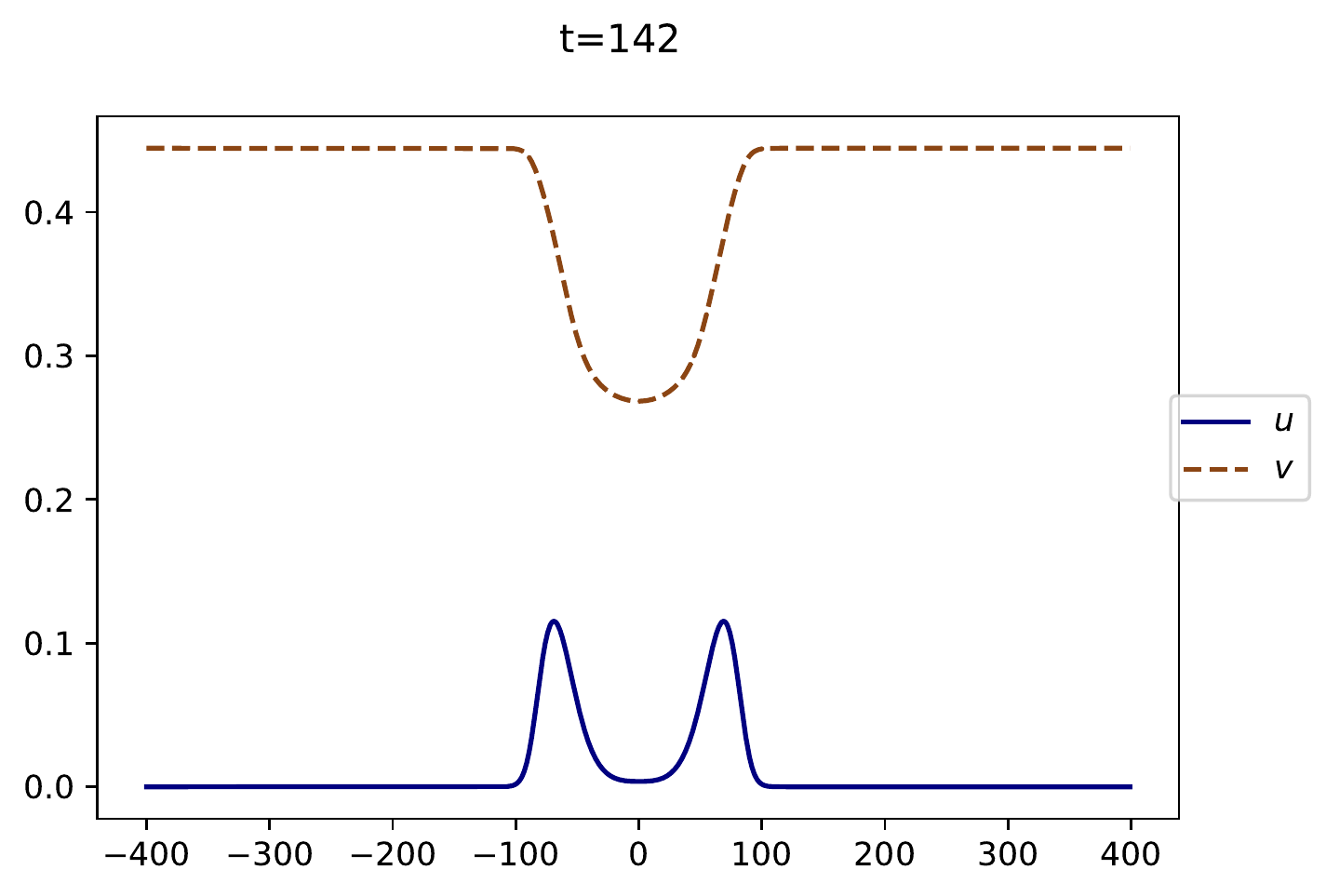}
  \end{subfigure}
  \hfill
  \begin{subfigure}[p]{0.45\linewidth}
    \centering\includegraphics[width=\textwidth]{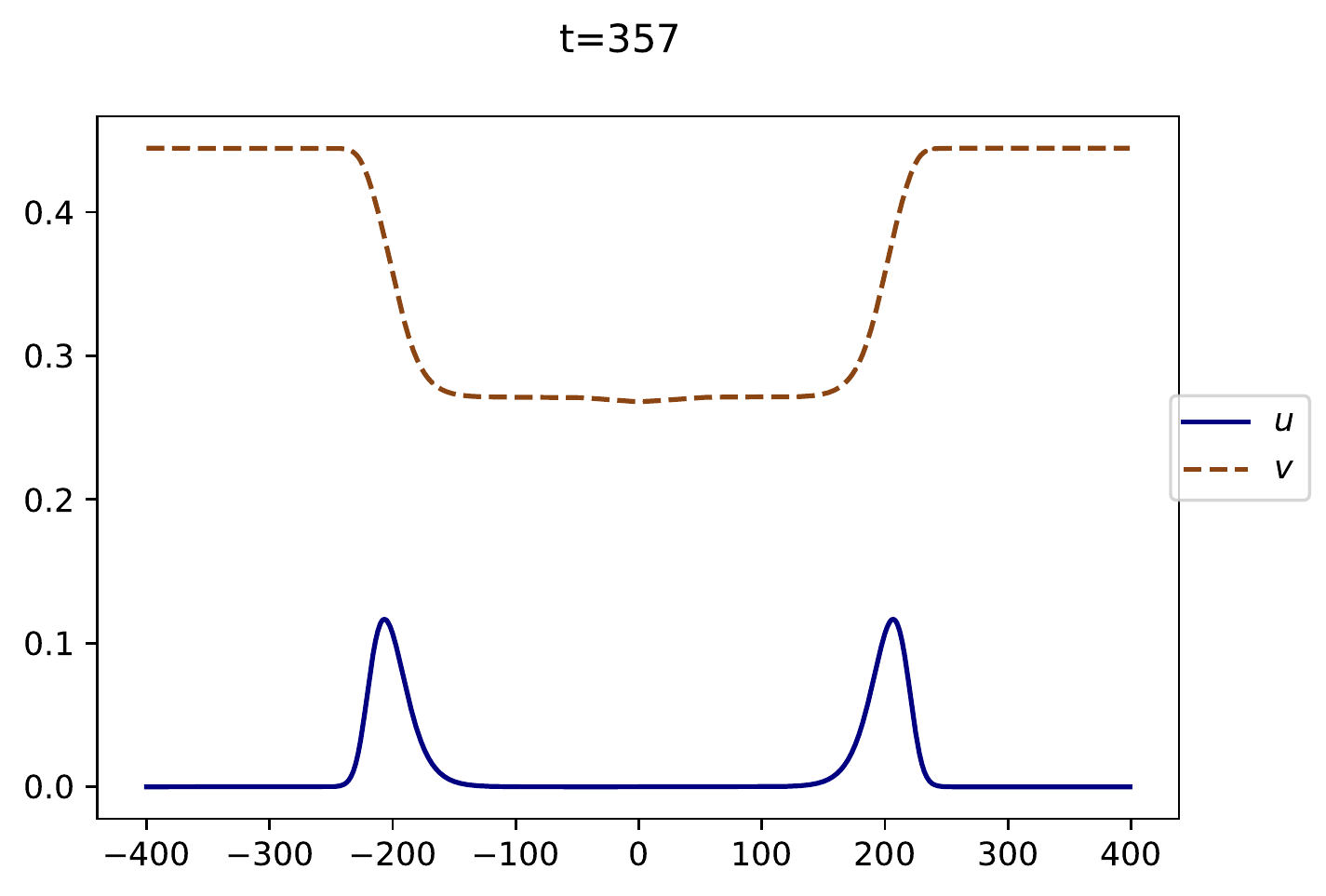}
  \end{subfigure}
\caption{No terrace: simple riot. Snapshots at different times of the solution of \eqref{ExampleMixed4} for $v_b=0.44$ and $u_0(x)=1.5(1-x^2)_+$.
\href{https://drive.google.com/file/d/1f4hYsMODEv5PqGHCzv18Z9_oKbRBlIkF/view}{\color{blue}\underline{Video: Terrace\_eps=15e-1.mp4}}  
} \label{fig:Bistable_eps=1.5}
\end{figure}

\begin{figure}[p]
  \centering
  \begin{subfigure}[p]{0.45\linewidth}
    \centering\includegraphics[width=\textwidth]{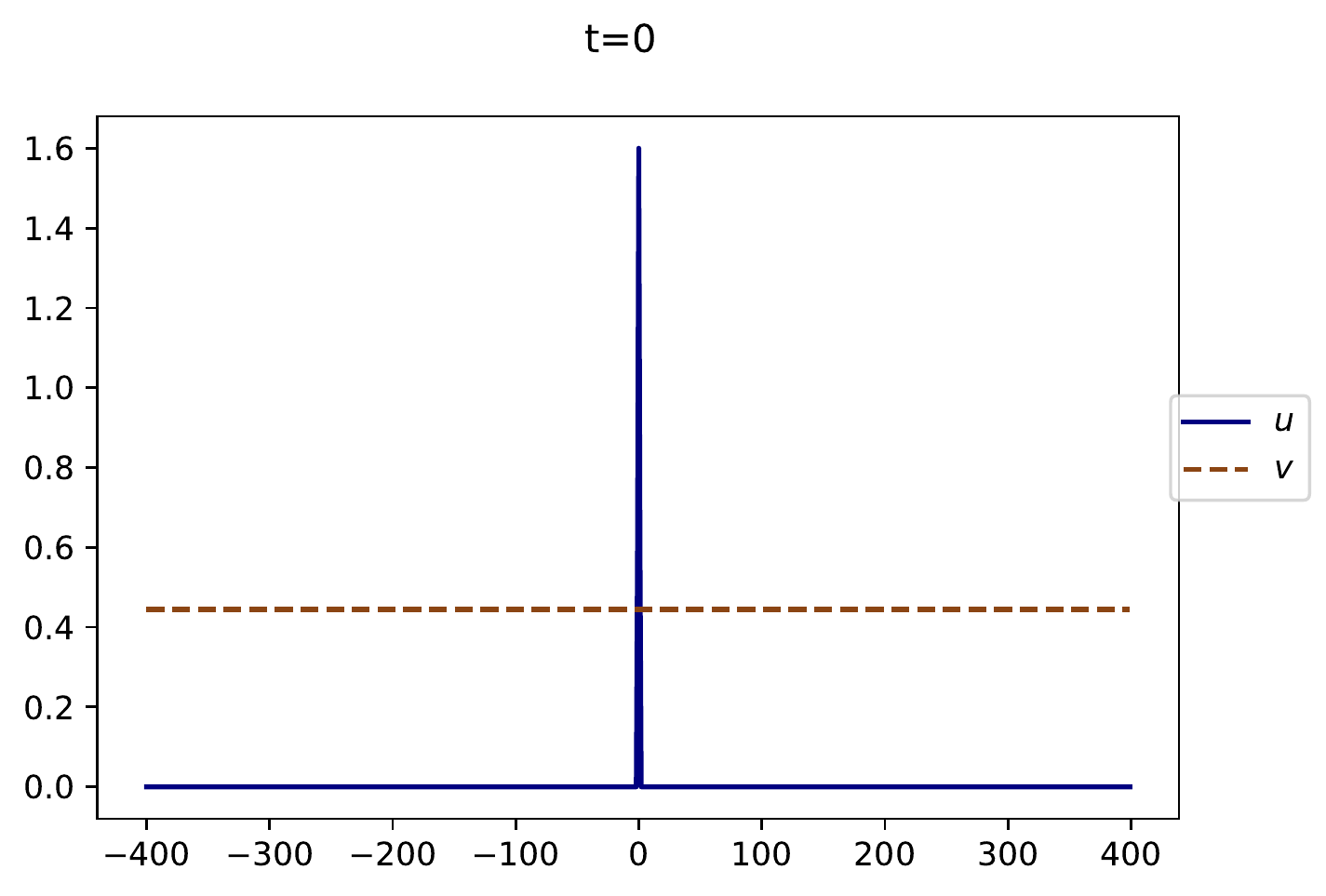}
  \end{subfigure}
  \hfill
  \begin{subfigure}[p]{0.45\linewidth}
    \centering\includegraphics[width=\textwidth]{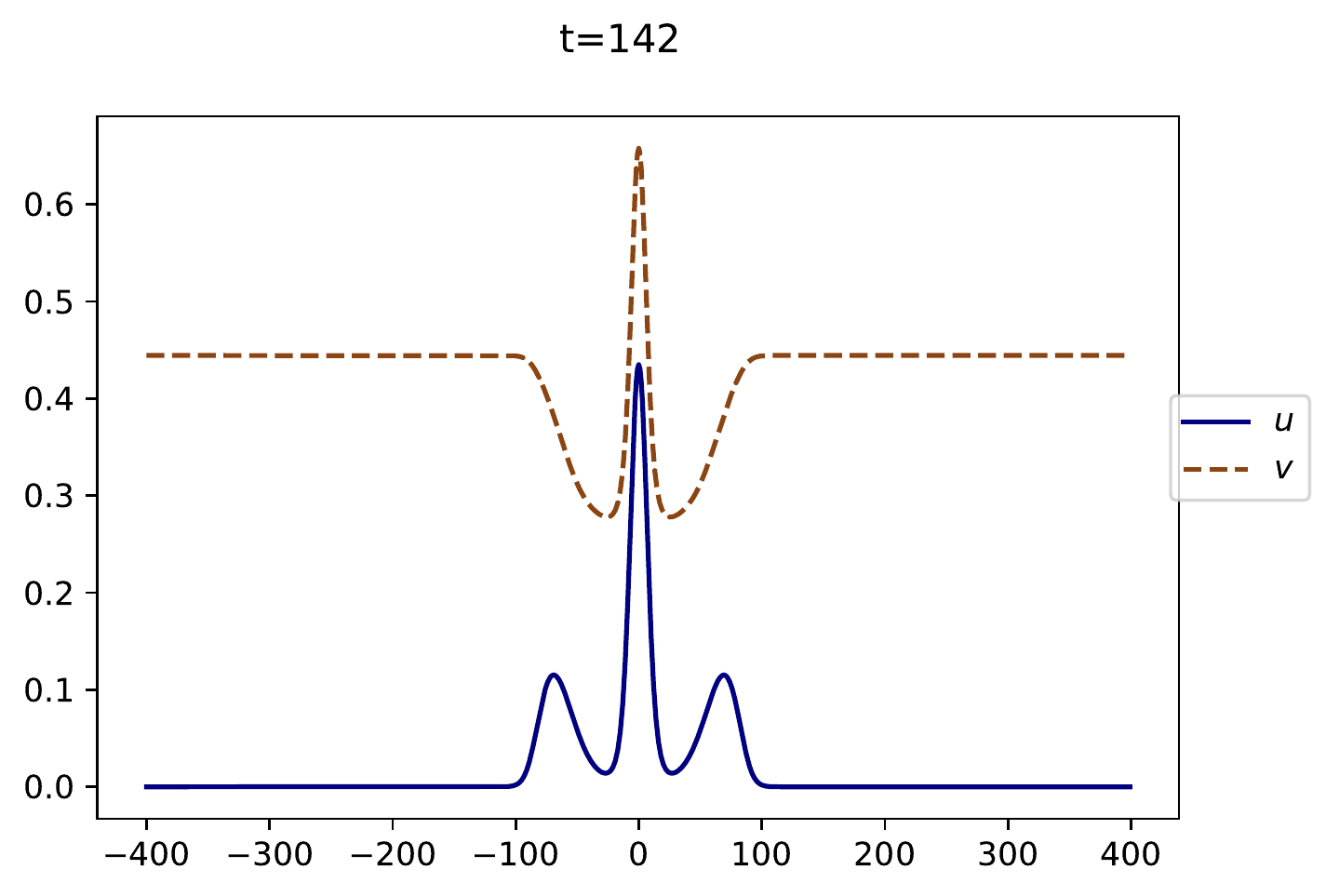}
  \end{subfigure}\\
    \begin{subfigure}[p]{0.45\linewidth}
    \centering\includegraphics[width=\textwidth]{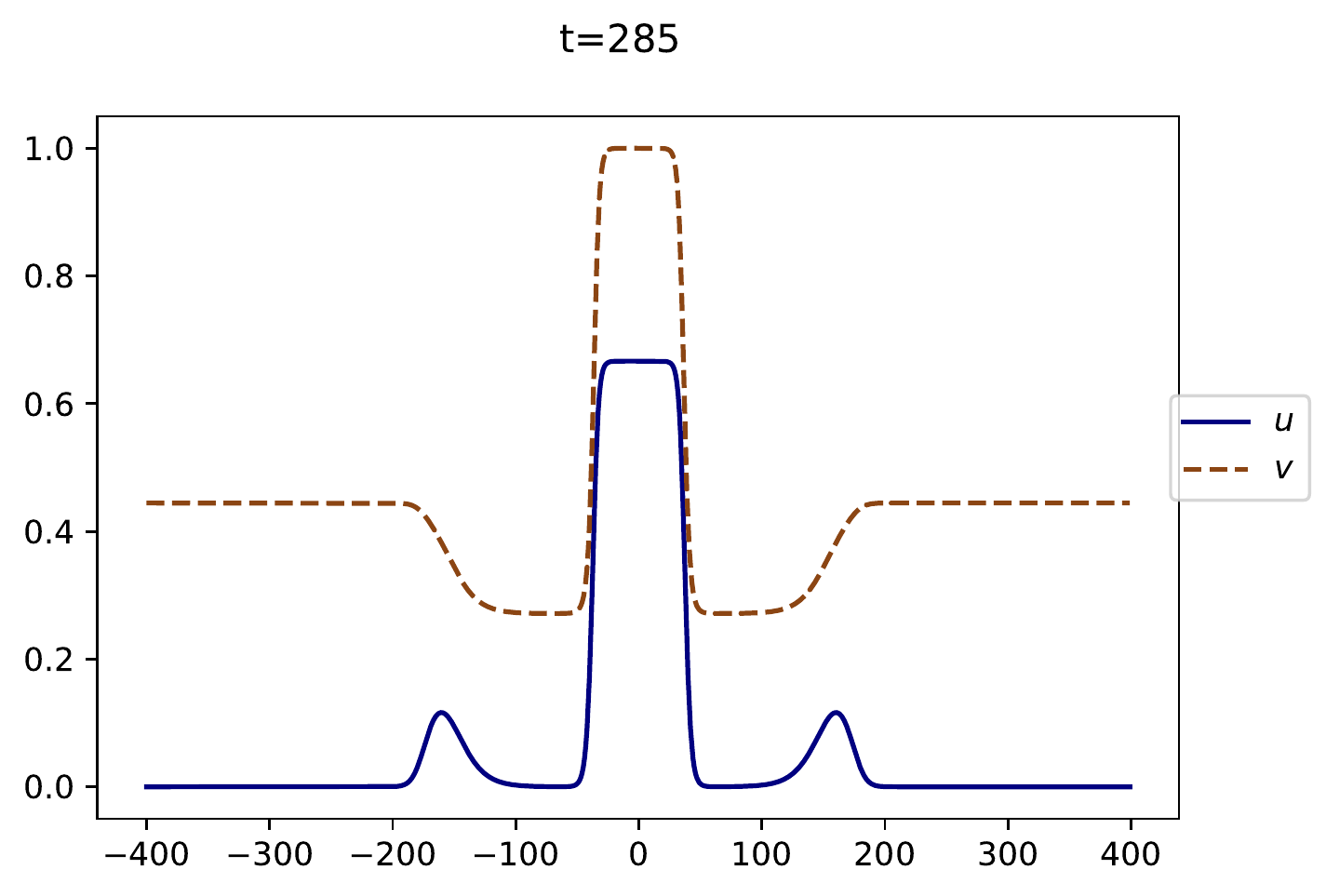}
  \end{subfigure}
  \hfill
  \begin{subfigure}[p]{0.45\linewidth}
    \centering\includegraphics[width=\textwidth]{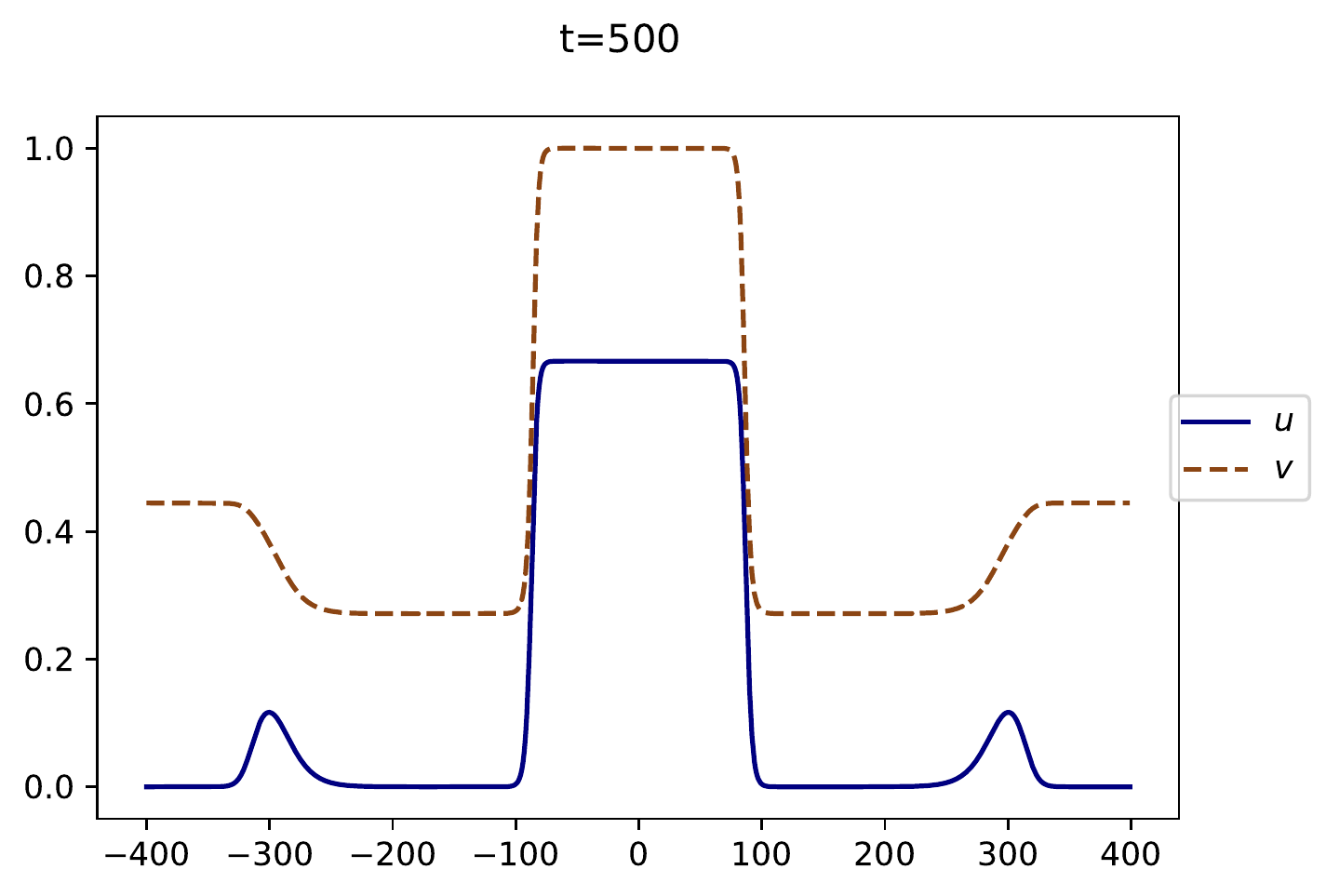}
  \end{subfigure}
\caption{Terrace: riot followed by a lasting upheaval. Snapshots at different times of the solution of \eqref{ExampleMixed4} for $v_b=0.44$ and $u_0(x)=1.6(1-x^2)_+$.
\href{https://drive.google.com/file/d/1cMK2xrqJsRP6LLxvFSTbcVaS_ppngAEA/view}{\color{blue}\underline{Video: Terrace\_eps=16e-1.mp4}}  
} \label{fig:Bistable_eps=1.6}
\end{figure}

The same phenomenon can also be observed on the following example
\begin{equation}\label{ExampleMixed5}
\left\{\begin{aligned}
&\D_t  u -\D_{xx} u=u\left[v(1-u)-\frac{1}{3}\right],\\
&\D_t  v-\D_{xx} v= u\left(u-\frac{1}{4}\right) v(1-v).
\end{aligned}\right.
\end{equation}
for $v_b=0.5$ and $u_0=\eps(1-x^2)_+$. Loosely speaking, this system can be interpreted as \emph{tension inhibiting} for $u<1/4$ and \emph{tension enhancing} for $u>1/4$. We observe a riot for $\eps=0.5$ (\href{https://drive.google.com/file/d/1CoHGi0pdrG4xlulolwmObx56mCDFnxUH/view}{\color{blue}\underline{video: Terrace2\_eps=05.mp4}}) and a terrace consisting of a riot followed by a lasting upheaval for $\eps=0.6$ (\href{https://drive.google.com/file/d/1VNRnLojb_L-XCGR3eeWLUpTmUwJHxPmh/view}{\color{blue}\underline{video: Terrace2\_eps=06.mp4}}).

\section{Spatially heterogeneous models}\label{sec:SpaceHeterogeneous}

Data show that the dynamics of social unrest is highly influenced by many factors that are not homogeneous in space, such as the density of population, poverty, etc. See, e.g.~\cite{Bonnasse-Gahot2018}.
An interesting extension of our model is to introduce spatial heterogeneity in the system~\eqref{GeneralEquationMotivationFinal}. 
In this section, we propose some possible ways to do this and provide some numerics.

\subsection{Gap problem}

The first approach to account for space heterogeneity is to assume that the function $\Phi(u,v)$ (which, we recall, stands for the growth of the level of social unrest $u$) depends explicitly on the space variable $x$. We may assume that some areas in space are not favorable to the growth of social unrest.
It is then an interesting problem to determine whether a propagating movement of social unrest can overcome those areas.

For some interval $\mathcal{K}:=(a, a+L)$, representing the \emph{gap} (or the \emph{obstacle}), we choose $\Phi$ of the form
\begin{equation*}
\Phi(x,u,v)=u\left[r(v)f(u)\mathds{1}_{x\not\in(a, a+L)}-\omega\right].
\end{equation*}
Namely, in dimension $n=1$, we consider the system
\begin{equation}\label{HeterogeneousGEquation}
\left\{\begin{aligned}
&\D_t  u -d_1 \D_{xx} u=u\left[r(v)f(u)\mathds{1}_{x\not\in(a, a+L)}-\omega.\right],\\
&\D_t  v-d_2\D_{xx} v=\Psi(u,v).
\end{aligned}\right.
\end{equation}

We ask the following question: is there a threshold on the length $L$ of the obstacle above which the propagation of the solution is blocked? 

First, let us remark that, if $v_0\equiv v_b> v_\star$, then our system enjoys a \emph{Hair-Trigger} effect: it means that arbitrarily small initial conditions ignites a social movement (see Sections~\ref{sec:ResumptionCalm} and \ref{sec:Burst}). Since, for all $t>0$, we have that $u(t,\cdot)>0$, we deduce that there exists no gap that can block the propagation. 
We illustrate this remark with a numerical simulation on \autoref{fig:GapHairTrigger} for the system~\eqref{ExampleInhibiting2}, that is
\begin{equation}\label{GapExample1}
\left\{\begin{aligned}
&\D_t  u -\D_{xx} u=u\left[v (1-u)\mathds{1}_{x\not\in(60,60+L)} -\frac{1}{3} \right],\\
&\D_t  v-\D_{xx} v=-uv,
\end{aligned}\right.
\end{equation}
 with $v_b=0.6$ (we recall that $v_\star= 1/3$) and $u_0(x)=0.2(1-x^2)_+$. 
We see on \autoref{fig:GapHairTrigger} that a riot begins to propagate, but then fades as it reaches the obstacle. However, after some times, the riot grows again and continue to propagate beyond the obstacle.
% A similar conclusion holds for the enhancing system~\eqref{GAP_Hair-Trigger_enhancing} for $v_b=0.6>v_\star$ (see video \textit{GAP\_Hair-Trigger\_enhancing.mp4}). This is in agreement with the previous remark stating that in the case $v_b>v_\star$, a Hair-Trigger effect occurs and no gap can block the propagation.

\begin{figure}[p]
  \centering
  \begin{subfigure}[p]{0.45\linewidth}
    \centering\includegraphics[width=\textwidth]{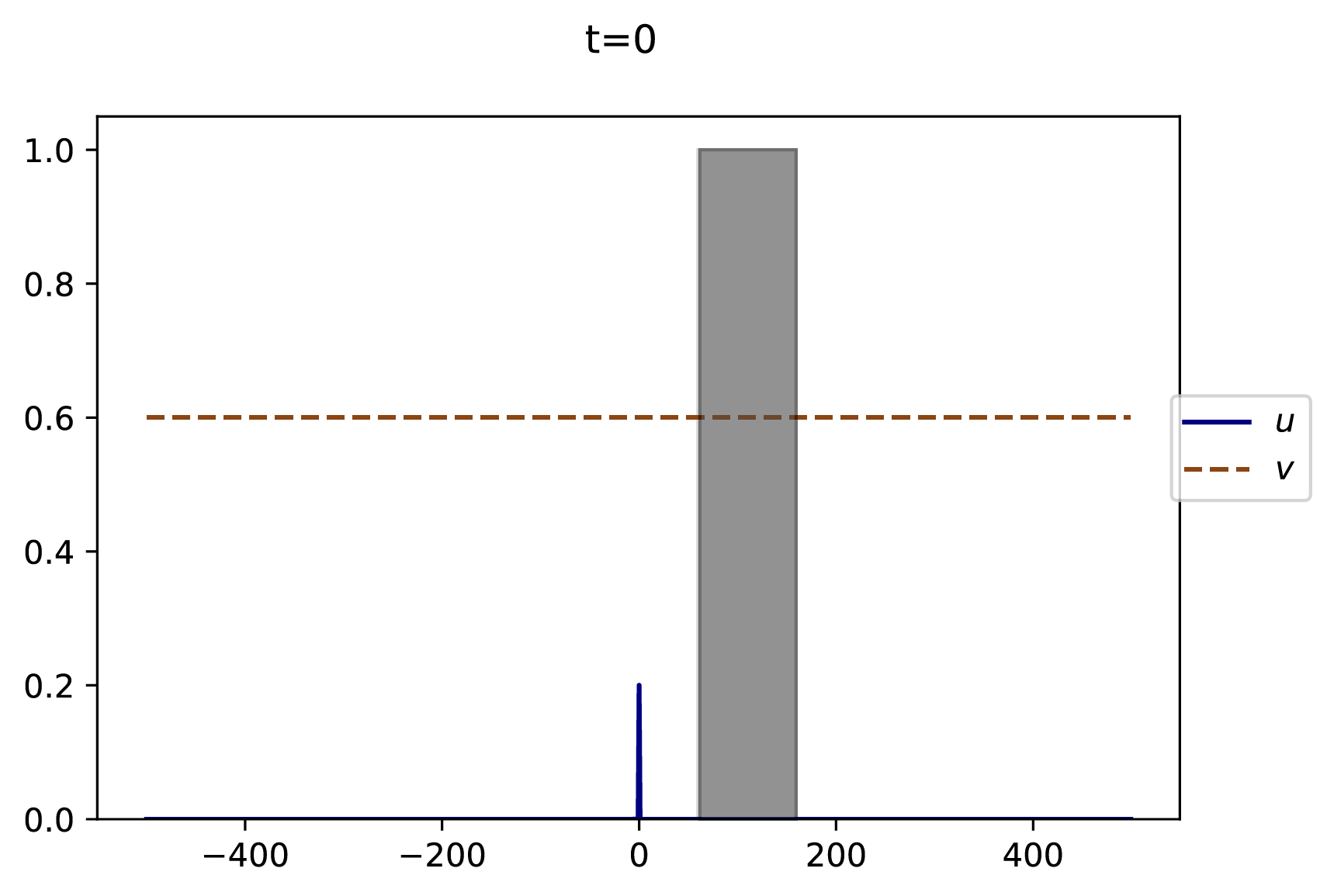}
  \end{subfigure}
  \hfill
  \begin{subfigure}[p]{0.45\linewidth}
    \centering\includegraphics[width=\textwidth]{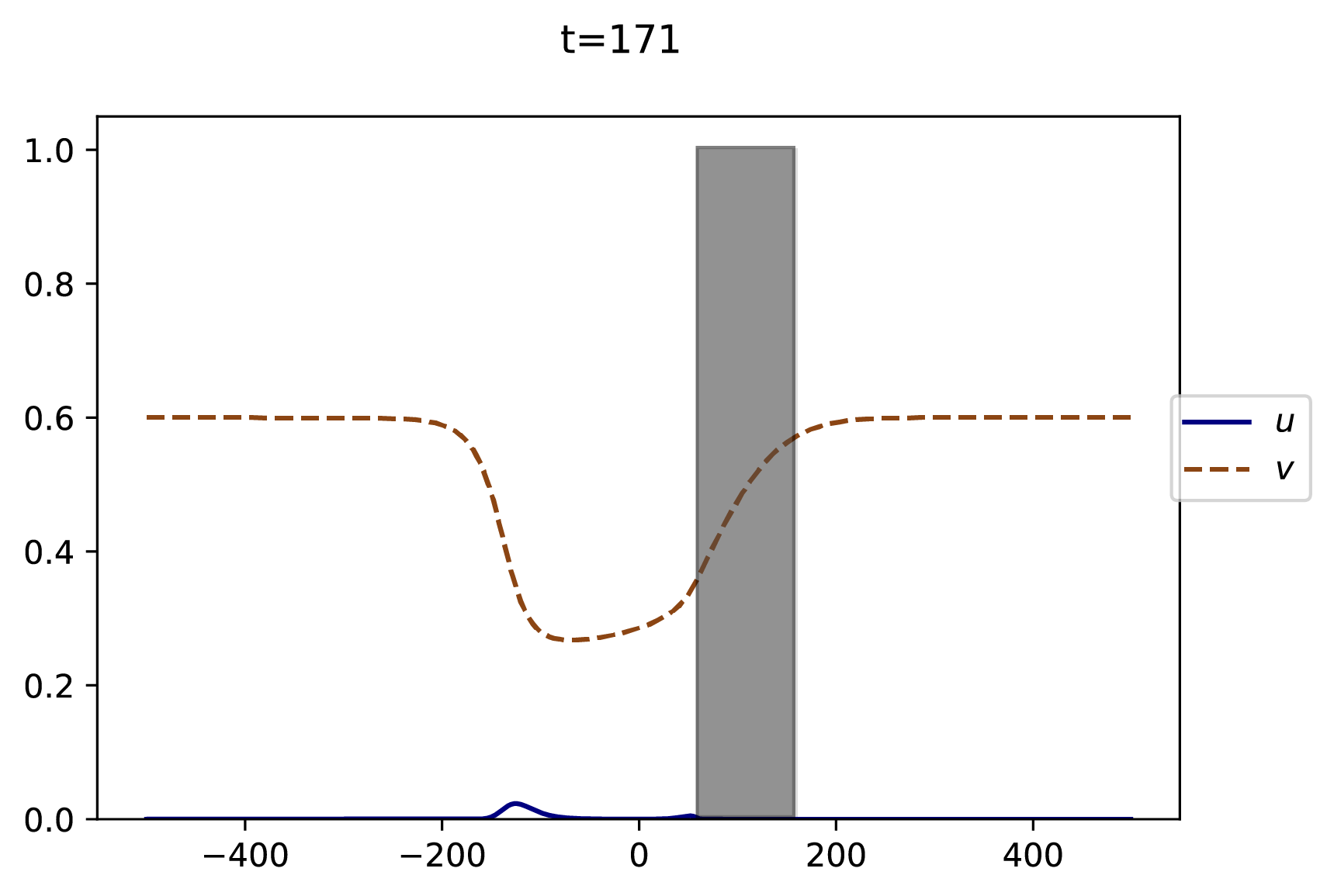}
  \end{subfigure}\\
    \begin{subfigure}[p]{0.45\linewidth}
    \centering\includegraphics[width=\textwidth]{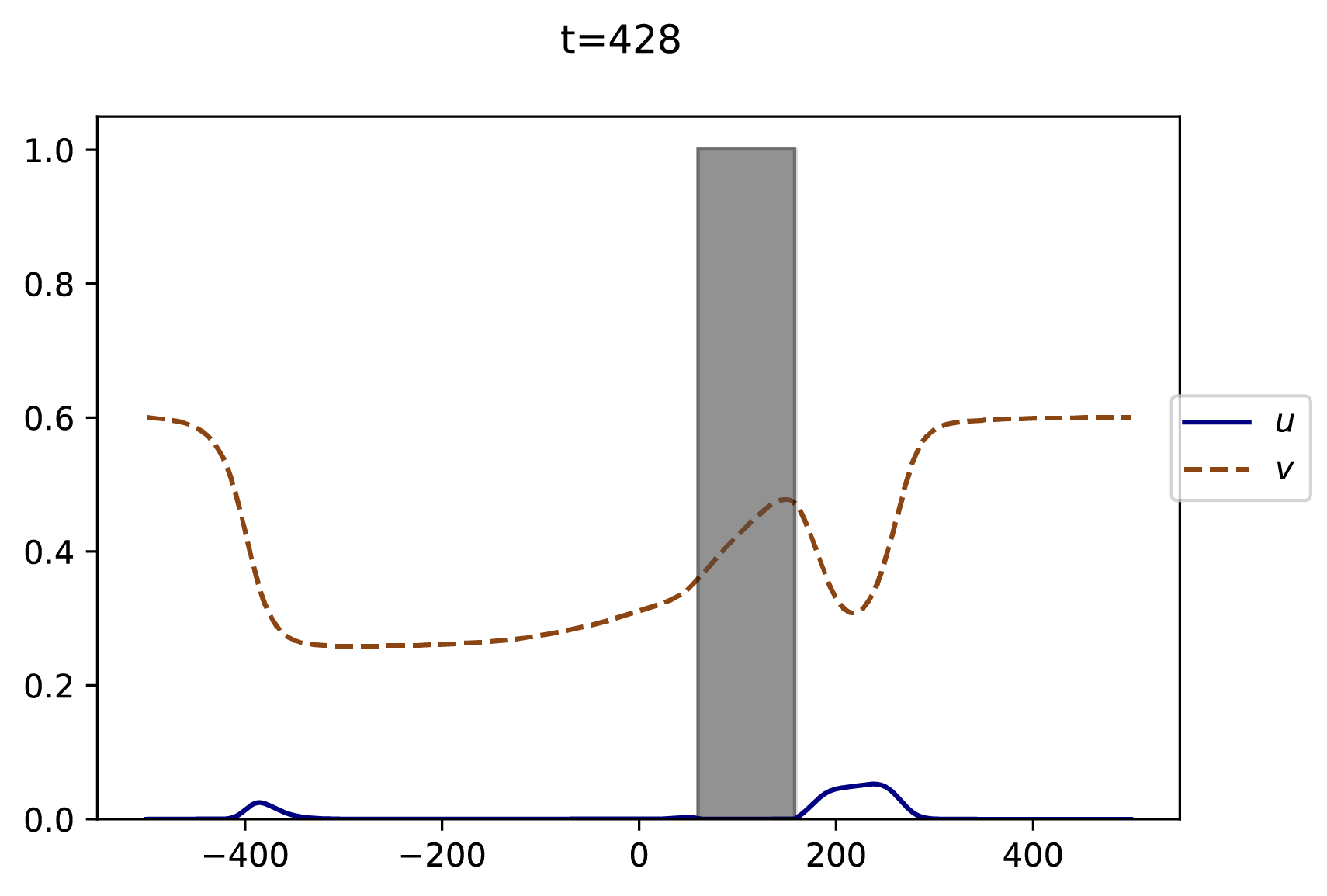}
  \end{subfigure}
  \hfill
  \begin{subfigure}[p]{0.45\linewidth}
    \centering\includegraphics[width=\textwidth]{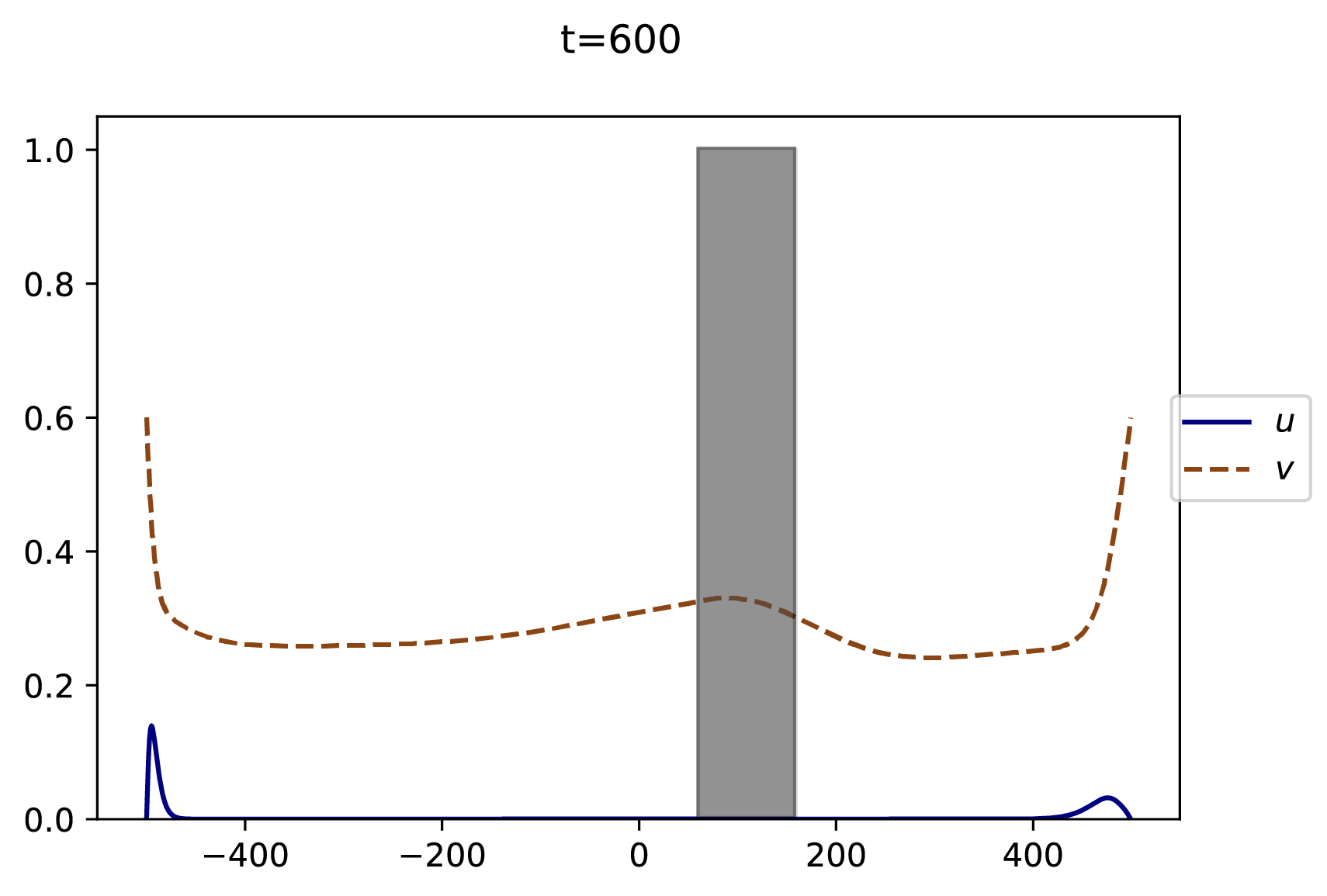}
  \end{subfigure}
\caption{Gap problem -- propagation. Snapshots at different times of the solution of~
\eqref{GapExample1} with $L=100$, $v_b=0.6$ and $u_(x)=0.2(1-x^2)_+$. The shaded area represents the obstacle $\mathcal{K}=(60,160)$.
\href{https://drive.google.com/file/d/1KS0nIULX_LVzPA73O4t6KxUTqcku3Co2/view}{\color{blue}\underline{Video: Gap\_Hair-Trigger\_inhibiting.mp4}}
} \label{fig:GapHairTrigger}
\end{figure}

However, if we are in a situation where $v_0\equiv v_b<v_\star$ but $u_0$ is large enough to triggers a social movement (as described in Sections~\ref{sec:enhancing_MagnitudeTriggering},~\ref{sec:Mixed_MagnitudeTriggering}), there might exists a length $L$ above which the gap blocks the propagation. Let us consider the following system
\begin{equation}\label{GapExample2}
\left\{\begin{aligned}
&\D_t  u -\D_{xx} u=u\left[v(1-u)\mathds{1}_{x\not\in(60,60+L)} -\frac{1}{3} \right],\\
&\D_t  v-\D_{xx} v=6uv(1-v)(v+10u-\frac{2}{3}),
\end{aligned}\right.
\end{equation}
with $v_b=0.3$ and $u_0(x)=(1-\frac{x^2}{10})_+$.
We plot in \autoref{fig:GapBistable_passage} a numerical simulation of the above system for $L=40$. We observe that an upheaval propagates until it reaches the obstacle. Then, after some time, the upheaval manages to overpass the obstacle and continue to propagate. If now we increase the length of the gap to $L=60$, we see in \autoref{fig:GapBistable_blockage} that the spreading of the upheaval is blocked by the gap (even after a long time).

\begin{figure}[p]
  \centering
  \begin{subfigure}[p]{0.45\linewidth}
    \centering\includegraphics[width=\textwidth]{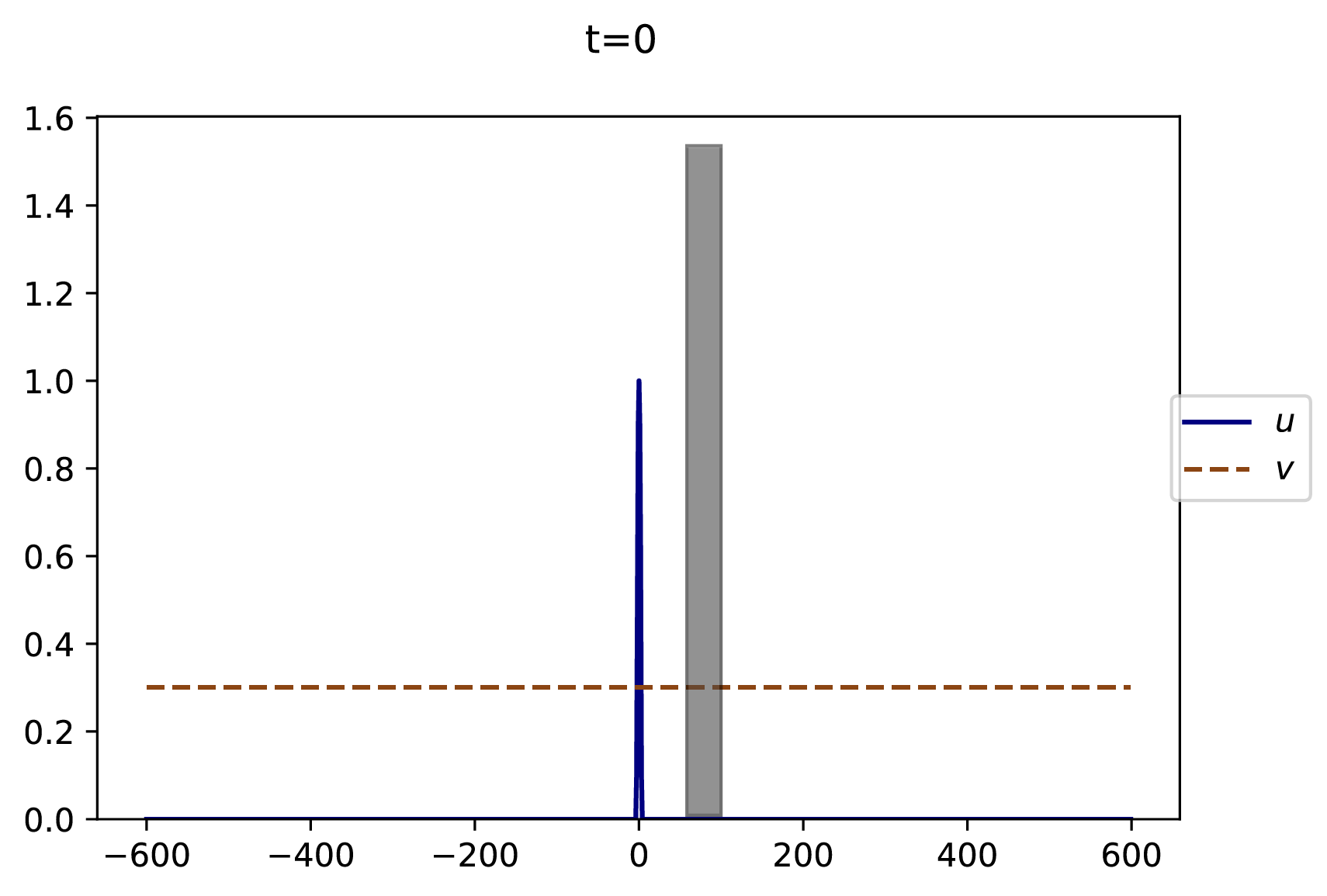}
  \end{subfigure}
  \hfill
  \begin{subfigure}[p]{0.45\linewidth}
    \centering\includegraphics[width=\textwidth]{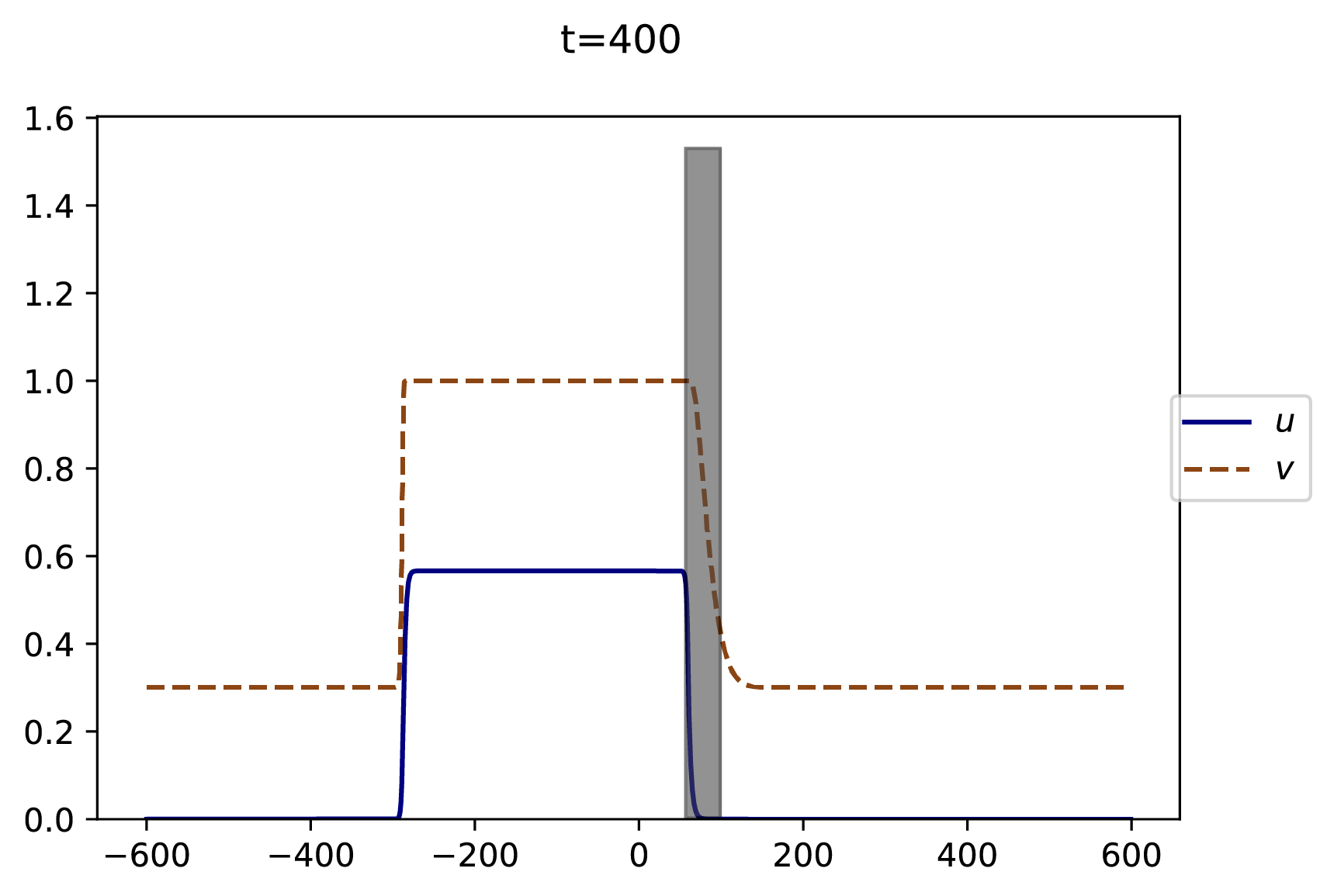}
  \end{subfigure}\\
    \begin{subfigure}[p]{0.45\linewidth}
    \centering\includegraphics[width=\textwidth]{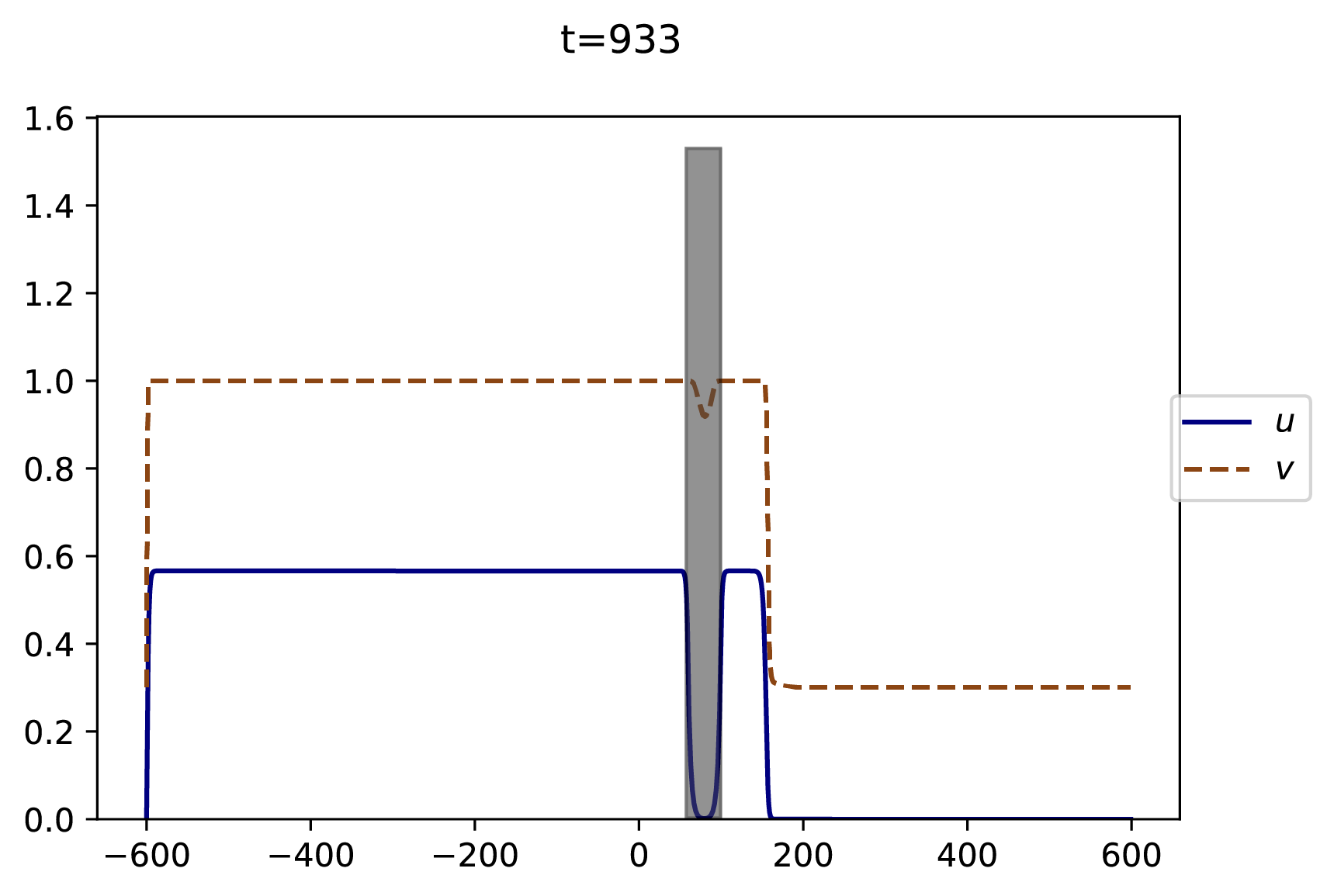}
  \end{subfigure}
  \hfill
  \begin{subfigure}[p]{0.45\linewidth}
    \centering\includegraphics[width=\textwidth]{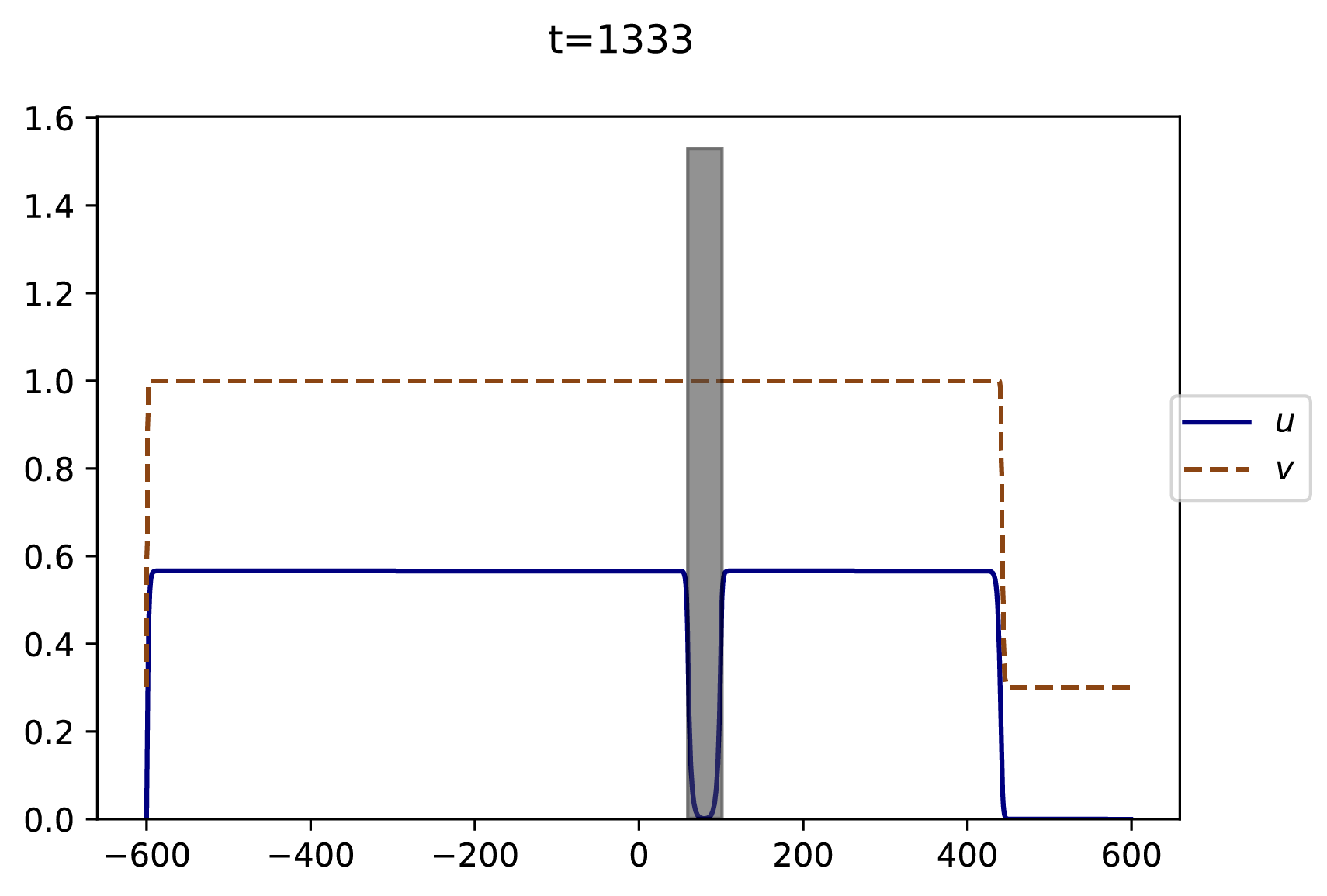}
  \end{subfigure}
\caption{Gap problem -- propagation. Snapshots at different times of the solution of~\eqref{GapExample2} with $L=40$, $v_b=0.3$ and $u_0(x)=(1-\frac{x^2}{10})_+$. The shaded area represents the obstacle $\mathcal{K}=(60,100)$.
\href{https://drive.google.com/file/d/1qzwGkFAt883exY6KwEo6b4OAYcj1L3JA/view}{\color{blue}\underline{Video: Gap=40.mp4}}
} \label{fig:GapBistable_passage}
\end{figure}

\begin{figure}[p]
  \centering
  \begin{subfigure}[p]{0.45\linewidth}
    \centering\includegraphics[width=\textwidth]{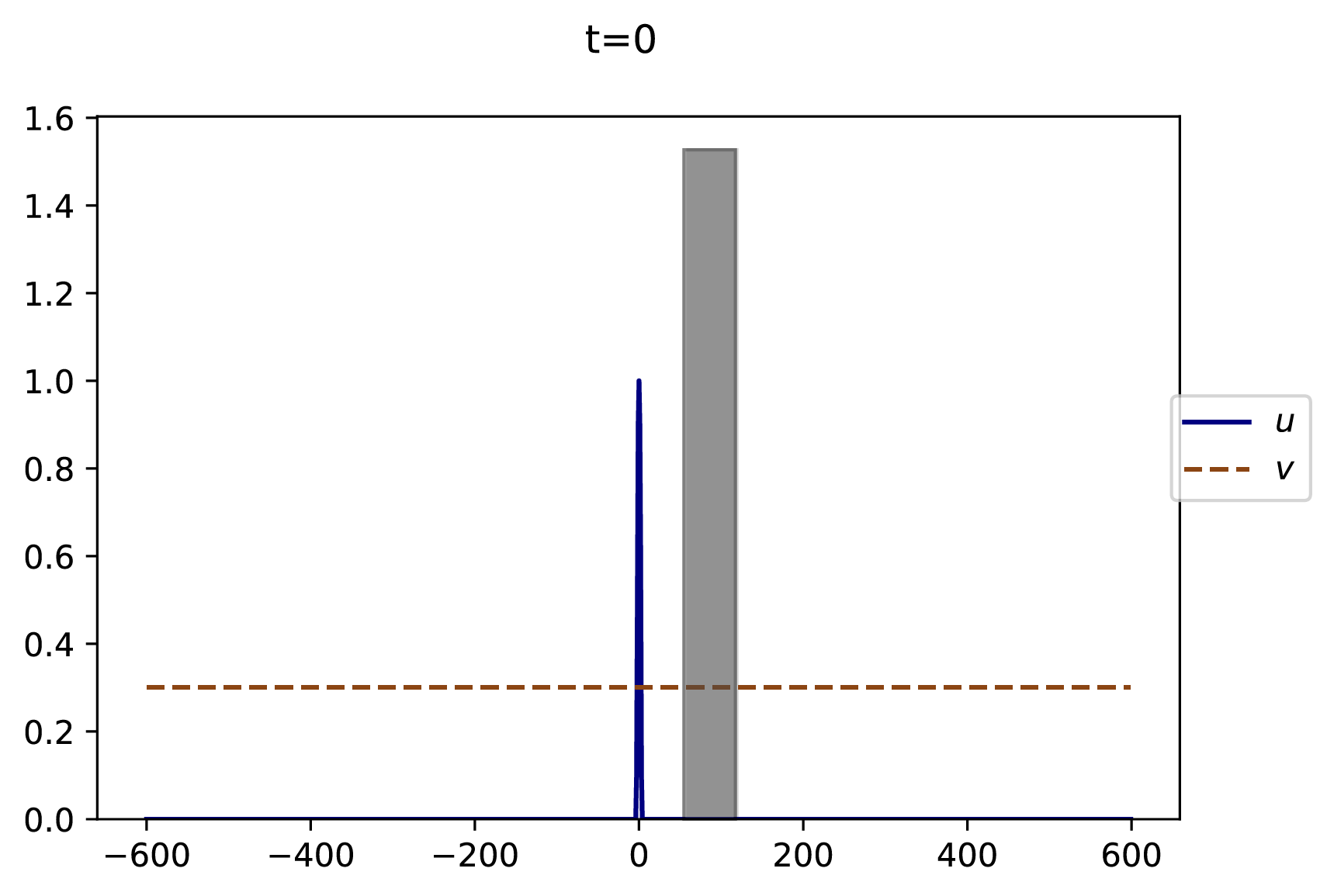}
  \end{subfigure}
  \hfill
  \begin{subfigure}[p]{0.45\linewidth}
    \centering\includegraphics[width=\textwidth]{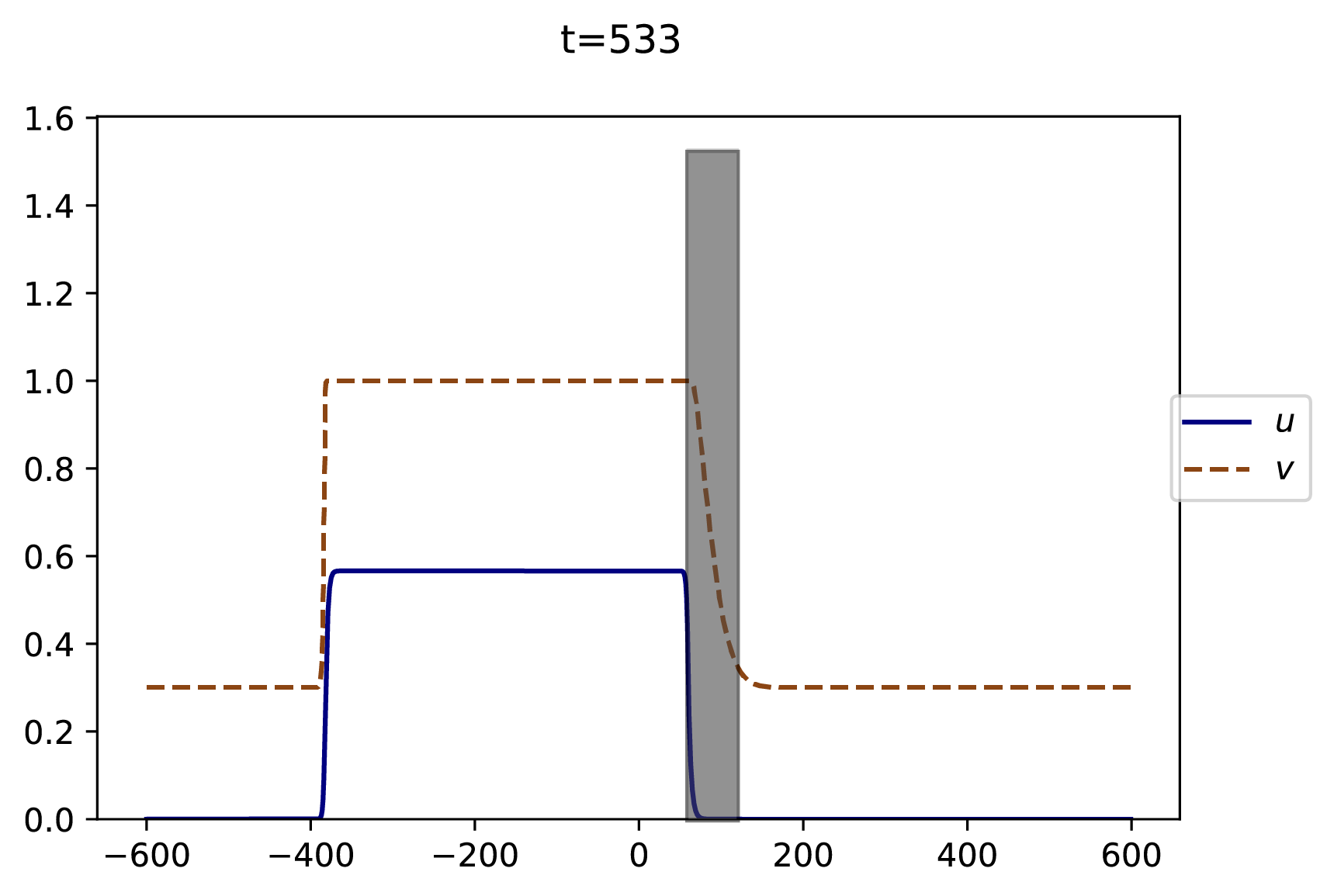}
  \end{subfigure}\\
    \centering
    \begin{subfigure}[p]{0.45\linewidth}
    \centering\includegraphics[width=\textwidth]{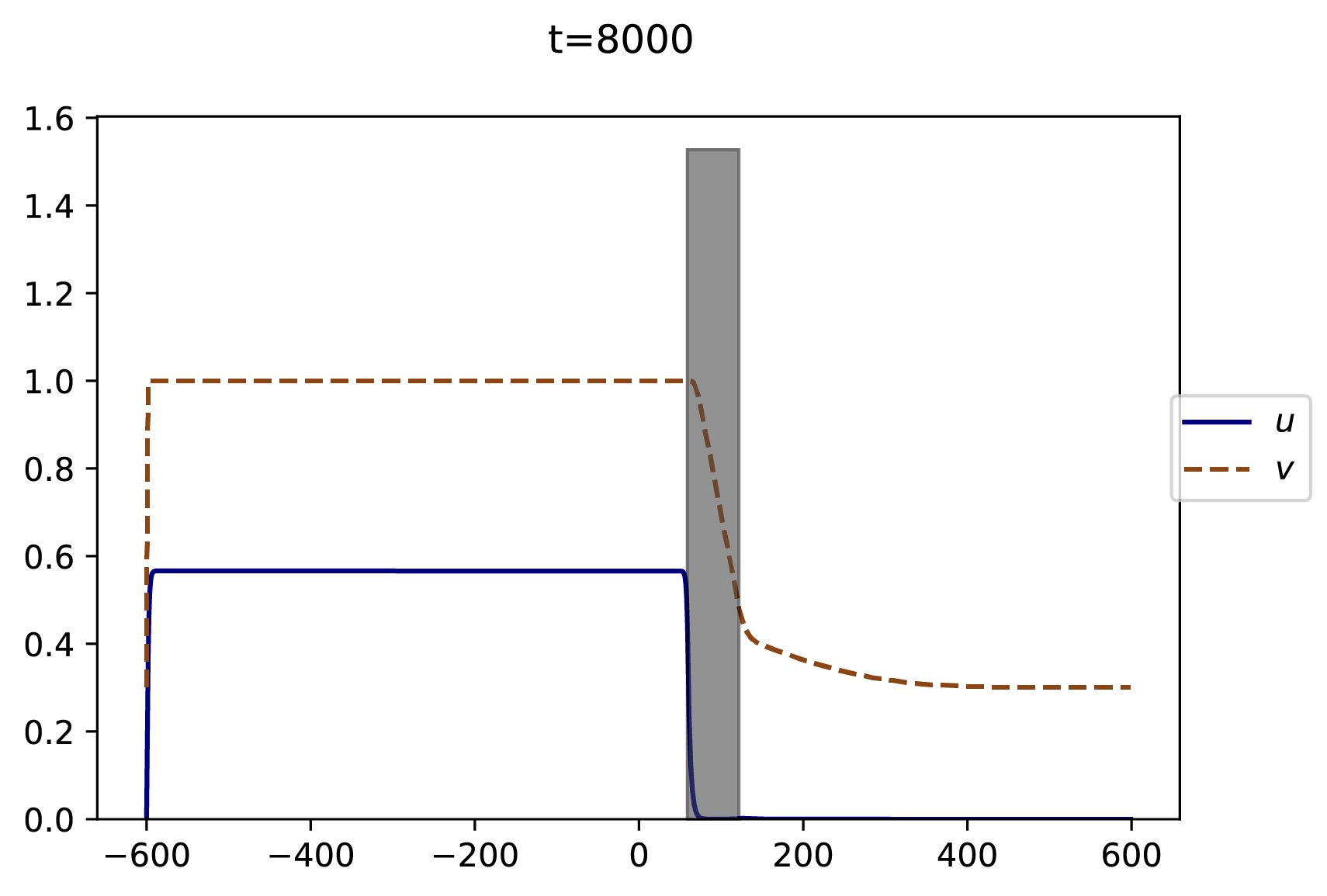}
  \end{subfigure}
\caption{Gap problem -- blockage. Snapshots at different times of the solution of~\eqref{GapExample2} with $L=60$, $v_b=0.3$ and $u_0(x)=(1-\frac{x^2}{10})_+$. Space is represented horizontally, and the shaded area represents the obstacle $\mathcal{K}=(60,120)$. Blue solid line: $u(t,\cdot)$. Brown dashed line: $v(t,\cdot)$.
\href{https://drive.google.com/file/d/1TQq1kXQeuXsvud2YK6SXdBmxNrIAz0Nw/view}{\color{blue}\underline{Video: Gap=60.mp4}}
} \label{fig:GapBistable_blockage}
\end{figure}

Of course, one could also consider that $\Psi$ depends on the space variable $x$, but we omit this case for conciseness.

\subsection{Non-uniform initial social tension}
So far, we have only dealt with a constant initial level of social tension $v_0\equiv v_b$ (although more general results are available in~\cite{Berestycki2019b} when $v_0-v_b$ is compactly supported). Another way to encode space heterogeneity in our model is to consider the case of a non-constant initial level of social tension $v_0$. This case is relevant from a modeling perspective, since the social tension may indeed vary between, for examples, cities and suburbs, or poor and rich areas (see~\cite{Bonnasse-Gahot2018}). 
To account for this situation with a model case, we may assume that $v_0(x):=v(t=0,x)$ is periodic  and oscillates around the threshold $v_\star$.

Consider the inhibiting system~\eqref{ExampleInhibiting2}, that we recall here,
\begin{equation}\label{HeteroV0}
\left\{\begin{aligned}
&\D_t  u -\D_{xx} u=u\left[v (1-u)-\frac{1}{3} \right],\\
&\D_t  v-\D_{xx} v=-uv,
\end{aligned}\right.
\end{equation}
with $u_0(x)=0.2(1-x^2)_+$ and
\begin{equation}\label{HeteroV0_def}
v_b(x)=\left\{\begin{aligned}
&0.2 &&\text{if }\vert x\vert \in[100k,100k+L],\\
&0.6 &&\text{if }\vert x\vert \in(100k-L,100k),
\end{aligned}\right. \qquad \forall k=1,2,\dots,
\end{equation}
for some parameter $L\in(0,100)$.
We recall that, for equation~\eqref{HeteroV0}, we have $v_\star=\frac{1}{3}$ (where $v_\star$ is defined in~\eqref{Def_v_star}). Therefore, $v_0(x)$ oscillates periodically around $v_\star$, creating zones of length $100-L$ that are favorable for propagation ($v_0(x)=0.6>v_\star$), and zones of length $L$ that are unfavorable ($v_0(x)=0.2<v_\star$). If the favorable zone is sufficiently large, then the solution manages to propagate, as can be seen on \autoref{fig:V0GAP_propagation} for $L=20$.
On the contrary, if the favorable zone is too thin, then the propagation is blocked, as can be seen on \autoref{fig:V0GAP_blockage} for $L=60$.
(We point out that, for the numerical simulation, we impose $v_0(x)=0.6$ on $(-60,60)$ to ensure that the movement of social unrest gets properly ignited at $x=0$ before being affected by the oscillations of $v_b$).

\begin{figure}[p]
  \centering
  \begin{subfigure}[p]{0.45\linewidth}
    \centering\includegraphics[width=\textwidth]{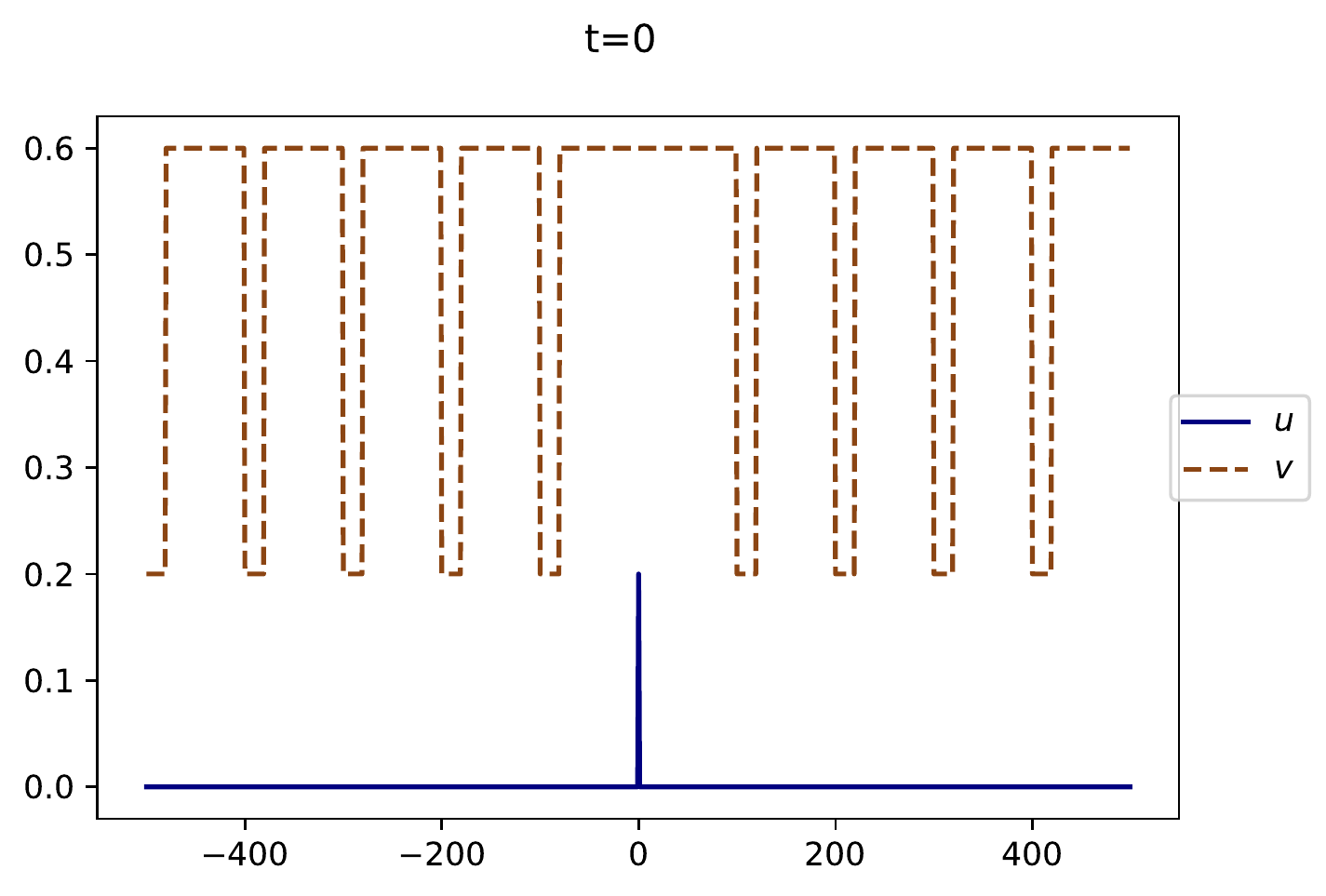}
  \end{subfigure}
  \hfill
  \begin{subfigure}[p]{0.45\linewidth}
    \centering\includegraphics[width=\textwidth]{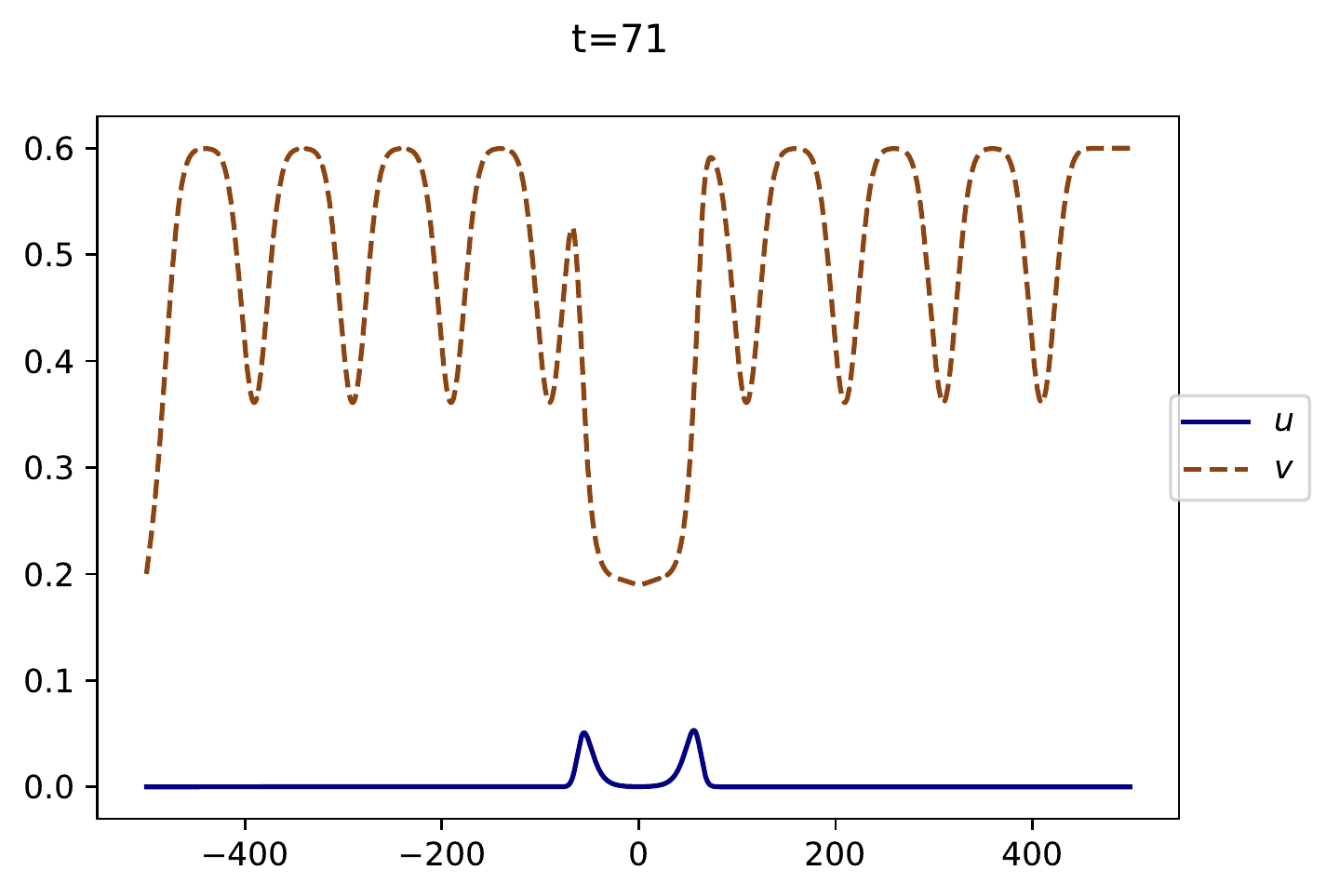}
  \end{subfigure}\\
    \centering
    \begin{subfigure}[p]{0.45\linewidth}
    \centering\includegraphics[width=\textwidth]{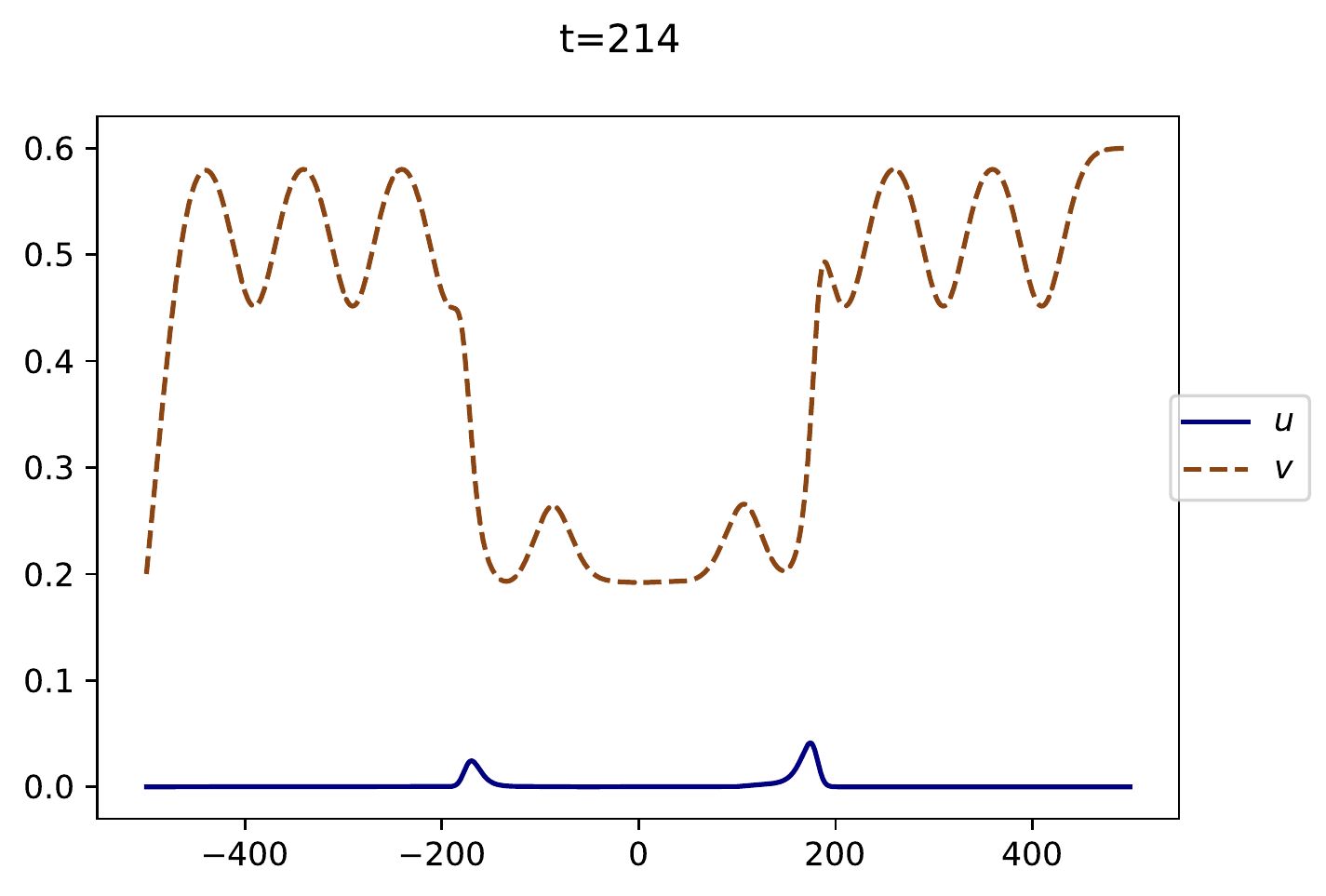}
  \end{subfigure}
   \hfill
  \begin{subfigure}[p]{0.45\linewidth}
    \centering\includegraphics[width=\textwidth]{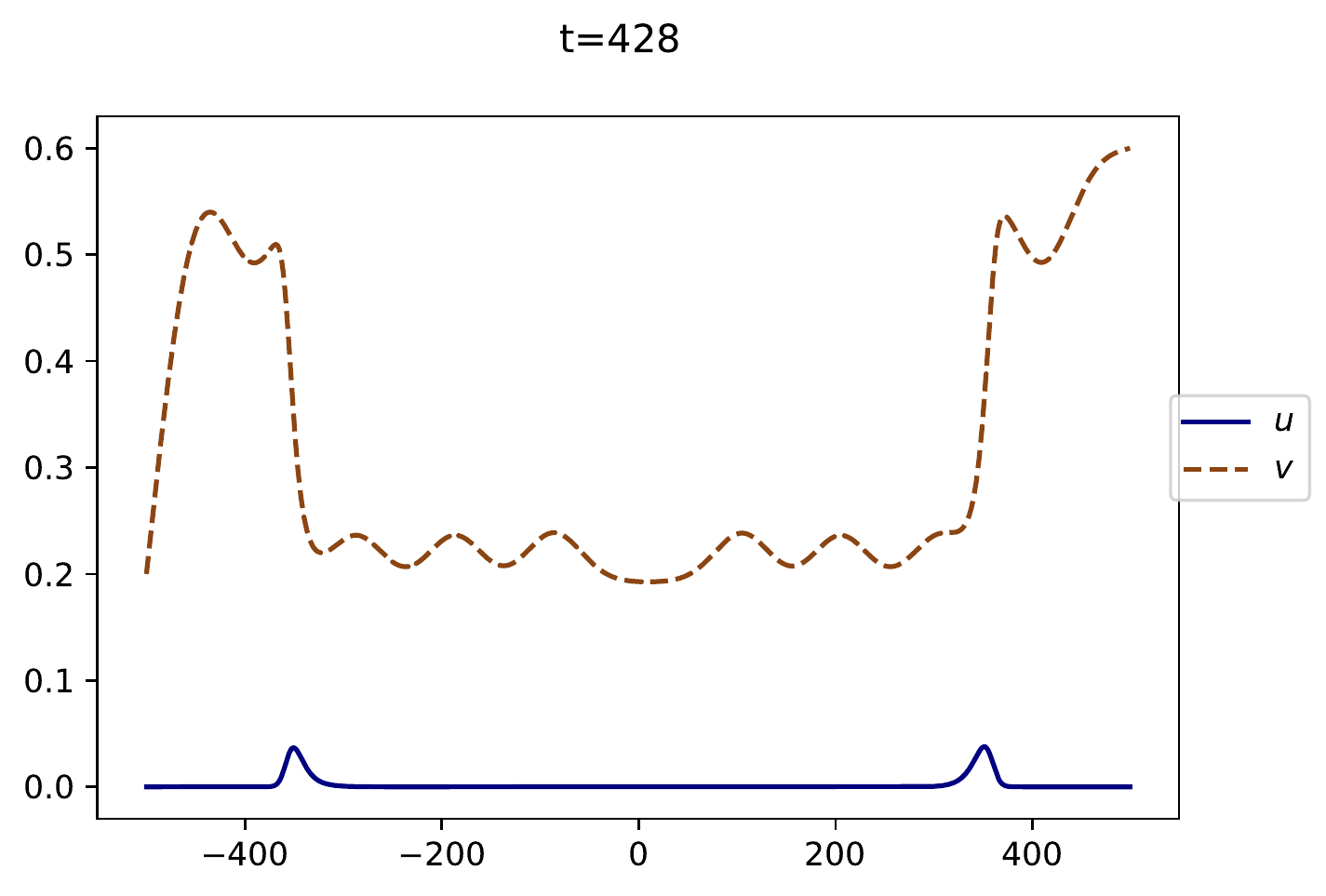}
  \end{subfigure}
\caption{Non-uniform initial social tension $v_0$ -- propagation. Snapshots at different times of the solution of \eqref{HeteroV0}-\eqref{HeteroV0_def} with $L=20$, $u_0(x)=0.2(1-x^2)_+$. Horizontal axis: space. Blue solid line: $u(t,\cdot)$. Brown dashed line: $v(t,\cdot)$.
\href{https://drive.google.com/file/d/1d74a9btmcuVt9SWbMXs24uzT8KJKCj2o/view}{\color{blue}\underline{Video: Periodic\_V0\_L=20.mp4}}
} \label{fig:V0GAP_propagation}
\end{figure}

\begin{figure}[p]
  \centering
  \begin{subfigure}[p]{0.45\linewidth}
    \centering\includegraphics[width=\textwidth]{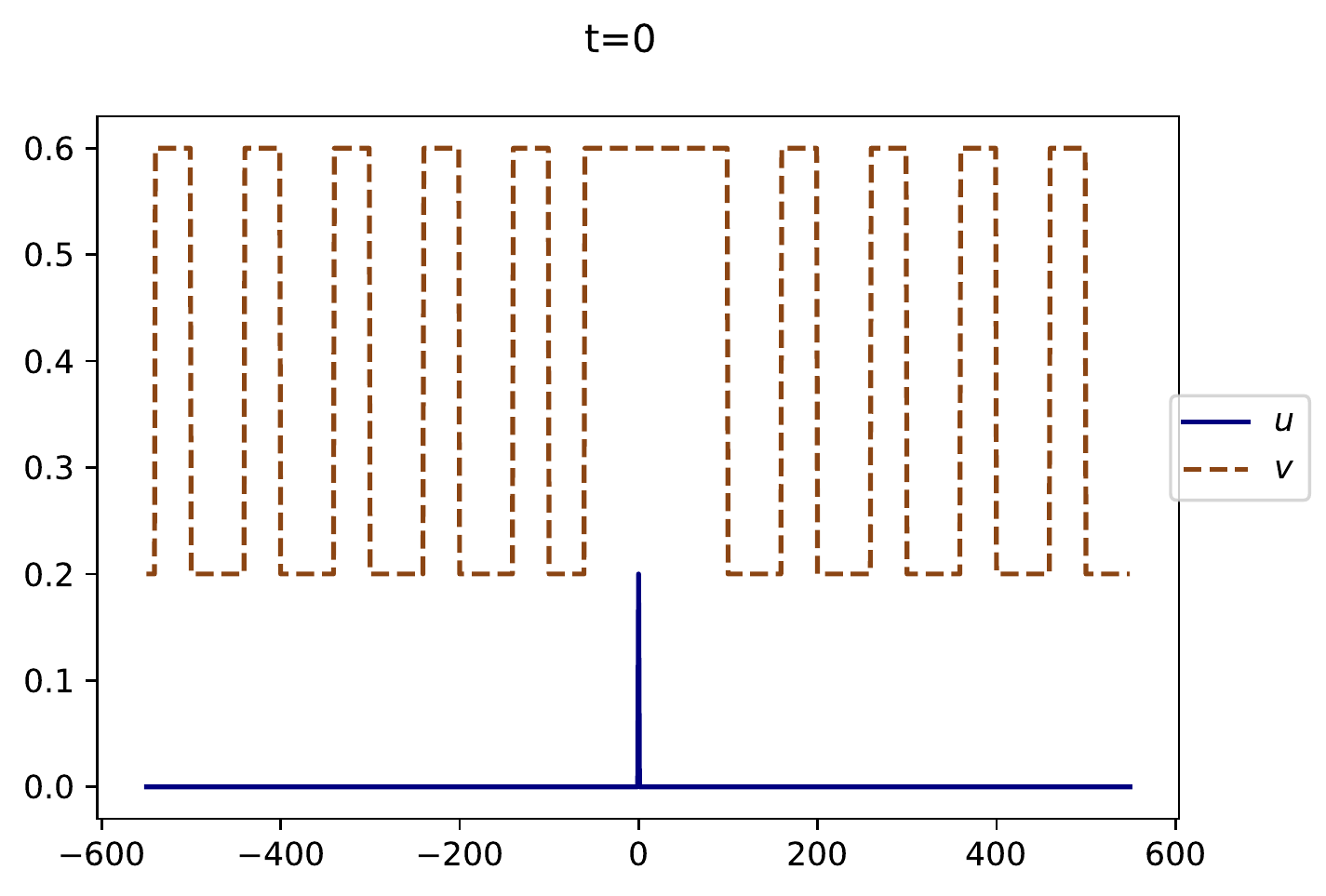}
  \end{subfigure}
  \hfill
  \begin{subfigure}[p]{0.45\linewidth}
    \centering\includegraphics[width=\textwidth]{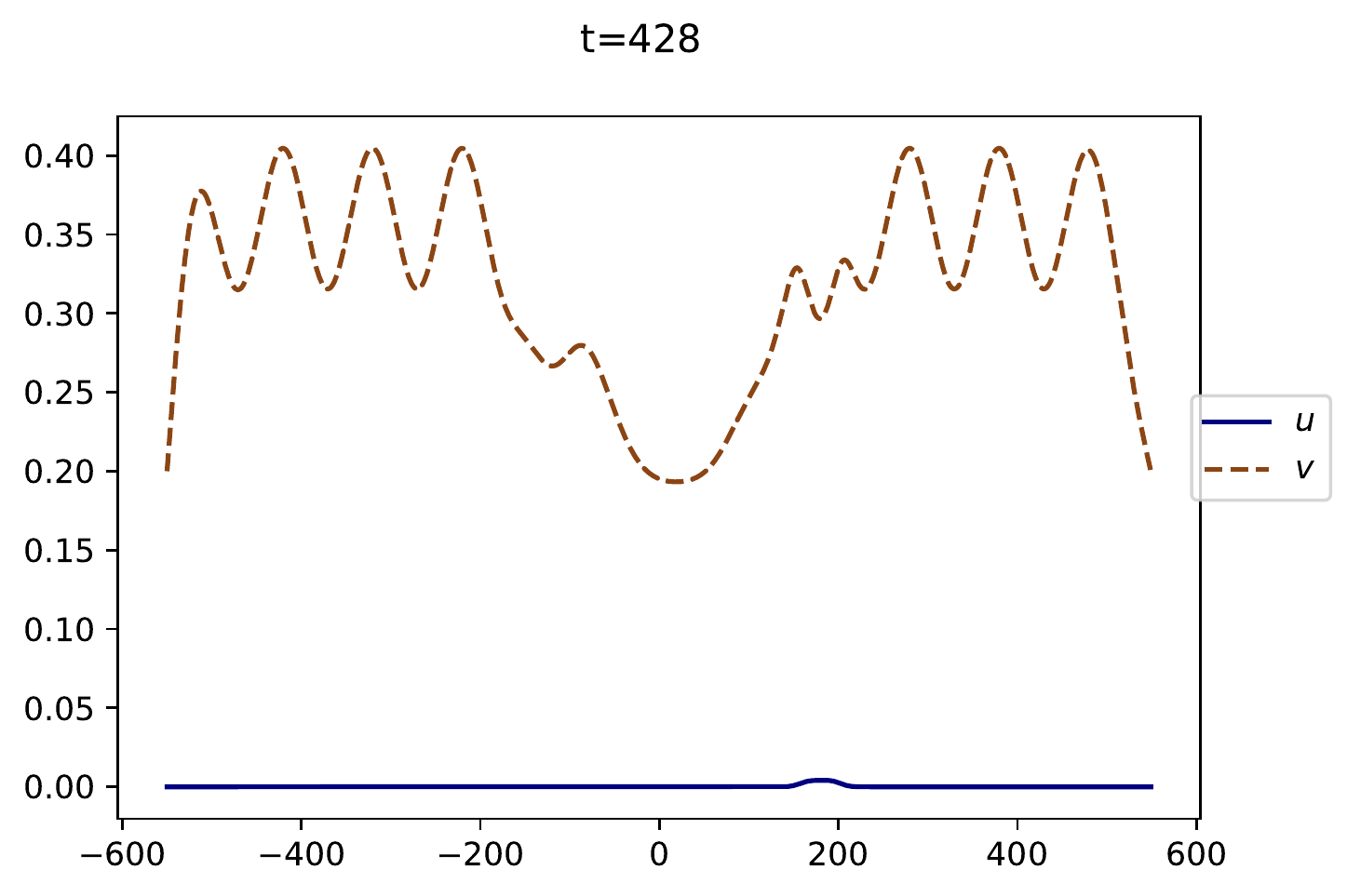}
  \end{subfigure}\\
    \centering
    \begin{subfigure}[p]{0.45\linewidth}
    \centering\includegraphics[width=\textwidth]{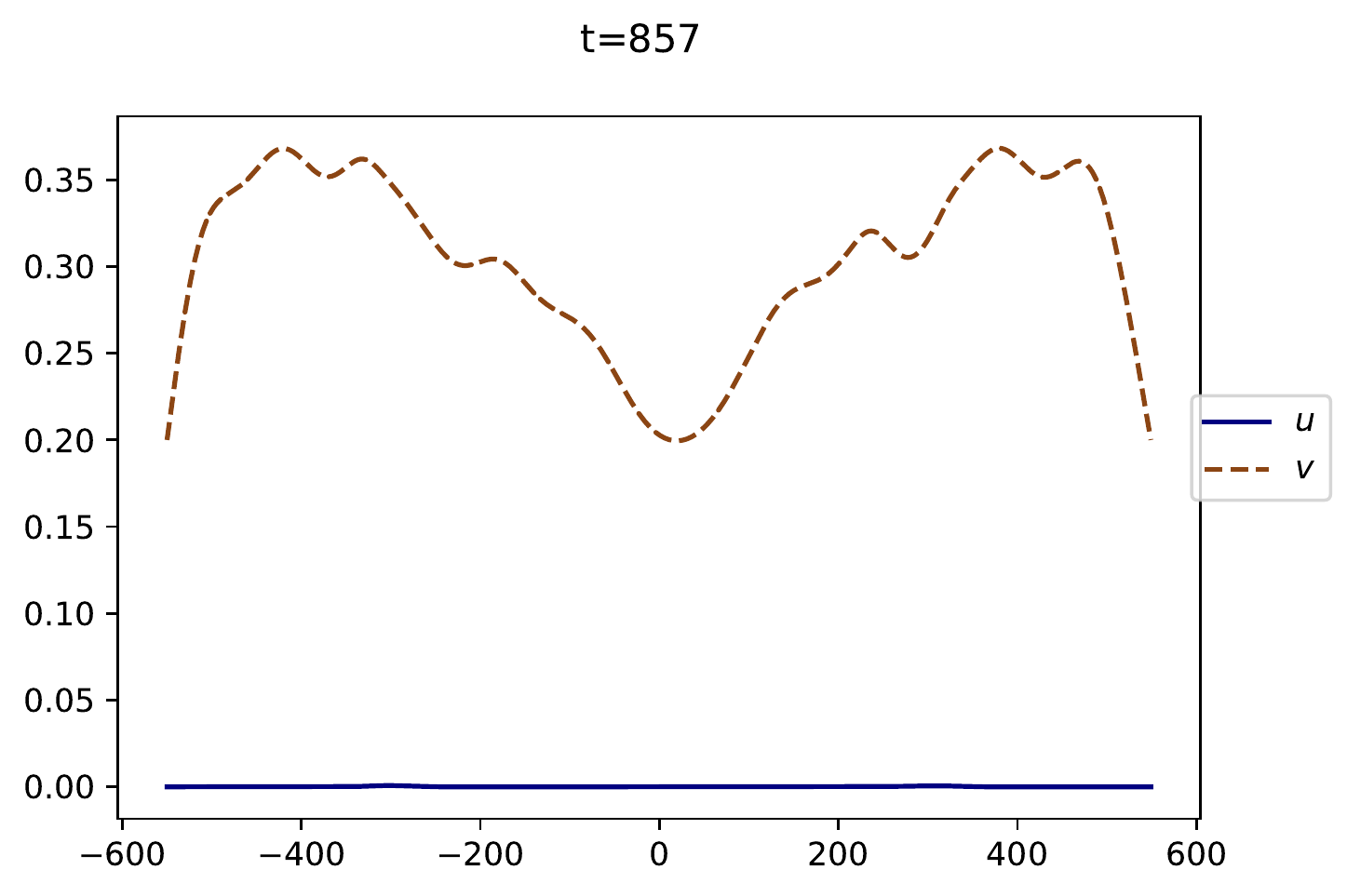}
  \end{subfigure}
   \hfill
  \begin{subfigure}[p]{0.45\linewidth}
    \centering\includegraphics[width=\textwidth]{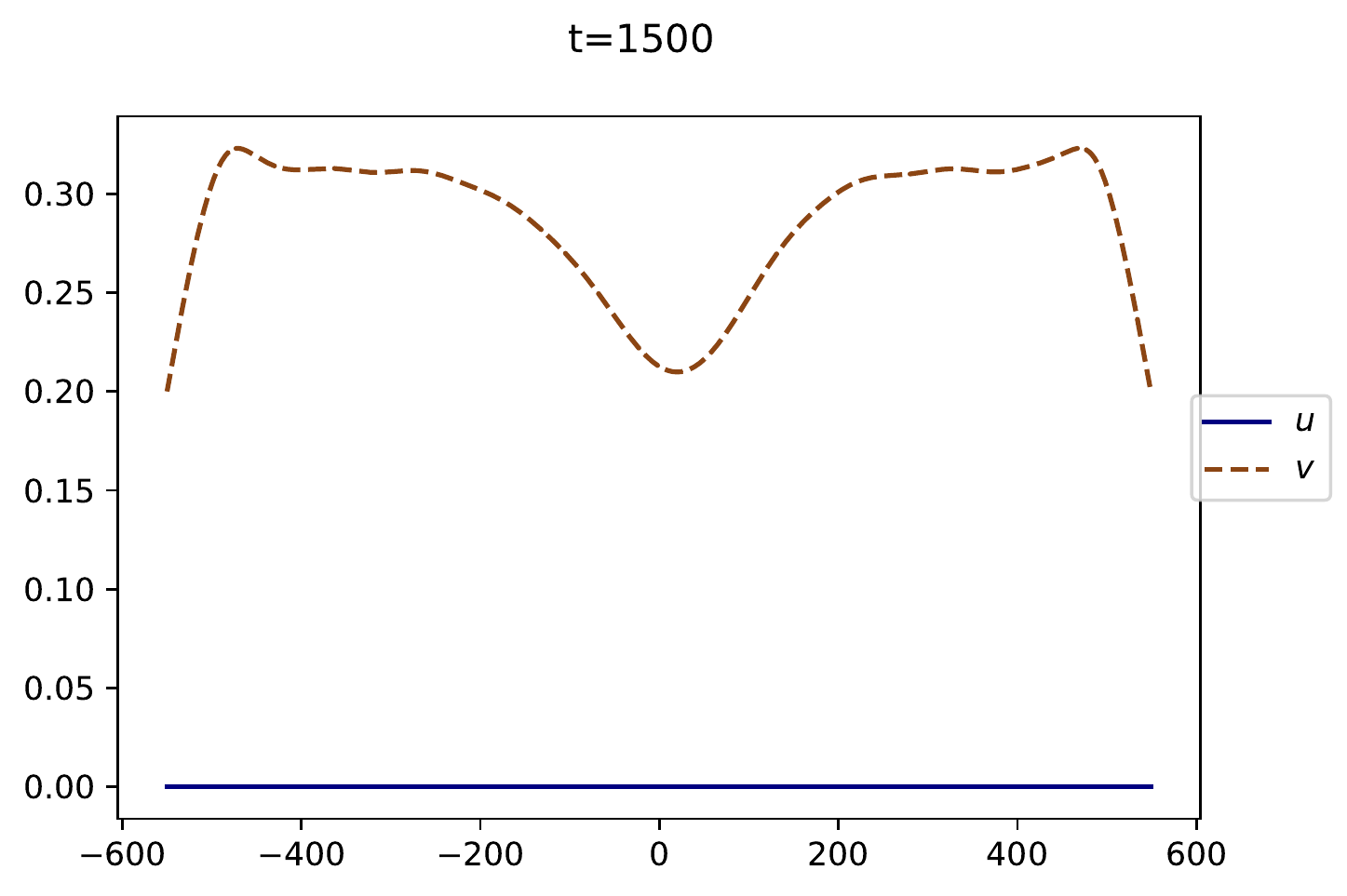}
  \end{subfigure}
\caption{Non-uniform initial social tension $v_0$ -- blockage. Snapshots at different times of the solution of \eqref{HeteroV0}-\eqref{HeteroV0_def} with $L=60$, $u_0(x)=0.2(1-x^2)_+$. Horizontal axis: space. Blue solid line: $u(t,\cdot)$. Brown dashed line: $v(t,\cdot)$.
\href{https://drive.google.com/file/d/1Ll0nUeYplzLKK96els3CVqkgs-ZffIUM/view}{\color{blue}\underline{Video: Periodic\_V0\_L=60.mp4}}
} \label{fig:V0GAP_blockage}
\end{figure}

We leave for future works the rigorous analysis of the case of a non-constant initial level of social tension $v_0(\cdot)\equiv v_b(\cdot)$. In this case, however, let us indicate that the threshold phenomenon on the initial level of social tension may not involve the sign of $v_b-v_\star$, but rather the sign of $\lambda_b$ defined as the lowest eigenvalue of the operator, $\forall \varphi\in C^2(\Omega)$,
\begin{equation*}
-d_1\Delta \varphi-\D_u\Phi(0,v_b(x))\varphi,
\end{equation*}
and given by the expression
\begin{equation}\label{eigenvalueExtensions}
\lambda_b:= \inf\left\{\int_{\R^n} d_1\vert\nabla\varphi\vert^2-\left[r(v_b(x))f(0)-\omega\right]\varphi^2: \varphi\in H^1(\R^n),\ \Vert\varphi\Vert_{L^2}=1\right\}.
\end{equation}

%In this case, it would be reasonable to consider that the diffusion vanishes when $u=0$. It echoes with our main assumption $\Psi(0,v)=0$ and would guarantee that $(0,v_b(x))$ is a steady-state (which is what we want it terms of modelization).
%
%For example, we can consider an equation of the form
%$\D_t v-u\Delta v=\Psi(u,v)$ (it is a kind of asymmetric cross-diffusion).

\subsection{Including geometry}

Another possible way to include spatial heterogeneity in our model is to consider the system~\eqref{GeneralEquationMotivationFinal} on a domain $\Omega\subset\R^n$ rather than on the entire space. We may impose Neumann boundary condition
\begin{equation*}
\D_{\nu}u=0,\qquad \text{on }\D\Omega,
\end{equation*}
with $\D_{\nu}$ the outer normal derivative, accounting for the fact that there is no flux of individuals across the boundary.

From the modeling point of view, the boundary $\D\Omega$ could stand for various structural spatial obstructions for rioters, such as streets, highways, fences, rivers, mountains etc. These features play sometimes a significant role; for example, the ring around Paris in the 2005 riots~\cite{Bonnasse-Gahot2018}.

Mathematically speaking, it is known that heterogeneous geometry can largely affect the propagation properties of Reaction-Diffusion equations (e.g., see~\cite{Ducasse2018,Berestyckib}). However, fewer papers deal with systems of Reaction-Diffusion such as our system~\eqref{GeneralEquationMotivationFinal}. We leave further investigations on this topic for future works.

\section{Proofs}\label{sec:Proofs}

\subsection{The tension inhibiting case}\label{sec:ProofsInhibiting}

We begin with the proof of Propositions~\ref{PropVInftyLeqVStar} and~\ref{Propo_Moyenne_Inhib}.

\begin{proof}[Proof of Proposition~\ref{PropVInftyLeqVStar}]
	First of all, we know from~\eqref{PropositionQualitativeInhibiting} that $U(\pm\infty)=0$,
	whence $U'(\pm\infty)=0$ by elliptic estimates.
	From the equation on $V$, we get 
	\begin{equation*}
	U=\frac{cV'}{G(V)}.
	\end{equation*}
	Injecting this formula into the equation on $U$, and integrating over $(-\infty,+\infty)$ we find
	\begin{equation*}
	c\left(U(+\infty)-U(-\infty)\right)=d_1(U'(+\infty)-U'(-\infty)) +c\int_{-\infty}^{+\infty} V'\left[\frac{r(V)-\omega}{G(V)}\right],
	\end{equation*}
	that is,
	\begin{equation*}
	\int_{-\infty}^{+\infty} V'\left[\frac{r(V)-\omega}{G(V)}\right]=0,
	\end{equation*}
	and so $Q(V_\infty)-Q(v_b)=0$
\end{proof}

%Let us first prove~\autoref{lemma:v<v0}.
%\begin{proof}[\autoref{lemma:v<v0}]
%The inhibiting assumption~\eqref{hyp:inhibiting} implies $\Psi(u,v_b)\leq 0$ for all $u\geq0$. Therefore, considering $v_b$ as a constant function, it satisfies
%\begin{equation}
%\D_t v_b-d_2\Delta v_b-\Psi(u,v_b)\geq 0,\qquad \forall u\geq0.
%\end{equation}
%Moreover, since $u^0\gneqq0$, we have that $v(t,x)\not\equiv v_b$, therefore the strong parabolic maximum principle implies $v(t,x)< v_b$. 
%\end{proof}

%Let us first give a proof of Proposition~\ref{Propo_Moyenne_Inhib} which rests on elementary arguments.

%\nota{Sam: cette preuve est-elle satisfaisante?\\	
%	Luca: pas completement, j'ai ajoute' details} 

\begin{proof}[Proof of Proposition~\ref{Propo_Moyenne_Inhib}]
	The smallness of $u_0$ is determined by the following condition:
	\begin{equation}\label{segment}
	\forall v\in[\tilde v_b, v_b],\quad
	r(v)f(u_0)>\omega,
	\end{equation}
	which can be achieved since $v_b\geq\tilde v_b> v_\star$ and so $r(v)f(0)>\omega$ for all $v\in[\tilde v_b,\tilde v]$.
	For all time $t\geq0$, both $u(t,\cdot)$ and $v(t,\cdot)$ are constant. We thus omit to write the dependence on $x$.
	The proof relies on a simple argument in the phase plane. 
	Denote $\gamma(t)$ and $\tilde \gamma(t)$ the trajectories in the plane $(u,v)$, namely $\gamma(t):=(u(t),v(t))$ and $\tilde \gamma(t):=(\tilde u(t),\tilde v(t))$. 
	By the inhibiting assumption~\eqref{hyp:inhibiting}, these trajectories are nonincreasing in the $v$ component and, by
	Theorem~\ref{PropositionAsymptoticsInhibiting}, they have two endpoints $(0,v_\infty)$, 
	$(0,\tilde v_\infty)$ respectively. 
	%
	%For small times, the solutions are ordered, namely $u(t)\geq \tilde u(t)$ and $v(t)\geq \tilde v(t)$.
	In addition, by uniqueness of solutions of the Cauchy problem, 
	they cannot cross each other, nor can $\gamma$ cross
	the segment $\{u_0\}\times( \tilde v_b,  v_b)$, 
	because there $u'>0$ thanks to~\eqref{segment}.
	The result then follows.
\end{proof}

Let us now turn to the proof of \autoref{PropositionAsymptoticsInhibiting}. The first statement of the theorem is a consequence of~\cite[Proposition~7]{Berestycki2019b}.
In order to prove the second statement, we make 
use of the following classical lemma, whose proof is inspired from that of 
\cite[Theorem~1.8]{Berestycki1997b}.
\begin{lemma}\label{LemmaLiouvilleRiots}
	Let $w\in C^{2}(\R^n)$ be a bounded function satisfying $w\Delta w\geq 0$ in $\R^n$. 
	If $n\leq2$, then $w$ is constant.
\end{lemma}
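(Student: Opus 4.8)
The plan is to prove that $\nabla w\equiv 0$ by deriving a Caccioppoli-type energy estimate: I test the pointwise inequality $w\Delta w\ge 0$ against the square of a cutoff function and integrate by parts, so that the sign condition becomes an upper bound on the localized Dirichlet energy $\int\chi^2|\nabla w|^2$. Sending the cutoff radius to infinity then forces $\int_{\R^n}|\nabla w|^2=0$, which together with $w\in C^2$ yields that $w$ is constant. The whole argument hinges on the low dimension $n\le 2$ entering through the growth of the annular integrals of $|\nabla\chi|^2$.

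First I would fix a Lipschitz cutoff $\chi=\chi_R$ with $0\le\chi\le1$, $\chi\equiv 1$ on $B_R$ and $\mathrm{supp}\,\chi\subset B_{2R}$. Multiplying $w\Delta w\ge0$ by $\chi^2\ge0$ and integrating by parts gives
\[
\int \chi^2|\nabla w|^2 \;=\; -\int \chi^2 w\,\Delta w \;-\; 2\int \chi\, w\,\nabla\chi\cdot\nabla w \;\le\; 2\int \chi\,|w|\,|\nabla\chi|\,|\nabla w|,
\]
where I used $-\int\chi^2 w\Delta w\le 0$. By Cauchy--Schwarz and absorption of $\big(\int\chi^2|\nabla w|^2\big)^{1/2}$ into the left-hand side, this yields
\[
\int \chi^2|\nabla w|^2 \;\le\; 4\int w^2\,|\nabla\chi|^2 .
\]
Using the boundedness $|w|\le M$ together with $|\nabla\chi|\le C/R$ supported on the annulus $B_{2R}\setminus B_R$, the right-hand side is at most $C'M^2R^{\,n-2}$. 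For $n=1$ this tends to $0$ as $R\to\infty$, so by monotone convergence $\int_{\R^n}|\nabla w|^2=0$ and $w$ is constant.

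The main obstacle is the borderline case $n=2$, where the naive cutoff only gives a \emph{bounded} right-hand side (hence merely $\nabla w\in L^2$) rather than a vanishing one. The standard remedy, which I expect to be the crux, is to replace the linear cutoff by a logarithmic one: set $\chi_R\equiv1$ on $B_R$, $\chi_R\equiv0$ outside $B_{R^2}$, and $\chi_R(x)=\log(R^2/|x|)/\log R$ on the annulus $R\le|x|\le R^2$, so that $|\nabla\chi_R|=1/(|x|\log R)$ there. Then a direct computation in polar coordinates gives
\[
\int w^2|\nabla\chi_R|^2 \;\le\; \frac{M^2}{(\log R)^2}\int_{R\le|x|\le R^2}\frac{dx}{|x|^2} \;=\; \frac{2\pi M^2}{(\log R)^2}\int_R^{R^2}\frac{dr}{r} \;=\; \frac{2\pi M^2}{\log R}\xrightarrow[R\to\infty]{}0 .
\]
Combining with the energy bound, $\int_{B_R}|\nabla w|^2\le\int\chi_R^2|\nabla w|^2\to 0$; since for each fixed $\rho$ one has $\int_{B_\rho}|\nabla w|^2\le\int_{B_R}|\nabla w|^2$ for all $R>\rho$, letting $R\to\infty$ forces $\int_{B_\rho}|\nabla w|^2=0$ for every $\rho$, hence $\nabla w\equiv0$ and $w$ is constant. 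This logarithmic-cutoff step is precisely where the dimension restriction $n\le2$ is sharp.
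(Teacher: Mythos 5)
Your proof is correct, and it handles the critical dimension $n=2$ by a genuinely different mechanism than the paper. Both arguments start identically: test $w\Delta w\ge 0$ against $\chi_R^2$, integrate by parts, and apply Cauchy--Schwarz. But you then absorb the factor $\bigl(\int\chi_R^2|\nabla w|^2\bigr)^{1/2}$ into the left-hand side to get the clean Caccioppoli bound $\int\chi_R^2|\nabla w|^2\le 4\int w^2|\nabla\chi_R|^2$, which forces you to make the right-hand side \emph{vanish}; with the linear cutoff this only happens for $n=1$, so in $n=2$ you must switch to the logarithmic cutoff on the annulus $R\le|x|\le R^2$, yielding the decay $O(1/\log R)$. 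The paper instead does \emph{not} absorb: it keeps the Cauchy--Schwarz factor localized on the annulus, $\int_{B_{2R}\setminus B_R}\chi_R^2|\nabla w|^2$, and only needs the weight $\int w^2|\nabla\chi_R|^2$ to stay \emph{bounded} (which holds for $n\le 2$ with the plain linear cutoff). This first shows the total Dirichlet energy $\int_{\R^n}|\nabla w|^2$ is finite, and then the annulus factor, being the tail of a convergent integral, tends to zero, bootstrapping the energy to zero. So the paper's self-improving argument uses one cutoff uniformly in $n=1,2$ and avoids your logarithmic construction, while your version buys an explicit decay rate ($1/R$ for $n=1$, $1/\log R$ for $n=2$) at the cost of a dimension-dependent choice of cutoff. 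One cosmetic remark: in your final step the clean way to conclude is the chain $\int_{B_\rho}|\nabla w|^2\le\int\chi_R^2|\nabla w|^2\le 8\pi M^2/\log R$ for every $R>\rho$, rather than comparing $\int_{B_\rho}$ with $\int_{B_R}$, since the latter is increasing in $R$; the logic you intend is clearly this one, and it is sound.
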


\begin{proof}%[of~\autoref{LemmaLiouvilleRiots}]
	For $R>0$, we define a cut-off function
	\begin{equation*}
	\chi_R(x):=\chi\left(\frac{\vert x\vert}{R}\right), \quad \forall x\in\R^n,
	\end{equation*} 
	for $\chi$ a smooth nonnegative function such that $${\chi(z)=
		\left\{\begin{aligned}
		&1 &&\text{if }0\leq z\leq 1,\\
		&0 &&\text{if } z\geq 2,
		\end{aligned}\right.}\quad \vert \chi'\vert\leq 2.$$
	Multiplying the equation on $w$ by $\chi_{R}^2$, integrating on $\R^n$ and using the divergence theorem, we find
	\begin{align*}
	0&\leq -\int_{\R^n}\nabla\left(\chi_{R}^2 w\right)\cdot\nabla\ w\\
	&=-\int_{\R^n}\chi_{R}^2\vert\nabla w\vert^2-2\int_{\R^n}\chi_{R} w\nabla\chi_{R}\cdot\nabla w.
	\end{align*}
	Using the Cauchy-Schwarz inequality, we deduce
	\begin{equation}\label{inequalityRIOT}
	\int_{\R^n}\chi_{R}^2\vert\nabla w\vert^2
	\leq 2\sqrt{\int_{\substack{B_{2R}\backslash B_R}}\chi_{R}^2\vert\nabla w\vert^2}\sqrt{\int_{\R^n} w^2\vert\nabla\chi_{R}\vert^2},
	\end{equation}
	where $B_R$ is the ball of radius $R$ and center $0$. Recall that $w$ is bounded and that $\vert\nabla \chi_R\vert^2\leq R^{-2}\Vert \nabla \chi\Vert_\infty.$
	As $n\leq 2$, we deduce that
	\begin{equation*}
	\int_{\R^n} w^2\vert\nabla\chi_{R}\vert^2\text{ is bounded, uniformly in }R\geq1.
	\end{equation*}
	From \eqref{inequalityRIOT}, we have that
	$\int_{\R^n} \chi_{R}^2\vert\nabla w\vert^2$ is uniformly bounded. Therefore, $\int_{\substack{B_{2R}\backslash B_R}}\chi_{R}^2\vert\nabla w\vert^2$ converges to $0$ as $R\to+\infty$, and so the term on the left-hand side of~\eqref{inequalityRIOT} also converges to $0$. At the limit, we find
	${
		\int_{\R^n}\vert\nabla w\vert^2\leq0.
	}$
	Hence $\nabla w\equiv0$, which ends the proof.
\end{proof}

\begin{proof}[Proof of the second statement of \autoref{PropositionAsymptoticsInhibiting}]
	%Using the parabolic comparison principle, we also deduce $0< u(t,x)< 1$ and $0<v(t,x)<v_b$ for all $t>0, x\in\R^n$.
	%\textbf{WE ADMIT: $v(t,\cdot)$ converges to some $v_\infty(\cdot)\geq 0$ when $t\to+\infty$.} 
	Assume that $v(t,x)$ converges pointwise to some $v_\infty(x)$ as $t\to+\infty$.
	From classical parabolic estimates, the convergence actually occurs in $C^2_{loc}$. Since $\D_t v -\Delta v\leq0$, when $t\to+\infty$ we find
	\begin{equation}\label{1stLemmaEquOnV}
	-\Delta v_\infty\leq0\quad\text{on $\R^n$.}
	\end{equation} 
	Lemma~\ref{LemmaLiouvilleRiots} implies that $v_\infty$ is constant.
	
	We claim that $v_\infty\leq v_\star$. By contradiction, assume $v_\infty>v_\star$, then, for $t$ large enough, $u$ satisfies
	\begin{equation*}
	\D_t u\geq d_1\Delta u+u\big(r(\alpha)f(u)-\omega \big),
	\end{equation*}
	with $\alpha:=\frac{v_\infty+v_\star}{2}>v_\star$. Thus, $u$ is a supersolution of a classical KPP equation, we deduce
	\begin{equation*}
	\liminf\limits_{t\to+\infty}u(t,x)> 0.
	\end{equation*} 
	We reach a contradiction with the first assertion of~\autoref{PropositionAsymptoticsEnhancing}, namely~\eqref{limifuto0}, which we proved previsouly.
	
	Finally, let us show that $u(t,\cdot)$ converges locally uniformly to $0$ when $t\to+\infty$. Let us first consider the case $v_\infty\neq0$. By contradiction, assume that there exists a diverging sequence of times $t_k>0$ and a ball $B\subset\R^n$ such that
	\begin{equation}\label{ContradictionUto0}
	\inf_{\substack{k\geq0\\ x\in B}}u(t_k,x)>0.
	\end{equation}
	We use the notation
	$$
	\tilde \Psi(t,x):=\frac{\Psi(u,v)}{u}.
	$$
	Since $v_\infty\in(0,1)$, the inhibiting assumption~\eqref{hyp:inhibiting} implies
	\begin{equation}\label{BorneInfPsiTilde}
	\sup_{\substack{k\geq0\\ x\in B}}\tilde \Psi(t_k,x)<0.
	\end{equation}
	From the equation on $v$, we can write 
	\begin{equation*}
	\D_t v-\Delta v= u\tilde \Psi.
	\end{equation*}
	From the convergence of $v(t,\cdot)$, we deduce that $u(t,\cdot)\tilde \Psi(t,\cdot)$ converges locally uniformly to $0$ as $t\to+\infty$. From~\eqref{BorneInfPsiTilde}, we thus have that $u(t_k,x)$ converges to $0$ as $k\to+\infty$ and $x\in B$: we reach a contradiction with~\eqref{ContradictionUto0}.

	Let us now consider the case $v_\infty= 0$. Fixing $\eps\in(0,v_\star)$, for $t$ large enough $u$ satisfies
	\begin{equation*}
	\D_t u -d_1\Delta u\leq u\left(r(\eps)f(u)-\omega \right)\leq -Cu,
	\end{equation*}
	where $C:=\omega-r(\eps)f(0)>0$. It proves that $u(t,\cdot)$ converges uniformly to $0$ as $t\to+\infty$.
\end{proof}

\subsection{The tension enhancing case}\label{sec:ProofsEnhancing}

Let us prove \autoref{PropositionAsymptoticsEnhancing} which deal with the asymptotic behavior of the solution in the tension enhancing case. The proof follows the same line as the proof of Proposition~10 in~\cite {Berestycki2019b} but, here, we derive a more precise result on the limit of $U$ in $+\infty$, and so we give a full proof for completeness.

\begin{proof}[Proof of~\autoref{PropositionAsymptoticsEnhancing} in the case $d_2>0$]
 	First of all, we know from Lemma~\ref{Lemma_uv} that
 	$u>0$ and $0<v<1$ for all $t>0$, $x\in\R^n$.
	The proof is achieved in four steps: we first derive an upper bound for $u$, next a lower bound for 
	$v$, then for $u$, and we finally conclude.
	
	\smallskip
	{\em Upper bound for $u$.}\\
	Let us derive the upper bound for $u$ as $t\to+\infty$.
	We see that, by the monotonicity of $r$,
	$$\D_t  u -d_1 \Delta u\leq u\big[r(1)f_+(u)-\omega\big],$$
	where $f_+:=\max\{f,0\}$ is the positive part of $f$.
	Let $U$ be the solution of the ODE $U'=U\big(r(1)f_+(U)-\omega\big)$ with initial datum
	$U(0)=\max\{M,\sup u_0\}$. There holds that $U\searrow u_\star(1)$
	given by~\eqref{def:u_star}, i.e., the zero of 
	$u\mapsto r(1)f(u)-\omega$, whose existence and uniqueness is guaranteed
	by~\eqref{Def_v_star} and~\eqref{AssumptionTensionEnhancing}. Then, by comparison,
	we get the desired upper bound:
	\begin{equation*}
	\limsup_{t\to+\infty}\left(\sup_{x\in\R^n}u(t,x)\right)\leq u_\star(1)<M.
	\end{equation*}
	It follows that there exist $M'<M$ and $T>0$ such that 
	\begin{equation}\label{u<u*}
	u(t,x)\leq M'<M,\quad\forall t\geq T,\ x\in\R^n.
	\end{equation}	
	
	\smallskip
	{\em Lower bound for $v$.}\\
%	We now deal with $v$. 
%	Fix $t>0$ and 
%	Define $$g(t,s):=\inf_{y\in\R^n}\Psi(u(t,y),s),$$
%	The function $v$ satisfies
%	$$\partial_t v-d_2\Delta v\geq g(t,v),\quad\text{ where }\
%	g(t,s):=\inf_{y\in\R^n}\Psi(u(t,y),s).$$
%	Hence $v(t,x)\geq V(t)$ for all $t\geq0$, $x\in\R^n$, where $V$ is the
%	solution of the ODE $V'=g(t,V)$ emerging from $V(0)=v_0$.
%	Since $g(t,0)\geq0$ by~\eqref{AssumptionSaturationV}, we deduce that 
	Consider a sequence $(x_k)_{k\in\N}$ in 
	$\R^n$ such that $|x_k|\to+\infty$. By parabolic estimates,
	the functions $u(\cdot,x_k+\cdot)$, $v(\cdot,x_k+\cdot)$ converge 
	(up to subsequences) as $k\to+\infty$, locally uniformly in $[0,+\infty)\times\R^n$,
	to some functions $\tilde u$, $\tilde v$ which are still 
	solutions of~\eqref{GeneralEquationMotivationFinal}, with initial datum
	$(0,v_b)$. Hence $(\tilde u,\tilde v)\equiv(0,v_b)$ because
	$\Phi(0,v_b)=\Psi(0,v_b)=0$. This means that $(u(t,x),v(t,x))\to
	(0,v_b)$ as $|x|\to+\infty$, for any given $t\geq0$. In particular, for any fixed $t\geq0$ and
	$\underline v\in(v_\star,v_b)$, we have that $v(t,x)>\underline v$ for $|x|$ sufficiently large.
	Moreover, \eqref{AssumptionTensionEnhancing} and~\eqref{u<u*} imply that 
	$\Psi(u,v)>0$ for $t\geq T$, $x\in\R^n$,
%	Consider now the time $T$ from~\eqref{u<u*}. 
%	Observe first that, because
%	$\Psi(u,0)\geq0$ and $\Psi$ is Lipschitz continuous,
%	the function $v$ satisfies
%	$\partial_t v-d_2\Delta v\geq g(t,x)v$, with $g$ bounded.
%	It follows that $v(t,\cdot)-v_b$, which is initially compactly supported, 
%	for any given $t>0$ is bounded from below by a Gaussian.
%	Moreover, for $t\geq T$ found above, $u(t,\cdot)\leq M'$
%	whence $g(t,\cdot)\geq0$ due to~\eqref{AssumptionTensionEnhancing},
%	
%	Because $d_2>0$,
%	by \eqref{AssumptionTensionEnhancing} and~\eqref{u<u*},  
	that is, $v$ is a supersolution of the heat equation, % for $t\geq T$,
	and we know that at time $T$ it is larger than $\underline v$ outside a large ball. 
	By comparison with the heat equation, 
	one readily deduces that, for given $\underline v'\in(v_\star,\underline v)$, 
	there exists $T'>T$ such that
	\begin{equation}\label{v>v'}
	v(t,x)\geq \underline v'>v_\star,\quad\forall t\geq T',\ x\in\R^n.
	\end{equation}

%	
%	
%	 is compactly supported, it follows by comparison with the 
%	solution of the heat equation that 
%	$$\liminf_{t\to+\infty}\left(\inf_{x\in\R^n}v(t,x)\right)\geq v_b.$$
%	
%
%	it easily follows from~\eqref{AssumptionTensionEnhancing} and the parabolic 
%	comparison principle that
%	
%	
%%	\nota{remplecer \eqref{AssumptionTensionEnhancing} par
%%	$\Psi<0$ for $u\in(0,M]$}
%	
%	Let $\mathcal{K}\subset\R^n$ be a given compact set.
%	Then, for given $\underline v\in(v_\star,v_b)$, we can find $T>0$ such that 
%	$v(t,x)\geq \underline v$ for $t>T$, $x\in\R^n$.	

	\smallskip
	{\em Lower bound for $u$.}\\
%	We now take $\underline v'>v_\star$ and use~\eqref{v>v'} to get a lower estimate on $u$.  
	Consider the equation 
	\begin{equation*}
	\D_t \hat u -d_1\Delta \hat u = \hat u\big[r(\underline v')f(\hat u)-\omega\big],
	\end{equation*}
	which is a standard scalar KPP equation. 	
	Observe indeed that $r(\underline v')f(0)-\omega>0>r(\underline v')f(M)-\omega$ by the 
	definition~\eqref{Def_v_star} of~$v_\star$ and~\eqref{AssumptionTensionEnhancing}.
%	 and 
%	$$\forall t>T,\ x\in\R^n,\quad
%	r(v(t,x))f(0)\geq r(\underline v)f(0)>\omega+\eps.$$
	We consider the solution of this KPP equation starting at time $T'$ with the datum 
	$\hat u(T',x)=\min\{u(T',x),u_\star(\underline v')\}$, where
	$u_\star(\underline v')>0$ is given by~\eqref{def:u_star}, i.e., 
	$r(\underline v')f(u_\star(\underline v'))=\omega$.
%	i.e., it is the unique zero of $u\mapsto r(\underline v')f(u)-\omega$.
 	It follows from the classical result of~\cite{Aronson1978a} that $\hat u(t,x)\nearrow
	u_\star(\underline v')$ as $t\to+\infty$, locally uniformly in $x\in\R^n$.
	For $t\geq T'$, using that
	$\hat u(t,\cdot)\leq u_\star(\underline v')$ and $v(t,\cdot)\geq \underline v'$ by~\eqref{v>v'},
	we see that $\hat u$ is a subsolution of the first equation 
	in~\eqref{GeneralEquationMotivationFinal}, whence, by comparison,
	\begin{equation}\label{u>u*}
	\liminf_{t\to+\infty}u(t,x)\geq u_\star(\underline v'),\quad
	\forall x \in \R^n.
	\end{equation}
	
	\smallskip
	{\em Conclusion.}\\
%	We can now conclude gathering together the estimates on $u$, $v$.
	Let $(t_k)_{k\in\N}$ be an arbitrary sequence diverging to $+\infty$.
	The functions $u(t_k+\cdot,\cdot)$, $v(t_k+\cdot,\cdot)$ converge 
	(up to subsequences) as $k\to+\infty$, locally uniformly in $\R\times\R^n$,
	to some functions $u_\infty$, $v_\infty$ which are entire 
	solutions  (i.e., for $t\in\R$, $x\in\R^n$) of the equations 
	in~\eqref{GeneralEquationMotivationFinal}.
	Moreover,~\eqref{u<u*}, \eqref{v>v'}, \eqref{u>u*} yield 
	$u_\star(\underline v')\leq u_\infty\leq u_\star(1)$ and $v_\infty\geq\underline v'$.
	From~\eqref{AssumptionTensionEnhancing} we deduce that necessarily 
	$v_\infty\equiv1$ and then that $u_\infty\equiv u_\star(1)$.
\end{proof}

\begin{proof}[Proof of~\autoref{PropositionAsymptoticsEnhancing} in the case $d_2=0$]
	We immediately see that $0<u<\max\{\sup u_0,M\}$, owing
	to~\eqref{AssumptionTensionEnhancing} and the parabolic strong maximum principle,
	and that $0<v<1$, by elementary ODE considerations, for all $t>0$, $x\in\R^n$.
	Moreover, we 
	observe that the upper bound~\eqref{u<u*} for $u$ is derived in the above
	proof for the case $d_2>0$ only by arguing on 
	the first equation in~\eqref{GeneralEquationMotivationFinal}, hence it holds true when~$d_2=0$. 
	We want to derive now the upper bound 
	\begin{equation}\label{u<MR}
	u(t,x)< M,\quad\forall t\geq0,\ |x|\geq R,
	\end{equation}
	for some possibly very large $R$. 
	For this we consider, for any given direction $e\in\mathbb{S}^{n-1}$,
	the function 
	$$u_e(t,x)=e^{\sigma (t+1)-x\cdot e}.$$
	It is readily seen that there exists $\sigma$ sufficiently large so that
	this is a supersolution of the first equation 
	in~\eqref{GeneralEquationMotivationFinal}.
	Moreover, since $u_0$ is compactly supported, we can choose $\sigma$, possibly even larger
	and independent of $e$, so that in addition $u_e(0,x)>u_0(x)$ for all $x\in\R^n$.
	Therefore, by comparison, $u\leq u_e$ for all $t\geq0$ and $x\in\R^n$, 
	which, being true for any $e\in\mathbb{S}^{n-1}$, yields $u(t,x)\leq e^{\sigma (t+1)-|x|}$.	
	We deduce in particular that $u(t,x)<M$ for all $t\in[0,T)$ an $|x|\geq \sigma(T+1)-\log M$.
	Combining this with~\eqref{u<u*} eventually gives~\eqref{u<MR} with $R=\sigma(T+1)-\log M$.
		
	We now use the upper bound~\eqref{u<MR} for $u$ in the equation for $v$.
	Owing to~\eqref{AssumptionTensionEnhancing},
	it implies that, for $t\geq 0$ and $|x|\geq R$, 
	$\partial_t v=\Psi(u,v)>0$, hence 
	\begin{equation}\label{v>vb}
	v(t,x)\geq v_b,\quad\forall t\geq0,\ |x|\geq R.
	\end{equation}
	
	Let us derive a lower bound on $u$.
	Let $\lambda_\rho$ be the Dirichlet principal eigenvalue of $-\Delta$ in $B_\rho$, 
	and $\varphi_\rho$ be the associated (positive) eigenfunction.
	%	, that is, $\varphi_r>0$ is a positive solution~to
	%	\[\begin{cases}
	%		-\Delta\varphi_r=\lambda_r\varphi_r & \text{in } B_r\\
	%		-\varphi_r=0 & \text{on }\partial B_r.
	%	\end{cases}\]
	It is well known that $\lambda_\rho\searrow0$ as $\rho\to+\infty$, hence in particular
	$d_1\lambda_\rho<r(v_b)f(0)-\omega$ for $\rho$ large enough, 
	because $v_b>v_\star$ defined by~\eqref{Def_v_star} and therefore $r(v_b)f(0)>\omega$.
	It follows that, for such a $\rho$ and for $\eps>0$ small enough, 
	$$-d_1\Delta(\eps\varphi_\rho)=\lambda_\rho\eps\varphi_r<
	\eps\varphi_\rho\big[r(v_b)f(\eps\varphi_\rho)-\omega\big],$$
	whence, by \eqref{v>vb},
	$\eps\varphi_\rho(x-x_0)$ is a subsolution to the first equation 
	in~\eqref{GeneralEquationMotivationFinal} for $t>0$ and $x\in B_\rho(x_0)$ whenever
	$|x_0|>R+\rho$. Take $x_0$ satisfying $|x_0|>R+\rho$ and $\eps>0$ small enough so that
	the above property holds and moreover
	$\eps\varphi_\rho(x-x_0)<u(1,x)$ for all $x\in B_\rho(x_0)$.
	The comparison principle yields
	$u(t,x)>\eps\varphi_\rho(x-x_0)$ for all $t\geq 1$, $x\in B_\rho(x_0)$,
%	 is bounded from below away from $0$ for $t\geq T$ and $|x_0|>R+\rho$. 
%	any fixed $x\in B_{\rho}(x_0)$. We have thereby shown that, for all $x$ such that $|x|>R_T$, 
%	$$\inf_{t\geq T}u(t,x)>0.$$
	thus, using the parabolic Harnack inequality, we find,
	for any compact set~$\mathcal{K}\subset\R^n$,
	\begin{equation}\label{u>0}
	m:=\inf_{\substack{t\geq 2\\ x\in\mathcal{K}}}u(t,x)>0.
	\end{equation}
	
	We are now in a position to conclude. For $s\geq0$ call
	$$g(s):=\min_{z\in[m,M']}\Psi(z,s).$$
	This function satisfies $g(s)>0$ for $s\in(0,1)$ by~\eqref{AssumptionTensionEnhancing} 
	and $g(1)=0$ by~\eqref{AssumptionSaturationV}.
	For $t\geq \max\{T,2\}$ and $x\in\mathcal{K}$, since $m\leq u(t,x)\leq M'$ by
	\eqref{u<u*} and~\eqref{u>0}, we see that 
	$$\partial_t v(t,x)=\Psi(u,v)\geq g(v).$$ 
	As a consequence, because $\min_{x\in\mathcal{K}}v(\max\{T,2\},x)>0$, by the continuity of $v$, 
	we infer that $v(t,x)\to1$ as $t\to+\infty$ uniformly in $x\in\mathcal{K}$.
	We have thereby shown that $v(t,x)\to1$ as $t\to+\infty$ locally uniformly in $x\in\R^n$.
	Finally, for any sequence $(t_k)_{k\in\N}$ diverging to $+\infty$,
	the function $u(t_k+\cdot,\cdot)$ converges 
	(up to subsequences) as $k\to+\infty$, locally uniformly in $\R\times\R^n$, 
	to a nonnegative, bounded solution $\tilde u$ of 
	\begin{equation*}
	\D_t \tilde u -d_1\Delta  \tilde u =  \tilde u\big[r(1)f(\tilde u)-\omega\big],
	\end{equation*}
	which satisfies $\tilde u(t,x)\geq m$ for all $t\in\R$, $x\in\mathcal K$ thanks to~\eqref{u>0}.
	It is a straightforward consequence of~\cite{Aronson1978a} that the only entire solution of this
	standard KPP equation satisfying such a property
	is $\tilde u\equiv u_\star(1)$.
	This concludes the~proof.
\end{proof}

\section{Conclusion}\label{sec:conclusion}
\subsection{Main findings}

An increasing number of papers consider systems of Reaction-Diffusion equations to model the dynamics of epidemics or collective behaviors such as riots. 
However, most of the work study particular and different cases. 
In this paper, we try to propose a unified mathematical approach, based on the theoretical framework developed in~\cite{Berestycki2019b}.  Although we focus on the problem of modeling social unrest, our goal is to keep a rather general mathematical approach that can be transposed to other topics in social dynamics.  

Our model involves two quantities, the level social unrest $u$ and the level of social tension~$v$, which play asymmetric roles.  We examine the problem of a system initially at equilibrium $u=0$, $v=v_b$, for which a triggering event $u_0(\cdot)\gneqq0$ occurs at $t=0$. 
After stating our modeling assumptions, we derive the Reaction-Diffusion
 system~\eqref{GeneralEquationMotivationFinal}.  

In Section~\ref{sec:Analysis}, we highlight a threshold phenomenon on the initial level of social tension
$v_0\equiv v_b$. On the one hand, if $v_b$ is below a threshold value $v_\star$ and the triggering event is small enough, the system returns to equilibrium quickly, and we speak of a return to calm. On the other hand, if $v_b$ is above $v_\star$, an arbitrarily small triggering event causes an eruption of social unrest.  Then, the movement of social unrest spreads through space with an asymptotically
constant speed. 

We are able to derive
more complete theoretical and numerical results on two subclasses of models.
The first one, called \emph{tension inhibiting}, 
is such that the movement of social unrest dissipates social tension.  Once the level of social tension falls below the threshold value $v_\star$, in turn, the level of social unrest fades until it is extinguished
as $t\to+\infty$. 
This behavior is exhibited by both traveling wave solutions, 
c.f.~\eqref{PropositionQualitativeInhibiting},
as well as by solutions of the Cauchy problem, c.f.~\autoref{PropositionAsymptoticsInhibiting}.
Tension inhibiting models thus 
give rise to limited duration movement of social agitation, that we call ``riots''.  An interesting property is that the intensity of the triggering event has no influence on the qualitative dynamic of the system. 
We numerically observe that the solution converges to two opposite 
traveling waves moving with the speed
$c_b:=2\sqrt{r(v_b)f(0)-\omega}$ (which does not depend on the parameters of the equation on $v$)
and link the steady state $(0,v_b)$ to another one $(0,v_\infty)$,
the profile of $u$ having the shape of a bump, and that of $v$ a monotonous decreasing wave, linking .  
We also invesigate theoretically and numerically 
the question of estimating the final level of social tension $v_\infty$, revealing the non-monotonic structure
underlying the inhibiting system, see~\autoref{Propo_Moyenne_Inhib}.

The second specific class of models we examine is the \emph{tension enhancing}. 
For such systems, if 
the initial level of social tension $v_b$ is higher than the threshold value $v_\star$, the dynamics gives rise to a movement of social agitation that converges in a long time to a sustainable excited state. This case typically accounts for time-persisting social movements, which we call here lasting upheaval. 
We numerically observe that if $v_b<v_\star$,
the solution converges towards two opposite 
traveling waves, whose speed can take intermediate values between
$c_b$ and $c_1:=2\sqrt{r(1)f(1)-\omega}$ 
(depending on the parameters of the equation on $v$, c.f.~Figure~\ref{fig:Enhanc_Speed_d2k}).
These waves connect $(0,v_b)$ to $(u_\star(1),1)$ (defined in~\eqref{def:u_star}),
the profiles of $u$ and $v$ having the shape of increasing waves,
c.f.~Theorem~\ref{PropositionQualitativeInhibitingTW}.
If $v_b<v_\star$, contrarily to the \emph{tension inhibiting} case, we observe that a sufficiently strong triggering event can still ignite a lasting upheaval, see Figure~\ref{fig:Triggering_eps}.

The \emph{tension inhibiting} and \emph{tension enhancing} classes of models give a good idea of the variety of behaviors that our model can generate.
In Section~\ref{sec:MixedCase}, we examine mixed cases that exhibit more complex behaviors; some models feature a double threshold effect between \emph{return to calm}, \emph{riot} and \emph{lasting upheaval}, others generate oscillating traveling waves or terraces (consisting of a riot followed by a lasting upheaval).

In Section~\ref{sec:SpaceHeterogeneous}, we propose several ways to include spatial heterogeneity in our model. We first consider the case of heterogeneous coefficients and study how an obstacle (i.e. an area of depressed growth for social unrest) affects the propagation of a social movement. On the one hand, if $v_b>v_\star$, the propagation of the social movement is guaranteed in any case. On the other hand, if $v_b<v_\star$, propagation is only possible if the triggering event is sufficiently large and the gap is sufficiently small. We then consider the case of an initial level of social tension $v_0$ that is not constant. This case accounts for the variability of populations according to the neighborhood (for example, between a city and its suburbs) which may have a significant impact on social movement according to data.
Finally, we mention that our framework allows to include geometrical heterogeneity through the domain on which we pose the system of equations~\eqref{GeneralEquationMotivationFinal}.

\subsection{Possible extensions and perspectives}

We conclude by mentionning several other extensions which are relevant regarding the modeling of social unrest.
\subsubsection{Non-local diffusion}\label{sec:Ext_NonLocal}
A possible extension of our model is two replace the Laplace operator in~\eqref{GeneralEquationMotivationFinal} with some non-local diffusion operator. One can consider, for example, that the classical diffusion is replaced by the convolution with an integrable kernel $K(\cdot)$
\begin{equation*}
K\ast u(x)= \int_{\Omega} K(x-y)u(y)dy.
\end{equation*}
Another interesting example is the fractional Laplacian, for $s\in(0,1)$,
\begin{equation*}
\Delta^s u (x)= c_{n,s}\int_{\R^n}\frac{u(x)-u(y)}{\vert x-y\vert^{n+2s}}dy, \qquad \forall x\in\R^n,
\end{equation*}
with $c_{n,s}$ a normalization constant.

On the one hand, a non-local diffusion on the level of activity $u$ could account for the fact that rioters can travel to another location. On the other hand, a nonlocal on the level of social tension could account for the global spreading of information through media.

Non-local diffusion is increasingly used in various modeling situations (e.g.,~\cite{Berestycki2016a} deals with the modeling of riots, and~\cite{Mendez2010} contains many other topics), and often leads to some anomalous behaviors. We let the reader refer to~\cite{Mendez2010,Lewis2016} and references therein for more details.

\subsubsection{Compartmental models}
An underlying hypothesis of our modeling approach is that all individuals are identical. Yet, the variability of individuals sometimes plays an important role in collective behaviors~\cite{Granovetter1978,Gavrilets2015}. It is often admitted that certain social and economic classes are more prone to trigger or drive a social movement, such as students~\cite{Altbach2014}, rural population~\cite{Past2020}, activists~\cite{Mistry2015}, etc.

One way to include individuals variability in our model is to consider two different levels of activity $u_1$ and $u_2$, the first accounting for the rioting activity of activist and leaders, the other accounting for the rioting activity of more reluctant individuals. It remains unclear how our conclusions would be affected by this additional feature, and we leave this question as an open problem.

%\subsubsection{Diffusion on a network}
%One can consider $N$ different locations and $\bm u=(u_i(t),v_i(t))_{i\in\{1,\dots,N\}}$ the respective level of social unrest and social tension, solution of the $2N$-system,
%\begin{equation*}\left\{\begin{aligned}
%&u'_i(t)-\bm D[\bm u]= \Phi(u_i(t),v_i(t)) \\
%&v'_i(t)-\bm D[\bm v]= \Psi(u_i(t),v_i(t))
%\end{aligned}\right.\end{equation*}
%where the diffusion operator $\bm D$ is defined as
%\begin{equation*}
%\bm D[\bm u]= \sum_{j\neq i} d_{ij}(u_i-u_j)
%\end{equation*}

\section*{Aknowledgments}

The research leading to these results has received funding from the European Research Council under the European Union’s Seventh Framework Programme (FP/2007-2013) / ERC Grant Agreement n. 321186 - ReaDi - “Reaction-Diffusion Equations, Propagation and modeling” held by Henri Berestycki. This work was also partially supported by the French National Research Agency (ANR), within project NONLOCAL ANR-14-CE25-0013.

\bibliographystyle{abbrv}
\bibliography{library}
\end{document}